\documentclass[a4paper,10pt,reqno]{amsart}

\usepackage{amssymb,amsthm,amsmath,mathrsfs}
\usepackage{mathtools}
\usepackage{bbm}
\usepackage{stackengine}
\usepackage{enumitem}
\usepackage{soul}

\usepackage[foot]{amsaddr}

\usepackage{caption,color,graphicx}
\usepackage[dvipsnames]{xcolor}
\usepackage{soul}

\usepackage{hyperref}
\usepackage{doi}
\hypersetup{colorlinks  = true,
            urlcolor    = blue,
            linkcolor   = red,
            citecolor   = blue}

\usepackage{verbatim}

\usepackage{environ}
\mathtoolsset{showonlyrefs,showmanualtags}

\usepackage{graphicx}
\newcommand\smallO{
  \mathchoice
    {{\scriptstyle\mathcal{O}}}
    {{\scriptstyle\mathcal{O}}}
    {{\scriptscriptstyle\mathcal{O}}}
    {\scalebox{.7}{$\scriptscriptstyle\mathcal{O}$}}
  }

\usepackage[numbers]{natbib}

\usepackage[centering, top=3cm, bottom=2.5cm]{geometry}

\usepackage{tikz}
\usetikzlibrary{arrows.meta}

\newcommand{\1}{\mathbbm{1}}

\renewcommand{\d}{\operatorname{d}\!}

\newcommand{\e}{\varepsilon}

\renewcommand{\implies}{\Rightarrow}
\newcommand{\vertiii}[1]{{\left\vert\kern-0.25ex\left\vert\kern-0.25ex\left\vert #1 \right\vert\kern-0.25ex\right\vert\kern-0.25ex\right\vert}}

\let\oldtocsection=\tocsection
\let\oldtocsubsection=\tocsubsection
\renewcommand{\tocsection}[2]{\hspace{0em}\oldtocsection{#1}{#2}}
\renewcommand{\tocsubsection}[2]{\hspace{1em}\oldtocsubsection{#1}{#2}}

\theoremstyle{plain}
\newtheorem*{theorem*}{Theorem}
\newtheorem{theorem}{Theorem}[section]
\newtheorem{lemma}[theorem]{Lemma}
\newtheorem{proposition}[theorem]{Proposition}

\newtheorem{maintheorem}{Theorem}

\theoremstyle{definition}
\newtheorem{definition}[theorem]{Definition}
\newtheorem{remark}[theorem]{Remark}
\newtheorem{example}[theorem]{Example}
\newtheorem{notation}[theorem]{Notation}

\newenvironment{stepproof}[1][]{%
  \begin{proof}[#1]%
}{%
  \end{proof}%
}

\newtheorem{manualstep}{\bf Step}
\newenvironment{step}[1][]{%
    \renewcommand\themanualstep{#1}%
    \begin{manualstep}\itshape
}{\end{manualstep}}

\newtheorem{manualhyp}{Hypothesis}
\newenvironment{hypothesis}[1][]{%
    \renewcommand\themanualhyp{#1}%
    \begin{manualhyp}\itshape
}{\end{manualhyp}}

\IfPackageLoadedTF{MnSymbol}{}{}

\author[Bernat Bassols Cornudella]{Bernat Bassols Cornudella$^{1}$}
\email{\href{mailto:bernat.bassols-cornudella20@imperial.ac.uk}{bernat.bassols-cornudella20@imperial.ac.uk}}

\author[Matheus M.~Castro]{Matheus M. Castro$^{2}$}
\email{\href{mailto:mmcastro@ime.unicamp.br}{mmcastro@ime.unicamp.br}}

\title[Conditional Stable Laws and Rare-Event Limits for Absorbing Markov Chains]{Conditional Stable Laws and Rare-Event Limits for Absorbing Markov Chains} 

\date{\today}

\begin{document}

\begin{abstract}
We establish conditional limit theorems, pointwise in the initial state, for absorbing Markov chains on a compact metric space $M$. We assume $L^1(M,\rho)$-continuous transition densities, irreducibility and aperiodicity. For the observable $f_\beta(x)=d_M(x,x_0)^{-\beta}$, with suitable $x_0$ satisfying $\rho(B_r(x_0))\sim C_d(x_0)r^d$, we prove that the point-process of normalised observations converges to a Poisson random measure. This yields totally right-skewed $\alpha$-stable laws for $\alpha=d/\beta\in(0,2)$ and, at the boundary value $\alpha=2$, a Gaussian limit with the non-standard normalisation $\sqrt{n\log n}$. We also establish a conditional central limit theorem for $L^2$ observables, exponential deviation bounds for bounded observables and a conditional Poisson law for visits to shrinking targets.
\end{abstract}

\keywords{Absorbing Markov chains; conditional limit laws; stable laws; Poisson point-processes; quasi-ergodic measures}
\subjclass[2020]{60F05; 60G55; 60J05; 37A50}

\vspace*{-.3cm}
\maketitle
{\thispagestyle{empty}\tableofcontents\newpage}

\section{Introduction}

The theory of absorbing Markov processes provides a mathematical framework to analyse the evolution of systems before they reach a terminal time~\cite{DarrochSeneta1965, ColletMartinezSanMartin2013}. This includes systems subject to extinction, such as birth--death population models~\cite{MeleardVillemonais2012, FritschVillemonaisZalduendo2026}; systems that may leak from a given bounded domain, such as molecular transport across cellular compartments~\cite{Pinsky1985, SchussSingerHolcman2007} or passive tracers in an open fluid flow~\cite{AltmannPortelaTel2013}; as well as systems prone to sudden collapse, as considered in system reliability theory~\cite{RausandHoyland2004}. Each of these examples can be modelled by a process whose trajectory terminates upon reaching an absorbing set.

If absorption occurs almost surely, every stationary probability measure is concentrated on the absorbing set and therefore carries no information about transient trajectories that remain active for a long time. To retain information about these pre-absorption dynamics, we consider trajectories conditioned on survival for sufficiently long time.

In this paper, we address whether observations along such long-lived conditioned trajectories obey classical statistical limit laws. More precisely, we investigate whether they satisfy central and stable limit theorems, exponential deviation bounds and Poisson limits. Let us begin by briefly recalling these properties.

For a Markov chain $X_n$ on $M$ preserving an ergodic probability measure $\pi$ and an observable $\phi:M\to\mathbb R$, consider the Birkhoff sums
$$
S_n\phi
:=
\sum_{i=0}^{n-1}\phi\circ X_i.
$$
If $\phi \in L^1(M,\pi)$ the law of large numbers describes the deterministic first-order behaviour of $S_n\phi$. To understand the fluctuations of $S_n \phi$ we generally rely on limit theorems. Namely, we seek constants $A_n$ and $B_n$ such that 
$$\frac{1}{B_n}\left( S_n\phi - A_n\right) \xrightarrow[d]{n\to\infty} Z$$
for some suitable distribution $Z$, where $\xrightarrow[d]{}$ denotes convergence in distribution.
Depending on the regularity of the observable $\phi$, one may establish different limit theorems:
\begin{itemize}
\item if $\phi\in L^2(M,\pi)$, sufficiently strong mixing conditions may yield a central limit theorem (CLT), i.e.~$Z$ a normal distribution, $A_n = n\pi(\phi)$ and $B_n = \sqrt{n};$
\item if $\phi \in L^p(M,\pi)$ for every $p<2$ but not $p=2$, the Gaussian limit may still hold with a non-standard normalisation, e.g.~$B_n = \sqrt{n\log n};$
\item
if $\phi\in L^p(M,\pi)$ for every $p<\alpha$ and $\phi\notin L^p(M,\pi)$ for every $p\geq\alpha$, and under suitable mixing and tail-balance assumptions, $Z$ may correspond to an $\alpha$-stable law, where $B_n = n^{1/\alpha}$ and the centring $A_n$ depends on the $\alpha$ considered.
\end{itemize}
We refer to~\cite{Feller1971} for general background on these limit theorems and~\cite{KulikSoulier2020} for a modern exposition.

While the preceding limit theorems concern a fixed observable $\phi$, another class of distributional limits arises when the observable varies with $n$. In particular, this is the case when analysing the number of visits to increasingly rare sets, as is often done in the study of extreme value theory~\cite{FreitasFreitasTodd2010}. Given a sequence of measurable sets $U_n\subset M$, consider the Birkhoff sums
$$
P_n:=\sum_{i=0}^{n-1}\mathbbm 1_{U_n}\circ X_i.
$$
The scaling $n\pi(U_n)\xrightarrow[]{n\to\infty}t\in(0,\infty)$
ensures that the expected number of visits to $U_n$ converges to a finite positive limit. Under suitable assumptions on mixing and temporal dependence, $P_n$ may converge in distribution to a Poisson or compound Poisson random variable~\cite{LeadbetterLindgrenRootzen1983, Resnick1987}.

In this paper, we establish several limit theorems and rare event statistics as presented above for absorbing Markov chains conditioned upon survival, where the law of the process also depends on $n$.

\subsection{Absorbing Markov processes and quasi-ergodic measures}
Most of the theory described above is formulated for stationary Markov chains. For an absorbing Markov chain $X_n$, let $\tau$ denote the absorption time and let $M$ be the non-absorbing state space. Conditioning the statistics of the process upon survival corresponds to analysing Birkhoff averages up to time $n$ conditionally on the event $\{\tau>n\}$. In other words, $S_n \phi$ is considered under the $n$-dependent probability measure $\mathbb P_x(\,\cdot\mid\tau>n).$
At the level of conditional expectations, the first-order behaviour of the Birkhoff average is described by the so-called \emph{quasi-ergodic} limit
\begin{equation}\label{eq:qed1}
\mathbb E_x\left[\left.\frac{1}{n}\sum_{i=0}^{n-1}\phi \circ X_i\,\right|\,\tau>n\right]\xrightarrow[]{n\to\infty}\nu(\phi):=\int_M \phi\,\d\nu.
\end{equation}
This limit is viewed as the conditional counterpart of the law of large numbers and has been studied extensively within the broader theory of quasi-stationarity (see~\cite{DarrochSeneta1965,BreyerRoberts1999,ChenJian2017,HeZhangZhu2019,Ocafrain2020,ColoniusRasmussen2021,CastroGoverseLambRasmussen2024,CastroLambOliconMendezRasmussen2024,BassolsCastroLamb2025} and references therein).

The quasi-ergodic measure $\nu$ turns out to be a stationary measure for $X_n$ under the initial family of probability measures
\[\mathbb Q_x (\cdot) := \lim_{n\to \infty} \mathbb P_x(\cdot\mid\tau>n)\]
defined on a suitable $\sigma$-algebra~\cite{ChampagnatVillemonais2016,Castroetall}. This new chain is referred to as the \emph{$Q$-process} and, in particular, is a standard (non-absorbing) Markov chain. Nevertheless, limit theorems for the $Q$-process under its stationary measure $\nu$ do not directly imply limit theorems for $X_n$ under the changing conditioned measures $\mathbb P_x(\cdot \mid \tau > n)$~\cite{ChampagnatVillemonais2016,Ocafrain2020,CastroLambOliconMendezRasmussen2024}. This presents a major challenge when establishing our results.

Relatively few limit theorems are available beyond the first-order quasi-ergodic behaviour in \eqref{eq:qed1}. Matthews~\cite{Matthews1970} and Al-Eideh~\cite{AlEideh1994} proved a conditional central limit theorem for finite-state absorbing Markov chains. More recently, Oçafrain~\cite{Ocafrain2023} established a conditional central limit theorem for a broad class of continuous-time absorbing Markov processes. The probability that conditioned Birkhoff averages deviate from their quasi-ergodic limit, in the spirit of a weak law of large numbers, has also been studied through deviation inequalities and large-deviation principles~\cite{ChenDeng2013, ChenJian2017, KimTagawaVelleret2026, GuillinNectouxWu2024}. The existing results leave open the behaviour of heavy-tailed observables and rare-event point-processes under survival conditioning.

\subsection{Existing techniques}
One of the principal methods for proving limit theorems for Birkhoff sums of Markov chains is the spectral technique introduced by Nagaev~\cite{Nagaev1957}. Given a transition operator $\mathcal P$ and an observable $\phi$, one considers the twisted operators $
\mathcal P_z h:=\mathcal P\left(e^{z\phi}h\right)$
and studies the perturbation of their leading eigenvalue near $z=0$. Imaginary perturbations encode the characteristic functions of the Birkhoff sums and lead to central limit theorems, whereas real perturbations provide control over exponential moments and yield concentration and large-deviation estimates.
Guivarc'h--Hardy~\cite{GuivarchHardy1988} applied this method to Markov chains for which their transition operators have a spectral gap on the space of Lipschitz functions, and Hennion--Hervé~\cite{HennionHerve2001} generalised it to a framework based on quasi-compact operators.
Further limiting results based on spectral methods for Markov processes, beyond the central limit and large deviation principle, were obtained in~\cite{KontoyiannisMeyn2003, Gouezel2010ASIP}.

The fluctuations of $S_n\phi$ for heavy-tailed observables $\phi$, e.g.~$\pi(|\phi| > t)\sim t^{-\alpha}$ with $\alpha \in (0,2),$ are governed by the exceptionally large values of $\phi$ attained along the chain. 
Point-process methods provide fine control of such large values, and yield stable laws for $S_n\phi$ after proper scaling and centring.
This technique was introduced by Durrett--Resnick~\cite{DurrettResnick1978} and further developed by Davis--Hsing~\cite{DavisHsing1995}. Tyran-Kamińska~\cite{TyranKaminska2010} formulated necessary and sufficient criteria, based on the point-process method, for obtaining convergence to a Lévy process under strong mixing assumptions. Moreover, Cattiaux--Manou-Abi~\cite{CattiauxManouAbi2014} obtained stable laws for heavy-tailed observables by establishing sufficient conditions in terms of spectral-gap and integrability properties of the transition operator.

Point-processes also arise naturally in the study of visits to rare sets. {Early Poisson limit laws appeared in Doeblin's work on continued fractions~\cite{Doeblin1940}. Iosifescu~\cite{Iosifescu1977} subsequently established a Poisson limit theorem for $\psi$-mixing sequences.} Poisson laws for returns to shrinking targets were later obtained for stationary Markov chains by Pitskel~\cite{Pitskel1991} and for Axiom A diffeomorphisms by Hirata~\cite{Hirata1993}. Hirata--Saussol--Vaienti~\cite{HirataSaussolVaienti1999} developed a general framework for return-time statistics. If such returns occur in clusters,
$P_n$ may instead converge to a compound Poisson law, as shown in~\cite{HaydnVaienti2009}. A complementary spectral approach to rare events, escape rates and quasi-stationarity was developed by Keller--Liverani~\cite{KellerLiverani2009}. More recently, this spectral perspective was extended to random dynamical systems by Atnip--Froyland--González-Tokman--Vaienti, who established quenched compound Poisson laws for visits to shrinking random targets through perturbations of transfer operator cocycles~\cite{AtnipFroylandGonzalezTokmanVaienti2025}.

\subsection{Brief statement of the main results}
We work within the framework of~\cite{CastroLambOliconMendezRasmussen2024}. Hypothesis~\ref{hyp:H}, stated precisely in Section~\ref{sec:main-results}, imposes compactness, irreducibility and aperiodicity conditions on the killed transition operator. These assumptions guarantee the existence and uniqueness of a quasi-stationary measure $\mu$ and a quasi-ergodic measure $\nu$, both absolutely continuous with respect to a reference measure $\rho$.

The main observable of interest for which we establish stable laws in this paper is
$$
f_\beta(x):=\begin{cases}
d_M(x,x_0)^{-\beta},&x\neq x_0,\\1,&x=x_0,
\end{cases}
\ 
\beta>0,
$$
where $x_0$ is a $(\rho, d)$-admissible point (see Definition~\ref{def:reg}). 

We set $f_\beta(x_0)=1$ for convenience. The $(\rho,d)$-admissibility condition implies that $\nu(f_\beta>u)\sim C(x_0)u^{-\alpha}$, where $\alpha:=d/\beta$ and $C(x_0):=C_d(x_0)\eta(x_0)m(x_0)>0$ (see Proposition~\ref{prop:tail-asymptotics}). Thus, the index $\alpha$ is determined by the local dimension of $\rho$ at $x_0$ and the strength of the singularity of $f_\beta$.

The main results are the following:
\begin{enumerate}[label = (R\arabic*)]
\item \label{it:brief-1} For $0<\alpha<2$, let $B_n:=\left(C(x_0)n\right)^{1/\alpha}$
and define the point-process
$$
N_n:=\sum_{i=0}^{n-1}\delta_{\frac{f_\beta\circ X_i}{B_n}}.
$$
Theorem~\ref{thm:ppp} shows that, under $\mathbb P_x(\,\cdot\mid\tau>n)$, $
N_n\xrightarrow[d]{n\to\infty}\operatorname{PRM}(\upsilon_\alpha)$ in $\mathcal M_p((0,\infty])$ (see Definition \ref{def:Mp}), a Poisson random measure of intensity $\upsilon_\alpha(\d y)=\alpha y^{-\alpha-1}\,\d y$.
\item \label{it:brief-2} Theorem~\ref{thm:main-stable-laws} establishes that, for suitable centring and normalising sequences $(A_n)$ and $(B_n)$, under $\mathbb P_x(\cdot \mid \tau>n)$
$$
\frac{1}{B_n}\left(\sum_{i=0}^{n-1}f_\beta\circ X_i-A_n\right)\xrightarrow[d]{n\to\infty}Z_\alpha.
$$
For $0<\alpha<2$, $Z_\alpha$ is a totally right-skewed $\alpha$-stable random variable, while $Z_2\sim\mathcal N(0,1)$. The precise choices of $A_n$ and $B_n$ in the different regimes of $\alpha$ are given in Theorem~\ref{thm:main-stable-laws}.
\item \label{it:brief-3} Theorem~\ref{thm:conditional-clt-l2} establishes, for every $g\in L^2(M,\mu)$,
$$
\frac{1}{\sqrt n}\sum_{i=0}^{n-1}\left(g-\nu(g)\right)\circ X_i\xrightarrow[d]{n\to\infty}\mathcal N(0,\sigma_g^2).
$$
For every bounded measurable observable $h$, Theorem~\ref{thm:conditional-exponential-concentration} gives there exists $C_x$ and $a_h$ such that
$$
\mathbb P_x \left(\left. \left| \frac{1}{n} \sum_{i=0}^{n-1} h\circ X_i -\nu(h)\right|>\varepsilon\right|\tau>n\right)\leq C_{x} e^{- a_h \e^2 n} $$
for every $\e>0.$
\item \label{it:brief-4} Theorem~\ref{thm:conditional-poisson} proves that, if $(U_n)$ is a sequence of measurable neighbourhoods shrinking to a point $x_1$ and $
n\nu(U_n)\xrightarrow[]{n\to\infty}t\in(0,\infty),$
then, under $\mathbb P_x(\,\cdot\mid\tau>n)$,
$$
\sum_{i=0}^{n-1}\mathbbm 1_{U_n}\circ X_i\xrightarrow[d]{n\to\infty}\operatorname{Poi}(t).
$$
\end{enumerate}
The point-process, stable, Gaussian boundary, central limit and concentration results \ref{it:brief-1}--\ref{it:brief-3} hold for every $x\in M\setminus Z$, where $Z$ denotes the set of points with an almost surely bounded escape time (see item \ref{it:hypH-ii} in Hypothesis~\ref{hyp:H}). The shrinking-target result \ref{it:brief-4} holds for every $x\in M\setminus(Z\cup\{x_1\})$. Thus, all the results are pointwise in the initial state (quenched) and do not require averaging against an initial distribution.

To the best of our knowledge, the Poisson point-process and stable limit laws in \ref{it:brief-1} and \ref{it:brief-2} are new for absorbing Markov chains. Our main technical contribution is to prove these results directly under the conditioned measures $\mathbb P_x(\cdot\mid\tau>n)$. Their proofs essentially combine: (i) ``conditional mixing of all orders'' estimates for well-separated observations, obtained from the spectral decomposition of the killed transition operator; together with (ii) estimates under the $Q$-process started from its invariant measure $\nu$, which control clusters of nearby rare observations. These ingredients allow us to establish the convergence to a Poisson point-process via the method of moments, which then yields the stable laws above.

The remaining results (\ref{it:brief-3} and \ref{it:brief-4}), namely the classical central limit theorem, large deviations and the Poisson limit law, are new for absorbing systems satisfying Hypothesis~\ref{hyp:H}. Their proofs adapt existing techniques in the theory of (classical) Markov processes to absorbing Markov processes.
 
The paper is organised as follows. Section~\ref{sec:main-results}
introduces the absorbing-chain setting, states the main conditional limit theorems and proves the tail asymptotics that determine the normalising sequence for the singular observable $f_\beta$. Section~\ref{sec:examples-hypothesis-H} verifies Hypothesis~\ref{hyp:H} for several classes of randomly perturbed systems and killed diffusions. Section~\ref{sec:spectral-framework} develops the spectral tools on $L^\infty$ and $L^2$ and proves a uniform estimate for shrinking targets. In Section~\ref{sec:q-process} we introduce the $Q$-process, later used to establish refined
quasi-ergodic limits in Appendix~\ref{ref:appendix}. Section~\ref{sec:mixing} derives conditional mixing estimates for well-separated observations and for separated clusters. Section~\ref{sec:ppp} proves the Poisson point-process
limit. Section~\ref{sec:stable-laws} deduces the stable laws for $0<\alpha<2$. Section~\ref{sec:gaussian-laws} treats the boundary case $\alpha=2$, the conditional central limit theorems and exponential concentration. Finally, Section~\ref{sec:conditional-poisson} proves the general conditional Poisson law for visits to shrinking targets by spectral perturbation.

\begin{figure}[htbp]
\centering
\makebox[\textwidth]{
\begin{tikzpicture}[
    result/.style={
        draw=black,
        line width=0.4pt,
        align=center,
        font=\small,
        minimum width=2.7cm,
        inner xsep=6pt,
        inner ysep=6pt
    },
    dependency/.style={
        -{Latex[length=1.7mm, width=1.2mm]},
        line width=0.4pt,
        shorten >=1pt,
        shorten <=1pt
    }
]

\node[result] (tail) at (-6.2,0) {Proposition~\ref{prop:tail-asymptotics}\\Lemma~\ref{lem:uniform-small-targets}};
\node[result] (foundation) at (0,0) {Theorems~\ref{thm:spectralgap} and~\ref{thm:Qprocess}};

\node[result] (mixing) at (-4.1,1.8) {Lemma~\ref{lem:mix}\\Lemma~\ref{lem:cluster-mixing}};
\node[result] (compact) at (2.1,1.8) {Lemma~\ref{lem:l2-compactness}};

\node[result] (ppp) at (-4.1,3.6) {Theorem~\ref{thm:ppp}\\[-1mm]\scriptsize Poisson point-process};
\node[result] (moment) at (0,3.6) {Lemma~\ref{lem:conditional-sum-second-moment}};
\node[result] (triangular) at (4.1,3.6) {Lemma~\ref{lem:conditional-triangular-clt}};

\node[result] (stable1) at (-6.2,5.4) {Theorem~\ref{thm:main-stable-laws}\,\ref{it:stab-law-i}\\[-1mm]\scriptsize stable law, $0<\alpha < 1$};
\node[result] (smalljumps) at (-2.1,5.4) {Lemma~\ref{lem:conditional-second-moment}};
\node[result] (bounded) at (2.1,5.4) {Theorem~\ref{thm:conditional-clt-bounded}\\[-1mm]\scriptsize bounded CLT};
\node[result] (gaussian) at (6.2,5.4) {Theorem~\ref{thm:main-stable-laws}\,\ref{it:stab-law-iv}\\[-1mm]\scriptsize boundary  Gaussian case, $\alpha = 2$};

\node[result] (stable23) at (-4.1,7.2) {Theorem~\ref{thm:main-stable-laws}\,\ref{it:stab-law-ii}--\ref{it:stab-law-iii}\\[-1mm]\scriptsize stable laws, $1\leq \alpha < 2$};
\node[result] (l2) at (2.1,7.2) {Theorem~\ref{thm:conditional-clt-l2}\\[-1mm]\scriptsize $L^2$ CLT};

\draw[dependency] (foundation.north) -- (mixing.south east);
\draw[dependency] (foundation.north) -- (compact.south);
\draw[dependency] (tail.north west) .. controls (-7.8,1.8) and (-7.6,3.6) .. (ppp.west);
\draw[dependency] (mixing.north) -- (ppp.south);
\draw[dependency] (compact.north west) -- (moment.south);
\draw[dependency] (compact.north east) -- (triangular.south);
\draw[dependency] (ppp.north west) -- (stable1.south);
\draw[dependency] (ppp.north) -- (stable23.south);
\draw[dependency] (moment.north west) -- (smalljumps.south);
\draw[dependency] (smalljumps.north) -- (stable23.south east);
\draw[dependency] (triangular.north west) -- (bounded.south);
\draw[dependency] (triangular.north east) -- (gaussian.south);
\draw[dependency] (bounded.north) -- (l2.south);
\draw[dependency] (moment.north) .. controls (0,5.4) and (0,7.2) .. (l2.west);

\node[result] (concentration) at (-4.8,-2.4) {Theorem~\ref{thm:conditional-exponential-concentration}\\[-1mm]\scriptsize large deviations};
\node[result] (targets) at (0,-2.4) {Lemma~\ref{lem:uniform-small-targets}};
\node[result] (lln) at (4.8,-2.4) {Theorem~\ref{thm:refined-conditional-lln}\\[-1mm]\scriptsize refined conditional LLN};
\node[result] (poisson) at (0,-4.8) {Theorem~\ref{thm:conditional-poisson}\\[-1mm]\scriptsize Poisson limit law};

\draw[dependency] (foundation.south west) -- (concentration.north);
\draw[dependency] (foundation.south) -- (targets.north);
\draw[dependency] (foundation.south east) -- (lln.north);
\draw[dependency] (targets.south) -- (poisson.north);

\end{tikzpicture}
}
\caption{Logical structure of the proofs. Arrows indicate the main dependencies.}
\label{fig:theorem-dependencies}
\end{figure}

\section{Setting and statements of the main results}
\label{sec:main-results}

\subsection{Setting and standing hypothesis}
Let $(M,d_M)$ be a compact metric space and let $\rho$ be a fully supported Borel probability measure on $M$.
We may think of $\rho$ as a reference measure.
We enlarge $M$ with an isolated point $\partial$ and set $
\widehat{M}:=M\sqcup\{\partial\}.$
Following~\cite[\S2]{CastroLambOliconMendezRasmussen2024}, we use the standard definition of a Markov chain given in~\cite[Definition~III.1.1]{RogersWilliams1994}. More precisely, the tuple
$$
\mathbf X:=\left(
\Omega,\mathcal F,(\mathcal F_n)_{n\in\mathbb N_0},(X_n)_{n\in\mathbb N_0},(\mathcal P^n)_{n\in\mathbb N_0},(\mathbb P_x)_{x\in \widehat{M}}\right)
$$
is a time-homogeneous Markov chain on $\widehat{M}$, meaning that:
\begin{itemize}
\item $(\Omega,\mathcal F,(\mathcal F_n)_{n\in\mathbb N_0})$ is a filtered measurable space;
\item $(X_n)_{n\in\mathbb N_0}$ is an $(\mathcal F_n)_{n\in\mathbb N_0}$-adapted process taking values in $\widehat{M}$;
\item $(\mathcal P^n)_{n\in\mathbb N_0}$ is a time-homogeneous transition function satisfying the Chapman--Kolmogorov equations;
\item $(\mathbb P_x)_{x\in \widehat{M}}$ is a family of probability measures satisfying $\mathbb P_x(X_0=x)=1$ and, for every bounded measurable function $h:\widehat{M}\to\mathbb R$ and every $m,n\in\mathbb N_0$,
$$
\mathbb E_x\left[h(X_{n+m})\mid\mathcal F_n\right]=\mathcal P^mh(X_n) \quad \mathbb P_x\text{-almost surely,}$$ where
$$\mathcal P^mh(z):=\int_{\widehat{M}}h(y)\,\mathcal P^m(z,\d y).
$$
\end{itemize}
The one-step transition kernel is therefore
$$\mathcal P(x,A):=\mathbb P_x(X_1\in A),\  x\in \widehat{M},\ A\in\mathcal B( \widehat{M}).$$
We assume that $\partial$ is absorbing, namely $
\mathcal P(\partial,\{\partial\})=1.$
Equivalently, once the process reaches $\partial$, it remains there forever. Define the absorption time by
$$\tau:=\inf\left\{n\geq0:X_n=\partial\right\}=\inf\left\{n\geq0:X_n\notin M\right\},$$
with the convention $\inf\varnothing=\infty$. Since $\partial$ is absorbing, $
\{\tau>n\}=\{X_n\in M\}.$
For a bounded measurable function $f:M\to\mathbb R$, extended by zero at $\partial$, the killed transition operator satisfies
$$
\mathcal P^nf(x) = \mathbb E_x\left[f(X_n)\mathbbm 1_{\{\tau>n\}}\right] = \int_Mf(y)\,\mathcal P^n(x,\d y),\  x\in M.
$$
In particular, $\mathcal P^n\mathbbm 1_M(x) = \mathbb P_x(\tau>n).$

In the study of absorbing Markov chains, quasi-stationary measures are a natural generalisation of stationary measure, whereas quasi-ergodic measures describe the behaviour of Birkhoff averages conditioned upon survival. 
We stress that quasi-stationary and quasi-ergodic measures generally differ, as opposed to the classical non-absorbing setting, where the stationary measure also describes the statistics of the Birkhoff averages. Below, we recall their definitions. 

\begin{definition}
A probability measure $\mu$ on $M$ is called a \emph{quasi-stationary measure} for $X_n$ if there exists $\lambda\in(0,1]$ such that
$$
\int_M\mathcal P(x,A)\,\mu(\d x) = \lambda\mu(A)
$$
for every Borel set $A\in\mathcal B(M)$. Equivalently, $
\mathbb P_\mu(X_n\in A\mid\tau>n) = \mu(A)$ for every $n\geq1$ and every $A\in\mathcal B(M)$. In this case, $\lambda=\mathbb P_\mu(\tau>1).$

A probability measure $\nu$ on $M$ is called a \emph{quasi-ergodic measure} for $X_n$ if, for every bounded measurable observable $h:M\to\mathbb R$, 
\begin{align}\mathbb E_x\left[
\left.\frac{1}{n}\sum_{i=0}^{n-1}h\circ X_i\,\right|\,\tau>n\right]\xrightarrow[]{n\to\infty}\int_M h\d\nu\label{eq:quasi-ergodiclimit5}
\end{align}
for $\nu$-almost every $x\in M$. We refer to the expression on the left-hand side of \eqref{eq:quasi-ergodiclimit5} as a \emph{conditioned Birkhoff average}.
\end{definition}
\begin{remark}
Different indexing conventions for the conditioned Birkhoff average appear in the literature. Besides \eqref{eq:quasi-ergodiclimit5}, one may consider
$$
\mathbb E_x\left[\left.\frac{1}{n}\sum_{i=1}^{n}h\circ X_i\,\right|\,\tau>n\right]\ \text{or }\mathbb E_x\left[\left.\frac{1}{n+1}\sum_{i=0}^{n}h\circ X_i\,\right|\,\tau>n\right],
$$
the latter being used, for instance, in
\cite{DarrochSeneta1965,ColoniusRasmussen2021}. These conventions are equivalent for bounded $h$, since their differences from the average in \eqref{eq:quasi-ergodiclimit5} are bounded, respectively, by $
2\|h\|_\infty/n$ and $2\|h\|_\infty/(n+1).$ For the unbounded observables considered in this paper, the same equivalence holds under the hypotheses of the corresponding results. Indeed, reindexing only introduces endpoint terms that vanish after the relevant normalisation. The choice in \eqref{eq:quasi-ergodiclimit5} above follows~\cite{Ocafrain2023,BassolsLamb2023, CastroLambOliconMendezRasmussen2024,CastroGoverseLambRasmussen2024,BassolsCastroLamb2025}.
\end{remark}

Throughout the paper, we assume that $X_n$ satisfies Hypothesis~\ref{hyp:H} (cf.~\cite[Hypothesis H]{CastroLambOliconMendezRasmussen2024}), ensuring existence and uniqueness of quasi-stationary and quasi-ergodic measures (see Theorem~\ref{thm:Castro}).

\begin{hypothesis}[H]
\label{hyp:H}
We assume that the following conditions hold.
\begin{enumerate}[label=(H\arabic*)]
\item \label{it:hypH-i}
For every $x\in M$, $\mathcal P(x,\cdot)\ll\rho$. Denoting the corresponding Radon--Nikodym derivative by
$$
\kappa(x,y):=\frac{\mathcal P(x,\d y)}{\rho(\d y)},
$$
the map $x\mapsto\kappa(x,\cdot)$ is continuous from $M$ into $L^1(M,\rho)$. Equivalently, for every $\varepsilon>0$, there exists $\delta>0$ such that, for all $x,z\in M$,
$$
d_M(x,z)<\delta\implies\int_M|\kappa(x,y)-\kappa(z,y)|\,\d\rho(y)<\varepsilon.
$$

\item \label{it:hypH-ii}
Define the set of eventually escaping points by
$$
Z:=\left\{x\in M:\mathcal P^k(x,M)=0
\text{ for some }k\geq1\right\}.
$$
We assume that $\rho(M\setminus Z)>0$ and that, for every $x\in M\setminus Z$ and every non-empty relatively open set $A\subset M\setminus Z$, there exists $n=n(x,A)\geq1$ such that $\mathcal P^n(x,A)>0$.

\item \label{it:hypH-iii}
The chain is aperiodic on $M\setminus Z$: if
$$
M\setminus Z=C_0\sqcup C_1\sqcup\cdots\sqcup C_{k-1}
$$
is a partition into non-empty Borel sets satisfying
$$
\left\{x\in M\setminus Z:\mathcal P(x,C_i)>0\right\}
\subset C_{i+1\ (\mathrm{mod}\ k)}
$$
for every $i\in\{0,\ldots,k-1\}$, then $k=1$.
\end{enumerate}
\end{hypothesis}

In Section~\ref{sec:examples-hypothesis-H}, we provide several examples of absorbing Markov chains satisfying Hypothesis~\ref{hyp:H}. As mentioned above, under this hypothesis $X_n$ admits a unique quasi-ergodic probability measure on $M.$

\begin{theorem}[{\cite[Theorems~A--C]{CastroLambOliconMendezRasmussen2024}}]
\label{thm:Castro}
Assume that the absorbing Markov chain $X_n$ satisfies Hypothesis~\ref{hyp:H}. Then $X_n$ admits a unique quasi-stationary probability measure $\mu$ and a unique quasi-ergodic probability measure $\nu$ on $M$. Moreover, there exist a non-negative density $m\in L^1(M,\rho)$ and a continuous function $\eta\in\mathcal C^0(M)$ such that
$$
\mu(\d x)=m(x)\rho(\d x)
\quad \text{and} \quad
\nu(\d x)=\eta(x)\mu(\d x)=\eta(x)m(x)\rho(\d x).
$$
Moreover, $M\setminus Z\subset\operatorname{supp}\mu$ and the function $\eta$ is strictly positive on $M\setminus Z$, vanishes on $Z$ and satisfies $\mu(\eta)=1$.
\end{theorem}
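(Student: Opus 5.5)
The plan is to realise $\mathcal P$ as a compact operator and run a Perron--Frobenius/Krein--Rutman argument. By \ref{it:hypH-i}, the kernel $x\mapsto\kappa(x,\cdot)$ is uniformly continuous from the compact space $M$ into $L^1(M,\rho)$. Hence $\mathcal P$ maps bounded measurable functions into $\mathcal C^0(M)$, and the image of the unit ball of $L^\infty(M,\rho)$ is equicontinuous. By Arzel\`a--Ascoli, $\mathcal P$ is therefore compact on $\mathcal C^0(M)$, and also on $L^\infty(M,\rho)$ through its continuous representatives. Dually, the adjoint acts on measures via $\mu\mapsto\int\kappa(x,\cdot)\mu(\d x)\,\rho$. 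This adjoint sends finite measures into $L^1(M,\rho)$, so any eigenmeasure is automatically of the form $m\,\rho$.

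Next I would identify the leading eigenvalue. Set $\lambda:=r(\mathcal P)$. This is positive: $\rho(M\setminus Z)>0$ together with irreducibility forces $\mathcal P^n\mathbbm 1_M\not\equiv0$ for all $n$ on $M\setminus Z$, and compactness rules out nilpotent-type decay being the whole story. The operator is positive and compact, so Krein--Rutman gives a non-negative $\eta\in\mathcal C^0(M)$ with $\mathcal P\eta=\lambda\eta$. It also gives a non-negative eigenmeasure $\mu=m\rho$ with $\mu\mathcal P=\lambda\mu$. Normalising $\mu(M)=1$ and $\mu(\eta)=1$ yields the quasi-stationary measure.

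The irreducibility hypothesis \ref{it:hypH-ii} then supplies the positivity and support statements. Suppose $\eta(x)=0$ for some $x\in M\setminus Z$. Then $\mathcal P^n\eta(x)=0$ for all $n$, and since $\eta>0$ on some open set $A$ meeting $M\setminus Z$, we would get $\mathcal P^n(x,A)>0$ for some $n$, a contradiction. Hence $\eta>0$ on $M\setminus Z$. On $Z$ we have $\mathcal P^k(x,M)=0$, so $\lambda^k\eta(x)=\mathcal P^k\eta(x)=0$, giving $\eta|_Z=0$. The same reasoning applied to $\mu$ shows $M\setminus Z\subset\operatorname{supp}\mu$.

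For uniqueness and spectral separation, I would show that $\lambda$ is algebraically simple and is the only eigenvalue on $|z|=\lambda$. Simplicity comes from the positivity of $\eta$ and irreducibility, via the standard argument comparing a second eigenfunction with $\eta$. Peripheral eigenvalues $\lambda e^{2\pi i j/k}$ would produce a cyclic partition $C_0,\dots,C_{k-1}$ of $M\setminus Z$, and this is excluded by \ref{it:hypH-iii}. Compactness then gives a spectral gap:
\[
\lambda^{-n}\mathcal P^n f=\eta\,\mu(f)+O(\theta^n)\qquad\text{for some }\theta<1,
\]
uniformly in $\|f\|_\infty\le1$. Uniqueness of the quasi-stationary measure follows, because any quasi-stationary $\tilde\mu$ must satisfy $\tilde\mu(\eta)>0$, and the decomposition then forces $\tilde\mu=\mu$.

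Finally, the quasi-ergodic limit is obtained by expanding
\[
\mathbb E_x\bigl[h(X_i)\mathbbm 1_{\tau>n}\bigr]=\mathcal P^i\bigl(h\,\mathcal P^{n-i}\mathbbm 1\bigr)(x).
\]
Applying the gap twice gives $\lambda^n\eta(x)\,\mu(h\eta)+O(\theta^{i}+\theta^{n-i})$. Dividing by $\mathbb P_x(\tau>n)\sim\lambda^n\eta(x)$ and averaging over $i$ yields $\nu(h)=\mu(\eta h)$, valid for all $x\in M\setminus Z$. The main obstacle I expect is the peripheral-spectrum and simplicity step. Turning a unimodular eigenfunction into a genuine cyclic Borel partition on $M\setminus Z$ is delicate, because the support of $\mu$ may be larger than $M\setminus Z$ and $\mathcal P$ is only known to be irreducible in the open-set sense.
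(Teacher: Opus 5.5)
The paper gives no proof of this statement; it imports it from \cite{CastroLambOliconMendezRasmussen2024}. Your Perron--Frobenius outline is the natural route, but two steps do not go through as written.

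\textbf{First gap: $\lambda=r(\mathcal P)>0$.} Non-vanishing of $\mathcal P^n\mathbbm 1_M$ together with compactness does not exclude quasi-nilpotence. Take $M=[0,1]$ with $\kappa(x,y)=\mathbbm 1_{\{y<x\}}$. Then \ref{it:hypH-i} holds, $Z=\{0\}$ and $\mathcal P^n\mathbbm 1_M(x)=x^n/n!\not\equiv0$, yet $\|\mathcal P^n\|=1/n!$ and $r(\mathcal P)=0$. What fails in this example is \ref{it:hypH-ii}, so irreducibility has to enter essentially. Until this is settled, Krein--Rutman cannot be invoked, and nothing downstream starts. One way to fix it:
\begin{itemize}
\item First show that $Z$ is closed. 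The function $\mathcal P(\cdot,M\setminus Z)$ is continuous by \ref{it:hypH-i} and vanishes on $Z$. Hence a point of $\overline Z\setminus Z$ could never reach $M\setminus Z$, contradicting \ref{it:hypH-ii}.
\item Pick a ball $B$ with $\overline B\subset M\setminus Z$. Using \ref{it:hypH-ii}, continuity of $\mathcal P^n(\cdot,B)$ and compactness of $\overline B$, find $n_1,\dots,n_p\geq1$ and $c>0$ such that $S:=\sum_i\mathcal P^{n_i}$ satisfies $S\mathbbm 1_B\geq c\mathbbm 1_B$.
\item Then $r(S)\geq c>0$. But $r(\mathcal P)=0$ would force $\sigma(S)=\{0\}$ by spectral mapping, a contradiction.
\end{itemize}
A de Pagter-type theorem for compact irreducible positive operators also works, but only on a lattice where $\mathcal P$ is genuinely irreducible. $\mathcal C^0(M)$ is not such a lattice when $Z\neq\varnothing$, since the ideal $\{f:f|_Z=0\}$ is invariant.

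\textbf{Second gap: every quasi-stationary measure charges $M\setminus Z$.} You assume without proof that the Krein--Rutman eigenmeasure, and indeed every quasi-stationary $\tilde\mu$, satisfies $\tilde\mu(\eta)>0$, i.e.\ $\tilde\mu(M\setminus Z)>0$. You need this in three places: to normalise $\mu(\eta)=1$, to start your support argument (it needs a point of $\operatorname{supp}\mu$ in $M\setminus Z$), and for uniqueness. It is not automatic. Pairing with $\eta$ gives nothing because $\eta|_Z=0$. Moreover, points of $Z$ die in finite but not uniformly bounded time, so $\tilde\mu\mathcal P^k(M)=\tilde\lambda^k$ does not by itself rule out a quasi-stationary measure living on $Z$.

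The missing lemma is that $\mathcal P$ restricted to $Z$ is quasi-nilpotent. Set $Z_m:=\{\mathcal P^m\mathbbm 1_M=0\}$. Transitions from $Z_{m+1}$ land in $Z_m$. Dini's theorem on the compact set $Z$ gives $\varepsilon_m:=\sup_{z\in Z}\mathcal P(z,Z\setminus Z_m)\to0$. A quasi-stationary measure with $\tilde\mu(Z)=1$ would then satisfy $\tilde\lambda\,\tilde\mu(Z\setminus Z_m)\leq\varepsilon_m\,\tilde\mu(Z\setminus Z_m)$. For large $m$ this forces $\tilde\mu(Z_m)=1$, and hence $\tilde\lambda^m=\tilde\mu\mathcal P^m(M)=0$, a contradiction.

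\textbf{The peripheral step is not the obstacle.} The step you flag as the main difficulty is actually the easy one. If $\mathcal Pf=\lambda e^{i\theta}f$, the previous steps give $|f|=c\eta$, and $u:=f/\eta$ is continuous on $M\setminus Z$. Equality in $|\mathcal Pf|\leq\mathcal P|f|$ gives $u(y)=e^{i\theta}u(x)$ for $\mathcal P(x,\cdot)$-a.e.\ $y\in M\setminus Z$, for every $x\in M\setminus Z$. Powers of $u$ and compactness make $e^{i\theta}$ a root of unity. The level sets of $u$ are then the cyclic classes excluded by \ref{it:hypH-iii}.
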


\begin{remark}
\label{rmk:no-kill}
In the absence of killing, $\eta\equiv1$ and $\nu=\mu$. This is generally not the case for absorbing Markov processes.
\end{remark}

Throughout the remainder of the paper, $\mu$, $\nu$ and $\eta$ denote the measures and function in Theorem~\ref{thm:Castro}. We fix a non-negative, finite-valued Borel representative $m$ of the Radon--Nikodym derivative $\d\mu/\d\rho$. Since $\mu(\{m=0\})=0$, we have $0<m<\infty$ $\mu$-almost everywhere, and hence also $\nu$-almost everywhere.

\subsection{Main results} 
We begin by introducing some notation essential to the statement of the main theorems. In particular, we employ different modes of convergence and asymptotic behaviour throughout.

\begin{notation}
We use the following convergence notations:
\begin{enumerate}
\item Let $Y_n$ and $Y$ be random variables on a metric space $(S, d_S)$, where $Y_n$ is considered under $\mathbb P_{x}(\cdot\mid \tau>n)$ and $Y$ under $\mathrm{Prob}$. We write $
Y_n\xrightarrow[d]{n\to\infty}Y$ if $$\mathbb E_{x}[h(Y_n)\mid \tau>n]
\xrightarrow[]{n\to\infty}
\mathbb E[h(Y)]$$ for every bounded continuous function $h:S\to\mathbb R$. Equivalently,
$$
\mathbb P_{x}(Y_n\in\cdot\mid \tau>n)
\xRightarrow{n\to\infty}
\mathrm{Prob}(Y\in\cdot).
$$
Thus, $\xRightarrow{n\to\infty}$ denotes weak$^*$ convergence of probability measures.
\item We say that $Y_n$ converges to $Y$ in $\mathbb P_{x}(\cdot\mid\tau>n)$-probability if, for every $\varepsilon>0$,
$$
\mathbb P_{x}\bigl(d_S(Y_n,Y)>\varepsilon\mid \tau>n\bigr)
\xrightarrow[]{n\to\infty}
0.
$$
\item For $u_n,u\in L^2(M,\mu)$, the notation $
u_n\xrightarrow[w]{n\to\infty}u
\ \text{in }L^2(M,\mu)$
means that
$$
\int_Mu_nv\,\d\mu \xrightarrow[]{n\to\infty} \int_Muv\,\d\mu\ \text{for every }v\in L^2(M,\mu).$$
\end{enumerate}
\end{notation}

\begin{notation}
Let $a\in\mathbb{R}$, and let $f,g:(a,\infty)\to\mathbb{R}$, with $g(t)\neq 0$ for all sufficiently large $t$.
\begin{enumerate}
    \item We write $f\sim g$ if $\lim_{t\to\infty} f(t)/g(t)=1.$
    \item We write $f(t)=\mathcal{O}(g(t))$ if there exist constants $C>0$ and $t_0>a$ such that ${
    |f(t)|\leq C|g(t)|}$
    for every $t\geq t_0$.
    \item We write $f(t)= \smallO(g(t))$ if $\lim_{t\to\infty} f(t)/g(t)=0$.
\end{enumerate}

More generally, let $\Lambda$ be a set and let $
F:\Lambda\times(a,\infty)\to\mathbb{R}.$
We write $F(x,t)=\mathcal{O}_x(g(t))$
if, for every $x\in\Lambda$, there exist constants $C_x>0$ and $t_x>a$, possibly depending on $x$, such that
$$
|F(x,t)|\leq C_x|g(t)|
$$
for every $t\geq t_x$. We say that $F(x,t) = \mathcal O(g(t))$ when $C_x$ and $t_x$ are uniform on $x\in \Lambda.$
\end{notation}

We are now ready to state the main results of this paper. 

\subsubsection{Poisson point-process convergence for \texorpdfstring{$f_\beta$}{f\_β}} \label{sec:ppp-intro}

Before introducing the observable $f_\beta$, let us specify the local conditions imposed on its singularity point $x_0$.

\begin{definition}
\label{def:reg}
Fix $d>0$. A point $y\in M$ is called $(\rho,d)$-\emph{regular} if there exists $C_d(y)\in(0,\infty)$ such that
$$
\lim_{r\to 0}\frac{\rho(B_r(y))}{r^d}=C_d(y).
$$
We say that $x_0$ is \emph{a $(\rho,d)$-admissible point}, if $x_0$ is $(\rho,d)$-regular, $m(x_0)>0$ and
\begin{equation}
\lim_{r\to 0}
\frac{1}{\rho(B_r(x_0))}
\int_{B_r(x_0)}|m(y)-m(x_0)|\,\d\rho(y)
=0,\label{eq:lebesgue}
\end{equation}
where we recall that $m \in L^1(M,\rho)$ is the density of the unique quasi-stationary measure provided by Theorem \ref{thm:Castro}.
\end{definition}

\begin{remark}
For a Riemannian volume form $\rho$ restricted to a compact domain, every interior point is $(\rho,d)$-regular, where $d$ is the dimension of the ambient manifold. Since $m\in L^1(M,\rho)$, we have that \eqref{eq:lebesgue} holds for $\rho$-almost every point of $M\setminus Z$. 
In particular, any $(\rho,d)$-regular point $x_0\in M\setminus Z$ at which $m$ is continuous and strictly positive is $(\rho,d)$-admissible. 
\label{rmk:admissible}
\end{remark}

We fix, once and for all, a $(\rho,d)$-admissible point $x_0\in M\setminus Z$. For $\beta>0$, define
$$
f_\beta(x):=
\begin{cases}
d_M(x,x_0)^{-\beta},&x\neq x_0,\\
1,&x=x_0,
\end{cases} \quad \alpha:=\frac d\beta,
\ C(x_0):=C_d(x_0)\eta(x_0)m(x_0).
$$

The value at $x_0$ makes $f_\beta$ finite and positive everywhere. Since $(\rho,d)$-regularity implies $\rho(\{x_0\})=0$ and $\mu,\nu\ll\rho$, this convention does not affect any tail asymptotic, integral or centring term. Any fixed positive value gives the same limiting results, and its time-zero contribution vanishes after normalisation.

We are interested in determining the limiting distribution, under $\mathbb P_x(\cdot\mid\tau>n)$, of the Birkhoff sums
$$
S_nf_\beta:=\sum_{i=0}^{n-1}f_\beta\circ X_i.
$$
To do so, we begin by determining the tail of $f_\beta$ with respect to the quasi-ergodic measure $\nu$. This identifies the index $\alpha$, the normalising sequence $(B_n)$ and the intensity measure appearing in the limiting point-process.
\begin{proposition}
\label{prop:tail-asymptotics}
The observable $f_\beta$ satisfies $
\nu(f_\beta>t)\sim C(x_0)t^{-\alpha}.$
Consequently, for $B_n:=\left(C(x_0)n\right)^{1/\alpha},$
we have 
\begin{align}
n\nu(f_\beta>uB_n)\xrightarrow[]{n\to\infty}u^{-\alpha} = \int_u^\infty\alpha y^{-\alpha-1}\,\d y,\ \text{for every }u>0.\label{eq:tailbn}
\end{align}
\end{proposition}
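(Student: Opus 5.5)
The plan is to reduce the tail of $f_\beta$ under $\nu$ to the $\nu$-measure of a small ball around $x_0$, and then to compare that with $\rho$ using the density $\eta m$.

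For $t>1$ we have $f_\beta(x_0)=1\le t$, and $f_\beta(x)>t$ exactly when $d_M(x,x_0)<t^{-1/\beta}$, $x\neq x_0$. With $r:=t^{-1/\beta}\to0$ this gives
$$
\nu(f_\beta>t)=\nu(B_r(x_0))-\nu(\{x_0\})=\nu(B_r(x_0)),
$$
because $\rho(\{x_0\})=\lim_{r\to0}\rho(B_r(x_0))=0$ by $(\rho,d)$-regularity and $\nu\ll\rho$. By Theorem~\ref{thm:Castro}, $\nu(B_r(x_0))=\int_{B_r(x_0)}\eta m\,\d\rho$. We write
$$
\eta m=\eta(x_0)m(x_0)+\eta\,(m-m(x_0))+(\eta-\eta(x_0))\,m(x_0).
$$
Integrating over $B_r(x_0)$ and dividing by $\rho(B_r(x_0))$, the first term gives exactly $\eta(x_0)m(x_0)$.

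The second term is bounded by $\|\eta\|_\infty$ times the average in \eqref{eq:lebesgue}, and so it tends to $0$; here $\eta$ is bounded because it is continuous on a compact space. The third term is bounded by $m(x_0)\sup_{B_r(x_0)}|\eta-\eta(x_0)|$, which tends to $0$ by continuity of $\eta$ at $x_0$. We also need $\rho(B_r(x_0))>0$, which holds since $\rho$ is fully supported. Hence
$$
\nu(B_r(x_0))\sim\eta(x_0)m(x_0)\rho(B_r(x_0))\sim\eta(x_0)m(x_0)C_d(x_0)r^d,
$$
using regularity for the second equivalence. Since $r^d=t^{-d/\beta}=t^{-\alpha}$, this yields $\nu(f_\beta>t)\sim C(x_0)t^{-\alpha}$. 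We also have $C(x_0)>0$ because $\eta(x_0)>0$ (as $x_0\notin Z$), $m(x_0)>0$ and $C_d(x_0)>0$.

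For \eqref{eq:tailbn}, fix $u>0$ and put $t=uB_n\to\infty$. Then
$$
n\nu(f_\beta>uB_n)\sim nC(x_0)u^{-\alpha}B_n^{-\alpha}=u^{-\alpha},
$$
since $B_n^\alpha=C(x_0)n$, and $\int_u^\infty\alpha y^{-\alpha-1}\d y=u^{-\alpha}$.

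There is no real obstacle here. The only step needing care is the Lebesgue-point estimate, where \eqref{eq:lebesgue} is used in its averaged form (not continuity of $m$), together with the boundedness of $\eta$.
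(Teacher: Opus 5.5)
Your proposal is correct and follows essentially the same route as the paper: both reduce $\{f_\beta>t\}$ to the ball $B_{t^{-1/\beta}}(x_0)$ and then use the Lebesgue-point condition \eqref{eq:lebesgue} for $m$, continuity of $\eta$ at $x_0$, and $(\rho,d)$-regularity. The only difference is bookkeeping: the paper first proves $\mu(B_r(x_0))\sim m(x_0)\rho(B_r(x_0))$ and then compares $\nu(B_r(x_0))$ with $\eta(x_0)\mu(B_r(x_0))$, whereas you split the density $\eta m$ into three terms in a single step.
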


\begin{proof}
From~\eqref{eq:lebesgue} we have that
\begin{align}
\left|\frac{\mu(B_r(x_0))}{\rho(B_r(x_0))}-m(x_0)\right|
&\leq\frac{1}{\rho(B_r(x_0))}\int_{B_r(x_0)}|m(y)-m(x_0)|\,\d\rho(y)\xrightarrow[]{r\to 0}0.
\label{eq:mu-ball-density}
\end{align}
As $m(x_0)>0$, this gives $\mu(B_r(x_0))\sim m(x_0)\rho(B_r(x_0))$. Moreover, continuity of $\eta$ at $x_0$ implies
$$
\left|\nu(B_r(x_0))-\eta(x_0)\mu(B_r(x_0))\right|\leq\sup_{y\in B_r(x_0)}|\eta(y)-\eta(x_0)|\,\mu(B_r(x_0))=\smallO\bigl(\mu(B_r(x_0))\bigr).
$$
Combining this estimate with \eqref{eq:mu-ball-density} and $(\rho,d)$-regularity of $x_0$, we obtain
\begin{align}
\nu(B_r(x_0))&\sim\eta(x_0)\mu(B_r(x_0))\sim\eta(x_0)m(x_0)\rho(B_r(x_0))\sim C(x_0)r^d.
\label{eq:nu-ball-asymptotics}
\end{align}
Regularity also implies $\rho(\{x_0\})=0$, and hence $\mu(\{x_0\})=\nu(\{x_0\})=0$. Since $f_\beta(x_0)=1$, for every $t>1$ we have
\begin{align}
\nu(f_\beta>t)&=\nu\left(B_{t^{-1/\beta}}(x_0)\setminus\{x_0\}\right)=\nu\left(B_{t^{-1/\beta}}(x_0)\right)\sim C(x_0)t^{-\alpha},
\label{eq:tail}
\end{align}
where the asymptotic follows from \eqref{eq:nu-ball-asymptotics}. With $B_n=(C(x_0)n)^{1/\alpha}$, the proof follows.
\end{proof}

Proposition~\ref{prop:tail-asymptotics} gives $\nu(f_\beta>uB_n)\sim u^{-\alpha}/n$ for every fixed $u>0$. Thus, among the first $n$ observations $f_\beta \circ X_{0},\ldots, f_\beta \circ X_{n-1}$, the number of values larger than $uB_n$ is expected to remain of order one. 

To describe the limiting distribution of these normalised large observations, we leverage properties of the space $\mathcal M_p(E)$ of locally finite point measures on $E=(0,\infty]$, properties of random point measures and of Poisson random measures. We use the standard framework of random measures and vague convergence~\cite[Ch.~1--4]{Kallenberg2017}, which we recall below.

\begin{definition}\label{def:Mp}
Let $E:=(0,\infty]$, where $\infty$ is included with its usual neighbourhoods $(R,\infty], R > 0$. A \emph{point measure} on $E$ is a measure of the form $
\xi
=
\sum_{i\in J}\delta_{x_i},$
where $J$ is at most countable, $x_i\in E$ and $\delta_{x_i}$ denotes the Dirac measure at $x_i$. We say that a measure $\xi$ is locally finite in $E$, if $\xi([\varepsilon,\infty])<\infty$
for every $\varepsilon>0$. We denote the space of all locally finite point measures on $E$ by $\mathcal M_p(E)$.

The space $\mathcal M_p(E)$ is endowed with the vague topology  i.e.
$$
\xi_n\xrightarrow[]{n\to\infty}\xi \ \text{in }\mathcal M_p(E)\ \text{if and only if }\int_E h\,\d\xi_n \xrightarrow[]{n\to\infty}\int_E h\,\d\xi\ \text{for every }h\in C_c(E),
$$
where $C_c(E)$ is the space of continuous real-valued functions on $E$ with compact support. Notice that such a function vanishes in a neighbourhood of zero, but it need not vanish at infinity.
\end{definition}
\begin{definition}
A \emph{random point measure} on $E$ is a measurable map
$$
N:(\Omega,\mathcal F,\mathrm{Prob})\to\mathcal M_p(E),
$$
where $\mathcal M_p(E)$ is equipped with the Borel $\sigma$-algebra generated by the vague topology.

Let $\upsilon$ be a locally finite measure on $E$. We say that the random point measure $N$ is a \emph{Poisson random measure} with intensity $\upsilon$, and write $
N\sim\operatorname{PRM}(\upsilon),$ if
\begin{itemize}
\item  for any Borel set $A$ such that $\upsilon(A) <\infty$ we have that $N(A)\sim\operatorname{Poi}\left(\upsilon(A)\right)$, i.e.
$$\mathrm{Prob}\left(N(A)=m\right) = e^{-\upsilon(A)}\frac{\upsilon(A)^m}{m!}$$
for every $m\in\mathbb N_0$; and
\item for every finite collection of pairwise disjoint Borel sets $A_1,\ldots,A_k\subset E$ satisfying ${\upsilon(A_j)<\infty}$ for every $j\in\{1, \dots, k\}$, it holds that the random variables $N(A_1),\ldots,N(A_k)$ are independent.
\end{itemize}
\end{definition}

For $0<\alpha<2$ and $B_n = \left(C(x_0)n\right)^{1/\alpha}$ we write, for the remainder of the paper,
$$
N_n := \sum_{j=0}^{n-1} \delta_{\frac{f_\beta\circ X_j}{B_n}}
\quad
\text{and}
\quad \upsilon_\alpha(\d y) := \alpha y^{-\alpha-1}\,\d y.
$$

The following result describes the asymptotic distribution of the normalised large observations of $f_\beta$. It shows that, conditioning upon survival, their point-process converges to a Poisson random measure with intensity $\upsilon_\alpha$. 
\begin{maintheorem}
\label{thm:ppp}
Let $0<\alpha<2$. For every $x\in M\setminus Z$, under the probability measures ${\mathbb P_x(\cdot\mid\tau>n)}$,
$$N_n \xrightarrow[d]{n\to\infty} N\ \text{in }\mathcal M_p((0,\infty]),$$
where $N\sim\operatorname{PRM}(\upsilon_\alpha)$. Equivalently,
$$\mathbb P_x\left(N_n\in\cdot\mid\tau>n\right)\xRightarrow[]{n\to\infty}\mathrm{Prob}\left(N\in\cdot\right).$$
 Moreover, if $I_1,\ldots,I_k$ are pairwise disjoint intervals of the form $I_j=(a_j,b_j]\subset(0,\infty]$, with $a_j>0$, $j = 1, \dots, k$, then
$$\left(N_n(I_1),\ldots,N_n(I_k)\right) \xrightarrow[d]{n\to\infty}\left(Y_1,\ldots,Y_k\right),$$
where $Y_1,\ldots,Y_k$ are independent and $Y_j \sim \operatorname{Poi}\left(\upsilon_\alpha(I_j)\right),$ for every $j = 1, \dots, k$.
\end{maintheorem}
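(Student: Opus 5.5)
The plan is to reduce the point-process convergence to convergence of the joint laws of the counts $(N_n(I_1),\ldots,N_n(I_k))$ over finite families of disjoint intervals $I_j=(a_j,b_j]$ with $a_j>0$. Kallenberg's criterion for simple point-process limits (or convergence of finite-dimensional distributions on a convergence-determining ring of finite unions of such intervals, together with tightness, which is automatic because $\mathbb E_x[N_n((\varepsilon,\infty])\mid\tau>n]$ stays bounded) then identifies the limit as $\operatorname{PRM}(\upsilon_\alpha)$. Since $\upsilon_\alpha$ has no atoms and the intervals are bounded away from $0$, the second statement of Theorem~\ref{thm:ppp} implies the first. For the second statement I would use the method of factorial moments. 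Writing $A_n^{(j)}:=\{f_\beta\in B_nI_j\}$, which is an annulus around $x_0$, it suffices to show that for every multi-index $(r_1,\ldots,r_k)$
\[
\mathbb E_x\Bigl[\prod_{j=1}^k \bigl(N_n(I_j)\bigr)_{r_j}\Bigm|\tau>n\Bigr]\xrightarrow[]{n\to\infty}\prod_{j=1}^k\upsilon_\alpha(I_j)^{r_j},
\]
where $(N)_r$ denotes the falling factorial. The Poisson law is determined by its moments, so this is enough. The left-hand side is a sum over ordered tuples of distinct times $i_1,\ldots,i_R\in\{0,\ldots,n-1\}$, with $R=\sum r_j$, of $\mathbb P_x(X_{i_\ell}\in A_n^{(j_\ell)}\ \forall\ell\mid\tau>n)$.

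Next I would split the tuples according to a gap scale $\ell_n\to\infty$ with $\ell_n=\smallO(n)$, say $\ell_n=\lfloor\log^2 n\rfloor$. Call a tuple \emph{separated} if all sorted consecutive gaps exceed $\ell_n$ and also $i_1\ge \ell_n$ and $n-i_R\ge\ell_n$. For separated tuples I would use the spectral decomposition of the killed operator, in the form of the conditional mixing estimate (Lemma~\ref{lem:mix}). Write $\mathcal P^n=\lambda^n(\mu(\cdot)h+Q_n)$ with $\|Q_n\|\le C\theta^n$ on $L^\infty$ or $L^2$ and $h$ the positive eigenfunction, so that $\eta=h$ suitably normalised. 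Then each factor contributes
\[
\mathbb P_x(X_{i_1}\in A_1,\ldots\mid\tau>n)=\prod_\ell \nu(A_\ell)\,(1+\mathcal O(\theta^{\ell_n}))+\mathcal O_x(\theta^{\ell_n})\prod\ldots
\]
The errors must be relative to $\prod\nu(A_\ell)\asymp n^{-R}$. This requires applying the operator to $\mathbbm 1_{A}\cdot$(bounded function) and bounding $\mu(\mathbbm 1_A Q_m g)$ by $\mu(A)\|Q_mg\|_\infty$, which is available in $L^\infty$ since $Q_m$ maps $L^\infty\to L^\infty$ contractively with a geometric rate. The first and last steps use $x\in M\setminus Z$, so that $\mathcal P^{i_1}(x,\cdot)/\lambda^{i_1}\to h(x)\mu$ in total variation, and the ratio $\mathbb P_x(\tau>n)\sim\lambda^n h(x)$. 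Summing over separated tuples, whose number is $\sim n^R/\prod r_j!$ times multiplicities, and using Proposition~\ref{prop:tail-asymptotics}, $n\nu(A_n^{(j)})\to\upsilon_\alpha(I_j)$, gives the main term.

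The main obstacle is the non-separated tuples, namely clusters in which two indices are within $\ell_n$. After decoupling well-separated clusters via Lemma~\ref{lem:cluster-mixing}, the contribution of a cluster reduces to bounding short-return probabilities
\[
\sum_{1\le s\le \ell_n}\mathbb P_\nu^{\mathbb Q}\bigl(X_0\in B_{r_n}(x_0),\,X_s\in B_{r_n}(x_0)\bigr),\qquad r_n\asymp n^{-1/d},
\]
under the stationary $Q$-process, which must be $\smallO(1/n)$. Here I would use that the $Q$-process kernel is $\kappa(x,y)h(y)/(\lambda h(x))$. The step-$s$ transition density is then in $L^1$ uniformly and continuous in $x$ by (H1). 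Lemma~\ref{lem:uniform-small-targets} should then give $\sup_{z}\mathcal P^s(z,B_{r_n}(x_0))\to0$ uniformly in $s\ge1$ as the ball shrinks, using uniform integrability of $\{\kappa(z,\cdot)\}_z$ obtained from $L^1$-continuity on compact $M$. Hence each term is at most $\nu(B_{r_n})\,\varepsilon_n$ with $\varepsilon_n\to0$. The difficulty is the factor $\ell_n$: it forces a quantitative rate, namely $\varepsilon_n\ell_n\to0$. I would first try to avoid needing a rate by choosing $\ell_n$ growing slowly enough, depending on $\varepsilon_n$, while keeping $\theta^{\ell_n}=\smallO(1)$. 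This is possible since only $\ell_n\to\infty$ is needed for the separated part. Combined with the trivial bound $\binom{n}{R-1}$ on the number of positions of the remaining indices, non-separated tuples then contribute $\smallO(1)$. The boundary tuples with $i_1<\ell_n$ or $i_R>n-\ell_n$ are handled similarly, since they are only an $\mathcal O(\ell_n/n)$ fraction.
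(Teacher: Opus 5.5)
Your overall architecture (factorial moments, a gap scale, Lemma~\ref{lem:mix} for separated tuples, Lemma~\ref{lem:cluster-mixing} plus short returns under $\mathbb Q_\nu$ for clusters, then Kallenberg's criterion) matches the paper's. You also correctly locate the crux, but your way around it does not work.

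\textbf{Why slowly growing $\ell_n$ fails.} Letting $\ell_n$ grow slowly enough that $\ell_n\varepsilon_n\to0$ is incompatible with the decoupling steps you use. Lemmas~\ref{lem:mix} and~\ref{lem:cluster-mixing} carry \emph{absolute} errors $\mathcal O_{x,R}(\vartheta^{\ell_n})$ per tuple. These must be summed over $\asymp n^R$ separated tuples, and over tuples with $c\geq1$ clusters, of which there are of order $n^c\ell_n^{R-c}$. So you need $n^R\vartheta^{\ell_n}\to0$, i.e.\ $\ell_n\gtrsim\log n$; this is why the paper takes $q_n=\lceil(\log n)^2\rceil$.

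Your claim that only $\ell_n\to\infty$ is needed, because the errors are relative, is not justified. The bound $|\mu(\mathbbm 1_AQ_mg)|\leq\mu(A)\|Q_mg\|_\infty$ only covers terms in which the projection acts to the left of an indicator. For a remainder acting on an indicator, in sup norm you only get $\|R^m(\mathbbm 1_Ag)\|_\infty\leq C\vartheta^{m-1}\bigl(\sup_z\mathcal P(z,A)+\mu(A)\bigr)\|g\|_\infty$. Nothing in Hypothesis~\ref{hyp:H} makes $\sup_z\mathcal P(z,U_n)$ comparable to $\nu(U_n)\asymp1/n$. In fact $\sup_z\mathcal P(z,B_{r_n}(x_0))$ can decay arbitrarily slowly: for additive noise on $\mathbb T^d$ whose density has an integrable singularity like $|y|^{-d}\log^{-2}(1/|y|)$ at $0$, it is $\gtrsim1/\log n$. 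So in general no $\ell_n$ satisfies both $\ell_n\varepsilon_n\to0$ and $n^R\vartheta^{\ell_n}\to0$.

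The same objection applies to your boundary tuples. For small $i$, $\mathbb P_{x,n}(X_i\in U_n)\approx\mathcal Q^i(x,U_n)$ is $\smallO(1)$ but not $\mathcal O(1/n)$, so these tuples are not controlled by being an $\mathcal O(\ell_n/n)$ fraction of all tuples.

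\textbf{The missing idea.} Bound short returns lag by lag using the spectral gap, rather than uniformly in the lag. Write $\mathcal P^m(\eta\mathbbm 1_{U_n})=\mathcal P^{m-1}\varphi_n$ with $\varphi_n:=\mathcal P(\eta\mathbbm 1_{U_n})$. Then $\|\varphi_n\|_\infty\leq\|\eta\|_\infty\sup_z\mathcal P(z,U_n)$ and $\mu(\varphi_n)=\lambda\nu(U_n)$, so \eqref{2.1} gives \eqref{ref:q-short-return}: $\sup_{y\in U_n}\mathcal Q^m(y,U_n)\leq\nu(U_n)+C\vartheta^{m-1}\sup_z\mathcal P(z,U_n)$ for every $m\geq1$.

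Summing over $1\leq m\leq q_n$ gives $\delta_n:=q_n\nu(U_n)+\frac{C}{1-\vartheta}\sup_z\mathcal P(z,U_n)\to0$, with no rate required. The long factor $q_n$ now multiplies only $\nu(U_n)=\mathcal O(1/n)$, while the slowly decaying $\sup_z\mathcal P(z,U_n)$ is multiplied by a convergent geometric series. This frees you to take $q_n=\lceil(\log n)^2\rceil$, and then:
\begin{itemize}
\item all absolute decoupling errors sum to $\mathcal O(n^R\vartheta^{q_n})\to0$;
\item each cluster of size $s$ contributes at most $\nu(U_n)\delta_n^{s-1}$ after summing over its internal gaps;
\item the clustered part is bounded by $C\sum_{c=1}^{R-1}(n\nu(U_n))^c\delta_n^{R-c}\to0$;
\item the same estimate at the fixed point $x$, using $\eta(x)>0$, disposes of the early boundary times.
\end{itemize}
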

Theorem~\ref{thm:ppp} is proved in Section~\ref{sec:ppp}.

\begin{remark}
    We emphasise that the random variables $Y_1,\ldots,Y_k$ in Theorem~\ref{thm:ppp} are independent, while the random variables $N_n(I_1),\ldots,N_n(I_k)$ are not.
\end{remark}

\subsubsection{Conditional stable and Gaussian laws for \texorpdfstring{$f_\beta$}{f\_β}} \label{sec:cond-stab-laws-intro}
 
The point-process limit describes the rare large observations of $f_\beta$, but the corresponding Birkhoff sums also contain contributions from smaller observations. The next theorem combines the point-process convergence with suitable truncation estimates and gives the limiting distribution of the normalised sums in every regime of $\alpha$. 

\begin{maintheorem}
\label{thm:main-stable-laws}
Let $x\in M\setminus Z$ and let $N \sim \mathrm{PRM}(\upsilon_\alpha) $. Under the probability measures $\mathbb P_x(\cdot\mid\tau>n)$, the following convergences hold.
\begin{enumerate}[label = (\roman*)]
\item \label{it:stab-law-i} If $0<\alpha<1$, set $B_n:=\left(C(x_0)n\right)^{1/\alpha}$. Then
$$
\frac{1}{B_n}\sum_{j=0}^{n-1}f_\beta\circ X_j\xrightarrow[d]{n\to\infty}Z_\alpha,$$ 
where $Z_\alpha:=\int_{(0,\infty]}y\,N(\d y)$ is the positive $\alpha$-stable random variable satisfying 
$$
\mathbb E\left[e^{-tZ_\alpha}\right]=\exp\left(-\Gamma(1-\alpha)t^\alpha\right),\ t\geq0.
$$
\item \label{it:stab-law-ii} If $\alpha=1$, set $B_n:=\left(C(x_0)n\right)^{1/\alpha}$ and  $A_n:=n\nu\left(f_\beta\mathbbm 1_{\{f_\beta\leq B_n\}}\right).$
Then
$$
\frac{1}{B_n}\left(\sum_{j=0}^{n-1}f_\beta\circ X_j-A_n\right)\xrightarrow[d]{n\to\infty}Z_1,$$ 
where $Z_1:=\lim_{\varepsilon\to0}\left(\int_{(\varepsilon,\infty]}y\,N(\d y)-\int_{(\varepsilon,1]}y\,\upsilon_1(\d y)\right).$
Moreover, for all $t \in \mathbb R$,
$$
\mathbb E\left[e^{itZ_1}\right]=\exp\left(\int_0^\infty\left(e^{ity}-1-ity\mathbbm 1_{(0,1]}(y)\right)y^{-2}\,\d y\right).
$$
\item \label{it:stab-law-iii} If $1<\alpha<2$, set $B_n:=\left(C(x_0)n\right)^{1/\alpha}$, then $f_\beta\in L^1(M,\nu)$ and
$$
\frac{1}{B_n}\left(\sum_{j=0}^{n-1}f_\beta\circ X_j-n\nu(f_\beta)\right)\xrightarrow[d]{n\to\infty}Z_\alpha,
$$
where $
Z_\alpha:=\lim_{\varepsilon\to0}\left(\int_{(\varepsilon,\infty]}y\,N(\d y)-\int_{(\varepsilon,\infty]}y\,\upsilon_\alpha(\d y)\right)$. Moreover, for all $t \in \mathbb R$,
$$
\mathbb E\left[e^{itZ_\alpha}\right]=\exp\left(\int_0^\infty\left(e^{ity}-1-ity\right)\alpha y^{-\alpha-1}\,\d y\right).
$$
\item \label{it:stab-law-iv} If $\alpha=2$, set $
B_n:=\sqrt{C(x_0)n\log n}.$
Then $f_\beta\in L^1(M,\nu)$ and
$$
\frac{1}{B_n}\left(\sum_{j=0}^{n-1}f_\beta\circ X_j-n\nu(f_\beta)\right)\xrightarrow[d]{n\to\infty}\mathcal N(0,1).
$$
\end{enumerate}
\end{maintheorem}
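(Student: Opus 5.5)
The plan follows the classical Durrett--Resnick / Davis--Hsing route. We deduce items (i)--(iii) from Theorem~\ref{thm:ppp} via a continuous-mapping argument combined with truncation. Item (iv) is handled separately by a truncated CLT. In each case we fix $\varepsilon>0$ and split
$$
\frac{S_nf_\beta}{B_n}=\int_{(\varepsilon,\infty]}y\,N_n(\d y)+\frac1{B_n}\sum_{j=0}^{n-1}f_\beta\mathbbm 1_{\{f_\beta\le\varepsilon B_n\}}\circ X_j .
$$
The map $\xi\mapsto\int_{(\varepsilon,\infty]}y\,\xi(\d y)$ is continuous at almost every realisation of $\operatorname{PRM}(\upsilon_\alpha)$: there are a.s.\ no atoms at $\varepsilon$ or at $\infty$, and the measure is finite on $[\varepsilon,\infty]$. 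Hence Theorem~\ref{thm:ppp} gives $\int_{(\varepsilon,\infty]}y\,N_n(\d y)\to\int_{(\varepsilon,\infty]}y\,N(\d y)$ in law under $\mathbb P_x(\cdot\mid\tau>n)$. One technical point is that $y$ is unbounded near $\infty$. I would handle it by also truncating at a large level $R$ and using $N_n((R,\infty])\to N((R,\infty])$ in law, with $\operatorname{Prob}(N((R,\infty])>0)=1-e^{-R^{-\alpha}}\to0$. As $\varepsilon\to0$, the limit terms (suitably centred) converge a.s.\ to $Z_\alpha$. The stated Laplace and characteristic functions then follow from the Campbell formula for Poisson random measures. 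By the standard ``converging together'' lemma (Billingsley, Thm.~3.2), it remains to show that the small-jump remainder, after centring, is negligible uniformly in $n$ as $\varepsilon\to0$:
$$
\lim_{\varepsilon\to0}\limsup_{n\to\infty}\mathbb P_x\Bigl(\Bigl|\tfrac1{B_n}\textstyle\sum_{j<n}\bigl(f_\beta\mathbbm 1_{\{f_\beta\le\varepsilon B_n\}}\circ X_j-c_{n,\varepsilon}\bigr)\Bigr|>\delta\,\Big|\,\tau>n\Bigr)=0 .
$$

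For $0<\alpha<1$ no centring is needed. I would bound the remainder by Markov's inequality using first conditional moments. The needed input is that $\mathbb E_x[\,\sum_{j<n}g\circ X_j\mid\tau>n]\le C_x\,n\,\nu(|g|)$ uniformly over nonnegative measurable $g$; this should come from Theorems~\ref{thm:spectralgap} and~\ref{thm:Qprocess}, with Lemma~\ref{lem:uniform-small-targets} handling the sets near $x_0$. Karamata-type integration of the tail from Proposition~\ref{prop:tail-asymptotics} gives $n\nu(f_\beta\mathbbm 1_{\{f_\beta\le\varepsilon B_n\}})\lesssim B_n\varepsilon^{1-\alpha}$, which tends to $0$ relative to $B_n$ as $\varepsilon\to0$. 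For $1\le\alpha<2$, centre with $c_{n,\varepsilon}=\nu(f_\beta\mathbbm 1_{\{f_\beta\le\varepsilon B_n\}})$ and use Chebyshev. This requires a conditional variance bound of the form
$$
\operatorname{Var}_x\Bigl(\sum_{j<n}g\circ X_j\,\Big|\,\tau>n\Bigr)\le C_x\bigl(n\,\nu(g^2)+\text{lower order}\bigr),
$$
uniformly over bounded $g$ with $\nu(g)=0$; I expect this to be the content of Lemma~\ref{lem:conditional-second-moment}, built on Lemma~\ref{lem:conditional-sum-second-moment}. Since $n\nu(f_\beta^2\mathbbm 1_{\{f_\beta\le\varepsilon B_n\}})\lesssim B_n^2\varepsilon^{2-\alpha}$, the variance bound kills the remainder. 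The deterministic adjustments are $n\nu(f_\beta\mathbbm 1_{\{\varepsilon B_n<f_\beta\le B_n\}})/B_n\to\int_\varepsilon^1y\,\upsilon_1(\d y)$ for $\alpha=1$, and $n\nu(f_\beta\mathbbm 1_{\{f_\beta>\varepsilon B_n\}})/B_n\to\int_\varepsilon^\infty y\,\upsilon_\alpha(\d y)$ for $\alpha>1$. These follow from \eqref{eq:tailbn} together with dominated tail integration, and they match the centrings in (ii) and (iii); integrability of $f_\beta$ for $\alpha>1$ is immediate from the tail asymptotics.

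For $\alpha=2$, truncate at level $\sqrt{n}/\ell_n$ with $\ell_n\to\infty$ slowly. Because $\nu(f_\beta>t)\sim Ct^{-2}$, the part above $B_n$ contributes only with vanishing probability: the expected number of such exceedances is $n\nu(f_\beta>\delta B_n)\to0$. The middle range is controlled in $L^1$. The truncated part has $\nu$-variance $\sim C(x_0)\log n\sim B_n^2/n$, so its second moment is slowly varying. I would then apply the conditional triangular-array CLT, Lemma~\ref{lem:conditional-triangular-clt}, to the truncated observable $g_n=(f_\beta\mathbbm 1_{\{f_\beta\le \sqrt n/\ell_n\}}-\nu(\cdot))/\sqrt{\log n}$. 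The main obstacle is showing that the asymptotic variance equals the $\nu$-variance. The covariance terms $\sum_k\operatorname{Cov}(g_n,g_n\circ X_k)$ are governed by the spectral gap applied to $\mathcal P$-images of $g_n$, and these stay bounded in $L^2$ norm. This is plausible because $\mathcal P g$ is controlled by $\|g\|_{L^1}$ via the $L^1$-continuous kernel in \ref{it:hypH-i} and compactness (Lemma~\ref{lem:l2-compactness}). Consequently the covariances are $O(1)$ while the diagonal term is of order $\log n$, so the normalisation $\sqrt{C(x_0)n\log n}$ yields $\mathcal N(0,1)$.
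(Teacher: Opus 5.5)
Your treatment of (i)--(iii) is the paper's route: Theorem~\ref{thm:ppp}, truncation at $\varepsilon$ and $R$, Markov's inequality for $\alpha<1$, and Lemma~\ref{lem:conditional-second-moment} with Chebyshev for $1\le\alpha<2$. One input there is misstated. The bound $\mathbb E_{x,n}\bigl[\sum_{j<n}g\circ X_j\bigr]\le C_xn\nu(|g|)$ does not hold uniformly over nonnegative $g$. The $j=0$ term is $g(x)$, and for $j$ near $0$ or $n$ the summands behave like $\mathcal Q^jg(x)$ or $\mu\bigl(g\,\widehat{\mathcal P}^{\,n-j}\mathbbm 1_M\bigr)$, neither of which is dominated by $\nu(g)$. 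What Lemma~\ref{lem:mix} actually gives is $n\nu(g)+C_x\|g\|_\infty+g(x)$. For the small jumps $\|g\|_\infty\le\varepsilon B_n$, so this only adds $C_x\varepsilon$ after normalisation, which is harmless.

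The genuine gap is in (iv), at the step you flag as the main obstacle. You argue that the $\mathcal P$-images of $g_n$ stay bounded in $L^2(\mu)$ because $\mathcal Pg$ is controlled by $\|g\|_{L^1}$, so the covariances are $\mathcal O(1)$. Hypothesis~\ref{hyp:H} does not give this. Condition \ref{it:hypH-i} only makes $x\mapsto\kappa(x,\cdot)$ continuous into $L^1(\rho)$. Hence $\mathcal P$ is bounded on $L^1(\mu)$ and maps $L^\infty$ into $\mathcal C^0$, but it need not map $L^1$ into $L^2$, and your claim is false.

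Here is a counterexample. On $\mathbb T=\mathbb R/\mathbb Z$ with Lebesgue measure, take $\kappa(x,y)=\frac12+\frac12\psi(y-x)$, where $\psi$ is a probability density with $\psi(w)\asymp|w|^{-1}(\log(e/|w|))^{-3/2}$ near $0$.
\begin{itemize}
\item Hypothesis~\ref{hyp:H} holds with $Z=\varnothing$ and no killing, so $\mu=\nu=\rho$ and $\eta\equiv1$.
\item The point $x_0=0$ is admissible with $d=1$, and $\beta=\frac12$ gives $\alpha=2$.
\item Since $\int_{|w|<r}\psi\asymp(\log(1/r))^{-1/2}$, one gets $\mathcal Pf_\beta(x)\gtrsim|x|^{-1/2}(\log(1/|x|))^{-1/2}$ near $0$, so $\mathcal Pf_\beta\notin L^2(\mu)$.
\item Take the centred truncation $g_n=f_\beta\mathbbm 1_{\{f_\beta\le u_n\}}-\nu(f_\beta\mathbbm 1_{\{f_\beta\le u_n\}})$ at any level with $\log u_n\asymp\log n$ (yours or the paper's). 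Keeping only the region $x\in(2u_n^{-2},1/4)$, $|y-x|<x/2$ gives $\mu(g_n\mathcal Pg_n)\ge c\sqrt{\log n}-C$.
\end{itemize}
So the lag-one covariance is unbounded.

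What is true in general, and all you need, is $\sum_{m\ge1}\mu\bigl(g_nR^m(\eta g_n)\bigr)=\smallO(\log n)$. This requires a different argument, which is the paper's:
\begin{itemize}
\item $\eta g_n/\sqrt{\log n}$ is bounded in $L^2(\mu)$ and converges pointwise to $0$, hence weakly to $0$.
\item Each $R^m$ is compact on $L^2(\mu)$ by Lemma~\ref{lem:l2-compactness}, so $R^m(\eta g_n)/\sqrt{\log n}\to0$ in norm. Thus every fixed lag is $\smallO(\log n)$.
\item The geometric decay of $\|R^m\|_{L^2(\mu)\to L^2(\mu)}$ controls the tail of the series uniformly in $n$.
\end{itemize}
With this replacement your version of (iv) goes through.

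Your lower truncation level $\sqrt n/\ell_n$ is also workable, in place of the paper's $B_n/(\log n)^{1/4}$, above which there are no exceedances. You do need $\ell_n=\smallO(\sqrt{\log n})$ so that the middle band is negligible, since $\frac{n}{B_n}\nu\bigl(f_\beta\mathbbm 1_{\{f_\beta>\sqrt n/\ell_n\}}\bigr)\asymp\ell_n/\sqrt{\log n}$.
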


\begin{remark}
The conclusions of Theorem~\ref{thm:main-stable-laws} extend to
$$f_{\beta,g}:=f_\beta g,\quad g\in\mathcal C^0(M), g(x_0)\neq0.
$$
As above, $f_{\beta,g}(x_0)$ may be assigned any fixed finite value. Set $
C_g(x_0):=|g(x_0)|^\alpha C(x_0)$ and $B_{n,g}:=|g(x_0)|B_n.$ For $0<\alpha<1$, no centring is required. For $\alpha=1$, the centring is given by $
A_{n,g}:=n\nu\left(f_{\beta,g}\mathbbm 1_{\{|f_{\beta,g}|\leq B_{n,g}\}}\right).$ For $1<\alpha\leq2$, use $A_{n,g}:=n\nu(f_{\beta,g})$. If $g(x_0)>0$, the limiting laws are unchanged, whereas if $g(x_0)<0$, the stable limits are reflected about the origin. The Gaussian limit for $\alpha=2$ is unchanged. These extensions follow from the same arguments; we take $g\equiv1$ to simplify the notation.
\end{remark}

Parts \ref{it:stab-law-i}--\ref{it:stab-law-iii} of Theorem~\ref{thm:main-stable-laws}, which relate to $\alpha \in (0,2)$, are proved in Section~\ref{sec:stable-laws}. Part~\ref{it:stab-law-iv}, relating to the Gaussian boundary case $\alpha =2$, is proved in Section~\ref{sec:gaussian-laws}.

\subsubsection{Conditional central limit and large deviations}

The preceding results in Sections~\ref{sec:cond-stab-laws-intro} and~\ref{sec:ppp-intro} concern the particular heavy-tailed observable $f_\beta$. We now turn to general observables in the finite-variance regime. The next theorem establishes a conditional central limit theorem for observables in $L^2(M,\mu)$ and identifies their asymptotic variance.  

The proof of conditional central limit theorem for $L^2(M,\mu)$ observables is divided into two stages. We first prove the conditional central limit theorem for bounded measurable observables in Theorem~\ref{thm:conditional-clt-bounded}, using spectral perturbation of the twisted killed operator. We then extend this result to observables in $L^2(M,\mu)$ via truncation, yielding the following theorem.

\begin{maintheorem}
\label{thm:conditional-clt-l2}
Let $g:M\to\mathbb R$ be a finite-valued measurable function lying in $L^2(M,\mu)$, and set $\overline g:=g-\nu(g)$. Then, for every $x\in M\setminus Z$, under the probability measures $\mathbb P_x(\cdot\mid\tau>n)$,
$$
\frac{1}{\sqrt n}\sum_{j=0}^{n-1}\overline g\circ X_j\xrightarrow[d]{n\to\infty}\mathcal N(0,\sigma_g^2),\quad \text{where } \sigma_g^2:=\nu(\overline g^2)+
2\sum_{m=1}^{\infty}\mu\left(\overline g\,\frac{1}{\lambda^m}\mathcal P^m(\eta\overline g)\right).
$$
The series converges absolutely and $\sigma_g^2\geq0$.
\end{maintheorem}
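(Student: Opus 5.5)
The proof has two stages, as announced before the statement. First we take the conditional CLT for bounded measurable observables (Theorem~\ref{thm:conditional-clt-bounded}) as given, with the same variance formula $\sigma_h^2$. Then we extend to $g\in L^2(M,\mu)$ by truncation, using a uniform conditional second-moment bound for centred Birkhoff sums.

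Fix $g\in L^2(M,\mu)$ and, for $K>0$, split
\[
g=g_K+r_K,\qquad g_K:=g\mathbbm 1_{\{|g|\le K\}},\quad r_K:=g\mathbbm 1_{\{|g|>K\}}.
\]
Since $\eta$ is continuous, hence bounded, $g\in L^2(M,\nu)\subset L^1(M,\nu)$, so $\nu(g)$ is defined. Moreover $\|r_K\|_{L^2(\mu)}\to0$ as $K\to\infty$. Write $\overline g=\overline{g_K}+\overline{r_K}$. For $\overline{g_K}$ the bounded CLT gives, under $\mathbb P_x(\cdot\mid\tau>n)$, convergence of $n^{-1/2}S_n\overline{g_K}$ to $\mathcal N(0,\sigma_{g_K}^2)$.

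The key step is a second-moment estimate, which I expect Lemma~\ref{lem:conditional-sum-second-moment} to supply:
\[
\limsup_{n\to\infty}\frac1n\,\mathbb E_x\!\left[\left(S_n\overline{u}\right)^2\,\middle|\,\tau>n\right]\le C\|u\|_{L^2(\mu)}^2
\]
for all $u\in L^2(M,\mu)$, with $C$ independent of $u$ (and possibly of $x$). To prove it I would expand the square into $\sum_{i,j}$ and write each term via the Markov property as
\[
\mathcal P^i\!\left(\overline u\,\mathcal P^{j-i}\!\left(\overline u\,\mathcal P^{n-j}\mathbbm 1\right)\right)(x)\Big/\mathcal P^n\mathbbm 1(x).
\]
Then I would insert the spectral decomposition $\lambda^{-k}\mathcal P^k=\eta\otimes\mu+O(\theta^k)$ on $L^2(M,\mu)$ from Theorem~\ref{thm:spectralgap}, together with Lemma~\ref{lem:l2-compactness}. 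The gap gives geometrically summable correlations, of order $\theta^{j-i}\|u\|_2^2$. The term at $x$ is only evaluated through $\mathcal P^i(\cdot)(x)$, and for $i\ge1$ this is controlled in $L^2\to L^\infty$ by (H1) smoothing.

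This bound is the main obstacle. The observable is unbounded, so the first step $i=0$ and small $i$ must be handled separately, using that $\mathcal P$ maps $L^1(\rho)$-type quantities into continuous functions. The centring is by $\nu(u)$ rather than $\mu(u)$, which requires subtracting the correct $\eta$-weighted mean so that the linear terms cancel up to $O(1)$. Pointwise in $x\in M\setminus Z$, the division by $\mathcal P^n\mathbbm 1(x)\sim\lambda^n\eta(x)$ requires $\eta(x)>0$.

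Granting the estimate, Chebyshev gives
\[
\limsup_n\mathbb P_x\!\left(n^{-1/2}|S_n\overline{r_K}|>\varepsilon\mid\tau>n\right)\le C\varepsilon^{-2}\|r_K\|_2^2\to0 .
\]
Next I check that $\sigma_{g_K}^2\to\sigma_g^2$. Each term $\mu(\overline{g_K}\,\lambda^{-m}\mathcal P^m(\eta\overline{g_K}))$ is continuous in $L^2(\mu)$. By the spectral gap, the summand is bounded by $C\theta^m\|\overline{g_K}\|_2^2$ once the rank-one part is killed, since $\mu(\eta\overline{g})=\nu(\overline g)=0$. Dominated convergence then gives convergence of the series, and the same bound shows absolute convergence for $g$; $\sigma_g^2\ge0$ follows as a limit of the variances $\sigma_{g_K}^2\ge0$. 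A standard approximation lemma (Billingsley, Theorem~3.2) combines $n^{-1/2}S_n\overline{g_K}\Rightarrow\mathcal N(0,\sigma_{g_K}^2)$, $\mathcal N(0,\sigma^2_{g_K})\Rightarrow\mathcal N(0,\sigma_g^2)$, and the uniform smallness of the remainder, which yields the conclusion for every $x\in M\setminus Z$.
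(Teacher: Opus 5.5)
There is a genuine gap, and it sits exactly at the step you flag as the main obstacle. The bound $\limsup_{n}n^{-1}\mathbb E_x[(S_n\overline u)^2\mid\tau>n]\le C\|u\|_{L^2(\mu)}^2$ cannot be proved for unbounded $u\in L^2(M,\mu)$ at a fixed initial point, and under Hypothesis~\ref{hyp:H} it is false in general.

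Condition (H1) only gives $L^\infty(\rho)\to\mathcal C^0(M)$ smoothing (strong Feller). It gives no $L^2(\mu)\to L^\infty$ or $L^1\to\mathcal C^0$ bound, because each $\kappa(x,\cdot)$ is merely in $L^1(\rho)$. Concretely, take the circle with $\kappa(x,y)=q(y-x)$ and $q(z)\propto|z|^{-1/2}$. There is no killing, $\mu=\nu=\rho$, and Hypothesis~\ref{hyp:H} holds. The function $u(y):=|y-x|^{-1/3}$ (with $u(x):=0$) lies in $L^2(\mu)$, yet $\mathbb E_x[u(X_1)^2]=\int q(z)|z|^{-2/3}\,\mathrm dz=\infty$. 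Your remainder $r_K=u\mathbbm 1_{\{|u|>K\}}$ keeps the same singularity, so $\mathbb E_x[(S_n\overline{r_K})^2\mid\tau>n]=\infty$ for every $K$ and every $n\ge2$. Your Chebyshev step therefore gives nothing, for any $K$. This is why Lemma~\ref{lem:conditional-sum-second-moment} is stated only for bounded $g$ and carries the term $C_x\|g\|_\infty^2$. That term comes from the $i=0$ contribution and from the remainders $R^i$ evaluated at $x$, which can only be controlled in sup-norm.

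The missing idea is a second, $n$-dependent truncation at the CLT scale, with the large values controlled in probability rather than in $L^2$. Set $A_{n,\varepsilon}:=\{|g|>\varepsilon\sqrt n\}$, and let $r_{n,K,\varepsilon}$ be the $\nu$-centring of $(g-g_K)\mathbbm 1_{\{|g|\le\varepsilon\sqrt n\}}$. This function is bounded by $2(\varepsilon\sqrt n+K)$, so Lemma~\ref{lem:conditional-sum-second-moment} yields $n^{-1}\mathbb E_{x,n}[(S_nr_{n,K,\varepsilon})^2]\le C_x\bigl(\mu((g-g_K)^2)+\nu((g-g_K)^2)+\varepsilon^2\bigr)+C_xK^2/n$. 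The sup-norm term now costs only $\varepsilon^2$.

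Let $E_{n,\varepsilon}$ be the event that $|g(X_j)|>\varepsilon\sqrt n$ for some $j<n$. Three facts control it:
\begin{itemize}
\item $n\nu(A_{n,\varepsilon})\le\varepsilon^{-2}\nu(g^2\mathbbm 1_{A_{n,\varepsilon}})\to0$;
\item the same bound holds with $\mu$ in place of $\nu$;
\item $\sup_{z}\mathcal P(z,A_{n,\varepsilon})\to0$ by Lemma~\ref{lem:uniform-small-targets}.
\end{itemize}
The last fact works for sets because $\{\kappa(z,\cdot)\}_{z\in M}$ is compact, hence uniformly integrable, in $L^1(\rho)$; this is precisely what fails for unbounded integrands. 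Combined with the spectral decomposition (via $\mathcal Q^j(x,\cdot)$ for $j\le n/2$ and the $\mu$-bound for $j>n/2$), they give $\mathbb P_{x,n}(E_{n,\varepsilon})\to0$.

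Off $E_{n,\varepsilon}$, $n^{-1/2}S_n(\overline g-\overline{g_K})$ equals $n^{-1/2}S_nr_{n,K,\varepsilon}$ minus the deterministic term $\sqrt n\,\nu\bigl((g-g_K)\mathbbm 1_{A_{n,\varepsilon}}\bigr)$. That term is at most $\varepsilon^{-1}\nu(g^2\mathbbm 1_{A_{n,\varepsilon}})$ in absolute value, which tends to $0$. Letting $\varepsilon\to0$ and then $K\to\infty$ gives $\lim_K\limsup_n\mathbb P_{x,n}(n^{-1/2}|S_n(\overline g-\overline{g_K})|>\delta)=0$. The rest of your argument, namely continuity of the variance and Billingsley's approximation theorem, then goes through as you wrote it.
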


Theorem~\ref{thm:conditional-clt-l2} is proved in Section~\ref{sec:l2-clt}.

The central limit theorem describes fluctuations on the scale of $\sqrt n$. For bounded observables, we can also control deviations of the Birkhoff average from its quasi-ergodic limit on the larger scale $n$. The following theorem gives an exponential bound for these deviations.
\begin{maintheorem}
\label{thm:conditional-exponential-concentration}
Let $h:M\to\mathbb R$ be bounded and measurable. For every $x\in M\setminus Z$, there exist constants $C_{x}>0$ and $a_h>0$ such that for every $\e>0$
\begin{align}
\mathbb P_x\left(\left.\left|\frac{1}{n}\sum_{j=0}^{n-1}h\circ X_j-\nu(h)\right|>\varepsilon\right|\tau>n\right)\leq C_{x}e^{-a_h \e^2 n}
\label{ref:conditional-exponential-concentration}
\end{align}
for every $n\geq1$. 
\end{maintheorem}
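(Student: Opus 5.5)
The plan is a Chernoff bound together with real spectral perturbation of the twisted killed operator. Put $\bar h:=h-\nu(h)$. For $\theta\in\mathbb R$ define $\mathcal P_\theta u:=\mathcal P(e^{\theta \bar h}u)$ on $L^\infty(M,\rho)$. A Markov-property induction gives
$$
\mathbb E_x\Bigl[e^{\theta S_n\bar h}\mathbbm 1_{\{\tau>n\}}\Bigr]=e^{\theta\bar h(x)}\,\mathcal P_\theta^{\,n-1}\bigl(\mathcal P(e^{\theta\bar h})\bigr)(x),
$$
up to the placement of the first and last factors, with the convention fixed by $S_n=\sum_{j=0}^{n-1}$. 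Markov's inequality then bounds the upper tail by
$$
\mathbb P_x(S_n\bar h>n\varepsilon\mid\tau>n)\le e^{-\theta n\varepsilon}\,\frac{\mathbb E_x[e^{\theta S_n\bar h}\mathbbm 1_{\{\tau>n\}}]}{\mathbb P_x(\tau>n)},
$$
and the lower tail is handled by replacing $h$ with $-h$. Since $\mathbb P_x(\tau>n)=\mathcal P^n\mathbbm 1(x)\sim\lambda^n\eta(x)$ with $\eta(x)>0$ for $x\in M\setminus Z$ (Theorem~\ref{thm:spectralgap}), it suffices to show
$$
\mathcal P_\theta^{\,n}u(x)\le C_x\,\lambda(\theta)^n\|u\|_\infty \quad\text{and}\quad \log\frac{\lambda(\theta)}{\lambda}\le c\,\theta^2 \quad\text{for } |\theta|\le\theta_0.
$$

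\textbf{Spectral step.} Theorem~\ref{thm:spectralgap} gives a spectral gap for $\mathcal P$ on $L^\infty$ with simple leading eigenvalue $\lambda$, eigenfunction $\eta$, and eigenmeasure $\mu$. The map $\theta\mapsto\mathcal P_\theta$ is analytic in operator norm, because multiplication by $e^{\theta\bar h}$ is analytic and $\|e^{\theta\bar h}-1\|_\infty\le e^{|\theta|\,\|\bar h\|_\infty}-1$. Kato perturbation theory then produces, for $|\theta|\le\theta_0$, a simple leading eigenvalue $\lambda(\theta)$, a spectral projector $\Pi_\theta$, and a uniform gap:
$$
\mathcal P_\theta^n=\lambda(\theta)^n\Pi_\theta+R_\theta^n, \qquad \|R_\theta^n\|\le C r^n,\ r<\lambda(\theta).
$$
The first derivative is $\lambda'(0)=\mu(\bar h\,\mathcal P\eta)/\mu(\eta)=\lambda\,\nu(\bar h)=0$. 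A Taylor expansion therefore gives $\log\lambda(\theta)-\log\lambda\le c\theta^2$ on $|\theta|\le\theta_0$, with $c$ depending only on $h$ through analytic bounds. Combining the pieces,
$$
\mathbb P_x(\,\cdot\,)\le C_x\exp\bigl(-n(\theta\varepsilon-c\theta^2)\bigr).
$$
For $\varepsilon\le 2c\theta_0$ I choose $\theta=\varepsilon/(2c)$, which gives the exponent $\varepsilon^2/(4c)$. For $\varepsilon\le 2\|h\|_\infty$ but larger than $2c\theta_0$, I take $\theta=\theta_0$. The rate is then $\theta_0\varepsilon/2\ge a\varepsilon^2$ with $a=\theta_0/(4\|h\|_\infty)$. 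For $\varepsilon>2\|h\|_\infty$ the probability is zero. Setting $a_h=\min\{1/(4c),\,\theta_0/(4\|h\|_\infty)\}$ covers all cases.

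\textbf{Constant $C_x$.} The ratio $\mathcal P_\theta^n u(x)/\mathcal P^n\mathbbm 1(x)$ has to be controlled pointwise in $x$. Using the decompositions of both operators, it is bounded by
$$
\frac{\lambda(\theta)^n\|\Pi_\theta\|\,\|u\|_\infty+Cr^n}{\lambda^n\bigl(\eta(x)-C'(r'/\lambda)^n\bigr)},
$$
which is at most $C\lambda(\theta)^n/(\lambda^n\eta(x))$ for $n\ge n_x$. For the finitely many $n<n_x$ I use the trivial bound $e^{2|\theta|n\|h\|_\infty}$, absorbed into $C_x$. This is where pointwise validity on $M\setminus Z$ comes from.

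\textbf{Main obstacle.} I expect the main obstacle to be making the perturbation uniform: the gap persists and $\|\Pi_\theta\|$ stays bounded on a fixed interval $|\theta|\le\theta_0$. This requires Theorem~\ref{thm:spectralgap} to give genuine quasi-compactness with an isolated simple eigenvalue on $L^\infty$. Hypothesis~\ref{hyp:H}(i) makes $\mathcal P^2$ compact on $L^\infty$, and compactness persists under bounded multiplicative twists, so the standard Kato argument should apply.
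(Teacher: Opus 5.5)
Your proposal is correct and follows essentially the same route as the paper: a Chernoff bound combined with Kato perturbation of the real-twisted killed operator, the identity $\Lambda'(0)=\lambda\nu(\bar h)=0$ giving $\log(\Lambda(t)/\lambda)\le K_h t^2$, and control of the pointwise ratio $\widehat{\mathcal P}_t^{\,n}\mathbbm 1_M(x)/\widehat{\mathcal P}^{\,n}\mathbbm 1_M(x)$ through $\eta(x)>0$. The differences are cosmetic. The paper bounds the sum from $1$ to $n$ and then absorbs the boundary terms, whereas you keep the factor $e^{\theta\bar h(x)}$ directly; the exact form is $e^{\theta\bar h(x)}\mathcal P_\theta^{\,n-1}(\mathcal P\mathbbm 1_M)(x)$, since writing $\mathcal P(e^{\theta\bar h})=\mathcal P_\theta\mathbbm 1_M$ adds a twist at time $n$. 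Also, your case split at $\varepsilon>2\|h\|_\infty$ makes explicit the $\varepsilon$-independent constant $a_h$ that the paper's last line leaves implicit.
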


Theorem~\ref{thm:conditional-exponential-concentration} is proved in Section~\ref{sec:exponential-concentration}. 

\subsubsection{A conditional Poisson law for shrinking targets}

We conclude the main results in this paper with a conditional Poisson law for visits to shrinking neighbourhoods.
\begin{definition}
Let $x_1\in M$. A sequence $(U_n)_{n\geq1}$ of measurable sets is called a sequence of \emph{shrinking neighbourhoods} of $x_1$ if $x_1\in U_n$ for every $n$ and, for every neighbourhood $V$ of $x_1$, there exists $n_V\geq1$ such that
$U_n\subset V$
for every $n\geq n_V$. 
\end{definition}

The following theorem shows that, if $n\nu(U_n)\to t\in(0,\infty)$, then the number of visits to $U_n$ converges to a Poisson random variable with parameter $t$.

\begin{maintheorem} \label{thm:conditional-poisson}
Let $(U_n)_{n\geq1}$ be a sequence of measurable neighbourhoods shrinking to $x_1\in M\setminus Z$. Suppose that $n\nu(U_n) \xrightarrow[]{n\to\infty}t$ for some $t\in(0,\infty)$. Then, for every $x\in M\setminus(Z\cup\{x_1\})$, under the probability measures $\mathbb P_x(\cdot\mid\tau>n)$,
$$\sum_{j=0}^{n-1}\mathbbm 1_{U_n}\circ X_j\xrightarrow[d]{n\to\infty}Y,$$
where $Y\sim\operatorname{Poi}(t)$. Equivalently,
$$
\mathbb P_x\left(\left.\sum_{j=0}^{n-1}\mathbbm 1_{U_n}\circ X_j\in\cdot\right|\tau>n\right)\xRightarrow[]{n\to\infty}\operatorname{Poi}(t).
$$
\end{maintheorem}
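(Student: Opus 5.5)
We prove Theorem~\ref{thm:conditional-poisson} by spectral perturbation of the killed operator, computing the conditional probability generating function of $P_n:=\sum_{j=0}^{n-1}\mathbbm 1_{U_n}\circ X_j$. For $s\in[0,1]$, define the perturbed operator $\mathcal P_{n,s}h:=\mathcal P\bigl(s^{\mathbbm 1_{U_n}}h\bigr)$, that is, $\mathcal P$ applied to $h$ multiplied by $(1-(1-s)\mathbbm 1_{U_n})$. By the Markov property,
\[
\mathbb E_x\bigl[s^{P_n}\mathbbm 1_{\{\tau>n\}}\bigr]=s^{\mathbbm 1_{U_n}(x)}\,\mathcal P_{n,s}^{\,n-1}\bigl(s^{\mathbbm 1_{U_n}}\bigr)(x).
\]
For $x\neq x_1$, eventually $x\notin U_n$, so the prefactor is $1$. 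It suffices to show that this quantity, divided by $\mathbb P_x(\tau>n)\sim\lambda^n\eta(x)$ (from the spectral gap, Theorem~\ref{thm:spectralgap}), tends to $e^{-(1-s)t}$. Convergence of generating functions on $[0,1]$ gives convergence in distribution to $\operatorname{Poi}(t)$.

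The key steps, in order, are as follows.

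\textbf{Step 1.} The operator $\mathcal P$ on $L^\infty$ has a simple leading eigenvalue $\lambda$ with eigenfunction $\eta$ and left eigenmeasure $\mu$, plus a spectral gap. We also use that $\mathcal P$ maps bounded functions to continuous ones, by \ref{it:hypH-i}.

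\textbf{Step 2.} The perturbation $\Delta_n:=\mathcal P-\mathcal P_{n,s}=(1-s)\mathcal P(\mathbbm 1_{U_n}\,\cdot)$ satisfies $\|\Delta_n\|_{L^\infty\to L^\infty}\le\sup_z\mathcal P(z,U_n)$. This tends to $0$ by $L^1$-continuity and compactness: the kernels $\{\kappa(z,\cdot)\}_z$ form a compact, hence uniformly integrable, family in $L^1(\rho)$, while $\rho(U_n)\to0$. This is presumably the content of Lemma~\ref{lem:uniform-small-targets}. Keller--Liverani or plain analytic perturbation then gives, for large $n$, a simple eigenvalue $\lambda_{n,s}$ of $\mathcal P_{n,s}$ with spectral projector close to $\eta\otimes\mu$ and a uniform gap.

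\textbf{Step 3.} We need the first-order eigenvalue expansion
\[
\lambda-\lambda_{n,s}=\mu(\Delta_n\eta)\bigl(1+\smallO(1)\bigr)=(1-s)\lambda\,\mu(\mathbbm 1_{U_n}\eta)\bigl(1+\smallO(1)\bigr)=(1-s)\lambda\,\nu(U_n)\bigl(1+\smallO(1)\bigr),
\]
using $\mu\mathcal P=\lambda\mu$ and $\nu=\eta\mu$. Since $n\nu(U_n)\to t$, this yields $(\lambda_{n,s}/\lambda)^n\to e^{-(1-s)t}$.

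\textbf{Step 4.} Writing $\mathcal P_{n,s}^{n-1}=\lambda_{n,s}^{n-1}\Pi_{n,s}+R_{n,s}^{n-1}$ with $\|R^{n-1}\|\le C\theta^n\lambda^n$ for some $\theta<1$, we need $\Pi_{n,s}(s^{\mathbbm 1_{U_n}})(x)\to\eta(x)$. Since $\Pi_{n,s}\to\eta\otimes\mu$ in operator norm and $\mu(s^{\mathbbm 1_{U_n}})\to1$, this convergence holds uniformly. The ratio then tends to $e^{-(1-s)t}$.

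The main obstacle is Step 3. An operator-norm smallness of $\Delta_n$ only yields $\lambda_{n,s}\to\lambda$, but we need the error to be $\smallO(\nu(U_n))$, i.e.\ $\smallO(1/n)$. Following Keller--Liverani, we would write
\[
\lambda-\lambda_{n,s}=\frac{\mu(\Delta_n\eta_{n,s})}{\mu(\eta_{n,s})},
\]
where $\eta_{n,s}$ is the perturbed eigenfunction, and show $\mu(\Delta_n(\eta_{n,s}-\eta))=\smallO(\mu(\Delta_n\eta))$. The difference $\eta_{n,s}-\eta$ is small in sup norm by Step 2. However, $\mu(\Delta_n g)\le(1-s)\lambda\,\|g\|_\infty\,\mu(U_n)$ holds for any bounded $g$ by the quasi-stationarity of $\mu$. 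Hence the error is $\smallO(\mu(U_n))$, and $\mu(U_n)\asymp\nu(U_n)$ because $\eta$ is continuous and positive at $x_1\in M\setminus Z$. This resolves Step 3 without needing extra assumptions of the Keller--Liverani type, since only a weak norm via $\mu$ is used.
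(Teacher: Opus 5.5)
Your proposal is essentially the paper's proof. Both use the same twisted killed operator and the same norm perturbation via $\sup_z\mathcal P(z,U_n)\to0$. Both rest on the same decisive step: apply $\mu$ to the perturbed eigenvalue equation and bound the correction by $\mu(U_n)\|\eta_{n,s}-\eta\|_\infty=\smallO(\nu(U_n))$. Your use of generating functions with $s\in[0,1]$, where the paper uses characteristic functions $e^{is\mathbbm 1_{U_n}}$, is immaterial.

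Two slips need fixing.

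\textbf{The Markov identity is mis-indexed.} With $\mathcal P_{n,s}h=\mathcal P(s^{\mathbbm 1_{U_n}}h)$, the correct identity is
\[
\mathbb E_x\bigl[s^{P_n}\mathbbm 1_{\{\tau>n\}}\bigr]=s^{\mathbbm 1_{U_n}(x)}\,\mathcal P_{n,s}^{\,n-1}(\mathcal P\mathbbm 1_M)(x).
\]
Your expression $\mathcal P_{n,s}^{\,n-1}(s^{\mathbbm 1_{U_n}})(x)$ counts time $n-1$ twice and only enforces survival up to time $n-1$. Taken literally, your Step~4 then gives the limit $e^{-(1-s)t}/\lambda$, not $e^{-(1-s)t}$. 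In the corrected identity, the convergence $\Pi_{n,s}(\mathcal P\mathbbm 1_M)(x)\to\lambda\eta(x)$ supplies the missing factor $\lambda$.

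\textbf{The claim $\rho(U_n)\to0$ does not follow from the hypotheses.} Since $x_1\in U_n$ for every $n$, $\rho(U_n)\geq\rho(\{x_1\})$, and $\rho$ may have an atom at $x_1$; only $\nu(\{x_1\})=0$ is forced by $n\nu(U_n)\to t$. For instance, take $\rho=\tfrac12\mathrm{Leb}+\tfrac12\delta_{1/2}$ on $[0,1]$ with a kernel that never charges $1/2$. Hypothesis~\ref{hyp:H} holds, yet $\rho(U_n)\geq 1/2$. The uniform smallness of $\mathcal P(z,U_n)$ should instead come from the second part of Lemma~\ref{lem:uniform-small-targets}, as in the paper. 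That argument uses $\nu(U_n)\to0$, $U_n\subset M\setminus Z$ for large $n$ (from continuity and positivity of $\eta$ at $x_1$), and $\mathcal P(z,\cdot)|_{M\setminus Z}\ll\nu$.
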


Theorem~\ref{thm:conditional-poisson} is proved in Section~\ref{sec:conditional-poisson}.
\begin{remark}
The target point $x_1$ in Theorem~\ref{thm:conditional-poisson} is independent of the singularity point $x_0$ and may coincide with it. If $X_0=x_1$, then
$$
\sum_{j=0}^{n-1}\mathbbm 1_{U_n}\circ X_j=1+\sum_{j=1}^{n-1}\mathbbm 1_{U_n}\circ X_j.
$$
Applying the argument in the proof of Theorem~\ref{thm:conditional-poisson} to the count starting at time $1$ therefore gives, under $\mathbb P_{x_1}(\cdot\mid\tau>n)$,
$$
\sum_{j=0}^{n-1}\mathbbm 1_{U_n}\circ X_j
\xrightarrow[d]{n\to\infty}1+Y,
\quad \text{with}\ Y\sim\operatorname{Poi}(t).
$$
\end{remark}

\subsubsection{Refined conditional laws of large numbers}
The quasi-ergodic limit \eqref{eq:quasi-ergodiclimit5} typically concerns the expectation of the conditioned Birkhoff average of a bounded observable. We may refine this result by considering conditional convergence in $L^1$ and in probability, as well as more general observables. Indeed, extensions to unbounded observables have been obtained in symmetric continuous-time settings under Kato-type assumptions~\cite{ChenJian2017, ChenJian2018, KimTagawaVelleret2026}. 

For discrete-time absorbing Markov chains satisfying Hypothesis~\ref{hyp:H}, we prove conditional convergence in probability for observables in $L^1(M,\nu)$ and conditional convergence in $L^1$ for observables in $L^1(M,\mu)$, for $\nu$-almost every initial state in $M\setminus Z$, assuming that the observable vanishes on $Z$. For bounded measurable observables, conditional convergence in $L^1$ holds for every initial state in $M\setminus Z$, without the assumption of vanishing on $Z$.

\begin{maintheorem}
\label{thm:refined-conditional-lln}
Let $X_n$ be an absorbing Markov chain satisfying Hypothesis~\ref{hyp:H}.
\begin{enumerate}[label=(\roman*)]
\item \label{it:cond-lln-1}
Let $h:M\to\mathbb R$ be bounded and measurable. Then, for every $x\in M\setminus Z$,
\begin{align}
\mathbb E_x\left[\left.\left|\frac1n\sum_{i=0}^{n-1}h\circ X_i-\nu(h)\right|\,\right|\,\tau>n\right]
&\xrightarrow[]{n\to\infty}0.
\label{eq:conditional-lln-bounded-l1}
\end{align}
 
\item \label{it:cond-lln-2}
Let $h:M\to\mathbb R$ be measurable, with $h(z)=0$ for every $z\in Z$.
If $h\in L^1(M,\mu)$, then, for $\nu$-almost every $x\in M\setminus Z$,
$$
\mathbb E_x\left[
\left.\left|\frac1n\sum_{i=0}^{n-1}h\circ X_i-\nu(h)\right|\,\right|\,\tau>n\right]\xrightarrow[]{n\to\infty}0.
$$

\item \label{it:cond-lln-3}
Let $h:M\to\mathbb R$ be measurable, with $h(z)=0$ for every $z\in Z$.
If $h\in L^1(M,\nu)$, then, for $\nu$-almost every $x\in M\setminus Z$
and every $\varepsilon>0$,
\begin{align}
\mathbb P_x\left(\left.\left|\frac1n\sum_{i=0}^{n-1}h\circ X_i-\nu(h)\right|>\varepsilon\,\right|\,\tau>n\right)&\xrightarrow[]{n\to\infty}0.
\label{ref:l1-conditional-probability}
\end{align}
\end{enumerate}
\end{maintheorem}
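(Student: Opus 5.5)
The plan is to prove the three parts in order, using the spectral structure behind Theorem~\ref{thm:Castro} and, for parts \ref{it:cond-lln-2}--\ref{it:cond-lln-3}, the $Q$-process. For part \ref{it:cond-lln-1} I will prove an $L^2$ bound, which is stronger than $L^1$. Set $\bar h:=h-\nu(h)$. Then
\[
\mathbb E_x\Bigl[\bigl(\tfrac1n S_n\bar h\bigr)^2\,\Big|\,\tau>n\Bigr]=\frac{1}{n^2\,\mathbb P_x(\tau>n)}\sum_{i,j<n}\mathbb E_x\bigl[\bar h(X_i)\bar h(X_j)\mathbbm 1_{\{\tau>n\}}\bigr].
\]
For $i\le j$ the Markov property gives
\[
\mathbb E_x\bigl[\bar h(X_i)\bar h(X_j)\mathbbm 1_{\{\tau>n\}}\bigr]=\mathcal P^i\bigl(\bar h\,\mathcal P^{j-i}(\bar h\,\mathcal P^{n-j}\mathbbm 1)\bigr)(x).
\]
I will use the spectral gap of $\mathcal P$ on $L^\infty$ (Theorem~\ref{thm:spectralgap}): $\lambda^{-k}\mathcal P^k f=\mu(f)\eta+O(\theta^k\|f\|_\infty)$ with $\theta<1$, and $\lambda^{-n}\mathbb P_x(\tau>n)\to\eta(x)>0$ for $x\notin Z$. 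Replace each $\lambda^{-k}\mathcal P^k$ by its rank-one projection whenever both gaps $j-i$ and $n-j$ are large. For index pairs with $i$, $j-i$ and $n-j$ all at least $K$, the main term is $\eta(x)\,\mu(\eta\bar h)\,\mu(\eta\bar h)$, up to a correction involving $\mu(\bar h\eta)=\nu(\bar h)=0$. So these terms are $O(\theta^{K})$ per pair. The remaining pairs number at most $O(Kn)$ and each is bounded by $\|h\|_\infty^2$ times a uniformly bounded ratio. Dividing by $n^2$ and letting $n\to\infty$ and then $K\to\infty$ gives the conclusion. This is essentially the mixing estimate of Lemma~\ref{lem:mix} with two observables.

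For part \ref{it:cond-lln-2} I will truncate. Write $h=h_R+r_R$ with $h_R:=h\mathbbm 1_{\{|h|\le R\}}$. Part \ref{it:cond-lln-1} handles $h_R$, and $|\nu(r_R)|\to0$ because $\nu\ll\mu$ with bounded density $\eta$, so $h\in L^1(\nu)$. It remains to show
\[
\limsup_n\frac1n\sum_{i<n}\frac{\mathcal P^i(|r_R|\,\mathcal P^{n-i}\mathbbm 1)(x)}{\mathcal P^n\mathbbm 1(x)}\to0 \quad\text{as } R\to\infty,
\]
for $\nu$-a.e.\ $x$. Using $\mathcal P^{n-i}\mathbbm 1\le C\lambda^{n-i}$, the summand is bounded by $C\lambda^{-i}\mathcal P^i|r_R|(x)/\eta(x)$, up to the ratio $\lambda^{-n}\mathcal P^n\mathbbm 1(x)\to\eta(x)$. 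Since $|r_R|$ is only in $L^1(\mu)$, pointwise control of $\lambda^{-i}\mathcal P^i|r_R|(x)$ is the main obstacle.

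My first attempt at that obstacle uses the kernel. For $i\ge1$ I write $\mathcal P^i|r_R|(x)=\int \kappa_i(x,y)|r_R|(y)\,\rho(\d y)$ and hope for a density bound $\lambda^{-i}\kappa_i(x,\cdot)\le c_x\, m$ as $i\to\infty$. That bound may fail. The fallback is an averaging argument. Because $\mu$ is quasi-stationary,
\[
\int \frac1n\sum_i\lambda^{-i}\mathcal P^i|r_R|\,\d\mu=\mu(|r_R|)\to0 .
\]
A maximal-inequality or Borel--Cantelli argument along $R=2^k$ with monotonicity in $R$ then gives, for $\mu$-a.e.\ $x$, a limsup of order $\mu(|r_R|)$ up to a subsequence. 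Hence the bound holds $\nu$-a.e. The hypothesis $h|_Z=0$ ensures nothing is lost where $\eta=0$.

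For part \ref{it:cond-lln-3} I will truncate at a level $R$ and apply Markov's inequality to the remainder. With $h\in L^1(\nu)$ only, I will work in the $Q$-process picture (Theorem~\ref{thm:Qprocess}). Under the $Q$-process the relevant quantity is $\mathbb E^{\mathbb Q}_x|r_R|(X_i)=\eta(x)^{-1}\lambda^{-i}\mathcal P^i(\eta|r_R|)(x)$, and $\eta|r_R|\in L^1(\mu)$. The conditioned law on $\mathcal F_i$ for $i\le n-K$ is comparable to $\mathbb Q_x$ uniformly, by the spectral gap. The last $K$ times contribute $o(n)$ in probability because $\mathbb P_x(|h(X_j)|>\varepsilon n\mid\tau>n)\to0$. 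Applying the same averaging argument as in part \ref{it:cond-lln-2}, with $\nu$ in place of $\mu$, then gives convergence in probability for $\nu$-a.e.\ $x$.
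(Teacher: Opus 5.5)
Your part (i) is correct. It is a direct conditioned second-moment bound (essentially Lemma~\ref{lem:conditional-sum-second-moment} applied to $\bar h$), whereas the paper proves $L^2(\mathbb Q_x)$ convergence and transfers it with Theorem~\ref{thm:conditional-transfer}.

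The gap is at the step you yourself call the main obstacle in (ii). Write $\mathcal A_nf:=\frac1n\sum_{i<n}\widehat{\mathcal P}^{\,i}f$. The identity $\int\mathcal A_n|r_R|\,\d\mu=\mu(|r_R|)$ controls $\mathcal A_n|r_R|$ only for each fixed $n$. Markov's inequality gives $\mu(\mathcal A_n|r_R|>t)\le\mu(|r_R|)/t$, with no decay in $n$. So Borel--Cantelli applied to these bounds, over $R=2^k$ or along a subsequence of $n$, cannot control $\limsup_n\mathcal A_n|r_R|(x)$. What you need is a maximal inequality in $n$, and you neither state one nor check that it applies. This is not automatic. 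Here $\widehat{\mathcal P}$ is not a Markov operator for $\mu$: its invariant function is $\eta$, and $\widehat{\mathcal P}\mathbbm 1_M$ has $\mu$-mean one, so it exceeds one on a set of positive $\mu$-measure unless it equals one $\mu$-a.e. Hence it is not an $L^\infty$-contraction, and the Dunford--Schwartz theorem does not apply as it stands. Part (iii) then reuses the same unspecified averaging step.

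Either of the following closes the gap.
\begin{itemize}
\item Hopf's maximal lemma needs only a positive $L^1(\mu)$-contraction, which $\widehat{\mathcal P}$ is (see the proof of Lemma~\ref{lem:l2-compactness}). Apply it to $f-t\mathbbm 1_M$ with $0\le f\in L^1(\mu)$, and use $\sum_{i<n}\widehat{\mathcal P}^{\,i}\mathbbm 1_M\le Cn$, where $C:=\sup_i\|\widehat{\mathcal P}^{\,i}\mathbbm 1_M\|_\infty<\infty$. This gives $\mu(\sup_n\mathcal A_nf>Ct)\le\mu(f)/t$. Now $F_R:=\limsup_n\mathcal A_n|r_R|$ decreases in $R$ and $\mu(F_R>Ct)\le\mu(|r_R|)/t\to0$. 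Hence $F_R\to0$ $\mu$-a.e., which completes (ii) along your lines and does not even use $h|_Z=0$.
\item The paper's route: for $f$ vanishing on $Z$, one has $\widehat{\mathcal P}^{\,i}f=\eta\,\mathcal Q^i(f/\eta)$ on $M\setminus Z$, with $\nu(f/\eta)=\mu(f)$. The pointwise ergodic theorem for the $\nu$-stationary ergodic $Q$-process then gives $\mathcal A_nf(x)\to\eta(x)\mu(f)$ for $\nu$-a.e.\ $x$ (Step~3 of the proof in Appendix~\ref{ref:appendix}). This is exactly where $h|_Z=0$ enters.
\end{itemize}

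In (iii) you work with $\mathcal Q$, which is Markov with stationary measure $\nu$, so the maximal inequality is standard there. However, your comparability claim needs correcting. Off $Z$, the density of $\mathbb P_{x,n}$ with respect to $\mathbb Q_x$ on $\mathcal F_i$ is $\eta(x)\widehat{\mathcal P}^{\,n-i}\mathbbm 1_M(X_i)/\bigl(\eta(X_i)\widehat{\mathcal P}^{\,n}\mathbbm 1_M(x)\bigr)$. This need not be bounded where $\eta$ is small. Moreover, $\mathbb P_{x,n}$ may charge paths entering $Z$, which are $\mathbb Q_x$-null. What the spectral gap does give, and what suffices, is $\mathbb P_{x,n}(A)\le C_x\mathbb Q_x(A)+C_x\vartheta^{\,n-i}$ for $A\in\mathcal F_i$. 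For comparison, the paper's proof of (iii) needs no truncation: it uses Birkhoff's theorem for $\mathbb Q_\nu$ followed by Theorem~\ref{thm:conditional-transfer}.
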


We defer the proof of Theorem~\ref{thm:refined-conditional-lln} to Appendix~\ref{ref:appendix} since it does not concern the paper's central topic of conditional limit laws. Nevertheless, these stronger forms of convergence for absorbing Markov processes satisfying Hypothesis~\ref{hyp:H} are not readily available in the literature.

\begin{remark}
The distinction between $\mu$ and $\nu$ in Theorem~\ref{thm:refined-conditional-lln} is relevant because $\nu=\eta\mu$ and $\eta$ may approach zero near $Z$. Hence, $L^1(M,\mu)\subset L^1(M,\nu),$ but the reverse inclusion need not hold. Our proof of convergence in conditional mean and of the quasi-ergodic expectation limit requires integrability with respect to $\mu$, whereas convergence in conditional probability only requires integrability with respect to $\nu$. It remains unclear whether these conclusions remain valid when integrability with respect to $\mu$ is replaced by integrability with respect to $\nu$. If $\eta$ is bounded away from zero, the two spaces coincide with equivalent norms.
\end{remark}

\section{Examples satisfying Hypothesis~\ref{hyp:H}}
\label{sec:examples-hypothesis-H}
We present five examples of systems satisfying Hypothesis~\ref{hyp:H}. The first two use the soft-killing construction introduced in~\cite{BassolsCastroLamb2025,BassolsCastro2025} for weighted absorbing Markov processes, whereas the remaining three concern hard killing.
The systems considered in
\cite[Examples~2.6--2.9]{CastroLambOliconMendezRasmussen2024}
also satisfy Hypothesis~\ref{hyp:H} and are therefore covered by the results of the present paper.

Let $\mathcal K$ be a Markov transition kernel on $M$ and let
$\varphi:M\to\mathbb R$ be continuous. Set
$$
\overline\varphi:=\varphi-\sup_M\varphi^+\quad \text{and} \quad\varphi^+(x):=\max\left\{\varphi(x),0\right\},
$$
so that $\overline\varphi\leq0$. Given a Markov chain $X_n$ with transition kernel $\mathcal K$, its associated $e^{\overline\varphi}$-weighted process is obtained by setting
$$
X_{n+1}^{\varphi}
=
\begin{cases}
X_{n+1},&\text{with probability }e^{\overline\varphi(X_n)},\\
\partial,&\text{with probability }1-e^{\overline\varphi(X_n)},
\end{cases}
$$
where the probability of soft killing are, in time, independent. In other words, the Markov chain $X_n^\varphi$ is described by the transition kernel
\begin{align*}
\mathcal P_\varphi(x,\d y)&=e^{\overline\varphi(x)}
\mathcal K(x,\d y).
\end{align*}
Since $e^{\overline\varphi(x)}>0$, weighting does not change which transitions have positive probability. Consequently, the irreducibility and aperiodicity of $\mathcal K$ are inherited by $\mathcal P_\varphi$.

We emphasise that our results also cover systems without escape. Indeed, as noted in Remark~\ref{rmk:no-kill}, if
$\mathcal P(x,M)=1$
for every $x\in M$, then
$\tau=\infty$
almost surely, $\lambda=1,$ $\eta=\mathbbm 1_M$ and $\nu=\mu.$
Moreover, $\mathbb P_x(\,\cdot\mid\tau>n)=\mathbb P_x$ for every $n\geq1$. Consequently, all the conditional limit theorems proved in this paper reduce to the corresponding classical limit theorems for the conservative Markov chain.

In all the examples below, $\partial$ denotes an absorbing cemetery state, so that $X_n=\partial$ implies $X_{n+1}=\partial.$ In all examples below, $\rho$ is the normalised volume measure on $M$. Moreover, the quasi-stationary density $m$ (with respect to $\rho$) in such examples is easily verified to be a continuous function on $M\setminus Z$. By Remark~\ref{rmk:admissible}, every point $x_0 \in M \setminus Z$ is $(\rho, d)$-admissible, where $d = \mathrm{dim} M$.

\begin{example}[Additive bounded noise on the torus]

Let $M=\mathbb T^d$, $d\in \mathbb N$, let $\rho$ be the normalised Lebesgue measure on $\mathbb T^d$ and let $T:\mathbb T^d\to\mathbb T^d$ be a continuous map with a dense forward orbit. Let $(\omega_n)_{n\geq0}$ be independent and identically distributed random variables on $\mathbb T^d$ with density
$q\in L^1(\mathbb T^d,\rho)$ satisfying
$$q>0\ \text{$\rho$-almost everywhere on }B_\varepsilon(0)\quad \text{and}\quad\operatorname{supp}(q)\subset\overline{B_\varepsilon(0)},$$
for $\varepsilon > 0$.
Let $(\xi_n)_{n\geq0}$ be independent and uniformly distributed random variables on $[0,1]$, independent of $(\omega_n)_{n\geq0}$. Given $X_0=x\in M$, define
$$
X_{n+1}:=\begin{cases}T(X_n)+\omega_n\ \operatorname{mod}\mathbb Z^d,&X_n\in M\quad \text{and}\quad \xi_n\leq e^{\overline\varphi(X_n)},\\
\partial,&\text{otherwise}.\end{cases}
$$
Thus, at each step, the chain first survives the soft killing with probability
$
e^{\overline\varphi(X_n)}
$
and then follows the random perturbation
$
T_{\omega_n}(X_n)=T(X_n)+\omega_n.$
Its killed transition kernel is
\begin{align*}
\mathcal P_\varphi(x,\d y)&=e^{\overline\varphi(x)}q\left(y-T(x)\right)\,\d\rho(y).
\end{align*}
Continuity of translations in $L^1(\mathbb T^d,\rho)$ gives~\ref{it:hypH-i}. The dense orbit of $T$, together with the positivity of $q$ on $B_\varepsilon(0)$, allows the random perturbation to follow finite $\varepsilon$-pseudo-orbits between any point and any non-empty open set. Hence, ~\ref{it:hypH-ii} holds and $Z=\varnothing$. Finally, connectedness of $\mathbb T^d$ and
\cite[Theorem~B]{CastroLambOliconMendezRasmussen2024}
give~\ref{it:hypH-iii}. We mention that if we additionally that $q\in L^\infty(\mathbb T^d,\rho)$
and that $T$ is nonsingular with respect to $\rho$, meaning that $\rho(T^{-1}(A))=0$ whenever $\rho(A)=0$. If, in addition, $q\in L^\infty(\mathbb T^d,\rho)$ and $T$ is nonsingular with respect to $\rho$, meaning that $\rho(T^{-1}(A))=0$ whenever $\rho(A)=0$, then it is possible to conclude that the quasi-stationary measure $\mu$ of $X_n^\varphi$ has a continuous, strictly positive density with respect to $\rho$.
\end{example}
\begin{example}[Bounded random perturbations on Riemannian manifolds]

Let $M$ be a compact connected Riemannian manifold and $\rho$ be a normalised volume form on $M$. Let $T:M\to M$ be a continuous map with a dense forward orbit. For $\varepsilon>0$, let
$q_\varepsilon(z,y)$ be a jointly continuous Markov density satisfying
$$
q_\varepsilon(z,y)>0
\ \text{if }d_M(z,y)<\varepsilon
\quad \text{and}\quad 
q_\varepsilon(z,y)=0
\ \text{if }d_M(z,y)>\varepsilon.
$$
Such a density may be constructed by pushing forward a smooth radial density on the tangent ball by the exponential map.

Let $Y_n$ be the Markov chain describing the random perturbation of $T$ via the transition density $q_\varepsilon(T(x),y)$. More precisely, given $Y_n=x$, one first applies the deterministic map $T$ and then chooses $Y_{n+1}$ randomly in an $\varepsilon$-neighbourhood of $T(x)$ according to the density $q_\varepsilon(T(x),\cdot)$. Thus, $$ \mathbb P\left( Y_{n+1}\in\d y \mid Y_n=x \right) = q_\varepsilon\left( T(x),y \right) \,\d\rho(y). $$
Let $(\xi_n)_{n\geq0}$ be independent of $Y_n$ and uniformly distributed on $[0,1]$. We define
$$
X_{n+1}:=\begin{cases}Y_{n+1},&X_n\in M\quad \text{and}\quad \xi_n\leq e^{\overline\varphi(X_n)},\\
\partial,
&
\text{otherwise}.
\end{cases}
$$
The killed transition kernel of this chain is
\begin{align*}
\mathcal P_\varphi(x,\d y)
&=
e^{\overline\varphi(x)}
q_\varepsilon\left(
T(x),y
\right)
\,\d\rho(y).
\end{align*}
Joint continuity of $q_\varepsilon$ gives~\ref{it:hypH-i}. The dense orbit of $T$ and the positivity of the density on geodesic balls imply~\ref{it:hypH-ii}. We have $Z=\varnothing$, and connectedness of $M$, together with
\cite[Theorem~B]{CastroLambOliconMendezRasmussen2024},
gives~\ref{it:hypH-iii}.

\end{example}

\begin{example}[Gaussian random perturbations with hard killing]

Let $\widetilde M$ be a connected Riemannian manifold, let
$\mathcal H\subset\widetilde M$ be a non-empty open set so that $M:=\widetilde M\setminus\mathcal H$ is a compact manifold with boundary and $M=\overline{\operatorname{int}(M)}$. Let $T:\widetilde M\to\widetilde M$ be a continuous map. Denote by $p_\sigma(z,y)$ the heat kernel on
$\widetilde M$ at some fixed time $\sigma>0$ (see~\cite[Ch.~4]{Hsu2002}).

Consider the Markov process $Y_n$ on $\widetilde{M}$ such that
$$
\mathbb P\left(Y_{n+1}\in\d y \mid Y_n=x \right)
=
p_\sigma\left(T(x),y\right) \d\operatorname{vol}_{\widetilde M}(y)\ \text{for any }n\in\mathbb N_0.
$$
Define the absorbing Markov process $X_n$ as
$$
X_{n+1}:=
\begin{cases}
Y_{n+1},&Y_{n+1}\in M,\\
\partial,&\text{otherwise}.
\end{cases}
$$
Thus the process is absorbed whenever its noisy image belongs to
$\mathcal H$, often called a hole. Its killed transition kernel on $M$ is $\mathcal P(x,\d y)=
p_\sigma \left(T(x),y\right) \mathrm{vol}_{\widetilde{M}}(M)\,\d\rho(y),$ where $\rho$ denotes the normalised restriction of
$\operatorname{vol}_{\widetilde M}$ to $M$.

The heat kernel is continuous and strictly positive. Hence,
\ref{it:hypH-i} holds and, for every non-empty relatively open set
$U\subset M$, $\mathcal P(x,U)>0$ for every $x\in M$. Thus,~\ref{it:hypH-ii} holds with one iterate,
$Z=\varnothing$, and strict positivity rules out any non-trivial cyclic decomposition. Therefore,~\ref{it:hypH-iii} also holds. Notice that no transitivity assumption on $T$ is required in this example.
\end{example}

\begin{example}[Killed elliptic diffusions in discrete time]

Let $D$ be a bounded connected domain with smooth boundary in a
Riemannian manifold $\widetilde M$, and set
$
M:=\overline D.
$
Consider the diffusion $(Y_t)_{t\geq0}$ in $D$ with generator 
$$\mathcal L f=b\cdot\nabla f+\frac{1}{2}\operatorname{tr}\left(a\nabla^2f\right),
$$
where both $a$ and $b$ are smooth, and $a$ is uniformly elliptic.  Let $\rho$ be a normalised volume form on $M$. Let $
\tau_D:=\inf\left\{t\geq0:Y_t\notin D\right\}$ be the first exit time from $D$. Fix $\Delta>0$ and define the
discrete-time absorbing Markov chain
$$
X_n
:=
\begin{cases}
Y_{n\Delta},&n\Delta<\tau_D,\\
\partial,&n\Delta\geq\tau_D.
\end{cases}
$$
Thus the process is absorbed as soon as the diffusion exits $D$,
including when the exit occurs between two consecutive observation
times.

Let $p_D(t,x,y)$ denote the Dirichlet heat kernel associated with
$\mathcal L$. The killed transition kernel of $(X_n)_{n \geq 0}$ is $\mathcal P(x,\d y)= p_D(\Delta,x,y)\,\d\rho(y),$ for $x\in D$, while $\mathcal P(x,M)=0$ for $x\in\partial D$ (see
\cite[\S5.2, in particular equation~(5.2.1) and
Theorem~5.2.8]{Stroock2008}).

For every fixed $\Delta>0$, the Dirichlet heat kernel is continuous on
$\overline D\times\overline D$ and satisfies $
p_D(\Delta,x,y)>0$
for every $x,y\in D$. Consequently, the map $x\to p_D(\Delta,x,\cdot)$
is continuous from $\overline D$ to $L^1(D,\rho)$, and hence
\ref{it:hypH-i} holds. Moreover, $
Z=\partial D.$ If $x\in D$ and $U\subset D$ is a non-empty open set, then
$$
\mathcal P(x,U)=\int_U p_D(\Delta,x,y)\,\d\rho(y)>0.$$
Thus,~\ref{it:hypH-ii} holds with one iterate. The strict positivity
of the kernel on $D\times D$ also excludes any non-trivial cyclic
decomposition, and therefore~\ref{it:hypH-iii} holds.
\end{example}

\begin{example}[A cubic map with bounded noise and hard killing]
We consider the setting of~\cite[Example~2.6]{CastroLambOliconMendezRasmussen2024}. Let $\varepsilon>2/(3\sqrt 3)$, let $r_\varepsilon>1$ be the unique solution of $r_\varepsilon^3-r_\varepsilon=\varepsilon$, and set $M=[-r_\varepsilon,r_\varepsilon]$. Let $(\omega_n)_{n\geq0}$ be independent and uniformly distributed random variables on $[-\varepsilon,\varepsilon]$. Define
$$
X_{n+1}=\begin{cases}
X_n^3+\omega_n,&X_n^3+\omega_n\in M,\\
\partial,&X_n^3+\omega_n\notin M,
\end{cases}
$$
where $\partial$ is absorbing. The killed transition kernel is
\begin{align*}
\mathcal P(x,\d y)&=\frac{1}{2\varepsilon}\mathbbm 1_M(y)\mathbbm 1_{[-\varepsilon,\varepsilon]}(y-x^3)\,\d y.
\end{align*}
Continuity of translations in $L^1(\mathbb R)$ gives~\ref{it:hypH-i}. Moreover, $Z=\{-r_\varepsilon,r_\varepsilon\}$, since $r_\varepsilon^3-\varepsilon=r_\varepsilon$ and $(-r_\varepsilon)^3+\varepsilon=-r_\varepsilon$. The monotonicity argument of~\cite[Example~2.6]{CastroLambOliconMendezRasmussen2024} gives irreducibility on $(-r_\varepsilon,r_\varepsilon)$, and hence~\ref{it:hypH-ii}. Finally, choose $\delta>0$ such that $\delta+\delta^3<\varepsilon$. Then the transition density is strictly positive on $(-\delta,\delta)\times(-\delta,\delta)$, which, together with irreducibility, yields~\ref{it:hypH-iii}.
\end{example}

\section{Spectral framework, the \texorpdfstring{$Q$}{Q}-process and refined quasi-ergodic limits}

This section develops the spectral and probabilistic tools used throughout the paper. We first recall the spectral decomposition of the killed transition operator and derive its consequences on $L^2(M,\mu)$, together with a uniform estimate for transitions into sets of small $\rho$-measure. We then introduce the $Q$-process, which describes the original chain conditioned on survival for an increasingly long time, and recall the transfer principle connecting convergence under the $Q$-process with convergence under $\mathbb P_x(\cdot\mid\tau>n)$. Finally, we combine these ingredients to prove the refined quasi-ergodic limits stated in Theorem~\ref{thm:refined-conditional-lln}.

To reduce notation, we often make use of the following symbols.
\begin{notation}
For $x\in M\setminus Z$ and $n\geq1$, write
$$
\mathbb P_{x,n}(A):=\mathbb P_x(A\mid\tau>n)=\frac{\mathbb P_x(A\cap\{\tau>n\})}{\mathbb P_x(\tau>n)}
$$
for $A\in\mathcal F$. For a measurable random variable $Y$, write
$$\mathbb E_{x,n}[Y]:=\mathbb E_x[Y\mid\tau>n]=\frac{\mathbb E_x[Y\mathbbm 1_{\{\tau>n\}}]}{\mathbb P_x(\tau>n)}$$
whenever the expectations are defined.
\end{notation}

\subsection{Spectral framework}\label{sec:spectral-framework}

In this section, we describe the spectral consequences of Hypothesis~\ref{hyp:H} that are leveraged throughout the paper. Before stating the spectral decomposition of the killed transition operator, we fix the notation for the function spaces on which the relevant operators act.

\begin{notation}
We use the following notation:
\begin{enumerate}
\item For $1\leq p<\infty$, $L^p(M,\rho)$ denotes the space of equivalence classes of measurable functions $f:M\to\mathbb R$ such that
$$\|f\|_{L^p(M,\rho)}:=\left[\int_M|f|^p\,\d\rho\right]^{1/p}<\infty.$$
\item $L^\infty(M,\rho)$ denotes the space of essentially bounded measurable functions, endowed with the essential supremum norm $\|f\|_{L^\infty(M,\rho)}
:=
\operatorname*{ess\,sup}_{x\in M}|f(x)|.$

\item $\mathcal C^0(M)$ denotes the Banach space of continuous functions $f:M\to\mathbb R$, endowed with the
$\|\cdot\|_\infty$-norm, $\|f\|_\infty:= \sup_{x\in M}|f(x)|.$
\end{enumerate}
\end{notation}

The following spectral decomposition result plays two main roles in this paper. Firstly, it yields conditional mixing of all orders, which is used to prove the Poisson limits for rare observations in Section~\ref{sec:conditional-poisson}, and secondly, it provides the perturbative framework for the conditional central limit theorem in Section~\ref{sec:l2-clt}. 

\begin{theorem}[{\cite[Theorems~A--C]{CastroLambOliconMendezRasmussen2024}}]
\label{thm:spectralgap}
Let $X_n$ be an absorbing Markov chain satisfying Hypothesis~\ref{hyp:H}. Then the operator
$$
\mathcal P:L^\infty(M,\rho)\to L^\infty(M,\rho),\quad 
(\mathcal Pf)(x):=\mathbb E_x\left[f\circ X_1\cdot \mathbbm 1_M\circ X_1\right],
$$
is compact, positive and linear. Moreover,
\begin{enumerate}[label = (\roman*)]
\item \label{it:sg-it1} The chain $X_n$ is strong Feller, meaning that $
\mathcal P L^\infty(M,\rho)\subset\mathcal C^0(M).$
\item \label{it:sg-it2} Let $\lambda=r(\mathcal P)>0$ be the spectral radius of $\mathcal P$. There exist a probability measure $\mu$ on $M$, a continuous function $\eta:M\to\mathbb R$ satisfying $\mu(\eta)=1$, and $0\leq\lambda_0<\lambda$ such that, for every $f\in L^\infty(M,\rho)$,
\begin{equation}
\mathcal P^nf=\lambda^n\eta\mu(f)+\mathcal O\left(\lambda_0^n\|f\|_\infty\right).
\label{2.1}
\end{equation}
Moreover, $
\mathcal P^*\mu:= \int_M \mathcal P(x,\cdot) \mu(\d x)=\lambda\mu,\ \mathcal P\eta=\lambda\eta$ and $\mu(\d x)=m(x)\rho(\d x),$ where $m\in L^1(M,\rho)$ is non-negative. Define
\begin{align}\widehat{\mathcal P}&:=\lambda^{-1}\mathcal P,\quad \Pi g:=\eta\mu(g),\quad R:=\widehat{\mathcal P}-\Pi.
\label{eq:op}
\end{align}
Then $\Pi^2=\Pi, \Pi R=R\Pi=0$ and $\widehat{\mathcal P}^{n}=\Pi+R^n$
for every $n\geq1$. Furthermore, there exists $C>0$ such that
$$\|R^ng\|_\infty\leq C\left(\frac{\lambda_0}{\lambda}\right)^n\|g\|_\infty$$
for every $g\in L^\infty(M,\rho)$ and every $n\geq1$.
\item $\eta(x) >0$ for every $x\in M\setminus Z$ and $\eta(x)=0$ for every $x\in Z$.
\item Define the probability measure $
\nu(\d x):=\eta(x)\mu(\d x)$ on $M$. Then, for every bounded measurable function $f:M\to\mathbb R$ and every $x\in M\setminus Z$,
$$\mathbb E_x\left[\left.\frac{1}{n}\sum_{i=0}^{n-1}f\circ X_i\right|\tau>n\right]\xrightarrow[]{n\to\infty}\nu(f).
$$ 
\end{enumerate}
\end{theorem}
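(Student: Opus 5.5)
The plan is to work with $\mathcal P$ acting on bounded functions and extract everything from compactness together with Perron--Frobenius/Krein--Rutman theory for positive operators, using \ref{it:hypH-ii}--\ref{it:hypH-iii} to pin down the peripheral spectrum. First, for $f\in L^\infty(M,\rho)$ and $x,z\in M$, I would use the bound
$$|\mathcal Pf(x)-\mathcal Pf(z)|\leq\|f\|_{L^\infty(M,\rho)}\int_M|\kappa(x,y)-\kappa(z,y)|\,\d\rho(y).$$
Together with \ref{it:hypH-i}, this shows that $\mathcal P$ maps the unit ball of $L^\infty(M,\rho)$ into a bounded equicontinuous subset of $\mathcal C^0(M)$. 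This gives the strong Feller property \ref{it:sg-it1} directly. Since $M$ is compact, Arzelà--Ascoli then gives compactness of $\mathcal P$ on $L^\infty(M,\rho)$, and also of its restriction to $\mathcal C^0(M)$. Positivity and linearity are immediate from $\mathcal Pf(x)=\int f\kappa(x,\cdot)\,\d\rho$. Since $\mathcal P$ is compact, its nonzero spectrum consists of isolated eigenvalues of finite multiplicity, and the same nonzero spectrum is seen on $\mathcal C^0(M)$, where the eigenfunctions live.

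Second, I would show $\lambda=r(\mathcal P)>0$ and construct $\eta,\mu$. To see $\lambda>0$, use \ref{it:hypH-ii} and $\rho(M\setminus Z)>0$: pick an open set $A$ and $x\in A\setminus Z$ with $\mathcal P^n(x,A)>0$. Continuity of $\mathcal P^n\mathbbm 1_A$ then gives $\mathcal P^n\mathbbm 1_A\geq c\mathbbm 1_{A'}$ on a neighbourhood $A'\subset A$ of $x$, and iterating yields $\|\mathcal P^{kn}\|\geq c^k$. By Krein--Rutman on the cone of nonnegative continuous functions, $\lambda$ is an eigenvalue with eigenfunction $\eta\geq0$, and the adjoint has a positive eigenmeasure $\mu$ with $\mathcal P^*\mu=\lambda\mu$. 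Absolute continuity $\mu\ll\rho$ follows from $\mu=\lambda^{-1}\int\mathcal P(x,\cdot)\mu(\d x)$ and \ref{it:hypH-i}; this identity also shows that $m=\d\mu/\d\rho$ lies in $L^1$. For the sign pattern of $\eta$: on $Z$, the relation $\lambda^k\eta(x)=\mathcal P^k\eta(x)=0$ whenever $\mathcal P^k(x,M)=0$ forces $\eta=0$. On $M\setminus Z$, irreducibility propagates positivity. Indeed, $\eta>0$ on some open set $A$, so for any $x\in M\setminus Z$ we get $\lambda^n\eta(x)=\mathcal P^n\eta(x)\geq\int_A\eta\,\mathcal P^n(x,\d y)>0$. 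I would then normalise so that $\mu(M)=1$ and $\mu(\eta)=1$.

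Third, the key step, which I expect to be the main obstacle, is to show that $\lambda$ is algebraically simple and is the only eigenvalue of modulus $\lambda$. The difficulty is that $\mathcal P$ is not irreducible on all of $M$ when $Z\neq\varnothing$, so abstract results for irreducible operators do not apply verbatim. For simplicity, I would argue as follows. If $\mathcal Ph=\lambda h$ with $h$ real, then for a suitable constant $c$ the function $\eta-ch\geq0$ vanishes somewhere in $M\setminus Z$; the positivity-propagation argument above then shows it vanishes on all of $M\setminus Z$. Its values on $Z$ are in turn determined by $\mathcal P^k$, since the kernel only charges $M\setminus Z$ up to $\rho$-null sets, and a Jordan block is excluded by pairing with $\mu$. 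For the peripheral spectrum, suppose $\mathcal Ph=\lambda e^{2\pi i\theta}h$. The standard argument gives $|h|\leq C\eta$ and then that $h/\eta$ has constant modulus on $M\setminus Z$ with phase rotating under transitions. Taking the level sets of the phase produces a cyclic partition $C_0,\dots,C_{k-1}$ of $M\setminus Z$ as in \ref{it:hypH-iii}. Aperiodicity then forces $k=1$ and $\theta=0$.

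Fourth, with $\lambda$ simple and isolated in the spectrum, and all other spectrum contained in a disc of radius $\lambda_0<\lambda$, the Riesz projection is $\Pi g=\eta\mu(g)$. The relations $\Pi^2=\Pi$ and $\Pi R=R\Pi=0$ follow from $\mathcal P\eta=\lambda\eta$ and $\mathcal P^*\mu=\lambda\mu$, and these give $\widehat{\mathcal P}^n=\Pi+R^n$. The spectral radius formula applied to $R$ yields $\|R^n\|\leq C(\lambda_0/\lambda)^n$ after slightly enlarging $\lambda_0$, which is \eqref{2.1}. Finally, for item (iv), I would write
$$\mathbb E_x\left[f(X_i)\mathbbm 1_{\{\tau>n\}}\right]=\mathcal P^i\bigl(f\,\mathcal P^{n-i}\mathbbm 1_M\bigr)(x),$$
insert the decomposition twice, and divide by $\mathcal P^n\mathbbm 1_M(x)=\lambda^n\eta(x)(1+\mathcal O((\lambda_0/\lambda)^n))$, which is legitimate because $\eta(x)>0$ for $x\in M\setminus Z$. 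This gives
$$\frac{\mathbb E_x[f(X_i)\mathbbm 1_{\{\tau>n\}}]}{\mathbb P_x(\tau>n)}=\mu(f\eta)+\mathcal O_x\Bigl((\lambda_0/\lambda)^{i}+(\lambda_0/\lambda)^{n-i}\Bigr)\|f\|_\infty.$$
Averaging over $0\leq i\leq n-1$ then gives convergence to $\nu(f)$ with error $\mathcal O_x(1/n)$.
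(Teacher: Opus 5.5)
The paper does not prove this statement. It quotes it from \cite{CastroLambOliconMendezRasmussen2024}, where it is derived via Perron--Frobenius theory on Banach lattices. Several parts of your plan are fine: the first step (strong Feller and compactness via Arzel\`a--Ascoli), the Krein--Rutman construction of $\eta$ and $\mu$ with $\mu\ll\rho$, the sign pattern of $\eta$, the fourth step and item (iv). The second and third steps, however, have genuine gaps, all caused by the degeneracy of the chain near $Z$.

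\textbf{The argument for $\lambda>0$ does not close.} Continuity gives $\mathcal P^n\mathbbm 1_A\geq c$ only on a smaller neighbourhood $A'\subset A$ of $x$. Iterating requires $\mathcal P^n\mathbbm 1_{A'}\geq c\mathbbm 1_{A'}$, that is, a uniform return to the \emph{smaller} set. Neither continuity nor \ref{it:hypH-ii} provides this, because the return time in \ref{it:hypH-ii} depends on the starting point and on the target set. One possible repair runs as follows. By \ref{it:hypH-ii}, choose $n$ and $E\subset M\setminus Z$ with $\rho(E)>0$ and $\kappa_n(x,\cdot)\geq\varepsilon$ on $E$, where $\kappa_n$ is the density of $\mathcal P^n$ (again $L^1$-continuous in the first variable). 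Pick $\delta>0$ with $\|\kappa_n(z,\cdot)-\kappa_n(x,\cdot)\|_{L^1(\rho)}\leq\varepsilon\rho(E)/4$ for $z\in B_\delta(x)$. Then $\mathcal P^n(z,F)\geq\varepsilon\rho(E)/4$ for all such $z$ and all $F\subset E$ with $\rho(F)\geq\rho(E)/2$. Applying \ref{it:hypH-ii} from the points of $E$ to $V:=B_\delta(x)\setminus Z$ gives $m_0,k$ such that $F:=\{y\in E:\sum_{m\leq m_0}\mathcal P^m(y,V)\geq 1/k\}$ has $\rho(F)\geq\rho(E)/2$. Hence $S:=\sum_{m\leq m_0}\mathcal P^{n+m}$ satisfies $S\mathbbm 1_V\geq\frac{\varepsilon\rho(E)}{4k}\mathbbm 1_V$ with $\rho(V)>0$, so $r(S)>0$, and the spectral mapping theorem forces $r(\mathcal P)>0$.

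\textbf{More seriously, you never show that $\mu$ charges $M\setminus Z$.} Yet the normalisation $\mu(\eta)=1$ and the exclusion of a Jordan block by pairing with $\mu$ both need $\mu(\eta)>0$; if $(\mathcal P-\lambda)g=\eta$, pairing only yields $\mu(\eta)=0$. Your remark that the kernel charges only $M\setminus Z$ up to $\rho$-null sets amounts to assuming $\rho(Z)=0$, in which case $\mu\ll\rho$ would settle this. But Hypothesis~\ref{hyp:H} only requires $\rho(M\setminus Z)>0$, and the paper deliberately allows $\rho(Z)>0$ (cf.\ Appendix~\ref{ref:appendix}). In that case the Krein--Rutman eigenmeasure could a priori live on $Z$.

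The missing idea is that the part of $\mathcal P$ on $Z$ is quasinilpotent. First, $Z$ is forward invariant: $\mathcal P(z,M\setminus Z)=0$ for $z\in Z$. Second, the compression $Q:=\mathbbm 1_Z\mathcal P\mathbbm 1_Z$ is compact. Third, if $Qh=\gamma h$ with $\gamma\neq0$, then $\gamma^kh=Q^kh=0$ on $\{\mathcal P^k\mathbbm 1_M=0\}$ for every $k$. These sets exhaust $Z$, which supports $h=\gamma^{-1}Qh$, so $h=0$ and $\sigma(Q)=\{0\}$. Finally, if $\mu(M\setminus Z)=0$, forward invariance gives $\mu\circ Q=\lambda\mu$, contradicting $\lambda\notin\sigma(Q)$.

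\textbf{Simplicity and the peripheral spectrum.} Once $\mu(M\setminus Z)>0$, hypothesis \ref{it:hypH-ii} gives $M\setminus Z\subset\operatorname{supp}\mu$. Simplicity should then be proved by the lattice argument:
\begin{itemize}
\item $\mathcal Ph^\pm\geq\lambda h^\pm$, and pairing with $\mu$ forces equality on $\operatorname{supp}\mu$ (and trivially on $Z$);
\item a nonzero $h^\pm$ is therefore strictly positive on $M\setminus Z$, so $h$ has a sign;
\item comparing with $\eta$ through $\mu$ then gives $h\in\mathbb R\eta$.
\end{itemize}
Your alternative, via $\eta-ch\geq0$ vanishing somewhere in $M\setminus Z$, presupposes that $|h|/\eta$ is bounded on $M\setminus Z$ with the relevant infimum attained. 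The same presupposition underlies your bound $|h|\leq C\eta$ in the peripheral step. This is not automatic, since $M\setminus Z$ is not compact and $\eta\to0$ at its boundary.

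In the peripheral step, $|h|=c\eta$ should instead come from $\mathcal P|h|=\lambda|h|$ together with simplicity. You also need $e^{2\pi i\theta}$ to be a root of unity before the level sets of the phase can yield a finite cyclic partition. This follows because the functions $\eta\,(h/|h|)^j$ are eigenfunctions for $\lambda e^{2\pi ij\theta}$ and the peripheral spectrum of a compact operator is finite.
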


We recall a useful operator-compactness result we shall immediately leverage to show that $\widehat{\mathcal P}$ and $R$ have good spectral properties.

\begin{theorem}[{\cite[Ch.~4, \S2, Theorem~2.9, p.~203]{BennettSharpley1988}}] \label{thm:compact-interpolation}
Let $(R,\mu)$ and $(S,\nu)$ be finite measure spaces. Suppose that $1\leq p_0,p_1, q_0, q_1\leq\infty,$
and let $T$ be a linear operator such that
$$
T:L^{p_0}(R,\mu)\to L^{q_0}(S,\nu)
\ \text{is bounded} \ \text{and}\ T:L^{p_1}(R,\mu)\to L^{q_1}(S,\nu)
\ \text{is compact}.
$$
For $0<\theta<1$, let $1 \leq p,q \leq \infty$ be defined by
$$\frac{1}{p}=\frac{1-\theta}{p_0}+\frac{\theta}{p_1}\quad \text{and}\quad\frac{1}{q}=\frac{1-\theta}{q_0}+\frac{\theta}{q_1}.$$
Then $T:L^p(R,\mu)\to L^q(S,\nu)$ is compact.
\end{theorem}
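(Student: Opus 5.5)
Since this is a classical interpolation result of Krasnosel'skiĭ type, I would not reprove it in full generality. I would give the argument along the following lines, which covers the configurations used later in the paper. First, the Riesz--Thorin theorem (or, if the pairs are not ordered, Marcinkiewicz) applied to the two bounded maps $T:L^{p_0}\to L^{q_0}$ and $T:L^{p_1}\to L^{q_1}$ gives that $T:L^p(R,\mu)\to L^q(S,\nu)$ is bounded. This uses that the compact map is in particular bounded. It remains to show that $T$ maps the unit ball of $L^p$ to a relatively compact set. I would take a sequence $(f_k)$ with $\|f_k\|_{L^p}\leq1$ and extract a subsequence along which $(Tf_k)$ is Cauchy in $L^q$.

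The key tool is log-convexity of $L^r$ norms. Hölder's inequality gives
$$
\|g\|_{L^q}\leq\|g\|_{L^{q_0}}^{1-\theta}\|g\|_{L^{q_1}}^{\theta}.
$$
Hence if $g_k:=T u_k$ is Cauchy in $L^{q_1}$ and bounded in $L^{q_0}$, then it is Cauchy in $L^q$. I would therefore split each $f_k$ into a good part and a remainder by truncation at a level set: $f_k=f_k\mathbbm 1_{\{|f_k|\leq N\}}+f_k\mathbbm 1_{\{|f_k|>N\}}$. The truncated parts $u_k^N$ are uniformly bounded in $L^\infty$. Since the measure spaces are finite, they are then bounded in both $L^{p_0}$ and $L^{p_1}$. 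Compactness of $T:L^{p_1}\to L^{q_1}$ yields a subsequence with $(Tu_k^N)$ Cauchy in $L^{q_1}$, and boundedness into $L^{q_0}$ keeps it bounded there. The displayed inequality then makes $(Tu_k^N)$ Cauchy in $L^q$. A diagonal argument over $N\in\mathbb N$ produces a single subsequence that works for every $N$.

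The main obstacle is showing that the remainders $T(f_k\mathbbm 1_{\{|f_k|>N\}})$ are small in $L^q$ uniformly in $k$ as $N\to\infty$. A bounded sequence in $L^p$ need not be uniformly integrable, so $\|f_k\mathbbm 1_{\{|f_k|>N\}}\|_{L^p}$ is not small in general. My first attempt would use the following device. Measure the tail in whichever of $L^{p_0}$, $L^{p_1}$ has the smaller exponent than $p$. On $\{|f|>N\}$ one has $|f|^{p_i}\leq N^{p_i-p}|f|^p$, so that norm is $\mathcal O(N^{1-p/p_i})\to0$ uniformly in $k$. I would then pair this with a truncation from above for the complementary exponent, in the spirit of the $K$-functional decomposition $f=f^{(0)}+f^{(1)}$ with $f^{(0)}\in L^{p_0}$ and $f^{(1)}\in L^{p_1}$. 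The Hölder interpolation above would then transfer the smallness to the $L^q$ norm.

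In the situation actually needed in the paper, $p_0=q_0=\infty$ with $T=\widehat{\mathcal P}$ or $T=R$ bounded on $L^\infty$, and compactness holds on $L^1$, or conversely. There this step is easier, because one of the endpoints gives uniform control directly. I would check that case first, and only then invoke the general statement from \cite{BennettSharpley1988} for the remaining configurations.
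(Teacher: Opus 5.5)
The paper does not prove this statement; it only cites Bennett--Sharpley. Your sketch has a genuine gap at exactly the step you flag, and the device you propose cannot close it.

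Put $\phi_k:=f_k\mathbbm 1_{\{|f_k|>N\}}$, and let $p_i<p<p_j$ be the two endpoint exponents. You correctly get $\|\phi_k\|_{L^{p_i}}\le N^{1-p/p_i}$, so $\|T\phi_k\|_{L^{q_i}}$ is small uniformly in $k$. But log-convexity also needs a bound on $\|T\phi_k\|_{L^{q_j}}$, and $\phi_k$ need not lie in $L^{p_j}$ at all, so no such bound exists. The only other information you have is $\|T\phi_k\|_{L^q}\le C$ from Riesz--Thorin. Combined with smallness in $L^{q_i}$, this gives smallness only at exponents strictly between $q_i$ and $q$, never at $q$ itself. "Truncation from above" does not help either, since that piece is already your good part $u_k^N$.

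In fact, no argument using only norm information on $\phi_k$ plus boundedness of $T$ can work. The identity on $[0,1]$ is bounded on $L^1$ and on $L^\infty$. Yet $f_k=k^{1/2}\mathbbm 1_{[0,1/k]}$ has $\|f_k\|_{L^2}=1$, and for $k>N^2$ the tail is $\phi_k=f_k$, which has $L^2$-norm $1$. So compactness must act on the tail, whereas your scheme applies it only to the bounded truncations. The uniform tail bound is true a posteriori, because a bounded sequence in $L^p$ tending to zero in $L^{p_i}$ is weakly null; but deriving it that way uses the compactness you are trying to prove.

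Your last paragraph also inverts the paper's configuration. There $T$ is bounded on $L^1(M,\mu)$ and compact on $L^\infty(M,\mu)$. The tail is no easier in that configuration, nor in the converse one: $\|\phi_k\|_{L^1}\le N^{-1}\|f_k\|_{L^2}^2$ is small, but $\phi_k\notin L^\infty$, so nothing controls $\|T\phi_k\|_{L^2}$.

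The missing idea is to split the operator rather than the functions, and then interpolate the operator norm of the error.

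1. Since $\nu(S)<\infty$, finite-valued simple functions are dense in $L^{q_1}(S,\nu)$ for every $q_1\in[1,\infty]$.
2. Given $\varepsilon>0$, choose an $\varepsilon$-net $g_1,\dots,g_m$ of $T(\{\|f\|_{L^{p_1}}\le1\})$ in $L^{q_1}$, and simple functions $s_i$ with $\|g_i-s_i\|_{L^{q_1}}<\varepsilon$.
3. Let $E$ be the conditional expectation onto the finite $\sigma$-algebra generated by the level sets of $s_1,\dots,s_m$. Then $E$ is a contraction on every $L^r(S,\nu)$ and $Es_i=s_i$.
4. Consequently $\|(\mathrm{Id}-E)T\|_{L^{p_1}\to L^{q_1}}\le4\varepsilon$, while $\|(\mathrm{Id}-E)T\|_{L^{p_0}\to L^{q_0}}\le2\|T\|_{L^{p_0}\to L^{q_0}}$.
5. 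Riesz--Thorin applied to $(\mathrm{Id}-E)T$ gives $\|(\mathrm{Id}-E)T\|_{L^p\to L^q}\le 2\bigl(2\|T\|_{L^{p_0}\to L^{q_0}}\bigr)^{1-\theta}(4\varepsilon)^{\theta}$. The outer factor $2$ is needed only for real scalars.
6. Since $ET:L^p\to L^q$ has finite rank, $T$ is a norm limit of finite-rank operators and is therefore compact.

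This Krasnosel'skii-type argument handles every exponent configuration on finite measure spaces at once, with no subsequence extraction. Finally, Riesz--Thorin requires no ordering of the exponents, so Marcinkiewicz is never needed.
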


\begin{lemma}
\label{lem:l2-compactness}
The operators $\widehat{\mathcal P}$ and $R$ defined in \eqref{eq:op} are compact on both $L^\infty(M,\mu)$ and $L^2(M,\mu)$. Moreover, there exist $C>0$ and $\kappa\in(0,1)$ such that
\begin{align}
\|R^mf\|_{L^2(\mu)}&\leq C\kappa^m\|f\|_{L^2(\mu)}
\label{ref:l2-spectral-estimate}
\end{align}
for every $f\in L^2(M,\mu)$ and every $m\geq1$. Consequently, $R^m$ is compact on $L^2(M,\mu)$ for every $m\geq1$.
\end{lemma}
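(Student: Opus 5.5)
Since $\mu\ll\rho$, every $\rho$-null set is $\mu$-null, so $\mathcal P$ acts on $L^\infty(M,\mu)$ and $L^p(M,\mu)$ through representatives. The plan is to establish boundedness and compactness at the endpoints $L^\infty(M,\mu)$ and $L^1(M,\mu)$, interpolate with Theorem~\ref{thm:compact-interpolation} to reach $L^2(M,\mu)$, and then extract the exponential decay of $R^m$ on $L^2(M,\mu)$ from spectral theory.

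\textbf{Step 1: $L^\infty$.} Writing $\mathcal Pf(x)=\int_M\kappa(x,y)f(y)\,\d\rho(y)$, changing $f$ on a $\mu$-null set does not change $\mathcal Pf$ on $\{m>0\}$, up to $\rho$-null sets. I expect that compactness of $\mathcal P$ on $L^\infty(M,\rho)$ (Theorem~\ref{thm:spectralgap}) transfers to $L^\infty(M,\mu)$, since $\|g\|_{L^\infty(\mu)}\le\|g\|_{L^\infty(\rho)}$. More directly, by~\ref{it:hypH-i} the family $\{\mathcal Pf:\|f\|_\infty\le1\}$ is uniformly bounded and equicontinuous, because $|\mathcal Pf(x)-\mathcal Pf(z)|\le\|\kappa(x,\cdot)-\kappa(z,\cdot)\|_{L^1(\rho)}$. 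Arzelà--Ascoli then gives precompactness in $\mathcal C^0(M)$, and hence in $L^\infty(M,\mu)$. The rank-one operator $\Pi f=\eta\mu(f)$ is compact since $\eta$ is bounded, so $R=\widehat{\mathcal P}-\Pi$ is compact on $L^\infty(M,\mu)$ as well.

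\textbf{Step 2: $L^1$ and interpolation.} From $\mathcal P^*\mu=\lambda\mu$ and positivity,
$$
\int_M|\mathcal Pf|\,\d\mu\le\int_M\mathcal P|f|\,\d\mu=\lambda\int_M|f|\,\d\mu,
$$
so $\widehat{\mathcal P}$ is a contraction on $L^1(M,\mu)$. Moreover $\Pi$ is bounded there, because $|\mu(f)|\le\|f\|_{L^1(\mu)}$ and $\eta\in L^1(\mu)$. Applying Theorem~\ref{thm:compact-interpolation} with $p_0=q_0=1$, $p_1=q_1=\infty$ and $\theta=1/2$ yields compactness of $\widehat{\mathcal P}$ and of $R$ on $L^2(M,\mu)$, and hence of every $R^m$ with $m\ge1$.

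\textbf{Step 3: decay.} The same interpolation, applied to the bounds $\|R^m\|_{L^\infty\to L^\infty}\le C(\lambda_0/\lambda)^m$ (from Theorem~\ref{thm:spectralgap}; it passes to $L^\infty(\mu)$ as in Step~1) and $\|R^m\|_{L^1\to L^1}\le C'$ (since $R^m=\widehat{\mathcal P}^m-\Pi$), gives
$$
\|R^m\|_{L^2\to L^2}\le \sqrt{CC'}\,(\lambda_0/\lambda)^{m/2}.
$$
This is the Riesz--Thorin bound, which holds because $R^m$ is a linear operator bounded on both endpoint spaces. Taking $\kappa=(\lambda_0/\lambda)^{1/2}$, or any $\kappa\in(0,1)$ when $\lambda_0=0$, proves \eqref{ref:l2-spectral-estimate}.

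\textbf{Main obstacle.} The delicate point is the passage from $L^\infty(\rho)$ to $L^\infty(\mu)$. Two things must be checked: that the $\mathcal O(\lambda_0^n\|f\|_\infty)$ estimate uses the $\mu$-essential norm, and that functions differing only on $\{m=0\}$ are handled. Both are resolved by observing that $\mathcal Pf$ depends only on $f$ $\rho$-a.e. For $f\in L^\infty(\mu)$, set $\tilde f=f\mathbbm 1_{\{m>0\}}$, so that $\|\tilde f\|_{L^\infty(\rho)}=\|f\|_{L^\infty(\mu)}$ and $\mu(\tilde f)=\mu(f)$. It remains to check that $\mathcal P\tilde f=\mathcal Pf$ holds $\mu$-a.e. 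This follows from $\mathcal P^*\mu=\lambda\mu$: for $\mu$-a.e.\ $x$, $\mathcal P(x,\{m=0\})=0$.
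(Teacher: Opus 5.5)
Your proposal is correct and follows the paper's route. Both arguments use $L^1(\mu)$-boundedness of $\widehat{\mathcal P}$ and $\Pi$, compactness on $L^\infty(\mu)$ transferred from $L^\infty(\rho)$ through suitable representatives, Theorem~\ref{thm:compact-interpolation} with $\theta=1/2$ for compactness on $L^2(\mu)$, and Riesz--Thorin between the $L^1$ bound and the exponentially decaying $L^\infty$ bound to get \eqref{ref:l2-spectral-estimate}. The only differences are minor: you offer a direct Arzel\`a--Ascoli argument from \ref{it:hypH-i} as an alternative for $L^\infty$-compactness, and you justify well-definedness on $\mu$-classes via $\mathcal P(x,\{m=0\})=0$ for $\mu$-a.e.\ $x$, where the paper uses the $L^1(\mu)$-contraction argument.
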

\begin{proof}
We first prove that $R$ is bounded on $L^1(M,\mu)$. Since $\widehat{\mathcal P}$ is positive and $\widehat{\mathcal P}^{*}\mu=\mu$,
$$
\|\widehat{\mathcal P}f\|_{L^1(\mu)}\leq\int_M\widehat{\mathcal P}|f|\,\d\mu=\int_M|f|\,\d\mu=\|f\|_{L^1(\mu)}.$$
Moreover, since $\Pi f=\eta\mu(f)$, $\eta\geq0$ and $\mu(\eta)=1$, $
\|\Pi f\|_{L^1(\mu)}=|\mu(f)|\mu(\eta)\leq\|f\|_{L^1(\mu)}.$ Therefore,
$\|Rf\|_{L^1(\mu)}\leq 2\|f\|_{L^1(\mu)}.$
The same argument, using $R^m=\widehat{\mathcal P}^{m}-\Pi$, gives
\begin{align}
\|R^mf\|_{L^1(\mu)}&\leq 2\|f\|_{L^1(\mu)}\ \text{for every }m\geq1.
\label{ref:l1-bound-Rm}
\end{align}

We next prove compactness on $L^2(M,\mu)$. We first regard $\widehat{\mathcal P}$ as an operator on $L^\infty(M,\mu)$. This operator is well defined. Indeed, if $f=g$ $\mu$-almost everywhere, then positivity and $\widehat{\mathcal P}^{*}\mu=\mu$ give
$$
\int_M\left|\widehat{\mathcal P}f-\widehat{\mathcal P}g\right|\,\d\mu\leq\int_M\widehat{\mathcal P}|f-g|\,\d\mu=\int_M|f-g|\,\d\mu=0.
$$
Hence, $\widehat{\mathcal P}f=\widehat{\mathcal P}g$ $\mu$-almost everywhere.

Let $(f_n)$ be a bounded sequence in $L^\infty(M,\mu)$. By redefining each $f_n$ on a $\mu$-null set, we may choose bounded measurable representatives $\widetilde f_n$ satisfying $
\|\widetilde f_n\|_{L^\infty(\rho)}\leq\|f_n\|_{L^\infty(\mu)}.$
Thus, $(\widetilde f_n)$ is bounded in $L^\infty(M,\rho)$. Since $\widehat{\mathcal P}$ is compact on $L^\infty(M,\rho)$, there exists a subsequence $(\widetilde f_{n_k})$ such that $
\widehat{\mathcal P}\widetilde f_{n_k}$ converges in $L^\infty(M,\rho)$. The strong Feller property ensures that these functions have continuous representatives. Since $\mu\ll\rho$, $
\|h\|_{L^\infty(\mu)}\leq\|h\|_{L^\infty(\rho)},$ and therefore the same subsequence converges in $L^\infty(M,\mu)$. Consequently, $\widehat{\mathcal P}$ is compact on $L^\infty(M,\mu)$. Since $\Pi$ has rank one, $
R=\widehat{\mathcal P}-\Pi$ is also compact on $L^\infty(M,\mu)$.

The preceding estimates show that $\widehat{\mathcal P}$ and $R$ are bounded on $L^1(M,\mu)$. Applying Theorem~\ref{thm:compact-interpolation} with $
p_0=q_0=1,\ p_1=q_1=\infty$ and $\theta=1/2,$ we conclude that both $\widehat{\mathcal P}$ and $R$ are compact on $L^2(M,\mu)$.

It remains to prove \eqref{ref:l2-spectral-estimate}. Let $f\in L^\infty(M,\mu)$ and choose a bounded measurable representative $\widetilde f$ such that $\|\widetilde f\|_{L^\infty(\rho)}\leq\|f\|_{L^\infty(\mu)}.$ The spectral decomposition on $L^\infty(M,\rho)$ gives constants $C_\infty>0$ and $\vartheta\in(0,1)$ such that
$$\|R^m\widetilde f\|_{L^\infty(\rho)}\leq C_\infty\vartheta^m\|\widetilde f\|_{L^\infty(\rho)}
$$
for every $m\geq1$. Since $\mu\ll\rho$, it follows that
$$\|R^mf\|_{L^\infty(\mu)}\leq C_\infty\vartheta^m\|f\|_{L^\infty(\mu)}.$$
Combining this estimate with \eqref{ref:l1-bound-Rm} and applying the Riesz--Thorin interpolation theorem~\cite[\S1.1]{BerghLofstrom1976}, we obtain
$$
\|R^mf\|_{L^2(\mu)}\leq\sqrt{2C_\infty}\vartheta^{m/2}\|f\|_{L^2(\mu)}.
$$
Thus, \eqref{ref:l2-spectral-estimate} holds with $
C:=\sqrt{2C_\infty}$ and $\kappa:=\sqrt{\vartheta}\in(0,1).$ Finally, since $R$ is compact and $R^{m-1}$ is bounded on $L^2(M,\mu)$, we have that $R^m = R\circ R^{m-1}$ is compact on $L^2(M,\mu)$ for every $m\geq1$.
\end{proof}

We finish the section with a useful lemma regarding shrinking target sets.

\begin{lemma}
\label{lem:uniform-small-targets}
Assume condition~\ref{it:hypH-i}. If $(U_n)_{n\geq1}$ is a sequence of Borel sets satisfying $\rho(U_n)\to0$, then $
\sup_{z\in M}\mathcal P(z,U_n)\xrightarrow[]{n\to\infty}0.$
Under Hypothesis~\ref{hyp:H}, the same conclusion holds whenever $\nu(U_n)\to0$ and $U_n\subset M\setminus Z$ for all sufficiently large $n$.
\end{lemma}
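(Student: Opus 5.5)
The first claim follows from uniform integrability of the transition densities, which comes from compactness of $M$ and the $L^1(M,\rho)$-continuity in \ref{it:hypH-i}. The map $z\mapsto\kappa(z,\cdot)$ is continuous from the compact space $M$ into $L^1(M,\rho)$, so the family $\mathcal K:=\{\kappa(z,\cdot):z\in M\}$ is a compact subset of $L^1(M,\rho)$. Fix $\varepsilon>0$ and choose a finite $\varepsilon/2$-net $z_1,\ldots,z_k$ of $\mathcal K$ in $L^1(M,\rho)$. Each single function $\kappa(z_j,\cdot)\in L^1(M,\rho)$ is absolutely continuous with respect to $\rho$, so there is $\delta_j>0$ with $\int_A\kappa(z_j,y)\,\d\rho(y)<\varepsilon/2$ whenever $\rho(A)<\delta_j$. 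Let $\delta:=\min_j\delta_j$. For arbitrary $z\in M$, pick $j$ with $\|\kappa(z,\cdot)-\kappa(z_j,\cdot)\|_{L^1(\rho)}<\varepsilon/2$. Then, whenever $\rho(A)<\delta$,
$$
\mathcal P(z,A)\leq\int_A\kappa(z_j,y)\,\d\rho(y)+\|\kappa(z,\cdot)-\kappa(z_j,\cdot)\|_{L^1(\rho)}<\varepsilon .
$$
Since $\rho(U_n)\to0$, we have $\rho(U_n)<\delta$ for all large $n$, and hence $\sup_z\mathcal P(z,U_n)<\varepsilon$ for those $n$.

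For the second claim, it suffices to show that $\nu(U_n)\to0$ together with $U_n\subset M\setminus Z$ forces $\rho(U_n)\to0$; the first part then applies. Recall $\nu=\eta m\,\rho$ with $\eta m>0$ $\rho$-almost everywhere on $M\setminus Z$, except possibly on $\{m=0\}$. This exceptional set is the main point to handle. I will use the quasi-stationarity identity $\mu=\lambda^{-1}\mu\mathcal P$, which gives
$$
m(y)=\lambda^{-1}\int_M\kappa(x,y)\,\mu(\d x)\quad \text{for }\rho\text{-a.e. } y .
$$
By \ref{it:hypH-ii}, for $y$ in the relatively open set $M\setminus Z$ (which is open because $x\mapsto\mathcal P^k(x,M)$ is continuous by the strong Feller property), every open subset of $M\setminus Z$ is reached with positive probability from $\mu$-almost every starting point. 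I expect this to show that $\{m=0\}\cap(M\setminus Z)$ is $\rho$-null. If this is hard to obtain directly, I will avoid it by working with $\mu$: the set $\{m=0\}$ is $\mu$-null, and I will bound $\mathcal P(z,U_n)$ via $\mathcal P(z,U_n\cap\{m>0\})+\mathcal P(z,U_n\cap\{m=0\})$, then show the second term vanishes identically. That vanishing should follow because $\lambda^{-1}\mathcal P^*\mu=\mu$ and irreducibility imply $\mathcal P(z,\cdot)\ll\mu$ on $M\setminus Z$ for $z\notin Z$; for $z\in Z$, the argument would go by iterating until the mass leaves $Z$.

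With $\{m=0\}$ handled, the conclusion follows by a truncation argument. Fix $\varepsilon>0$ and $\gamma>0$, and split
$$
\rho(U_n)\leq\rho(\{\eta m<\gamma\}\cap(M\setminus Z))+\gamma^{-1}\nu(U_n).
$$
Since $\eta m>0$ $\rho$-a.e. on $M\setminus Z$, the first term tends to $0$ as $\gamma\to0$, and for fixed $\gamma$ the second term tends to $0$ as $n\to\infty$. Hence $\rho(U_n)\to0$, and the first part yields $\sup_z\mathcal P(z,U_n)\to0$. The main obstacle is the positivity of $m$ $\rho$-a.e. on $M\setminus Z$, which is why I set up the fallback that replaces $\rho$ by $\mu$ in the uniform integrability step.
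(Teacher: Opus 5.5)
Your first part is correct and coincides with the paper's argument: a finite $L^1(M,\rho)$-net of $\{\kappa(z,\cdot):z\in M\}$ plus absolute continuity of finitely many integrals. The second part, however, has a genuine gap.

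Your main reduction, that $\nu(U_n)\to0$ with $U_n\subset M\setminus Z$ forces $\rho(U_n)\to0$, is false in general. Nothing in Hypothesis~\ref{hyp:H} makes $\{m=0\}\cap(M\setminus Z)$ $\rho$-null, because the irreducibility in \ref{it:hypH-ii} only sees open sets. For example, take $M=[0,1]$ with Lebesgue $\rho$, let $B$ be a closed nowhere dense set with $\rho(B)=1/2$, and set $\kappa(x,y):=2\mathbbm 1_{M\setminus B}(y)$. Hypothesis~\ref{hyp:H} then holds with $Z=\varnothing$: every non-empty open $A$ has $\mathcal P(x,A)=2\rho(A\setminus B)>0$, and a kernel independent of $x$ admits no non-trivial cyclic partition. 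Here $\lambda=1$, $\eta\equiv1$ and $m=2\mathbbm 1_{M\setminus B}$. The constant sequence $U_n:=B$ has $\nu(U_n)=0$ but $\rho(U_n)=1/2$. So your truncation paragraph, written for $U_n$ itself and using positivity of $\eta m$ $\rho$-a.e.\ on $M\setminus Z$, rests on a false premise.

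Your fallback is the right idea, and it is exactly the paper's route. But the step carrying all the weight is not proved: that $\mathcal P(z,\cdot)|_{M\setminus Z}\ll\mu$ (equivalently $\ll\nu$, since $\eta>0$ off $Z$) for \emph{every} $z\in M$.
\begin{itemize}
\item Quasi-stationarity only gives $\int_M\mathcal P(z,A)\,\mu(\d z)=\lambda\mu(A)=0$, i.e.\ $\mathcal P(z,A)=0$ for $\mu$-a.e.\ $z$. Irreducibility does not upgrade this to every $z$.
\item What does the upgrade is continuity of $z\mapsto\mathcal P(z,A)$, which follows from \ref{it:hypH-i} via $|\mathcal P(z,A)-\mathcal P(z',A)|\leq\|\kappa(z,\cdot)-\kappa(z',\cdot)\|_{L^1(\rho)}$. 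A continuous non-negative function with zero $\mu$-integral vanishes on $\operatorname{supp}\mu$, and $\operatorname{supp}\mu\supset M\setminus Z$ by Theorem~\ref{thm:Castro}.
\item For $z\in Z$ there is nothing to iterate. The relevant fact is $\mathcal P(z,M\setminus Z)=0$, obtained from $0=\lambda\eta(z)=\mathcal P\eta(z)=\int_{M\setminus Z}\eta(y)\,\mathcal P(z,\d y)$ together with $\eta>0$ on $M\setminus Z$.
\end{itemize}

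Once you have $\mathcal P(z,\cdot)|_{M\setminus Z}\ll\nu$ for all $z$, you do not need $\rho$ at all. Your net bound $\sup_z\mathcal P(z,A)\leq\varepsilon/2+\max_j\mathcal P(z_j,A)$ holds for every Borel $A$. Absolute continuity of the finitely many finite measures $\mathcal P(z_j,\cdot)|_{M\setminus Z}$ with respect to $\nu$ then gives $\max_j\mathcal P(z_j,U_n)\to0$ directly. If you prefer to keep the reduction to the first part, first discard the $\mu$-null piece $U_n\cap\{m=0\}$, then apply the truncation to $U_n\cap\{m>0\}$ rather than to $U_n$.
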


\begin{proof}
By condition~\ref{it:hypH-i}, the map $z\mapsto\kappa(z,\cdot)$ is continuous from the compact space $M$ into $L^1(M,\rho)$. Fix $\varepsilon>0$. There exist $z_1,\ldots,z_N\in M$ such that, for every $z\in M$, some $j\in\{1,\ldots,N\}$ satisfies
$\|\kappa(z,\cdot)-\kappa(z_j,\cdot)\|_{L^1(\rho)}
<\e /2.$ Consequently, for every Borel set $A\subset M$,
\begin{align}
\sup_{z\in M}\mathcal P(z,A)&\leq\frac{\varepsilon}{2}+\max_{1\leq j\leq N}\mathcal P(z_j,A).
\label{eq:uniform-target-net}
\end{align}
If $\rho(U_n)\xrightarrow[]{n\to\infty}0$, absolute continuity of the integral gives $\mathcal P(z_j,U_n)\xrightarrow[]{n\to\infty}0$ for every $j\in \{1, \dots, N\}$. Equation~\eqref{eq:uniform-target-net}, followed by $\varepsilon\to 0$, proves the first assertion.

For the second part of the lemma, let $A\subset M\setminus Z$ be Borel with $\nu(A)=0$. Since $\nu=\eta\mu$ and $\eta>0$ on $M\setminus Z$, we have $\mu(A)=0$. Quasi-stationarity gives
$$
\int_M\mathcal P(z,A)\,\d\mu(z)=\lambda\mu(A)=0.
$$
By condition~\ref{it:hypH-i}, the function $z\mapsto\mathcal P(z,A)$ is continuous and non-negative. It therefore vanishes on $\operatorname{supp}\mu$, which contains $M\setminus Z$ by Theorem~\ref{thm:Castro}. For $z\in Z$,
$$
0=\lambda\eta(z)=\mathcal P\eta(z) =\int_{M\setminus Z}\eta(y)\,\mathcal P(z,\d y),
$$
so $\mathcal P(z,M\setminus Z)=0$. Consequently, $
\mathcal P(z,\cdot)|_{M\setminus Z}\ll\nu$ for every $z\in M.$ Since $\nu(U_n)\to0$ and $U_n\subset M\setminus Z$ eventually, absolute continuity of these finite measures gives
$$
\max_{1\leq j\leq N}\mathcal P(z_j,U_n)\xrightarrow[]{n\to\infty}0.
$$
Again, applying \eqref{eq:uniform-target-net} and letting $\varepsilon\to0$ concludes the proof.
\end{proof}

\subsection{The \texorpdfstring{$Q$}{Q}-process} \label{sec:q-process}

We continue by recalling the definition of a $Q$-process associated with an absorbing Markov chain. The following theorem gives its finite-dimensional distributions, its transition operator and its invariant probability measure.
\begin{definition}[$Q$-process]
Let $\mathcal F_n := \sigma (X_0,X_1,\ldots,X_n)\subset \mathcal F$. A family of probability measures $(\mathbb Q_x)_{x\in M\setminus Z}$ on
$(\Omega,\mathcal F)$ is called a \emph{$Q$-process} if:
\begin{enumerate}
\item for every $x\in M\setminus Z$, every $n\in\mathbb N_0$ and every $A\in\mathcal F_n$,
\begin{align*}
\mathbb Q_x(A):=\lim_{t\to\infty}\mathbb P_x(A\mid\tau>t);
\end{align*}
\item the tuple $
\left(\Omega,(\mathcal F_n)_{n\in\mathbb N},X_n,(\mathcal Q^n)_{n\in\mathbb N},(\mathbb Q_x)_{x\in M\setminus Z}\right)$
is a Markov process, where
$$
\mathcal  Q^n(x,B):=\mathbb Q_x[X_n\in B]
$$
for every $n\in\mathbb N$, $x\in M\setminus Z$ and $B\in\mathcal B(M)$.
\end{enumerate}
Note that, by definition, a $Q$-process is unique. 
Given a probability measure $\nu$ on $M$, define
$$
\mathbb Q_\nu(\d x):=\int_{M\setminus Z}\mathbb Q_y(\d x)\,\nu(\d y).
$$
\end{definition}

The following theorem establishes the existence of the $Q$-process under Hypothesis~\ref{hyp:H}.
\begin{theorem}[{\cite[{Proposition~2.6}]{Castroetall}}] \label{thm:Qprocess}
Let $X_n$ be an absorbing Markov chain satisfying Hypothesis~\ref{hyp:H}. Then, for every $x\in M\setminus Z$, it holds that:
\begin{enumerate}[label = (\roman*)]
\item \label{it:Qproc-i} for every $n\in\mathbb N_0$ and every $A\in\mathcal F_n$,
$$
\lim_{t\to\infty}\mathbb P_x\left(A\mid\tau>t\right)=\frac{1}{\lambda^n\eta(x)}\mathbb E_x\left[\mathbbm 1_A\eta(X_n)\right];
$$
\item \label{it:Qproc-ii} for every $n\in\mathbb N_0$ and every $B\in\mathcal B(M)$,
$$
\mathcal Q^n(x,B)=\frac{1}{\lambda^n\eta(x)}\mathcal P^n\left(\eta\mathbbm 1_B\right)(x);
$$
\item \label{it:Qproc-iii} the quasi-ergodic measure
$
\nu(\d x)=\eta(x)\mu(\d x)
$
is the unique stationary probability measure of the $Q$-process.
In particular, $\mathbb Q_\nu$ is stationary and ergodic.
\end{enumerate}
\end{theorem}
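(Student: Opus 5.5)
The plan is to treat the $Q$-process as a Doob $h$-transform of the killed chain with $h=\eta$, and to read all three items off the spectral decomposition \eqref{2.1} in Theorem~\ref{thm:spectralgap}.

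For item~\ref{it:Qproc-i}, fix $x\in M\setminus Z$, $n\in\mathbb N_0$, $A\in\mathcal F_n$ and $t>n$.
\begin{enumerate}
\item Since $\{\tau>t\}\subset\{\tau>n\}$ and $\mathcal P^{t-n}\mathbbm 1_M$ is extended by zero at $\partial$, the Markov property gives
$$
\mathbb P_x(A\cap\{\tau>t\})=\mathbb E_x\left[\mathbbm 1_A\,\mathcal P^{t-n}\mathbbm 1_M(X_n)\right].
$$
\item By \eqref{2.1} with $f=\mathbbm 1_M$ and $\mu(\mathbbm 1_M)=1$, we have $\mathcal P^{t-n}\mathbbm 1_M=\lambda^{t-n}\eta+\mathcal O(\lambda_0^{t-n})$. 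The error is uniform in the state, so the numerator equals $\lambda^{t-n}\mathbb E_x[\mathbbm 1_A\eta(X_n)]+\mathcal O(\lambda_0^{t-n})$.
\item Likewise, the denominator is $\mathbb P_x(\tau>t)=\lambda^t\eta(x)+\mathcal O(\lambda_0^t)$.
\item Since $\eta(x)>0$ for $x\notin Z$ and $\lambda_0<\lambda$, dividing and letting $t\to\infty$ gives the stated formula.
\end{enumerate}
Item~\ref{it:Qproc-ii} is the special case $A=\{X_n\in B\}$, with $\mathbb E_x[\mathbbm 1_B(X_n)\eta(X_n)]=\mathcal P^n(\eta\mathbbm 1_B)(x)$.

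Next, I must check that these limits define a genuine Markov family. The identity $\mathcal P\eta=\lambda\eta$ makes $\lambda^{-n}\eta(X_n)\eta(x)^{-1}\mathbbm 1_{\{\tau>n\}}$ a mean-one $\mathbb P_x$-martingale. Hence the finite-dimensional laws in~\ref{it:Qproc-i} are consistent, and they extend (Kolmogorov/Ionescu-Tulcea) to $\mathbb Q_x$ on $\mathcal F_\infty$. Because $\eta=0$ on $Z$ and on $\partial$, we get $\mathbb Q_x(X_n\in Z\cup\{\partial\})=0$, so the process lives on $M\setminus Z$.

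For the Markov property, take $A\in\mathcal F_n$ and $B\in\mathcal B(M)$. Conditioning on $\mathcal F_n$ under $\mathbb P_x$ in the formula for $\mathbb Q_x(A\cap\{X_{n+m}\in B\})$ gives
$$
\mathbb Q_x(A\cap\{X_{n+m}\in B\})=\frac{1}{\lambda^{n}\eta(x)}\mathbb E_x\left[\mathbbm 1_A\,\eta(X_n)\,\frac{\mathcal P^m(\eta\mathbbm 1_B)(X_n)}{\lambda^m\eta(X_n)}\right]=\mathbb E^{\mathbb Q_x}\left[\mathbbm 1_A\,\mathcal Q^m(X_n,B)\right].
$$
This is exactly the Markov property with kernels $\mathcal Q^m$. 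I expect this bookkeeping, namely extending from cylinder events to a process on path space, to be the main technical obstacle. It is routine once the martingale identity is in place.

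For item~\ref{it:Qproc-iii}, I argue stationarity, uniqueness and ergodicity in turn.
\begin{enumerate}
\item \emph{Stationarity.} Using $\nu=\eta\mu$ and $\mathcal P^*\mu=\lambda\mu$,
$$
\int\mathcal Q(x,B)\,\nu(\d x)=\lambda^{-1}\int\mathcal P(\eta\mathbbm 1_B)\,\d\mu=\mu(\eta\mathbbm 1_B)=\nu(B).
$$
\item \emph{Uniqueness.} For $x\in M\setminus Z$, \eqref{2.1} gives
$$
\mathcal Q^n(x,B)=\frac{\lambda^n\eta(x)\mu(\eta\mathbbm 1_B)+\mathcal O(\lambda_0^n)}{\lambda^n\eta(x)}\xrightarrow[]{n\to\infty}\nu(B).
$$
If $\pi$ is any stationary law on $M\setminus Z$, then $\pi(B)=\int\mathcal Q^n(x,B)\,\pi(\d x)$. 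Bounded convergence then forces $\pi(B)=\nu(B)$.
\item \emph{Ergodicity.} The unique stationary law is extremal in the convex set of stationary laws, so $\mathbb Q_\nu$ is a stationary ergodic Markov measure.
\end{enumerate}
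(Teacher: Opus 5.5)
The paper does not prove this statement; it quotes it from \cite[Proposition~2.6]{Castroetall}, so there is no in-paper argument to compare with. Your proof is correct and self-contained: you realise the $Q$-process as the Doob transform of the killed chain by $\eta$ and read items (i)--(iii) off \eqref{2.1}. This also matches how the paper handles $\mathcal Q^n$ later, namely the computation $\mathbb P_{x,n}(X_i\in U_n)=\mathcal Q^i(x,U_n)+\mathcal O_x(\vartheta^{n-i})$ in Step~3 of the proof of Theorem~\ref{thm:ppp}, and the identity $\mathcal Q^m h=\eta^{-1}\widehat{\mathcal P}^{m}(\eta h)$ in the proof of Lemma~\ref{lem:cluster-mixing}.

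Three small points deserve a line each when you write it up.

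\emph{Pointwise use of \eqref{2.1}.} The estimate \eqref{2.1} holds in $L^\infty(M,\rho)$, but you evaluate it at the fixed point $x$ (in the denominator and in the uniqueness step). This is legitimate: for $k\geq1$, $\mathcal P^kf$ and $\eta$ are continuous by the strong Feller property, and $\rho$ has full support, so the bound holds at every point of $M$. The same remark makes $\mathcal P\eta=\lambda\eta$ a pointwise identity, which your consistency step uses.

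\emph{The $0/0$ in the Markov-property display.} The quotient $\mathcal P^m(\eta\mathbbm 1_B)(X_n)/(\lambda^m\eta(X_n))$ is $0/0$ on $\{X_n\in Z\cup\{\partial\}\}$. This is harmless, because $0\leq\mathcal P^m(\eta\mathbbm 1_B)\leq\mathcal P^m\eta=\lambda^m\eta$. Hence $\mathcal P^m(\eta\mathbbm 1_B)=\lambda^m\eta\,\mathcal Q^m(\cdot,B)$ holds on all of $M$ for any convention on $Z$. The same observation disposes of $Z$ in your stationarity computation.

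\emph{Ergodicity.} The step ``unique, hence extremal, hence $\mathbb Q_\nu$ ergodic'' rests on the standard theorem that the stationary path measure of a Markov chain on a Polish space is shift-ergodic if and only if its invariant law is extremal. It applies here because $M\setminus Z=\{\eta>0\}$ is open in $M$. If you prefer to avoid that theorem, the spectral gap gives mixing directly. Let $\theta$ be the shift, $\phi_B(y):=\mathbb Q_y(B)$, and take cylinder events $A\in\mathcal F_k$, $B$, with $n\geq k$. The Markov property and $\mathcal Q^j\phi=\nu(\phi)+\eta^{-1}R^j(\eta\phi)$ on $M\setminus Z$ give
$\bigl|\mathbb Q_\nu(A\cap\theta^{-n}B)-\mathbb Q_\nu(A)\mathbb Q_\nu(B)\bigr|\leq\mu\bigl(|R^{n-k}(\eta\phi_B)|\bigr)\leq C\|\eta\|_\infty(\lambda_0/\lambda)^{n-k}$.
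This extends to all events by approximation and yields mixing, hence ergodicity.
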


To apply Theorem~\ref{thm:Qprocess} in our setting, we realise $X_n$ on the canonical path space $\Omega=\widehat M^{\mathbb N_0}$, equipped with the coordinate $\sigma$-algebra and the natural filtration. We may do so without loss of generality, since our results depend only on the law of the process.

\begin{remark} For every $x\in M\setminus Z$, Theorem~\ref{thm:Qprocess} and $\eta|_Z=0$ give
$$\mathbb Q_x(\tau>n)=1\quad \text{and}\quad\mathcal Q^n(x,Z)=0\ \text{for every }n\geq0.
$$
Thus the $Q$-process remains in $M\setminus Z$ forever:
$$
\mathbb Q_x\left(X_n\in M\setminus Z
\text{ for every }n\geq0\right)=1.
$$
\end{remark}

\section{Conditional mixing estimates}
\label{sec:mixing}

This section establishes the independence estimates used in the proof
of Theorem~\ref{thm:ppp}, which appears in Section~\ref{sec:ppp}. Lemma~\ref{lem:mix} treats well-separated
observations, while Lemma~\ref{lem:cluster-mixing} controls separated
clusters and is used to show that clustered configurations give a
negligible contribution to the factorial moments.

\subsection{Conditional mixing of all orders}

The proof of the point-process limit requires estimates for joint conditional observations at several separated times. The spectral decomposition in Theorem~\ref{thm:spectralgap} gives the following conditional mixing estimate of all orders.
\begin{lemma}\label{lem:mix}
Fix $x\in M\setminus Z$, $k> 1$, and choose $\vartheta\in(\lambda_0/\lambda,1)$. Let
$g_1,\ldots,g_k\in L^\infty(M,\rho)$ and $n_1,\ldots,n_k\in\mathbb N$. Set $
s_j:=n_1+\cdots+n_j$ and $
m:=\min_{1\leq j\leq k}n_j.$ Then
\begin{align}\mathbb E_x\left[\left.\prod_{j=1}^k g_j\circ X_{s_j}\,\right|\,\tau>s_k\right]=\left(\prod_{j=1}^{k-1}\nu(g_j)\right)\mu(g_k)+\mathcal O_{x,k}\left(
\vartheta^m\prod_{j=1}^k\|g_j\|_\infty
\right).\label{eq:problematic}
\end{align}
Moreover, if $n>s_k$ and $
m_n:=\min\{n_1,\ldots,n_k,n-s_k\},$
then
\begin{align}
\mathbb E_x\left[\left.\prod_{j=1}^k g_j\circ X_{s_j}\,\right|\,\tau>n\right]=\prod_{j=1}^k\nu(g_j)+\mathcal O_{x,k}\left(\vartheta^{m_n}\prod_{j=1}^k\|g_j\|_\infty\right).\label{eq:mixingallorders}\end{align}
\end{lemma}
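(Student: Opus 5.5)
The plan is to write the joint expectation as an iterated application of the killed operator and then use the spectral decomposition $\widehat{\mathcal P}^{n}=\Pi+R^n$ from Theorem~\ref{thm:spectralgap} at every gap. By the Markov property,
$$
\mathbb E_x\Bigl[\prod_{j=1}^k g_j(X_{s_j})\mathbbm 1_{\{\tau>n\}}\Bigr]
=\mathcal P^{n_1}\Bigl(g_1\,\mathcal P^{n_2}\bigl(g_2\cdots\mathcal P^{n_k}(g_k\,\mathcal P^{n-s_k}\mathbbm 1_M)\cdots\bigr)\Bigr)(x),
$$
where each $g_j$ is extended by zero at $\partial$. For \eqref{eq:problematic} the innermost factor $\mathcal P^{n-s_k}\mathbbm 1_M$ is simply absent, since $n=s_k$. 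Dividing numerator and denominator by $\lambda^n$ converts every $\mathcal P^{n_j}$ into $\widehat{\mathcal P}^{n_j}=\Pi+R^{n_j}$. The denominator is $\widehat{\mathcal P}^n\mathbbm 1_M(x)=\eta(x)+\mathcal O(\vartheta^n)$, and since $\eta(x)>0$ for $x\in M\setminus Z$, it is bounded below for $n$ large. This bounded-below property is where the dependence on $x$ enters the constants.

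The key step is the expansion of the nested expression. I will argue by induction from the inside out. Define $h_k:=g_k\widehat{\mathcal P}^{n-s_k}\mathbbm 1_M$, which equals $g_k\eta\,\mu(\mathbbm 1_M)+\mathcal O(\vartheta^{n-s_k}\|g_k\|_\infty)=g_k\eta+\mathcal O(\cdot)$ because $\mu(\mathbbm 1_M)=1$. Then set $h_{j}:=g_j\widehat{\mathcal P}^{n_{j+1}}h_{j+1}$. Each application of $\widehat{\mathcal P}^{n_{j+1}}$ replaces $h_{j+1}$ by $\eta\,\mu(h_{j+1})$, with an error of size at most $C\vartheta^{n_{j+1}}\|h_{j+1}\|_\infty$. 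For this I use the uniform bound $\|R^n g\|_\infty\le C(\lambda_0/\lambda)^n\|g\|_\infty\le C\vartheta^n\|g\|_\infty$.

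Inductively, this gives $h_j=g_j\eta\prod_{i>j}\mu(g_i\eta)+E_j$ with $\|E_j\|_\infty\le C_k\vartheta^{m_n}\prod_{i\ge j}\|g_i\|_\infty$. Here $\|\eta\|_\infty<\infty$ and the operator norms $\|\widehat{\mathcal P}^{n}\|_\infty\le \|\Pi\|+C$ are uniformly bounded, so the accumulated constants depend only on $k$. Finally, applying $\widehat{\mathcal P}^{n_1}$ at $x$ produces $\eta(x)\prod_{j}\mu(\eta g_j)=\eta(x)\prod_j\nu(g_j)$ plus the error. Dividing by $\eta(x)+\mathcal O(\vartheta^n)$ then yields \eqref{eq:mixingallorders}.

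For \eqref{eq:problematic} the same computation goes through with the innermost function being $g_k$ itself rather than $g_k\eta$. The last factor therefore becomes $\mu(g_k)$ instead of $\nu(g_k)$, and the denominator is $\widehat{\mathcal P}^{s_k}\mathbbm 1_M(x)=\eta(x)+\mathcal O(\vartheta^{s_k})$. Note that $s_k\ge m$.

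I expect the main bookkeeping obstacle to be keeping the error uniform in $n_1,\dots,n_k$ with a constant depending only on $x$ and $k$. This is handled by three facts. First, the norms $\|\widehat{\mathcal P}^n\|_{L^\infty\to L^\infty}$ are uniformly bounded in $n$. Second, the Feller property ensures that all iterates are genuine functions, so evaluation at the point $x$ makes sense. Third, we have $\vartheta^{n}\le\vartheta^{m}$ for every $n\ge m$.
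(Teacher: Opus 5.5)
Your proposal is correct and is essentially the paper's proof. Your inside-out induction, which replaces each $\widehat{\mathcal P}^{n_j}$ by $\Pi$ at the cost of one remainder $R^{n_j}$ controlled via $\sup_n\|\widehat{\mathcal P}^n\|_{L^\infty\to L^\infty}<\infty$ and then divides by $\widehat{\mathcal P}^{s_k}\mathbbm 1_M(x)=\eta(x)+\mathcal O_x(\vartheta^{s_k})$ with $\eta(x)>0$, is the paper's telescoping argument; the paper's only change is to get the $\{\tau>n\}$ estimate by appending $g_{k+1}=\mathbbm 1_M$ with gap $n-s_k$, which is what your $h_k=g_k\widehat{\mathcal P}^{n-s_k}\mathbbm 1_M$ does.
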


\begin{proof}
Define $
\widehat{\mathcal P}^n:=\lambda^{-n}\mathcal P^n$
and let $\Pi$ be the rank-one operator $
\Pi g:=\eta\mu(g).$
By the spectral decomposition in \eqref{2.1} and \eqref{eq:op}, there are operators $R$ such that
\begin{align}
\widehat{\mathcal P}^ng=\Pi g+R^ng,\quad \Pi \circ R = R\circ \Pi = 0\quad \text{and}\quad \|R^ng\|_\infty
\leq C\vartheta^n\|g\|_\infty.
\label{ref:normalised-spectral-decomposition}
\end{align}
It follows from \eqref{ref:normalised-spectral-decomposition} that $
\sup_{n\geq1}
\|\widehat{\mathcal P}^n\|_{L^\infty\to L^\infty}
<\infty.$
Applying the same spectral decomposition to $\mathbbm 1_M$, we obtain, for every $\ell\geq1$,
\begin{align}
\widehat{\mathcal P}^\ell \mathbbm 1_M(x) = \frac{1}{\lambda^{\ell}}\mathcal P^\ell\mathbbm 1_M(x)
=
\eta(x)+\mathcal O_x(\vartheta^\ell).
\label{ref:normalised-survival-probability}
\end{align}
By the Markov property, and since $s_k = n_1 + \cdots + n_k$,
\begin{align}\frac{1}{\lambda^{s_k}}\mathbb E_x\left[
\prod_{j=1}^k g_j\circ X_{s_j}
\mathbbm 1_{\{\tau>s_k\}}
\right] = \widehat{\mathcal P}^{n_1}\left(
g_1\widehat{\mathcal P}^{n_2}\left(g_2\cdots
\widehat{\mathcal P}^{n_{k-1}}\left(
g_{k-1}\widehat{\mathcal P}^{n_k}g_k
\right)\right)\right)(x).\label{eq:markovbernat}
\end{align}
Since $\widehat{\mathcal P}=\Pi+R$, $\Pi^2=\Pi$ and $\Pi R=R\Pi=0$, we have $
\widehat{\mathcal P}^n=(\Pi+R)^n=\Pi+R^n.$
Recall that, by definition, $
\Pi g_k=\eta\mu(g_k).$ Therefore,
$$
\begin{aligned}
\Pi\left(g_{k-1}\Pi g_k\right)=\Pi\left(g_{k-1}\eta\mu(g_k)\right)=\eta\mu(g_{k-1}\eta)\mu(g_k)= \eta\nu(g_{k-1})\mu(g_k).
\end{aligned}
$$
Repeating this calculation gives
\begin{align}
\Pi\left(
g_1\Pi\left(g_2\cdots
\Pi\left(g_{k-1}\Pi g_k\right)\right)\right)=\eta\left(\prod_{j=1}^{k-1}\nu(g_j)\right)\mu(g_k),\label{eq:average}
\end{align}
which corresponds to the factor in~\eqref{eq:markovbernat} containing only terms in $\Pi$.
To obtain the remainder in~\eqref{eq:problematic} we estimate the difference between \eqref{eq:markovbernat} and \eqref{eq:average}.
In order to control and keep track of the several crossed terms with $\Pi$, $R$ and $g_j$, for each for $j=1,\ldots,k$, we introduce the multiplication operator $M_j h = g_j h$. Observe that $\|M_j\|_{L^\infty\to L^\infty}=\|g_j\|_\infty$.

Since $\widehat{\mathcal P}^{n_j}-\Pi=R^{n_j}$, the telescoping identity gives
\begin{equation}
\prod_{\ell=1}^k
  \left(\widehat{\mathcal P}^{n_\ell}M_\ell\right)-\prod_{\ell=1}^k \left(\Pi M_\ell\right)=\sum_{j=1}^k\left[\prod_{\ell=1}^{j-1}\left(\Pi M_\ell\right)\right]R^{n_j}M_j\left[\prod_{\ell=j+1}^k
\left(\widehat{\mathcal P}^{n_\ell}M_\ell\right)\right].\label{eq:telescomping}
\end{equation}
Thus each of the $k$ summands contains exactly one remainder $R^{n_j}$,
with projections $\Pi$ acting to its left and powers of $\widehat{\mathcal P}$ to its right.

Setting $
K:=
\max\left\{1,\|\Pi\|_{L^\infty\to L^\infty},\sup_{n\geq1}\|\widehat{\mathcal P}^n\|_{L^\infty\to L^\infty}\right\}<\infty,$ \eqref{ref:normalised-spectral-decomposition}, \eqref{eq:average} and \eqref{eq:telescomping} we obtain
\begin{align*}
&\left|\widehat{\mathcal P}^{n_1}\left(
g_1\widehat{\mathcal P}^{n_2}\left(g_2\cdots\widehat{\mathcal P}^{n_{k-1}}\left(g_{k-1}\widehat{\mathcal P}^{n_k}g_k
\right)\right)\right)(x) -  \eta(x)\left(\prod_{j=1}^{k-1}\nu(g_j)\right)\mu(g_k)\right|\\
&\leq \left\|\prod_{\ell=1}^k\left(\widehat{\mathcal P}^{n_\ell}M_\ell\right)-\prod_{\ell=1}^k\left(\Pi M_\ell\right)\right\|_{L^\infty\to L^\infty} \leq\sum_{j=1}^k K^{j-1}C\vartheta^{n_j}K^{k-j}\prod_{\ell=1}^k\|g_\ell\|_\infty
\\
&=
CK^{k-1}\left(\sum_{j=1}^k\vartheta^{n_j}\right)\prod_{\ell=1}^k\|g_\ell\|_\infty\leq kCK^{k-1}\vartheta^m \prod_{\ell=1}^k\|g_\ell\|_\infty =  \mathcal O_{k}\left(
\vartheta^m\prod_{j=1}^k\|g_j\|_\infty\right),
\end{align*}
where the last inequality uses $n_j\geq \min_{1\leq j \leq k} n_j =m $ for every $j$.
From the above bound and \eqref{eq:markovbernat} we obtain that 
\begin{align}
&\frac{1}{\lambda^{s_k}}\mathbb E_x\left[\prod_{j=1}^k g_j\circ X_{s_j}\mathbbm 1_{\{\tau>s_k\}}\right]=\eta(x)\left(\prod_{j=1}^{k-1}\nu(g_j)\right)\mu(g_k)+\mathcal O_k\left(\vartheta^m\prod_{j=1}^k\|g_j\|_\infty\right).
\label{ref:last-observation-numerator}
\end{align}
By the definition of conditional expectation,
$$
\begin{aligned}
\mathbb E_x\left[\left.\prod_{j=1}^k g_j\circ X_{s_j}\,\right|\,\tau>s_k\right]=\frac{\lambda^{-s_k}\mathbb E_x\left[\prod_{j=1}^k g_j\circ X_{s_j}\mathbbm 1_{\{\tau>s_k\}}\right]
}{
\lambda^{-s_k}\mathbb P_x(\tau>s_k)
}.
\end{aligned}
$$
Since $\eta(x)>0$, dividing \eqref{ref:last-observation-numerator} by
\eqref{ref:normalised-survival-probability}, with $\ell=s_k$, gives
$$
\mathbb E_x\left[\left.\prod_{j=1}^k g_j\circ X_{s_j}\,\right|\,\tau>s_k\right]=\left(\prod_{j=1}^{k-1}\nu(g_j)\right)\mu(g_k)+\mathcal O_{x,k}\left(\vartheta^m\prod_{j=1}^k\|g_j\|_\infty\right),
$$
which proves \eqref{eq:problematic}. Equation \eqref{eq:mixingallorders} follows from the above identity by setting $k = k'+1$ and $g_{k'+1} = \mathbbm 1_M$.
\end{proof}

\subsection{Cluster mixing}
Conditional mixing of all orders controls observations separated by long gaps, but it does not directly apply to groups of observations occurring close together. We treat each such group as a cluster and describe its internal distribution using the stationary $Q$-process. The following lemma shows that clusters separated by long gaps become asymptotically independent.

\begin{lemma}
\label{lem:cluster-mixing}
Fix $x\in M\setminus Z$ and choose $\vartheta\in(\lambda_0/\lambda,1)$. Let $
0<t_1<\cdots<t_r<N$
and let $D_1,\ldots,D_r\subset M$ be measurable sets. Suppose that the selected times are divided into $c$ consecutive clusters determined by indices
$$
0=p_0<p_1<\cdots<p_c=r.
$$
The $\ell$-th cluster is $
\mathcal C_\ell:=\{t_{p_{\ell-1}+1},\ldots,t_{p_\ell}\},\  1\leq\ell\leq c.$
Assume that
$$
t_1\geq q,\ t_{p_\ell+1}-t_{p_\ell}\geq q\ \text{for }1\leq\ell<c,\ N-t_r\geq q,
$$
i.e.~the gap from time zero to the first cluster, every gap between consecutive clusters and the gap from the last cluster to the conditioning time $N$ are all at least of size $q$.
For every $\ell\in\{1,\ldots,c\}$, define the stationary $Q$-probability of the events belonging to the $\ell$-th cluster by
$$
\pi_\ell:=\mathbb Q_\nu\left(X_{t_a-\min\mathcal C_\ell}\in D_a\text{ for every }a\text{ such that }t_a\in\mathcal C_\ell
\right).
$$
Then
\begin{align*}
\mathbb P_x\left(X_{t_a}\in D_a\text{ for every }1\leq a\leq r\mid\tau>N\right)&=\prod_{\ell=1}^c\pi_\ell+\mathcal O_{x,r}(\vartheta^q).
\end{align*}
The error is uniform in the sets $D_1,\ldots,D_r$, the gaps between observations inside each cluster and the positions of the clusters.
\end{lemma}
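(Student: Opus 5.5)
The plan is to reduce to the same operator calculus as in Lemma~\ref{lem:mix}, treating each cluster as a single ``block operator''. For each cluster $\mathcal C_\ell$, with first time $u_\ell:=\min\mathcal C_\ell$, I would define the bounded operator $G_\ell$ on $L^\infty(M,\rho)$ by
$$
G_\ell h:=\mathbbm 1_{D_{p_{\ell-1}+1}}\,\widehat{\mathcal P}^{t_{p_{\ell-1}+2}-t_{p_{\ell-1}+1}}\Bigl(\mathbbm 1_{D_{p_{\ell-1}+2}}\cdots\widehat{\mathcal P}^{t_{p_\ell}-t_{p_\ell-1}}\bigl(\mathbbm 1_{D_{p_\ell}}h\bigr)\Bigr).
$$
That is, $G_\ell$ alternates indicator multiplications with normalised transition powers inside the cluster. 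By the Markov property, the normalised numerator $\lambda^{-N}\mathbb P_x(X_{t_a}\in D_a\ \forall a,\ \tau>N)$ equals
$$
\widehat{\mathcal P}^{u_1}G_1\widehat{\mathcal P}^{u_2-\max\mathcal C_1}G_2\cdots G_c\,\widehat{\mathcal P}^{N-t_r}\mathbbm 1_M(x).
$$
All the gap exponents here are at least $q$ by hypothesis.

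Next I would replace each gap power $\widehat{\mathcal P}^{n}$ by $\Pi+R^n$ and telescope exactly as in \eqref{eq:telescomping}. This requires a uniform bound $\|G_\ell\|_{L^\infty\to L^\infty}\le K^{|\mathcal C_\ell|-1}$, where $K:=\max\{1,\sup_n\|\widehat{\mathcal P}^n\|\}$. The bound holds because indicators have norm at most one, and it is uniform in the sets $D_a$ and in the internal gaps. With this, every term containing at least one $R^{n}$ is $\mathcal O_r(\vartheta^q)$, uniformly in all the data.

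The pure-$\Pi$ term is
$$
\eta(x)\prod_{\ell=1}^{c}\mu\bigl(\eta\,G_\ell\,\cdots\bigr),
$$
where the last factor uses $\Pi\mathbbm 1_M=\eta\mu(\mathbbm 1_M)=\eta$. Each factor therefore has the form $\mu(G_\ell\eta)$ (one checks that $\Pi G_\ell\Pi h=\eta\,\mu(G_\ell\eta)\,\mu(h)$ and $\mu(G_c\eta)$ at the end). The main identification step is to show
$$
\mu(G_\ell\eta)=\pi_\ell .
$$
For this I would use Theorem~\ref{thm:Qprocess}\ref{it:Qproc-i}. For $y\in M\setminus Z$, the quantity $\mathbb Q_y(X_{s_a}\in D_a\ \forall a)$ with shifted times $s_a=t_a-u_\ell$ equals $\eta(y)^{-1}\lambda^{-s}\mathbb E_y\bigl[\prod\mathbbm 1_{D_a}(X_{s_a})\,\eta(X_{s})\bigr]$, where $s$ is the cluster length. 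By the Markov property this is exactly $\eta(y)^{-1}(G_\ell\eta)(y)$. Integrating against $\nu=\eta\mu$, and using that $\eta$ vanishes on $Z$ so only $M\setminus Z$ contributes, gives $\pi_\ell=\mu(G_\ell\eta)$.

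The final step is to divide by $\lambda^{-N}\mathbb P_x(\tau>N)=\eta(x)+\mathcal O_x(\vartheta^N)$, using \eqref{ref:normalised-survival-probability}. Since $\eta(x)>0$ and $N\ge q$, this yields $\prod_\ell\pi_\ell+\mathcal O_{x,r}(\vartheta^q)$. The constant depends on $x$ only through $\eta(x)^{-1}$ and the survival remainder.

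The main obstacle is the bookkeeping in the telescoping. One must ensure that the internal cluster powers are never expanded: they stay inside $G_\ell$ and contribute only through the uniform norm bound. Only the $c+1$ gap powers are split. One must also handle the measure-theoretic point that $G_\ell$ acts on $L^\infty(M,\rho)$ while the $Q$-process formula is pointwise. Both are resolved by the strong Feller property and by evaluating at the fixed $x$ only at the end.
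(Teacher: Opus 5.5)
Your proposal is correct and is essentially the paper's proof. It uses the same block operators (the paper's $\mathcal B_\ell$), the same Markov representation with $c+1$ long gaps, telescoping of those gap powers only, the identity $\pi_\ell=\mu(\mathcal B_\ell\eta)$, and division by $\widehat{\mathcal P}^{N}\mathbbm 1_M(x)=\eta(x)+\mathcal O_x(\vartheta^N)$.

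The only difference is cosmetic. You read off $\pi_\ell$ directly from the finite-dimensional formula in Theorem~\ref{thm:Qprocess}\ref{it:Qproc-i}, whereas the paper chains the identity $\mathcal Q^m h=\eta^{-1}\widehat{\mathcal P}^m(\eta h)$ through the cluster. In your version, the step $\int_{M\setminus Z}G_\ell\eta\,\d\mu=\mu(G_\ell\eta)$ holds because $G_\ell\eta=0$ on $Z$, since the chain cannot pass from $Z$ back to $M\setminus Z$.
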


\begin{proof}
Recall that $
\widehat{\mathcal P}^{m}:=\lambda^{-m}\mathcal P^m$ and $ \Pi g:=\eta\mu(g).$
By \eqref{ref:normalised-spectral-decomposition}, 
$$
\left\|\widehat{\mathcal P}^{m}-\Pi\right\|_{L^\infty\to L^\infty}\leq C\vartheta^m\quad \text{and}\quad \sup_{m\geq0}\left\|\widehat{\mathcal P}^{m}\right\|_{L^\infty\to L^\infty}<\infty.
$$
For each $\ell\in\{1,\ldots,c\}$, define the operator associated with the $\ell$-th block by:
\begin{itemize}
    \item $
\mathcal B_\ell g:=\mathbbm 1_{D_{p_{\ell-1}+1}}\widehat{\mathcal P}^{\,t_{p_{\ell-1}+2}-t_{p_{\ell-1}+1}}\left(\mathbbm 1_{D_{p_{\ell-1}+2}}\cdots\widehat{\mathcal P}^{\,t_{p_\ell}-t_{p_\ell-1}}\left( \mathbbm 1_{D_{p_\ell}}g\right)\cdots\right)$ if $p_\ell-p_{\ell-1}\geq2;$  and
\item $
\mathcal B_\ell g:=\mathbbm 1_{D_{p_\ell}}g$ when  $p_\ell-p_{\ell-1}=1.$
\end{itemize}
We claim that
\begin{align}
\pi_\ell=\mu(\mathcal B_\ell\eta).
\label{ref:block-stationary-probability}
\end{align}
Indeed, the Markov property for the stationary $Q$-process gives
$$
\pi_\ell=\int_M\mathbbm 1_{D_{p_{\ell-1}+1}}\mathcal Q^{t_{p_{\ell-1}+2}-t_{p_{\ell-1}+1}}\left(\mathbbm 1_{D_{p_{\ell-1}+2}}\cdots\mathcal Q^{t_{p_\ell}-t_{p_\ell-1}}\left(\mathbbm 1_{D_{p_\ell}}\right)\right)\,\d\nu.
$$
Recall that, for every bounded measurable function $h$, $
\mathcal Q^m h=\frac{1}{\eta}\widehat{\mathcal P}^{m}(\eta h).$
Consequently, two consecutive applications of the $Q$-transition satisfy
$$
\mathcal Q^{m_1}\left(\mathbbm 1_D\mathcal Q^{m_2}h
\right)=\frac{1}{\eta}\widehat{\mathcal P}^{m_1}\left(\eta\mathbbm 1_D\frac{1}{\eta}\widehat{\mathcal P}^{m_2}(\eta h)\right)=\frac{1}{\eta}\widehat{\mathcal P}^{m_1}\left(\mathbbm 1_D\widehat{\mathcal P}^{m_2}(\eta h)\right).
$$
Applying the above identity successively to all the transitions inside the cluster gives
$$
\pi_\ell=\int_M\frac{1}{\eta}\mathcal B_\ell\eta\,\d\nu = \int_M \mathcal B_\ell \eta \d \mu = \mu(\mathcal B_\ell \eta).$$
If the cluster contains only one selected time, the same identity follows directly from
$$
\pi_\ell=\nu(D_{p_\ell})=\mu\left(\eta\mathbbm 1_{D_{p_\ell}}\right)=\mu(\mathcal B_\ell\eta).
$$

Define the gaps outside the blocks by
$$d_0:=t_1,\ d_\ell:=t_{p_\ell+1}-t_{p_\ell}\ \text{for }1\leq\ell<c,\ d_c:=N-t_r.
$$
By assumption, $d_\ell\geq q\ \text{for every }0\leq\ell\leq c.$
The Markov property gives
\begin{align}
&\mathbb P_x\left(X_{t_a}\in D_a
\text{ for every }1\leq a\leq r\mid\tau>N\right) =\frac{\widehat{\mathcal P}^{\,d_0}\mathcal B_1\widehat{\mathcal P}^{\,d_1}\mathcal B_2\cdots\widehat{\mathcal P}^{\,d_{c-1}}\mathcal B_c\widehat{\mathcal P}^{\,d_c}\mathbbm 1_M(x)
}{
\widehat{\mathcal P}^{N}\mathbbm 1_M(x)
}.
\label{ref:block-markov-representation}
\end{align}
We replace each of the $c+1$ operators $
\widehat{\mathcal P}^{\,d_0},
\ldots,
\widehat{\mathcal P}^{\,d_c}$
in \eqref{ref:block-markov-representation} by $\Pi$. Since $d_\ell\geq q$, each replacement produces an error bounded by $C\vartheta^q$. Moreover, the operators $\mathcal B_1,\ldots,\mathcal B_c$ are uniformly bounded because each block contains at most $r$ observations and
$$
\sup_{m\geq0}\left\|\widehat{\mathcal P}^{m}\right\|_{L^\infty\to L^\infty}<\infty.
$$
A telescoping expansion over the $c+1$ long gaps (as done in the proof of Lemma \ref{lem:mix}) therefore gives 
$$
\begin{aligned}
\left[\widehat{\mathcal P}^{\,d_0}\mathcal B_1\widehat{\mathcal P}^{\,d_1}\cdots\widehat{\mathcal P}^{\,d_{c-1}}\mathcal B_c\widehat{\mathcal P}^{\,d_c}\mathbbm 1_M\right](x)=\left[\Pi\mathcal B_1\Pi\cdots\Pi\mathcal B_c\Pi\mathbbm 1_M\right](x)+\mathcal O_{r}(\vartheta^q).
\end{aligned}
$$
Since $\mu(\mathbbm 1_M)=1$, we have $
\Pi\mathbbm 1_M=\eta.
$
Using \eqref{ref:block-stationary-probability} repeatedly, we obtain
$$
\Pi\mathcal B_1\Pi\cdots\Pi\mathcal B_c\Pi\mathbbm 1_M(x)=\eta(x)\prod_{\ell=1}^c\pi_\ell.
$$
Finally, \eqref{ref:normalised-spectral-decomposition} gives $
\widehat{\mathcal P}^{N}\mathbbm 1_M(x)=\eta(x)+\mathcal O_x(\vartheta^N).$
Since $\eta(x)>0$ and $N\geq q$, division in \eqref{ref:block-markov-representation} implies
$$\mathbb P_x \left(X_{t_a}\in D_a \text{ for every }1\leq a\leq r\mid\tau>N\right) = \prod_{\ell=1}^c\pi_\ell + \mathcal O_{x,r}(\vartheta^q),$$
which finishes the proof.
\end{proof}

\section{Poisson point-process convergence}\label{sec:ppp}

In this section, we prove Theorem~\ref{thm:ppp}. Recall that
$$
N_n:=\sum_{j=0}^{n-1}\delta_{\frac{f_\beta\circ X_j}{B_n}}\in \mathcal M_p((0,\infty]).
$$
We first record three standard results that will be used to pass from the convergence of mixed factorial moments to the convergence of random point measures. For a finite set $A$, we denote its cardinality by $|A|$.

\begin{theorem}[{\cite[Theorem~30.2]{Billingsley95}}] \label{thm:methodofmoments}
Let $Y,Y_1,Y_2,\ldots$ be real-valued random variables. Suppose that the distribution of $Y$ is uniquely determined by its moments, that $Y_n$ has moments of all orders for every $n\geq1$, and that
$\mathbb E\left[Y_n^r\right]\xrightarrow[]{n\to\infty} \mathbb E\left[Y^r\right]$ for every $r\in\mathbb N$. Then $Y_n\xrightarrow[d]{n\to\infty}Y.$
\end{theorem}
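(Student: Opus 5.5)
The plan is the standard tightness-plus-identification argument: show that $(Y_n)$ is tight, show that every subsequential weak limit has the same moments as $Y$, and conclude from the moment-determinacy of $Y$ that all subsequential limits coincide with the law of $Y$.

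\textbf{Tightness.} Since $\mathbb E[Y_n^2]\to\mathbb E[Y^2]<\infty$, the second moments are bounded, say $\sup_n\mathbb E[Y_n^2]\leq K$. Chebyshev's inequality then gives $\mathrm{Prob}(|Y_n|>R)\leq K/R^2$ uniformly in $n$, so the laws of $(Y_n)$ are tight. By Prokhorov's theorem, every subsequence has a further subsequence $(Y_{n_k})$ converging in distribution to some random variable $Y'$.

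\textbf{Moments of the limit.} This is the only step needing care. Weak convergence alone does not transport moments, because $y\mapsto y^r$ is unbounded; the fix is uniform integrability. For fixed $r$, the even moments $\mathbb E[Y_{n}^{2r}]$ converge, hence are bounded in $n$. Therefore $|Y_{n_k}|^r$ is uniformly integrable, since
\[
\mathbb E\bigl[|Y_{n_k}|^r\mathbbm 1_{\{|Y_{n_k}|>R\}}\bigr]\leq R^{-r}\,\mathbb E\bigl[Y_{n_k}^{2r}\bigr].
\]
By the Skorokhod representation, or by truncating $y^r$ with a continuous cutoff at level $R$ and letting $R\to\infty$, weak convergence combined with uniform integrability gives
\[
\mathbb E[(Y')^r]=\lim_{k\to\infty}\mathbb E[Y_{n_k}^r]=\mathbb E[Y^r].
\]
The same uniform-integrability bound shows in particular that $Y'$ has finite moments of every order, so all these expectations are well defined.

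\textbf{Identification.} The random variable $Y'$ has the same moments as $Y$. Since the distribution of $Y$ is uniquely determined by its moments, $Y'$ has the same law as $Y$. Hence every subsequence of $(Y_n)$ has a further subsequence converging in distribution to $Y$. By the subsequence criterion for weak convergence, the whole sequence satisfies $Y_n\xrightarrow[d]{n\to\infty}Y$.
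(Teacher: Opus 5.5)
Your proof is correct and follows essentially the same route as the standard proof in the cited source (Billingsley, Theorem~30.2), which the paper quotes without reproving: bounded second moments give tightness, bounded higher even moments give uniform integrability so that moments pass to subsequential limits, moment-determinacy identifies each such limit with the law of $Y$, and the subsequence criterion concludes. Nothing is missing.
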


\begin{theorem} [{\cite[Theorem~29.4]{Billingsley95}}] 
\label{thm:cramerwold}
Let $Z_n$ and $Z$ be random vectors in $\mathbb R^k$. Then $Z_n\xrightarrow[d]{n\to\infty}Z$
if and only if, for every $u_1,\ldots,u_k\in\mathbb R$, $\sum_{j=1}^ku_jZ_{n,j}\xrightarrow[d]{n\to\infty}\sum_{j=1}^ku_jZ_j.$
\end{theorem}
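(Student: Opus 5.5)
The plan is the standard characteristic-function argument. I treat the two implications separately, and the reverse implication carries essentially all the content.

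\emph{Forward direction.} Suppose $Z_n\xrightarrow[d]{n\to\infty}Z$ and fix $u=(u_1,\ldots,u_k)\in\mathbb R^k$. The map $z\mapsto\sum_{j=1}^k u_jz_j$ is continuous from $\mathbb R^k$ to $\mathbb R$. For every bounded continuous $h:\mathbb R\to\mathbb R$, the composition $h\bigl(\sum_j u_j z_j\bigr)$ is bounded and continuous on $\mathbb R^k$. The definition of convergence in distribution then gives $\sum_j u_jZ_{n,j}\xrightarrow[d]{n\to\infty}\sum_j u_jZ_j$ immediately; this is the continuous mapping theorem.

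\emph{Reverse direction, step 1 (characteristic functions).} Write $\varphi_n(u):=\mathbb E\bigl[e^{i\langle u,Z_n\rangle}\bigr]$ and $\varphi(u):=\mathbb E\bigl[e^{i\langle u,Z\rangle}\bigr]$ for $u\in\mathbb R^k$. Then $\varphi_n(u)$ is the one-dimensional characteristic function of $\sum_j u_jZ_{n,j}$ evaluated at $1$. The function $y\mapsto e^{iy}$ has bounded continuous real and imaginary parts, so the hypothesis yields $\varphi_n(u)\to\varphi(u)$ for every $u\in\mathbb R^k$. The remaining task is therefore to show that pointwise convergence of $k$-dimensional characteristic functions to that of $Z$ implies $Z_n\xrightarrow[d]{n\to\infty}Z$, i.e.\ a multivariate Lévy continuity theorem. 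I would prove this in two further steps.

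\emph{Step 2 (tightness).} Take $u=e_j$, the $j$-th coordinate vector. The hypothesis gives $Z_{n,j}\xrightarrow[d]{n\to\infty}Z_j$ for each $j$, so each coordinate sequence is tight in $\mathbb R$. This is the one-dimensional Prokhorov fact that convergent sequences of laws on $\mathbb R$ are tight. Given $\varepsilon>0$, choose $a_j$ with $\sup_n\mathrm{Prob}(|Z_{n,j}|>a_j)<\varepsilon/k$. A union bound then gives $\sup_n\mathrm{Prob}\bigl(Z_n\notin\prod_j[-a_j,a_j]\bigr)<\varepsilon$, so $(Z_n)$ is tight in $\mathbb R^k$.

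\emph{Step 3 (identification of limits).} By Prokhorov's theorem, every subsequence of $(Z_n)$ has a further subsequence converging in distribution to some random vector $W$. Along that subsequence, $e^{i\langle u,\cdot\rangle}$ is bounded and continuous, so $\varphi_{n}(u)\to\mathbb E\bigl[e^{i\langle u,W\rangle}\bigr]$. Hence $W$ has characteristic function $\varphi$, and by the uniqueness theorem for characteristic functions on $\mathbb R^k$, $W$ has the law of $Z$. Since every subsequence has a further subsequence with this same limit, the full sequence satisfies $Z_n\xrightarrow[d]{n\to\infty}Z$.

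The main obstacle is step 2, tightness in $\mathbb R^k$. Pointwise convergence of characteristic functions alone does not give compactness unless the limit is continuous at the origin. The Cramér--Wold hypothesis sidesteps this cleanly: specialising to coordinate directions reduces tightness to the scalar case. Multivariate uniqueness of characteristic functions, used in step 3, is taken as standard.
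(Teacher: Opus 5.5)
Your proof is correct and follows the standard argument behind the cited result. The paper gives no proof of its own and only cites Billingsley, who proves necessity by the continuous mapping theorem and sufficiency by passing to characteristic functions and invoking the multivariate continuity theorem (his Theorem~29.3); that theorem is in turn proved, as in your steps 2 and 3, via tightness of the marginals, a subsequence/Prokhorov argument and uniqueness of characteristic functions. The only difference is that you obtain marginal tightness directly from the hypothesis with $u=e_j$ rather than from the one-dimensional continuity theorem, which is a harmless shortcut.
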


\begin{theorem}[{\cite[Theorem~4.11]{Kallenberg2017}}] \label{thm:kallenberg-random-measures}
Let $N,N_1,N_2,\ldots$ be random point measures on $(0,\infty]$. Suppose that, for every $p\in\mathbb N$ and every collection of pairwise disjoint intervals
$J_\ell=(a_\ell,b_\ell],$
with $0<a_\ell<b_\ell\leq\infty$, we have
$$
\left(N_n(J_1),\ldots,N_n(J_p)\right)\xrightarrow[d]{n\to\infty}\left(N(J_1),\ldots,N(J_p)\right).
$$
Suppose also that $N(\partial J)=0\ \text{almost surely}$ for every interval $J=(a,b]$ with $0<a<b\leq\infty$. Then
$$
N_n\xrightarrow[d]{n\to\infty}N\ \text{in}\ \mathcal M_p((0,\infty]),
$$
endowed with the vague topology.
\end{theorem}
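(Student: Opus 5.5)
The statement is a cited standard result, so I sketch the argument behind it. The plan is to prove convergence in distribution in $\mathcal M_p((0,\infty])$ by the usual two steps: tightness of the laws of $(N_n)$, and identification of every subsequential limit as $N$ in law. Throughout I use that $\mathcal M_p(E)$ with the vague topology is Polish (closed in the Polish space of locally finite measures on the locally compact, second countable space $E=(0,\infty]$). Hence Prohorov's theorem applies.

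\emph{Step 1: tightness.} A set $\Gamma\subset\mathcal M_p(E)$ is vaguely relatively compact if and only if $\sup_{\xi\in\Gamma}\xi(K)<\infty$ for every compact $K\subset E$. Every compact $K\subset E$ is contained in $(\varepsilon,\infty]$ for some $\varepsilon>0$. Taking $p=1$ and $J_1=(\varepsilon,\infty]$ in the hypothesis gives $N_n((\varepsilon,\infty])\xrightarrow[d]{}N((\varepsilon,\infty])$. In particular the sequence $\bigl(N_n((\varepsilon,\infty])\bigr)_n$ is tight in $\mathbb R$ for each fixed $\varepsilon$. Choose $\varepsilon_k=1/k$ and constants $c_k$ with $\sup_n\mathrm{Prob}\bigl(N_n((1/k,\infty])>c_k\bigr)\le\delta2^{-k}$. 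The set $\{\xi:\xi((1/k,\infty])\le c_k\ \forall k\}$ is then vaguely relatively compact and carries mass at least $1-\delta$ under every law of $N_n$. This gives tightness, so every subsequence has a further subsequence converging in distribution to some random point measure $N'$.

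\emph{Step 2: identification of the limit.} Let $N_{n'}\xrightarrow[d]{}N'$. Define $D$ as the set of $a\in(0,\infty)$ with $\mathrm{Prob}(N'(\{a\})>0)=0$. Its complement is countable, since $\mathbb E[\min(1,N'((\varepsilon,\infty]))]$-type arguments show that only countably many atoms carry positive probability. Hence $D$ is dense.

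For intervals $J=(a,b]$ with $a\in D$ and $b\in D\cup\{\infty\}$, the map $\xi\mapsto\xi(J)$ is vaguely continuous at every $\xi$ with $\xi(\partial J)=0$. Here $\infty$ is not a boundary point of $(a,\infty]$ in $E$. So this map is continuous at $N'$-a.e.\ point. The continuous mapping theorem then gives
\[
\bigl(N_{n'}(J_1),\ldots,N_{n'}(J_p)\bigr)\xrightarrow[d]{}\bigl(N'(J_1),\ldots,N'(J_p)\bigr).
\]
By hypothesis the left side also converges to $(N(J_1),\ldots,N(J_p))$. Hence $N$ and $N'$ have the same finite-dimensional distributions on the semiring $\mathcal I_D$ of such intervals. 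Disjointness is no restriction, because overlapping intervals in $\mathcal I_D$ can be refined into disjoint ones within $\mathcal I_D$ and the counts are additive. The hypothesis $N(\partial J)=0$ a.s.\ is used only to guarantee that the finite-dimensional distributions of $N$ on $\mathcal I_D$ are the relevant ones.

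\emph{Step 3: uniqueness.} The class $\mathcal I_D$ is a countable dissecting semiring generating $\mathcal B(E)$, because $D$ is dense. The Borel $\sigma$-algebra of $\mathcal M_p(E)$ is generated by the maps $\xi\mapsto\xi(J)$, $J\in\mathcal I_D$. Sets of the form $\{\xi:(\xi(J_1),\ldots,\xi(J_p))\in A\}$ form a $\pi$-system generating this $\sigma$-algebra. By Dynkin's $\pi$--$\lambda$ theorem, $N'\overset{d}{=}N$. Since every subsequence has a further subsequence converging to the same limit $N$, we conclude $N_n\xrightarrow[d]{}N$.

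I expect Step 2 to be the main obstacle. The difficulty is that continuity of $\xi\mapsto\xi(J)$ requires the boundary condition at the \emph{unknown} limit $N'$, not at $N$. The remedy is to restrict to endpoints in $D$, which is defined through $N'$, and then rely on the density of $D$ in Step 3.
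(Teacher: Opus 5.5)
Your argument is correct, but it takes a genuinely different route from the paper. The paper gives no proof of its own. It invokes the implication (ii)$\Rightarrow$(i) of Kallenberg's Theorem~4.11 for the dissecting semiring $\{\varnothing\}\cup\{(a,b]:0<a<b\leq\infty\}$. That theorem is formulated for semirings of continuity sets of the limit, which is why the hypothesis $N(\partial J)=0$ is included.

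You instead prove the statement from first principles:
\begin{itemize}
\item Prohorov, via tightness of $N_n((1/k,\infty])$;
\item identification of a subsequential limit $N'$ on intervals whose endpoints are a.s.\ not atoms of $N'$;
\item a $\pi$--$\lambda$ uniqueness argument.
\end{itemize}
Because you adapt the endpoints to $N'$ rather than to $N$, and the convergence hypothesis covers all intervals, your proof never uses $N(\partial J)=0$. Your sentence assigning it a role is therefore inaccurate: for this implication the hypothesis is simply superfluous (e.g.\ $N_n=N=\delta_1$ violates it, yet the conclusion holds). So the citation buys brevity, while your route is self-contained and slightly more general.

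Two minor slips. First, $\mathcal I_D$ is not countable, since $D$ is co-countable and hence uncountable. Countability is never needed, or you can replace $D$ by a countable dense subset. Second, the countability of fixed atoms of $N'$ deserves its one-line proof. For finite $A\subset(\varepsilon,\infty)$ one has $\sum_{a\in A}\mathrm{Prob}\bigl(N'(\{a\})>0,\ N'((\varepsilon,\infty])\leq m\bigr)\leq m$. Choosing $m$ with $\mathrm{Prob}\bigl(N'((\varepsilon,\infty])>m\bigr)<\delta/2$ shows that at most $2m/\delta$ points $a>\varepsilon$ satisfy $\mathrm{Prob}(N'(\{a\})>0)\geq\delta$.
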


\begin{remark}
    Theorem~\ref{thm:kallenberg-random-measures} is a direct consequence of the implication
$\textnormal{(ii)}\Rightarrow\textnormal{(i)}$ in
\cite[Theorem~4.11]{Kallenberg2017}, applied to the dissecting semiring of intervals $\{\varnothing\}\cup\{(a,b]:0<a<b\leq\infty\}$.
\end{remark}
 
The proof of Theorem~\ref{thm:ppp} is based on the factorial-moment method, following the classical method of moments~\cite[Theorem~30.2]{Billingsley95}. To estimate the mixed factorial moments, we decompose the selected observation times into well-separated configurations and configurations containing clusters of nearby times, as in
\cite{DolgopyatFayadSixu2022,Auer25} (see also
\cite{AuerLiu2026,Auer2026Quenched}). In the present conditioned setting, the well-separated configurations are controlled by conditional mixing of all orders, whereas the contribution of each cluster is estimated under the stationary $Q$-process using the short-return estimate~\eqref{eq:shortreturn}.

\begin{proof}[Proof of Theorem~\ref{thm:ppp}]
Let $I_1,\ldots,I_k$ be pairwise disjoint intervals of the form $I_j=(a_j,b_j]\subset(0,\infty]$, with $a_j>0$, and use the convention $b_j^{-\alpha}=0$ when $b_j=\infty$.
Define $\mathbb P_{x,n}(\cdot):=\mathbb P_x(\cdot\mid\tau>n)$ and $a:=\min_{1\leq j\leq k}a_j$. For each $j\in\{1,\ldots,k\}$, set 
$$
A_{n,j}:=\{f_\beta/B_n\in I_j\}\quad \text{and} \quad U_n:= \{f_\beta > aB_n\}.
$$
Thus,
$$
N_n(I_j)=\sum_{i=0}^{n-1}\mathbbm 1_{A_{n,j}}(X_i)\quad \text{and}\quad A_{n,j}\subset U_n\ \text{for every }j\in\{1,\ldots,k\}.
$$ 

We divide the proof into seven steps.

\begin{step}[1] \label{step:ppp-1}
We show that 
\begin{enumerate}[label = (\roman*)]
    \item \label{it:ppp-1-i} $n\nu(A_{n,j})\to\upsilon_\alpha(I_j)$ for every $j\in\{1,\ldots,k\}$ as $n\to\infty$;
    \item \label{it:ppp-1-ii} $n\nu(U_n)\to a^{-\alpha}$ as $n\to\infty$; and
    \item \label{it:ppp-1-iii} $\sup_{z\in M}\mathcal P(z,U_n)\to0$ as $n\to\infty.$
\end{enumerate} 
\end{step}
\begin{stepproof}[Proof of Step~\ref{step:ppp-1}]
We show \ref{it:ppp-1-i}. Since $I_j=(a_j,b_j]$ and from Proposition \ref{prop:tail-asymptotics} we have that $n\nu(f_\beta>uB_n)\to u^{-\alpha}$ for every $u>0$, we have
$$
n\nu(A_{n,j})=n\nu(f_\beta>a_jB_n)-n\nu(f_\beta>b_jB_n)\xrightarrow[]{n\to\infty}a_j^{-\alpha}-b_j^{-\alpha}=\upsilon_\alpha(I_j).
$$
To obtain \ref{it:ppp-1-ii}, observe that
\begin{align*}
n\nu(U_n)&=n\nu(f_\beta>aB_n)\xrightarrow[]{n\to\infty}a^{-\alpha}.
\end{align*}
We now verify \ref{it:ppp-1-iii}.
Since $U_n$ decreases to the empty set, continuity from above gives
$\rho(U_n)\to0.$ Lemma~\ref{lem:uniform-small-targets} therefore yields
$$
\sup_{z\in M}\mathcal P(z,U_n)\xrightarrow[]{n\to\infty}0,
$$
as desired.
\end{stepproof}

\begin{step}[2] \label{step:ppp-2}
We show that there exists $C>0$ and $\vartheta\in(0,1)$ such that
$$
\sup_{y\in U_n}\mathcal Q^m(y,U_n)\leq\nu(U_n)+C\vartheta^{m-1}\sup_{z\in M}\mathcal P(z,U_n)
$$
for every $m\geq1$ and all sufficiently large $n$. Recall from \eqref{thm:Qprocess} that
$$
\mathcal Q^mg(y):=\frac{\mathcal P^m(\eta g)(y)}{\lambda^m\eta(y)}.
$$
\end{step}
\begin{stepproof}[Proof of Step~\ref{step:ppp-2}]
Fix $\vartheta\in(\lambda_0/\lambda,1)$. Recall that $\nu$ is stationary for $\mathcal Q$:
\begin{align}
\int_M\mathcal Qg\,d\nu&=\frac{1}{\lambda}\mu\bigl(\mathcal P(\eta g)\bigr)=\mu(\eta g)=\nu(g).\nonumber
\end{align}
Since $\eta(x_0)>0$, there exist a neighborhood $V$ of $x_0$ and $\eta_*>0$ such that $\eta(y)\geq\eta_*$ for every $y\in V$. For all sufficiently large $n$, $U_n\subset V$. Let $\varphi_n:=\mathcal P(\eta\mathbbm 1_{U_n})$. By Step~\ref{step:ppp-1} and the identity $\mathcal P^* \mu=\lambda\mu$, we have that
\begin{align*}
\|\varphi_n\|_\infty\leq \|\eta\|_\infty\sup_{z\in M}\mathcal P(z,U_n)\quad  \text{and}\quad\mu(\varphi_n) =\lambda\nu(U_n).
\end{align*}
For $m\geq2$, the spectral decomposition in \eqref{2.1} gives
\begin{align}
\mathcal P^m(\eta\mathbbm 1_{U_n})&=\mathcal P^{m-1}\varphi_n=\lambda^{m-1}\eta\mu(\varphi_n)+\mathcal O\bigl(\lambda^{m-1}\vartheta^{m-1}\|\varphi_n\|_\infty\bigr)\nonumber\\
&=\lambda^m\eta\nu(U_n)+\mathcal O\bigl(\lambda^{m-1}\vartheta^{m-1}\sup_{z\in M}\mathcal P(z,U_n)\bigr).\nonumber
\end{align}
Dividing by $\lambda^m\eta(y)$ and using $\eta(y)\geq\eta_*$ on $U_n$, we obtain
\begin{align}
\mathcal Q^m(y,U_n)=\nu(U_n)+\mathcal O\bigl(\vartheta^{m-1}\sup_{z\in M}\mathcal P(z,U_n)\bigr)\label{eq:shortreturn}
\end{align}
uniformly in $y\in U_n$. For $m=1$,
$$
\mathcal Q(y,U_n)=\frac{\mathcal P(\eta\mathbbm 1_{U_n})(y)}{\lambda\eta(y)}\leq\frac{\|\eta\|_\infty}{\lambda\eta_*}\sup_{z\in M}\mathcal P(z,U_n).
$$
Increasing the constant if necessary, we conclude that
\begin{align}
\sup_{y\in U_n}\mathcal Q^m(y,U_n)\leq\nu(U_n)+C\vartheta^{m-1}\sup_{z\in M}\mathcal P(z,U_n)\label{ref:q-short-return}
\end{align}
for every $m\geq1$, completing Step~\ref{step:ppp-2}.
\end{stepproof}
\begin{step}[3] \label{step:ppp-3}
Fix $r_1,\ldots,r_k\in\mathbb N_0$, set $r:=r_1+\cdots+r_k$, and choose $q_n=\lceil (\log n)^2\rceil$ hence ${n^r\vartheta^{q_n}\xrightarrow[]{n\to\infty}0}$. We show that
\begin{align}
\sum_{\substack{0\leq i<q_n\\\text{or }n-q_n<i<n}}\mathbb P_{x,n}(X_i\in U_n)\xrightarrow[]{n\to\infty}0.\label{eq:limite}
\end{align}
In particular, defining
$$
\widetilde N_n(I_j):=\sum_{i=q_n}^{n-q_n}\mathbbm 1_{A_{n,j}}(X_i),
$$
the fact that $A_{n,j}\subset U_n$ together with \eqref{eq:limite} yields
\begin{align}
\mathbb P_{x,n}\left(\bigl(N_n(I_1),\ldots,N_n(I_k)\bigr)\neq\bigl(\widetilde N_n(I_1),\ldots,\widetilde N_n(I_k)\bigr)\right)\xrightarrow[]{n\to\infty}0.\label{ref:truncated-counts}
\end{align}
\end{step}
\begin{stepproof}[Proof of Step~\ref{step:ppp-3}]
For $1\leq i\leq q_n$, the Markov property and the spectral decomposition, followed by the computation from Step~2 at the fixed point $x$, give
\begin{align}
\mathbb P_{x,n}(X_i\in U_n)&=\frac{\mathcal P^i\left(\mathbbm 1_{U_n}\mathcal P^{n-i}\mathbbm 1_M\right)(x)}{\mathcal P^n\mathbbm 1_M(x)}=\frac{\lambda^{-i}\mathcal P^i\left(\mathbbm 1_{U_n}\left(\eta+\mathcal O(\vartheta^{n-i})\right)\right)(x)}{\eta(x)+\mathcal O_x(\vartheta^n)}\nonumber\\
&=\mathcal Q^i(x,U_n)+\mathcal O_x(\vartheta^{n-i})\nonumber\\
&\leq\nu(U_n)+C_x\vartheta^{i-1}\sup_{z\in M}\mathcal P(z,U_n)+\mathcal O_x(\vartheta^{n-i}).\label{eq:low}
\end{align}
Here we used $\eta(x)>0$ in the last two lines.

 For $1\leq t\leq q_n$, applying the spectral decomposition to the initial gap $n-t$ gives
\begin{align}
\mathbb P_{x,n}(X_{n-t}\in U_n)&=\frac{\mathcal P^{n-t}\bigl(\mathbbm 1_{U_n}\mathcal P^t\mathbbm 1_M\bigr)(x)}{\mathcal P^n\mathbbm 1_M(x)} =\frac{\lambda^{-t} \lambda^{-(n-t)}\mathcal P^{n-t}\bigl(\mathbbm 1_{U_n}\mathcal P^t\mathbbm 1_M\bigr)(x)}{\lambda^{-n}\mathcal P^n\mathbbm 1_M(x)}\nonumber\\
&= \frac{ \mu(\mathbbm 1_{U_n} \lambda^{-t} \mathcal P^t \mathbbm 1_M)\eta(x) + \mathcal O(\vartheta^{n-t}) }{\eta(x) + \mathcal O(\vartheta^{n}  )}= \mu\bigl(\mathbbm 1_{U_n} \eta  + \mathbbm 1_{U_n} R^t\mathbbm 1_M  \bigr)+\mathcal O_x(\vartheta^{n-t})\nonumber\\
&\leq C\nu(U_n)+\mathcal O_x(\vartheta^{n-t}).\label{eq:high}
\end{align}
For the last inequality, we used that $R^t\mathbbm 1_M$ is uniformly bounded in $L^\infty(M,\mu)$ and that $\mu(U_n)\leq\eta_*^{-1}\nu(U_n)$ for $n$ large enough. From \eqref{eq:low}, \eqref{eq:high} and Step~\ref{step:ppp-1}, we obtain
\begin{align*}
\sum_{\substack{0\leq i<q_n\\\text{or }n-q_n<i<n}}\mathbb P_{x,n}(X_i\in U_n)&\leq C_x\bigl(q_n\nu(U_n)+\sup_{z\in M}\mathcal P(z,U_n)\bigr)+\smallO(1)\xrightarrow[]{n\to\infty}0.
\end{align*}
The term $i=0$ vanishes for all sufficiently large $n$ because
$f_\beta(x)/B_n\to0.$
Here we use the convention $f_\beta(x_0)=1$. This completes Step~\ref{step:ppp-3}.
\end{stepproof}

Let $\mathscr L:=\{(j,\ell):1\leq j\leq k,\ 1\leq\ell\leq r_j\}$ and $T_n:=\{q_n,\ldots,n-q_n\}$. We refer to $\mathscr L$ as the set of labels and $T_n$ as the set of times. An element $\mathrm{t}\in\operatorname{Inj}(\mathscr L,T_n)$ assigns to each label $(j,\ell)$ a time $t_{j,\ell}:=\mathrm{t}(j,\ell)\in T_n$, with no time being chosen twice. Define 
$$
\mathscr G_n:=\left\{\mathrm{t}\in\operatorname{Inj}(\mathscr L,T_n):|t_{j,\ell}-t_{j',\ell'}|>q_n\text{ whenever }(j,\ell)\neq(j',\ell')\right\}.
$$
i.e.~the set of injective maps whose selected times are mutually separated by more than $q_n$.

Given $z,r\in \mathbb N_0$, recall the notation 
$$
(z)_r:= \begin{cases}
z(z-1)(z-2)\ldots (z-r+1),&\ \text{if }r\in \mathbb N\\
1,&\ \text{if }r=0.
\end{cases}
$$

\begin{step}[4] \label{step:ppp-4}
We show that
$$
\mathbb E_{x,n}\left[\prod_{j=1}^k\bigl(\widetilde N_n(I_j)\bigr)_{r_j}\right]=S_n^{\mathrm{sep}}+S_n^{\mathrm{cl}},
$$
where
\begin{align}
S_n^{\mathrm{sep}}&:=\sum_{\mathrm{t}\in\mathscr G_n}\mathbb P_{x,n}\left(X_{t_{j,\ell}}\in A_{n,j}\text{ for every }(j,\ell)\in\mathscr L\right),\quad \text{and} \quad\nonumber\\
S_n^{\mathrm{cl}}&:=\sum_{\mathrm{t}\in\operatorname{Inj}(\mathscr L,T_n)\setminus\mathscr G_n}\mathbb P_{x,n}\left(X_{t_{j,\ell}}\in A_{n,j}\text{ for every }(j,\ell)\in\mathscr L\right).\nonumber
\end{align}
\end{step}
\begin{stepproof}[Proof of Step~\ref{step:ppp-4}]
To see this, first observe that
$$
\bigl(\widetilde N_n(I_j)\bigr)_{r_j}=\sum_{\substack{t_{j,1},\ldots,t_{j,r_j}\in T_n\\t_{j,1},\ldots,t_{j,r_j}\text{ distinct}}}\prod_{\ell=1}^{r_j}\mathbbm 1_{A_{n,j}}(X_{t_{j,\ell}}) \ \text{for every }j\in\{1,\ldots,k\}.
$$
Indeed, if $\widetilde N_n(I_j)=z$, then there are exactly $z$ observation indices $i\in T_n$ for which $X_i\in A_{n,j}$. The right-hand side counts the ordered choices of $r_j$ distinct indices among these $z$ indices. The number of such choices is $z(z-1)\cdots(z-r_j+1)=(z)_{r_j},$ which proves the identity.

When the products over $j=1,\ldots,k$ are expanded, a term in which the same time is chosen for two different intervals is zero, since $A_{n,j}\cap A_{n,j'}=\varnothing$ whenever $j\neq j'$. Therefore, only injective choices of times contribute, and
$$
\mathbb E_{x,n}\left[\prod_{j=1}^k\bigl(\widetilde N_n(I_j)\bigr)_{r_j}\right]=\sum_{\mathrm{t}\in\operatorname{Inj}(\mathscr L,T_n)}\mathbb P_{x,n}\left(X_{t_{j,\ell}}\in A_{n,j}\text{ for every }(j,\ell)\in\mathscr L\right).
$$
The set $\operatorname{Inj}(\mathscr L,T_n)$ is the disjoint union of $\mathscr G_n$ and $\operatorname{Inj}(\mathscr L,T_n)\setminus\mathscr G_n$. Splitting the last sum according to this disjoint union gives the required decomposition and completes Step~\ref{step:ppp-4}.
\end{stepproof}

\begin{step}[5] \label{step:ppp-5}
We show that
$$
S_n^{\mathrm{sep}}\xrightarrow[]{n\to\infty}\prod_{j=1}^k\upsilon_\alpha(I_j)^{r_j}.
$$
\end{step}
\begin{stepproof}[Proof of Step~\ref{step:ppp-5}]
The case $r=0$ is trivial; we therefore assume that $r\geq 1$.  Since $|\mathscr L|=r_1+\cdots+r_k=r$, the total number of injective maps from $\mathscr L$ to $T_n$ is
$$
\left|\operatorname{Inj}(\mathscr L,T_n)\right|=(|T_n|)_r=n^r+\mathcal O_r(q_nn^{r-1}),
$$
where we have used that $|T_n|=n-2q_n+1=n+\mathcal O(q_n)$. 

We next estimate the number of injective maps which do not belong to $\mathscr G_n$. Every such map contains two distinct labels $(j,\ell)$ and $(j',\ell')$ whose corresponding observation indices satisfy
$$
|t_{j,\ell}-t_{j',\ell'}|\leq q_n.
$$
If $r=1$, then $\mathscr G_n=\operatorname{Inj}(\mathscr L,T_n)$, so the complement is empty. We may therefore assume that $r\geq2$ when estimating the number of maps outside $\mathscr G_n$.
\begin{itemize}
\item There are at most $\binom{r}{2}$ choices for the pair of distinct labels, since $\mathscr L$ contains $r$ labels.
\item After fixing these labels, there are at most $|T_n|$ choices for the observation index $t_{j,\ell}\in T_n$ associated with the first label.
\item Once $t_{j,\ell}$ has been chosen, the second observation index must belong to
$$
T_n\cap\{t_{j,\ell}-q_n,\ldots,t_{j,\ell}+q_n\},
$$
which contains at most $2q_n+1$ elements.
\item After choosing these two observation indices, we disregard the injectivity restriction and allow each of the remaining $r-2$ labels to be assigned any element of $T_n$. This gives at most $|T_n|^{r-2}$ choices.
\end{itemize}
It follows that
$$
\left|\operatorname{Inj}(\mathscr L,T_n)\setminus\mathscr G_n\right|\leq\binom{r}{2}|T_n|(2q_n+1)|T_n|^{r-2}\leq C_rq_nn^{r-1}.
$$
Subtracting these maps from the total number of injective maps gives
$$
|\mathscr G_n|= \left|\operatorname{Inj}(\mathscr L,T_n)\right| - \left|\operatorname{Inj}(\mathscr L,T_n)\setminus \mathscr G_n\right| =n^r+\mathcal O_r(q_nn^{r-1}).
$$
For every $\mathrm{t}\in\mathscr G_n$, the selected times belong to $T_n$ and are mutually separated by more than $q_n$. Lemma~\ref{lem:mix}, applied to the corresponding indicator functions, gives
$$
\mathbb P_{x,n}\left(X_{t_{j,\ell}}\in A_{n,j}\text{ for every }(j,\ell)\in\mathscr L\right)=\prod_{j=1}^k\nu(A_{n,j})^{r_j}+\mathcal O_{x,r}(\vartheta^{q_n}).
$$
Hence,
\begin{align*}
S_n^{\mathrm{sep}}&=|\mathscr G_n|\prod_{j=1}^k\nu(A_{n,j})^{r_j}+\mathcal O_{x,r}\bigl(n^r\vartheta^{q_n}\bigr)\nonumber\\
&=\frac{|\mathscr G_n|}{n^r}\prod_{j=1}^k\bigl(n\nu(A_{n,j})\bigr)^{r_j}+\mathcal O_{x,r}\bigl(n^r\vartheta^{q_n}\bigr)\xrightarrow[]{n\to\infty}\prod_{j=1}^k\upsilon_\alpha(I_j)^{r_j}.\nonumber
\end{align*}
Indeed, $|\mathscr G_n|/n^r\to1$, $n\nu(A_{n,j})\to\upsilon_\alpha(I_j)$, and $n^r\vartheta^{q_n}\to0$. This completes Step~\ref{step:ppp-5}.
\end{stepproof}

\begin{step}[6]\label{step:ppp-6}
We show that $S_n^{\mathrm{cl}}\xrightarrow[]{n\to\infty}0.$ 
Consequently, combining this limit with Steps~\ref{step:ppp-4} and~\ref{step:ppp-5}, we obtain
$$
\mathbb E_{x,n}\left[\prod_{j=1}^k\bigl(\widetilde N_n(I_j)\bigr)_{r_j}\right]\xrightarrow[]{n\to\infty}\prod_{j=1}^k\upsilon_\alpha(I_j)^{r_j}.
$$
\end{step}
\begin{stepproof}[Proof of Step~\ref{step:ppp-6}]
The case $r=0$ is trivial. If $r=1$, then $\operatorname{Inj}(\mathscr L,T_n)\setminus\mathscr G_n=\varnothing,$
and hence $S_n^{\mathrm{cl}}=0$. We may therefore assume that $r\geq2$.

We divide the proof into three parts. We first decompose each tuple contributing to $S_n^{\mathrm{cl}}$ into clusters. We then estimate the contribution of a single cluster. Finally, we use the long gaps between the clusters to factorise their joint probability and count all possible tuples.

Fix $\mathrm{t}\in\operatorname{Inj}(\mathscr L,T_n)\setminus\mathscr G_n$
and write its selected times in increasing order as $t_1<t_2<\cdots<t_r.$
For every $a\in\{1,\ldots,r\}$, there is a unique label $(j_a,\ell_a)\in\mathscr L$ such that $\mathrm{t}(j_a,\ell_a)=t_a.$
Thus, the event associated with $\mathrm{t}$ is
$$
X_{t_a}\in A_{n,j_a}\ \text{for every }a\in\{1,\ldots,r\}.
$$
Since $\mathrm{t}\notin\mathscr G_n$, there exist two distinct selected times whose distance is at most $q_n$. After ordering the selected times, this implies that
$$
t_{a+1}-t_a\leq q_n\ \text{for at least one }a\in\{1,\ldots,r-1\}.
$$

Let us decompose $\{t_1,\ldots,t_r\}$ into clusters such that consecutive elements of each cluster are at distance at most $q_n$. There exist unique indices
\begin{itemize}
\item $0=p_0<p_1<\cdots<p_c=r$;
\item $t_{a+1}-t_a\leq q_n$ for $p_{\ell-1}+1\leq a<p_\ell$;
\item $t_{p_\ell+1}-t_{p_\ell}>q_n$ for $1\leq\ell<c$.
\end{itemize}
Observe that $c$ is the number of clusters. The $\ell$-th cluster is $\mathcal C_\ell:=\{t_{p_{\ell-1}+1},\ldots,t_{p_\ell}\},$
and its size is $s_\ell:=p_\ell-p_{\ell-1}.$
The cluster sizes satisfy
\begin{align}
s_1+\cdots+s_c=r \quad \text{and} \quad r-c=\sum_{\ell=1}^c(s_\ell-1)\geq1.\label{ref:number-internal-gaps}
\end{align}
Indeed, $s_\ell-1$ is the number of internal gaps in the $\ell$-th cluster. Since $\mathrm{t}\notin\mathscr G_n$, at least one cluster has size at least two, which proves the final inequality in \eqref{ref:number-internal-gaps}.

We next estimate the contribution of a single cluster to the sum defining $S_n^{\mathrm{cl}}$. Consider a cluster of size $s\geq2$, with interval labels $j_1,\ldots,j_s$ and successive internal gaps
$$
1\leq m_1,\ldots,m_{s-1}\leq q_n.
$$
Set
$$
\pi_n(j_1,\ldots,j_s;m_1,\ldots,m_{s-1}):=\mathbb Q_\nu\left(X_0\in A_{n,j_1},X_{m_1}\in A_{n,j_2},\ldots,X_{m_1+\cdots+m_{s-1}}\in A_{n,j_s}\right).
$$
Since $A_{n,j}\subset U_n$ for every $j$, the Markov property gives
$$
\pi_n(j_1,\ldots,j_s;m_1,\ldots,m_{s-1})\leq\nu(U_n)\prod_{b=1}^{s-1}\sup_{y\in U_n}\mathcal Q^{m_b}(y,U_n).
$$
Applying \eqref{ref:q-short-return} to every factor, we obtain
\begin{align}
\pi_n(j_1,\ldots,j_s; m_1,\ldots,m_{s-1})&\leq\nu(U_n)\prod_{b=1}^{s-1}\left(\nu(U_n)+C\vartheta^{m_b-1}\sup_{z\in M}\mathcal P(z,U_n)\right).\label{ref:single-cluster-probability}
\end{align}
Define $\varepsilon_n:=q_n\nu(U_n)+\frac{C}{1-\vartheta}\sup_{z\in M}\mathcal P(z,U_n).$ Summing \eqref{ref:single-cluster-probability} over all possible internal gaps gives
\begin{align}
\sum_{m_1,\ldots,m_{s-1}=1}^{q_n}\pi_n(j_1,\ldots,j_s;m_1,\ldots,m_{s-1})&\leq\nu(U_n)\varepsilon_n^{s-1}.\label{ref:single-cluster-sum}
\end{align}
Indeed,
$$
\sum_{m=1}^{q_n}\left(\nu(U_n)+C\vartheta^{m-1}\sup_{z\in M}\mathcal P(z,U_n)\right)\leq\varepsilon_n.
$$
For a cluster of size $s=1$ there are no internal gaps and its stationary $Q$-probability is bounded by
$$
\nu(A_{n,j})\leq\nu(U_n)=\nu(U_n)\varepsilon_n^{s-1}.
$$
Thus, \eqref{ref:single-cluster-sum} remains valid for $s=1$, with the sum over internal gaps interpreted as having one term.

Since $q_n=\lceil(\log n)^2\rceil$ and $\nu(U_n)=\mathcal O(n^{-1})$, we have $q_n\nu(U_n)\to0.$ Step~\ref{step:ppp-1}~\ref{it:ppp-1-iii} also gives $\sup_{z\in M}\mathcal P(z,U_n)\to0.$ Therefore, $\varepsilon_n\to0.$

We now use the estimate for each individual cluster to bound the joint contribution of all $c$ clusters in the tuple. For each $\ell\in\{1,\ldots,c\}$, define
$$
\pi_{\ell,n}:=\mathbb Q_\nu\left(X_{t_{p_{\ell-1}+b}-t_{p_{\ell-1}+1}}\in A_{n,j_{p_{\ell-1}+b}}\text{ for every }1\leq b\leq s_\ell\right).
$$
Thus, $\pi_{\ell,n}$ is the stationary $Q$-probability of the visits in the $\ell$-th cluster after shifting the first selected time of that cluster to zero.

Since every selected time belongs to $T_n=\{q_n,\ldots,n-q_n\}$, we have $t_1\geq q_n$
and $n-t_r\geq q_n.$
Moreover, the maximality of the cluster decomposition gives $t_{p_\ell+1}-t_{p_\ell}>q_n$
for every $1\leq\ell<c$. Applying Lemma~\ref{lem:cluster-mixing} to the $c$ clusters, with $D_a=A_{n,j_a}$, $N=n$ and $q=q_n$, gives
\begin{align}
\mathbb P_{x,n}\left(X_{t_a}\in A_{n,j_a}\text{ for every }1\leq a\leq r\right)&=\prod_{\ell=1}^c\pi_{\ell,n}+\mathcal O_{x,r}(\vartheta^{q_n}).\label{ref:cluster-factorisation}
\end{align}

Let us first sum the product term in \eqref{ref:cluster-factorisation}. Fix the number of clusters $c$, their sizes $s_1,\ldots,s_c$ and the temporal order of the labels. A tuple is determined by choosing the initial time of each cluster and the internal gaps within each cluster.

There are at most $n^c$ choices for the initial times of the clusters. The restrictions that the clusters must be ordered and separated by more than $q_n$ only reduce this number. For each cluster, the sum over its internal gaps is controlled by \eqref{ref:single-cluster-sum}. Therefore, the sum of the product terms is bounded by
\begin{align}
\sum_{\substack{\mathrm{t}\in\operatorname{Inj}(\mathscr L,T_n)\setminus\mathscr G_n\\\mathrm{t}\text{ has }c\text{ clusters of sizes }s_1,\ldots,s_c}}\prod_{\ell=1}^c\pi_{\ell,n} \leq C_{x,r}n^c\prod_{\ell=1}^c\left(\nu(U_n)\varepsilon_n^{s_\ell-1}\right)=C_{x,r}\bigl(n\nu(U_n)\bigr)^c\varepsilon_n^{r-c}.\label{ref:cluster-main-count}
\end{align}
Here we used \eqref{ref:number-internal-gaps}.

We next sum the error term in \eqref{ref:cluster-factorisation}. There are at most $\left|\operatorname{Inj}(\mathscr L,T_n)\right|\leq n^r$
injective choices of the $r$ selected times. Since the error associated with each choice is bounded by $\mathcal O_{x,r}(\vartheta^{q_n})$, the total absolute error is bounded by
\begin{align}
&\sum_{\mathrm{t}\in\operatorname{Inj}(\mathscr L,T_n)\setminus\mathscr G_n}\left|\mathbb P_{x,n}\left(X_{t_a}\in A_{n,j_a}\text{ for every }1\leq a\leq r\right)-\prod_{\ell=1}^c\pi_{\ell,n}\right|\leq C_{x,r}n^r\vartheta^{q_n}.\label{ref:cluster-error-count}
\end{align}
For fixed $r$ and $c$, there are at most $r!$ possible temporal orders of the labels and $\binom{r-1}{c-1}$
possible decompositions of $r$ into $c$ positive cluster sizes. These numbers do not depend on $n$ and can be absorbed into $C_{x,r}$.

Summing \eqref{ref:cluster-main-count} over $c\in\{1,\ldots,r-1\}$ and adding \eqref{ref:cluster-error-count}, we obtain
\begin{align}
S_n^{\mathrm{cl}}&\leq C_{x,r}\sum_{c=1}^{r-1}\bigl(n\nu(U_n)\bigr)^c\varepsilon_n^{r-c}+C_{x,r}n^r\vartheta^{q_n}.\label{ref:close-tuples-bound}
\end{align}
Since $n\nu(U_n)=\mathcal O(1),\ \varepsilon_n\to0,\ r-c\geq1,\ n^r\vartheta^{q_n}\to0,$
every term on the right-hand side of \eqref{ref:close-tuples-bound} converges to zero. Hence, $S_n^{\mathrm{cl}}\xrightarrow[]{n\to\infty}0.$
Finally, Step~\ref{step:ppp-4} gives
$$
\mathbb E_{x,n}\left[\prod_{j=1}^k\bigl(\widetilde N_n(I_j)\bigr)_{r_j}\right]=S_n^{\mathrm{sep}}+S_n^{\mathrm{cl}},
$$
while Step~\ref{step:ppp-5} gives
$$
S_n^{\mathrm{sep}}\to\prod_{j=1}^k\upsilon_\alpha(I_j)^{r_j},
$$
concluding Step~\ref{step:ppp-6}.
\end{stepproof}

\begin{step}[7]\label{step:ppp-7}
We show that the joint factorial moments converge to those of $(Y_1,\ldots,Y_k)$ and conclude the Poisson point-process limit.
\end{step}
\begin{stepproof}[Proof of Step~\ref{step:ppp-7}]
Combining Steps~\ref{step:ppp-4}, \ref{step:ppp-5} and \ref{step:ppp-6} and using the factorial moments of independent Poisson random variables, for arbitrary $r_1,\ldots,r_k\in\mathbb N_0$, we obtain
$$
\mathbb E_{x,n}\left[\prod_{j=1}^k\bigl(\widetilde N_n(I_j)\bigr)_{r_j}\right]\xrightarrow[]{n\to\infty}\prod_{j=1}^k\upsilon_\alpha(I_j)^{r_j}=\mathbb E\left[\prod_{j=1}^k(Y_j)_{r_j}\right].
$$
Thus, all mixed factorial moments of
$\left(\widetilde N_n(I_1),\ldots,\widetilde N_n(I_k)\right)$
converge to the corresponding mixed factorial moments of
$(Y_1,\ldots,Y_k).$

Since the falling factorials $(z)_r$ form a basis for the space of polynomials~\cite[\S6.1, equation~(6.10)]{GrahamKnuthPatashnik1994}, convergence of all mixed factorial moments implies convergence of all mixed ordinary moments, i.e.
$$
\mathbb E_{x,n}\left[\prod_{j=1}^k\bigl(\widetilde N_n(I_j)\bigr)^{r_j}\right]\xrightarrow[]{n\to\infty}\mathbb E\left[\prod_{j=1}^kY_j^{r_j}\right].
$$
In particular, for every $u_1,\ldots,u_k\in\mathbb R$ and every $m\in\mathbb N$,
$$
\mathbb E_{x,n}\left[\left(\sum_{j=1}^ku_j\widetilde N_n(I_j)\right)^m\right]\xrightarrow[]{n\to\infty}\mathbb E\left[\left(\sum_{j=1}^ku_jY_j\right)^m\right].
$$
Moreover, since $Y_1,\ldots,Y_k$ are independent Poisson random variables,
$$
\mathbb E\left[\exp\left(s\sum_{j=1}^ku_jY_j\right)\right]=\exp\left(\sum_{j=1}^k\upsilon_\alpha(I_j)\left(e^{su_j}-1\right)\right)<\infty
$$
for every $s\in\mathbb R$. Hence, the distribution of $\sum_{j=1}^ku_jY_j$ is determined by its moments. From Theorem \ref{thm:methodofmoments}
$$
\sum_{j=1}^ku_j\widetilde N_n(I_j)\xrightarrow[d]{n\to\infty}\sum_{j=1}^ku_jY_j
$$
for every $u_1,\ldots,u_k\in\mathbb R$. Theorem~\ref{thm:cramerwold}  then yields
\begin{align}
\left(\widetilde N_n(I_1),\ldots,\widetilde N_n(I_k)\right)\xrightarrow[d]{n\to\infty}\left(Y_1,\ldots,Y_k\right).\label{eq:jointconversition}
\end{align}
By \eqref{ref:truncated-counts}, the same convergence holds with $N_n(I_j)$ in place of $\widetilde N_n(I_j)$.

For every interval $J=(a,b]$ with $0<a<b\leq\infty$, the boundary $\partial J$ contains at most two points. Since $\upsilon_\alpha$ has no atoms,
$$
N(\partial J)\sim\operatorname{Poi}\left(\upsilon_\alpha(\partial J)\right)=\operatorname{Poi}(0).
$$
Hence,
$N(\partial J)=0$
almost surely. Combining this fact with the joint convergence in \eqref{eq:jointconversition}, Theorem~\ref{thm:kallenberg-random-measures} gives
$$
N_n\xrightarrow[d]{n\to\infty} N\ \text{in }\mathcal M_p((0,\infty]),\  N\sim\operatorname{PRM}(\upsilon_\alpha).
$$
This concludes Step~\ref{step:ppp-7}.
\end{stepproof}
The proof of the theorem is now complete.
\end{proof}

\section{Conditional stable laws for \texorpdfstring{$0<\alpha<2$}{0<α<2}}
\label{sec:stable-laws}
The point-process limit analysed in the previous section does not immediately imply convergence of the partial sums. This is a consequence of the identity function on $E := (0,\infty]$ not being compactly supported. We overcome this obstacle by truncating the identity function near zero and infinity following~\cite{Resnick1987,DavisHsing1995}.

Let us make this obstruction more evident. Recall that
$$
N_n(\d y):=\sum_{i=0}^{n-1}\delta_{\frac{f_\beta\circ X_i}{B_n}}(\d y).
$$
Hence, for every $n\in \mathbb N$,
$$
\frac{1}{B_n}\sum_{i=0}^{n-1}f_\beta\circ X_i=\int_{(0,\infty]}yN_n(\d y)=N_n(\mathrm{id}),
$$
where $\mathrm{id}(y):=y$. The function $\mathrm{id}$ is not a compactly supported continuous function on $(0,\infty]$. Therefore, while Theorem \ref{thm:ppp} guarantees
$$
\mathbb P_x\bigl(N_n\in\cdot\mid\tau>n\bigr)\xRightarrow[]{n\to\infty}\mathrm{Prob}\bigl(N\in\cdot\bigr),
$$
with $N\sim\operatorname{PRM}(\upsilon_\alpha)$, the continuous mapping theorem cannot be applied directly to the map
$$
\xi\mapsto\xi(\mathrm{id})=\int_{(0,\infty]}y\,\xi(\d y).
$$
Consequently, the point-process convergence does not by itself imply that
$$
\mathbb P_x\bigl(N_n(\mathrm{id})\in\cdot\mid\tau>n\bigr)\xRightarrow[]{n\to\infty}\mathrm{Prob}\bigl(N(\mathrm{id})\in\cdot\bigr).
$$
Let us instead truncate the identity function near zero and infinity: fix $0<\varepsilon<1<R$. Then
\begin{align}
    N_n(\mathrm{id})=\int_{(0,\varepsilon]}y\,N_n(\d y)+\int_{(\varepsilon,R]}y\,N_n(\d y)+\int_{(R,\infty]}y\,N_n(\d y).\label{eq:integrand}
\end{align}
The middle integrand $y\mathbbm 1_{(\varepsilon,R]}(y)$ now is supported away from zero and infinity. Therefore, since $N(\{\e,R\})=0$ the point-process convergence can be used to prove
$$
\int_{(\varepsilon,R]}y\,N_n(\d y)\xrightarrow[d]{n\to\infty}\int_{(\varepsilon,R]}y\,N(\d y).
$$
It remains to control the extremal contributions from $(0,\varepsilon]$ and $(R,\infty]$. The required argument depends on the value of $\alpha$:
\begin{itemize}
\item If $0<\alpha<1$, no centring is required. We show that the small-jump contribution vanishes in probability as $\varepsilon\to0$ and that the large-jump contribution vanishes in probability as $R\to\infty$. This yields
$$
N_n(\mathrm{id})\xrightarrow[d]{n\to\infty}\int_{(0,\infty]}y\,N(\d y).
$$
\item If $\alpha=1$, we use the truncated centring
$$
A_n:=n\nu\left(f_\beta\mathbbm 1_{\{f_\beta\leq B_n\}}\right).
$$
The small jumps must be compensated by their deterministic mean. After controlling the compensated small jumps and the large jumps, the limit is the compensated Poisson integral
$$
\lim_{\varepsilon\to0}\left(\int_{(\varepsilon,\infty]}y\,N(\d y)-\int_{(\varepsilon,1]}y\,\upsilon_1(\d y)\right).
$$
\item If $1<\alpha<2$, the observable $f_\beta$ is integrable with respect to $\nu$, and we take
$$
A_n:=n\nu(f_\beta).
$$
We then control the centred small-jump contribution and the centred large-jump contribution. The limit is the compensated Poisson integral
$$
\lim_{\varepsilon\to0}\left(\int_{(\varepsilon,\infty]}y\,N(\d y)-\int_{(\varepsilon,\infty]}y\,\upsilon_\alpha(\d y)\right).
$$
\end{itemize}

\subsection{The case \texorpdfstring{$0<\alpha<1$}{0<α<1}}

\begin{proof}[Proof of Theorem~\ref{thm:main-stable-laws}~\ref{it:stab-law-i}]
Write $\mathbb P_{x,n}(\cdot):=\mathbb P_x(\cdot\mid\tau>n)$. By Theorem~\ref{thm:ppp} and the truncation argument \eqref{eq:integrand}, it remains only to control the small and large jumps.

We first control the small jumps.
From \eqref{eq:mixingallorders}, for every $1\leq i<n$,
$$
\mathbb E_{x,n}\left[\frac{1}{B_n}f_\beta\circ X_i\cdot \mathbbm 1_{\{f_\beta\leq\varepsilon B_n\}}(X_i)\right]\leq \frac{1}{B_n}\nu(f_\beta\cdot \mathbbm 1_{\{f_\beta\leq\varepsilon B_n\} })+\widetilde{C}_x\vartheta^{\min\{i,n-i\}}\varepsilon.
$$
Summing over $i$ gives
\begin{align*}
\mathbb E_{x,n}\left[\int_{(0,\varepsilon]}y\,N_n(\d y)\right] &= \mathbb E_x\left[ \left.\sum_{i=0}^{n-1} \frac{f_\beta\circ X_i}{B_n} \mathbbm 1_{\left\{\frac{f_\beta (X_i)}{B_n}\leq \e\right\}}\,\right|\, \tau >n\right]\\
&\leq
\frac{n}{B_n}\nu(f_\beta\mathbbm 1_{\{f_\beta\leq\varepsilon B_n\}})
+C_x\varepsilon+\frac{f_\beta(x)}{B_n},
\end{align*}
where $C_x = 2\widetilde C_x/(1-\vartheta).$ From \eqref{eq:tail}, $\nu(f_\beta>t)\sim C(x_0)t^{-\alpha}.$
Since $0<\alpha<1$, integration of this asymptotic gives 
$$
\int_0^t\nu(f_\beta>u)\,\d u\sim\frac{C(x_0)}{1-\alpha}t^{1-\alpha}\sim
\frac{t\nu(f_\beta>t)}{1-\alpha}.
$$
Using that
$$
\nu\left(f_\beta\mathbbm 1_{\{f_\beta\leq t\}}\right) =  \nu\left(\int_0^t \mathbbm 1_{\{u<f_\beta\}} \d u - t \mathbbm 1_{\{t<f_\beta\}} \right)=\int_0^t\nu(f_\beta>u)\,\d u-t\nu(f_\beta>t),
$$
we obtain
\begin{align}
\nu\left(f_\beta\mathbbm 1_{\{f_\beta\leq t\}}\right)\sim\frac{\alpha}{1-\alpha}
t\nu(f_\beta>t).\label{eq:assymp}
\end{align}
Moreover, for each $\e>0$ fixed, we have that $\varepsilon B_n\to\infty$. Setting $t=\varepsilon B_n$ in \eqref{eq:assymp} we obtain
$$
\nu\left(
f_\beta\mathbbm 1_{\{f_\beta\leq\varepsilon B_n\}}\right)\sim\frac{\alpha}{1-\alpha}\varepsilon B_n\nu(f_\beta>\varepsilon B_n).
$$
Hence,
$$
\frac{n}{B_n}\nu\left(f_\beta\mathbbm 1_{\{f_\beta\leq\varepsilon B_n\}}\right)\sim\frac{\alpha}{1-\alpha}\varepsilon n\nu(f_\beta>\varepsilon B_n).$$
By Proposition~\ref{prop:tail-asymptotics}, $
n\nu(f_\beta>uB_n)\xrightarrow[]{n\to\infty}u^{-\alpha}$
for every $u>0$, so taking $u=\varepsilon$ gives
$$
n\nu(f_\beta>\varepsilon B_n)\xrightarrow[]{n\to\infty}\varepsilon^{-\alpha}.
$$
Consequently,
\begin{align}
\frac{n}{B_n}\nu\left(f_\beta\mathbbm 1_{\{f_\beta\leq\varepsilon B_n\}}\right)\xrightarrow[]{n\to\infty}\frac{\alpha}{1-\alpha}\varepsilon\varepsilon^{-\alpha}=\frac{\alpha}{1-\alpha}\varepsilon^{1-\alpha}.\label{eq:limepsalpha}
\end{align}
Therefore, for every $\delta>0$, Markov's inequality gives
\begin{align*}
\mathbb P_{x,n}\left(\int_{(0,\varepsilon]}y\,N_n(\d y)>\delta\right)&\leq\frac{1}{\delta}\mathbb E_{x,n}\left[\int_{(0,\varepsilon]}y\,N_n(\d y)\right]\\
&\leq\frac{1}{\delta}\left(\frac{n}{B_n}\nu\left(f_\beta\mathbbm 1_{\{f_\beta\leq\varepsilon B_n\}}\right)+C_x\varepsilon+\frac{f_\beta(x)}{B_n}\right).
\end{align*}
Taking the upper limit as $n\to\infty$ and using \eqref{eq:limepsalpha}, we obtain
$$
\limsup_{n\to\infty}\mathbb P_{x,n}\left(\int_{(0,\varepsilon]}y\,N_n(\d y)>\delta\right)\leq
\frac{1}{\delta}\left(
\frac{\alpha}{1-\alpha}\varepsilon^{1-\alpha}+C_x\varepsilon\right).
$$
Since $0<\alpha<1$, both terms on the right-hand side converge to zero as $\varepsilon\to0$. Hence,
\begin{align}
\lim_{\varepsilon\to0}\limsup_{n\to\infty}\mathbb P_{x,n}\left(\int_{(0,\varepsilon]}y\,N_n(\d y)>\delta\right)
&=0.
\label{ref:small-jumps-alpha-less-one}
\end{align}

For the large jumps, we apply Theorem~\ref{thm:ppp} with $k=1$ and $I_1=(R,\infty]$. Since this interval is bounded away from zero, the theorem gives
$$
N_n((R,\infty])\xrightarrow[d]{n\to\infty}N((R,\infty]),
$$
where $N((R,\infty])\sim\operatorname{Poi}\left(\upsilon_\alpha((R,\infty])\right)=\operatorname{Poi}(R^{-\alpha}),$ where $N((R,\infty])$ is Poisson with parameter $R^{-\alpha}$. Moreover,
$$
\left\{
\int_{(R,\infty]}y\,N_n(\d y)>0
\right\}
=
\{N_n((R,\infty])\geq1\}.
$$
Hence, for every fixed $R>0$,
\begin{align*}
\lim_{n\to\infty}\mathbb P_{x,n}\left(\int_{(R,\infty]}y\,N_n(\d y)>0\right)
&=\lim_{n\to\infty} \mathbb P_{x,n}(N_n((R,\infty))\geq 1 )
\\&=\mathrm{Prob}\left(
N((R,\infty])\geq1
\right)
=
1-e^{-R^{-\alpha}}.
\end{align*}
Since $1-e^{-R^{-\alpha}}\to0$ as $R\to\infty$, we conclude that
\begin{align}
\lim_{R\to\infty}\limsup_{n\to\infty}\mathbb P_{x,n}\left(\int_{(R,\infty]}y\,N_n(\d y)>0\right)
&=0.
\label{ref:large-jumps-alpha-less-one}
\end{align}
Combining Theorem~\ref{thm:ppp}, \eqref{ref:small-jumps-alpha-less-one} and \eqref{ref:large-jumps-alpha-less-one} with the truncation argument above gives
$$
N_n(\mathrm{id})\xrightarrow[d]{n\to\infty}N(\mathrm{id})
=
Z_\alpha.
$$
Since
$$
\int_{(0,1]}y\,\upsilon_\alpha(\d y)=\frac{\alpha}{1-\alpha}<\infty
\quad \text{and}\quad  
\upsilon_\alpha((1,\infty])=1,
$$
the random variable $Z_\alpha$ is almost surely finite. Finally, applying the formula for the Laplace transform of a Poisson point-process~\cite[Theorem~24.14]{Klenke2020} to the function $y\mapsto ty$ yields
\begin{align*}
\mathbb E\left[e^{-tZ_\alpha}\right]
&=
\exp\left(\int_{(0,\infty]}\left(e^{-ty}-1\right)\upsilon_\alpha(\d y)\right)=\exp\left(-\int_{(0,\infty]}\left(1-e^{-ty}\right)\alpha y^{-\alpha-1} \d y\right)\\
&=\exp\left(-\Gamma(1-\alpha)t^\alpha\right).
\end{align*}
From~\cite[Theorem~14.10]{Sato1999}, the equation above is the Laplace transform of a positive strictly $\alpha$-stable random variable. In particular, $Z_\alpha$ is totally skewed to the right.
\end{proof}

\subsection{A conditional second-moment bound}

We first establish a conditional second-moment estimate for bounded centred observables. We then apply it to the compensated small jumps when $1\leq\alpha<2$. The same estimate will also be used in the proof of Theorem~\ref{thm:conditional-clt-l2}.

\begin{lemma}
\label{lem:conditional-sum-second-moment}
Assume Hypothesis~\ref{hyp:H}. For every $x\in M\setminus Z$, there exists $C_x>0$ such that, for every bounded measurable function $g:M\to\mathbb R$ satisfying $\nu(g)=0$ and every $n\geq1$,
\begin{align}
\mathbb E_{x,n}\left[\left(\sum_{i=0}^{n-1}g(X_i)\right)^2\right]
&\leq C_xn\left(\nu(g^2)+\mu(g^2)\right)+C_x\|g\|_\infty^2.
\label{ref:conditional-sum-second-moment}
\end{align}
The constant $C_x$ is independent of $g$ and $n$.
\end{lemma}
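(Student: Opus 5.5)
The plan is to expand the square and write every term through the normalised operators $\widehat{\mathcal P}=\Pi+R$ of Theorem~\ref{thm:spectralgap}. The key is to route every quantity that must be summed over $\sim n$ indices through $\mu$-integrals, which Lemma~\ref{lem:l2-compactness} controls in $L^2(M,\mu)$. The crude sup-norm bound $\|g\|_\infty^2$ is reserved for terms carrying a summable geometric weight.

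\textbf{Denominator and representation.} First, since $\widehat{\mathcal P}^n\mathbbm 1_M(x)=\eta(x)+\mathcal O_x(\vartheta^n)$ and $\mathbb P_x(\tau>n)>0$ for $x\in M\setminus Z$, we have $\inf_n\widehat{\mathcal P}^n\mathbbm 1_M(x)=:c_x>0$. So it suffices to bound the normalised numerators. Set $h_\ell:=\widehat{\mathcal P}^{\ell}\mathbbm 1_M=\eta+R^\ell\mathbbm 1_M$. Then $\|h_\ell\|_\infty\le K$ and $\|h_\ell-\eta\|_\infty\le C\vartheta^\ell$. By the Markov property,
\[
\lambda^{-n}\mathbb E_x\Bigl[\Bigl(\sum_{i=0}^{n-1}g(X_i)\Bigr)^2\mathbbm 1_{\{\tau>n\}}\Bigr]=\sum_{i}\widehat{\mathcal P}^i\bigl(g^2h_{n-i}\bigr)(x)+2\sum_{i}\widehat{\mathcal P}^i\bigl(g\,G_i\bigr)(x),
\]
where $G_i:=\sum_{j=i+1}^{n-1}\widehat{\mathcal P}^{\,j-i}\bigl(g\,h_{n-j}\bigr)$.

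\textbf{Diagonal terms.} The $i=0$ term is at most $K\|g\|_\infty^2$. For $i\ge1$ write $\widehat{\mathcal P}^i\varphi(x)=\eta(x)\mu(\varphi)+R^i\varphi(x)$. The remainder contributes at most $C\vartheta^i K\|g\|_\infty^2$, which sums to $C\|g\|_\infty^2$. The main part satisfies $\mu(g^2h_{n-i})\le\nu(g^2)+C\vartheta^{n-i}\mu(g^2)$, so the diagonal sum is at most $Cn\nu(g^2)+C\mu(g^2)+C\|g\|_\infty^2$.

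\textbf{Off-diagonal terms.} This is the main obstacle. The naive bound $|R^i(gG)(x)|\le C\vartheta^i\|g\|_\infty\|G\|_\infty$ applied pair by pair would give $n\|g\|_\infty^2$, which is too large. The fix is to sum over $j$ first, inside $G_i$, before estimating. Since $\nu(g)=\mu(g\eta)=0$,
\[
\widehat{\mathcal P}^{m}(g h_\ell)=\eta\,\mu\bigl(g(h_\ell-\eta)\bigr)+R^m(g\eta)+R^m\bigl(g(h_\ell-\eta)\bigr).
\]
This yields $\|G_i\|_\infty\le C\|g\|_\infty$ uniformly in $i$ and $n$: use the sup-norm decay $\vartheta^m$ for the $R^m$ parts, and $\vartheta^{n-j}$ for the terms involving $h-\eta$. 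Hence the remainder parts $R^i(gG_i)(x)$, $i\ge1$, sum to at most $C\|g\|_\infty^2$, and the $i=0$ term is $\le C\|g\|_\infty^2$.

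For the main parts $\eta(x)\mu(gG_i)$, estimate each piece of $G_i$ in $L^2(\mu)$ using Lemma~\ref{lem:l2-compactness}:
\[
|\mu(g\,R^m(g h_\ell))|\le C\kappa^m\|g\|_{L^2(\mu)}\|gh_\ell\|_{L^2(\mu)}\le C\kappa^m\mu(g^2).
\]
The projection piece obeys $|\mu(g\eta)\,\mu(g(h_\ell-\eta))|\le C\vartheta^{\ell}\,\nu(|g|)\mu(|g|)\le C\vartheta^\ell(\nu(g^2)+\mu(g^2))$. Summing over $m$ gives $|\mu(gG_i)|\le C(\nu(g^2)+\mu(g^2))$ uniformly in $i$. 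Summing over $i<n$ then yields the term $C_xn(\nu(g^2)+\mu(g^2))$.

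Dividing by $c_x$ gives \eqref{ref:conditional-sum-second-moment}, with constants depending only on $x$ through $c_x$, $\eta(x)$ and the spectral constants, and not on $g$ or $n$.
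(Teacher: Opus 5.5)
Your proposal is correct and follows essentially the same route as the paper. You expand the square, sum the inner index first to get a function bounded by $C\|g\|_\infty$ in sup norm and controlled in $L^2(\mu)$, and then split the outer operator as $\widehat{\mathcal P}^i=\Pi+R^i$: the $\Pi$-parts are estimated through $\mu$-integrals via Lemma~\ref{lem:l2-compactness}, and the $R^i$-parts through the geometric sup-norm decay. Two minor points, neither affecting correctness: your projection piece $\mu(g\eta)\,\mu\bigl(g(h_\ell-\eta)\bigr)$ vanishes identically since $\mu(g\eta)=\nu(g)=0$, and the paper bounds the denominator by the cleaner estimate $\widehat{\mathcal P}^n\mathbbm 1_M(x)\geq\eta(x)/\|\eta\|_\infty$.
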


\begin{proof}
We start by recording some estimates. From Theorem~\ref{thm:spectralgap} and Lemma~\ref{lem:l2-compactness}, we may choose a common $\vartheta\in(0,1)$ such that
\begin{align}
\|R^kf\|_\infty\leq C\vartheta^k\|f\|_\infty\quad \text{and}\quad \|R^kf\|_{L^2(\mu)}\leq C\vartheta^k\|f\|_{L^2(\mu)}
\label{eq:second-moment-spectral-bounds}
\end{align}
for every $k\geq1$. Since $\widehat{\mathcal P}^{k}=\Pi+R^k$, we may enlarge $C$ so that
\begin{align}
\sup_{k\geq0}\max\left\{\|\widehat{\mathcal P}^{k}\|_{L^\infty(\rho)\to L^\infty(\rho)},\|\widehat{\mathcal P}^{k}\|_{L^2(\mu)\to L^2(\mu)}
\right\}\leq C.\label{eq:second-moment-uniform-powers}
\end{align}
Since $\widehat{\mathcal P}^{m-k}\mathbbm 1_M=\eta+R^{m-k}\mathbbm 1_M$ and $\Pi(\eta g)=\eta\nu(g)=0$, for every $m\geq1$ we have
\begin{align}
\sum_{k=1}^{m-1}\widehat{\mathcal P}^{k}\left(g\widehat{\mathcal P}^{m-k}\mathbbm 1_M\right)
&=\sum_{k=1}^{m-1}R^k(\eta g)+\sum_{k=1}^{m-1}\widehat{\mathcal P}^{k}\left(gR^{m-k}\mathbbm 1_M\right),
\label{eq:second-moment-future-decomposition}
\end{align}
 Applying \eqref{eq:second-moment-spectral-bounds} and \eqref{eq:second-moment-uniform-powers}, together with ${\|gR^{m-k}\mathbbm 1_M\|_{L^2(\mu)}\leq\|g\|_{L^2(\mu)}\|R^{m-k}\mathbbm 1_M\|_\infty}$, gives
\begin{align}
\left\|\sum_{k=1}^{m-1}\widehat{\mathcal P}^{k}\left(g\widehat{\mathcal P}^{m-k}\mathbbm 1_M\right)\right\|_\infty
&\leq C\|g\|_\infty\sum_{k=1}^{m-1}\left(\vartheta^k+\vartheta^{m-k}\right)\leq C\|g\|_\infty,\ \text{and}\nonumber\\
\left\|\sum_{k=1}^{m-1}\widehat{\mathcal P}^{k}\left(g\widehat{\mathcal P}^{m-k}\mathbbm 1_M\right)\right\|_{L^2(\mu)}
&\leq C\|g\|_{L^2(\mu)}\sum_{k=1}^{m-1}\left(\vartheta^k+\vartheta^{m-k}\right)
\leq C\|g\|_{L^2(\mu)}.
\label{eq:second-moment-future-bounds}
\end{align}
Consequently, Cauchy--Schwarz, \eqref{eq:second-moment-uniform-powers} and \eqref{eq:second-moment-future-bounds} imply
\begin{align}
\left|\mu\left(g^2\widehat{\mathcal P}^{m}\mathbbm 1_M
+2g\sum_{k=1}^{m-1}\widehat{\mathcal P}^{k}\left(g\widehat{\mathcal P}^{m-k}\mathbbm 1_M\right)\right)\right|
&\leq C\mu(g^2),\ \text{and }\nonumber\\
\left\|g^2\widehat{\mathcal P}^{m}\mathbbm 1_M
+2g\sum_{k=1}^{m-1}\widehat{\mathcal P}^{k}\left(g\widehat{\mathcal P}^{m-k}\mathbbm 1_M\right)\right\|_\infty
&\leq C\|g\|_\infty^2.
\label{eq:second-moment-integrand-bounds}
\end{align}

We now expand the left-hand side of \eqref{ref:conditional-sum-second-moment} as
\begin{align}
\widehat{\mathcal P}^{n}\mathbbm 1_M(x)\,\mathbb E_{x,n}&\left[\left(\sum_{i=0}^{n-1}g(X_i)\right)^2\right]= \widehat{\mathcal P
}^{n}\left(\left(\sum_{i=0}^{n-1}g(X_i)\right)^2\right)(x) \\
&=\sum_{i=0}^{n-1}\widehat{\mathcal P}^{i}\left(
g^2\widehat{\mathcal P}^{n-i}\mathbbm 1_M
+2g\sum_{k=1}^{n-i-1}\widehat{\mathcal P}^{k}\left(g\widehat{\mathcal P}^{n-i-k}\mathbbm 1_M\right)
\right)(x).
\label{eq:second-moment-expansion}
\end{align}
From \eqref{eq:second-moment-integrand-bounds}, the term $i=0$ is bounded in absolute value by $C\|g\|_\infty^2$. For $i\geq1$, applying $\widehat{\mathcal P}^{i}=\Pi+R^i$ and using \eqref{eq:second-moment-spectral-bounds} and \eqref{eq:second-moment-integrand-bounds} yields
\begin{align}
&\left|\widehat{\mathcal P}^{i}\left(
g^2\widehat{\mathcal P}^{n-i}\mathbbm 1_M
+2g\sum_{k=1}^{n-i-1}\widehat{\mathcal P}^{k}\left(g\widehat{\mathcal P}^{n-i-k}\mathbbm 1_M\right)
\right)(x)\right|\leq C\eta(x)\mu(g^2)+C\vartheta^i\|g\|_\infty^2.
\label{eq:second-moment-summand-bound}
\end{align}
Summing \eqref{eq:second-moment-summand-bound} in \eqref{eq:second-moment-expansion} therefore gives
\begin{align}
\widehat{\mathcal P}^{n}\mathbbm 1_M(x)\,
\mathbb E_{x,n}\left[\left(\sum_{i=0}^{n-1}g(X_i)\right)^2\right]
&\leq Cn\eta(x)\mu(g^2)+C\|g\|_\infty^2.
\label{eq:second-moment-unnormalised}
\end{align}
Finally, positivity and $\widehat{\mathcal P}\eta=\eta$ imply
\begin{align}
\widehat{\mathcal P}^{n}\mathbbm 1_M(x)
&\geq\widehat{\mathcal P}^{n}\left(\frac{\eta}{\|\eta\|_\infty}\right)(x)
=\frac{\eta(x)}{\|\eta\|_\infty}>0.
\label{eq:second-moment-denominator}
\end{align}
Dividing \eqref{eq:second-moment-unnormalised} by this denominator, we conclude that
\begin{align}
\mathbb E_{x,n}\left[\left(\sum_{i=0}^{n-1}g(X_i)\right)^2\right]
&\leq Cn\mu(g^2)+C_x\|g\|_\infty^2,
\label{eq:second-moment-final-bound}
\end{align}
which implies \eqref{ref:conditional-sum-second-moment}.
\end{proof}

We now apply this estimate to the compensated small jumps.

\begin{lemma}
\label{lem:conditional-second-moment}
Assume that $1\leq\alpha<2$ and let $\upsilon_n(\d y):=n\nu(f_\beta/B_n\in\d y)$. For every $x\in M\setminus Z$ and $\varepsilon>0$,
\begin{align}
\mathbb E_{x,n}\left[\left(\int_{(0,\varepsilon]}y(N_n-\upsilon_n)(\d y)\right)^2\right]
&\leq \frac{C_xn}{B_n^2}\nu\left(f_\beta^2\mathbbm 1_{\{f_\beta\leq\varepsilon B_n\}}\right)+C_x\varepsilon^2+\smallO(1),
\label{ref:conditional-small-jump-second-moment}
\end{align}
where $C_x$ is independent of $n$ and $\varepsilon$, and $\smallO(1)$ tends to zero as $n\to\infty$ for fixed $\varepsilon$.
\end{lemma}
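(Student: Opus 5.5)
The plan is to obtain the bound as a direct consequence of Lemma~\ref{lem:conditional-sum-second-moment}, applied to one centred, bounded, $n$-dependent observable. Fix $\varepsilon>0$ and set
$$
h_n:=\frac{f_\beta}{B_n}\mathbbm 1_{\{f_\beta\leq\varepsilon B_n\}},\qquad g_n:=h_n-\nu(h_n).
$$
By the definitions of $N_n$ and $\upsilon_n$,
$$
\int_{(0,\varepsilon]}y\,N_n(\d y)=\sum_{i=0}^{n-1}h_n(X_i)\quad\text{and}\quad\int_{(0,\varepsilon]}y\,\upsilon_n(\d y)=n\nu(h_n).
$$
Hence the quantity in \eqref{ref:conditional-small-jump-second-moment} is exactly $\mathbb E_{x,n}\bigl[(\sum_{i=0}^{n-1}g_n(X_i))^2\bigr]$. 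Since $0\leq h_n\leq\varepsilon$, the function $g_n$ is bounded, satisfies $\nu(g_n)=0$ and has $\|g_n\|_\infty\leq 2\varepsilon$. Lemma~\ref{lem:conditional-sum-second-moment} then bounds the quantity by $C_xn(\nu(g_n^2)+\mu(g_n^2))+4C_x\varepsilon^2$. The second term is already the $C_x\varepsilon^2$ in \eqref{ref:conditional-small-jump-second-moment}.

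For the $\nu$-term, the variance inequality gives $\nu(g_n^2)\leq\nu(h_n^2)=B_n^{-2}\nu\bigl(f_\beta^2\mathbbm 1_{\{f_\beta\leq\varepsilon B_n\}}\bigr)$, which is the main term on the right-hand side. For the $\mu$-term, I would write $\mu(g_n^2)\leq 2\mu(h_n^2)+2\nu(h_n)^2$ and treat each piece separately.

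\emph{Bounding $\mu(h_n^2)$.} As in Step~\ref{step:ppp-2} of the proof of Theorem~\ref{thm:ppp}, choose $r>0$ so small that $\eta\geq\eta_*>0$ on $V:=B_r(x_0)$. On $V$ we have $\mu(h_n^2\mathbbm 1_V)\leq\eta_*^{-1}\nu(h_n^2)$, which is absorbed into the main term at the cost of enlarging $C_x$; since $r$ depends only on $x_0$, the constant stays independent of $n$ and $\varepsilon$. Off $V$ we have $f_\beta\leq r^{-\beta}$, so $n\mu(h_n^2\mathbbm 1_{M\setminus V})\leq nr^{-2\beta}B_n^{-2}$. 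This tends to zero because $B_n^2\asymp n^{2/\alpha}$ and $\alpha<2$, so it goes into the $\smallO(1)$.

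\emph{Bounding $n\nu(h_n)^2$.} Here the step needing the most care is the case $\alpha=1$, where $f_\beta\notin L^1(\nu)$.
\begin{itemize}
\item For $1<\alpha<2$, Proposition~\ref{prop:tail-asymptotics} gives $f_\beta\in L^1(\nu)$. Then $n\nu(h_n)^2\leq n\nu(f_\beta)^2/B_n^2\to0$.
\item For $\alpha=1$, I would use the layer-cake identity from the proof of Theorem~\ref{thm:main-stable-laws}\ref{it:stab-law-i} together with $\nu(f_\beta>u)\sim C(x_0)u^{-1}$. This gives $\nu\bigl(f_\beta\mathbbm 1_{\{f_\beta\leq\varepsilon B_n\}}\bigr)=\mathcal O(\log(\varepsilon B_n))$, so $n\nu(h_n)^2=\mathcal O\bigl(n(\log n)^2/B_n^2\bigr)=\mathcal O\bigl((\log n)^2/n\bigr)\to0$ for fixed $\varepsilon$.
\end{itemize}

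Collecting the three contributions (the main $\nu$-term, $C_x\varepsilon^2$, and the vanishing remainders) yields \eqref{ref:conditional-small-jump-second-moment}, with $C_x$ independent of $n$ and $\varepsilon$.
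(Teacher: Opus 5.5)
Your proposal is correct and follows essentially the same route as the paper: you centre the truncated observable $f_\beta\mathbbm 1_{\{f_\beta\leq\varepsilon B_n\}}/B_n$, apply Lemma~\ref{lem:conditional-sum-second-moment}, compare $\mu$ with $\nu$ near $x_0$ via $\eta\geq\eta_*$, and use that $f_\beta$ is bounded away from $x_0$ together with $n/B_n^2\to0$. The only difference is your case split for $n\nu(h_n)^2$, which the paper avoids by Cauchy--Schwarz, $\nu(h_n)^2\leq\nu(h_n^2)$; this absorbs the term into the main term uniformly for $1\leq\alpha<2$, although your $\alpha=1$ layer-cake estimate is also valid.
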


\begin{proof}
Define
$$
g:=\frac{1}{B_n}\left(f_\beta\mathbbm 1_{\{f_\beta\leq\varepsilon B_n\}}-\nu\left(f_\beta\mathbbm 1_{\{f_\beta\leq\varepsilon B_n\}}\right)\right).
$$
Then $\nu(g)=0$, $\|g\|_\infty\leq\varepsilon$ and, by the definitions of $N_n$ and $\upsilon_n$,
\begin{align}
\int_{(0,\varepsilon]}y(N_n-\upsilon_n)(\d y)
&=\sum_{i=0}^{n-1}g(X_i).
\label{eq:small-jump-centred-sum}
\end{align}
Choose a neighbourhood $V$ of $x_0$ and $\eta_*>0$ such that $\eta\geq\eta_*$ on $V$. Since $\nu=\eta\mu$ and $f_\beta$ is bounded on $M\setminus V$,
\begin{align}
\mu\left(f_\beta^2\mathbbm 1_{\{f_\beta\leq\varepsilon B_n\}}\right)
&\leq\eta_*^{-1}\nu\left(f_\beta^2\mathbbm 1_{\{f_\beta\leq\varepsilon B_n\}}\right)+C,
\label{eq:small-jump-moment-comparison}
\end{align}
where $C$ is independent of $n$ and $\varepsilon$. The variance identity under $\nu$ gives
\begin{align}
\nu(g^2)&\leq\frac{1}{B_n^2}\nu\left(f_\beta^2\mathbbm 1_{\{f_\beta\leq\varepsilon B_n\}}\right).
\label{eq:small-jump-nu-bound}
\end{align}
Using $|a-b|^2\leq2|a|^2+2|b|^2$, Cauchy--Schwarz and \eqref{eq:small-jump-moment-comparison}, we also obtain
\begin{align}
\mu(g^2)\leq\frac{2}{B_n^2}\mu\left(f_\beta^2\mathbbm 1_{\{f_\beta\leq\varepsilon B_n\}}\right)
+\frac{2}{B_n^2}\nu\left(f_\beta^2\mathbbm 1_{\{f_\beta\leq\varepsilon B_n\}}\right)\leq\frac{C}{B_n^2}\nu\left(f_\beta^2\mathbbm 1_{\{f_\beta\leq\varepsilon B_n\}}\right)+\frac{C}{B_n^2}.
\label{eq:small-jump-mu-bound}
\end{align}
Applying Lemma~\ref{lem:conditional-sum-second-moment} to $g$ and using \eqref{eq:small-jump-centred-sum}, \eqref{eq:small-jump-nu-bound} and \eqref{eq:small-jump-mu-bound}, we conclude that
\begin{align}
\mathbb E_{x,n}\left[\left(\int_{(0,\varepsilon]}y(N_n-\upsilon_n)(\d y)\right)^2\right]
&\leq C_xn\left(\nu(g^2)+\mu(g^2)\right)+C_x\|g\|_\infty^2\nonumber\\
&\leq\frac{C_xn}{B_n^2}\nu\left(f_\beta^2\mathbbm 1_{\{f_\beta\leq\varepsilon B_n\}}\right)
+C_x\varepsilon^2+\frac{C_xn}{B_n^2}.
\label{eq:small-jump-second-moment-bound}
\end{align}
Since $\alpha<2$, we have $n/B_n^2\to0$. Thus, the last term in \eqref{eq:small-jump-second-moment-bound} tends to zero, proving \eqref{ref:conditional-small-jump-second-moment}.
\end{proof}

\subsection{The case \texorpdfstring{$\alpha=1$}{α=1}}

\begin{proof}[Proof of Theorem~\ref{thm:main-stable-laws}~\ref{it:stab-law-ii}]
Write $\mathbb P_{x,n}(\cdot):=\mathbb P_x(\cdot\mid\tau>n)$ and recall that $\upsilon_n(A)
:=
n\nu\left(\frac{f_\beta}{B_n}\in A\right).$
By the definition of $A_n$,
$$\frac{A_n}{B_n}=\int_{(0,1]}y\,\upsilon_n(\d y).$$
Fix $0<\varepsilon<1<R$. We have the decomposition
$$
\begin{aligned}
\frac{1}{B_n}\left(\sum_{i=0}^{n-1}f_\beta\circ X_i-A_n\right)
&=
\int_{(0,\varepsilon]}y\left(N_n-\upsilon_n\right)(\d y)\\
&\phantom{=}+\int_{(\varepsilon,R]}y\,N_n(\d y)-\int_{(\varepsilon,1]}y\,\upsilon_n(\d y)+\int_{(R,\infty]}y\,N_n(\d y).
\end{aligned}
$$
Combining Theorem~\ref{thm:ppp}, the convergence $\upsilon_n\to\upsilon_1$ on sets bounded away from zero, and the fact that $N(\{\e,R\})=0$, we obtain that
\begin{align}
\int_{(\varepsilon,R]}y\,N_n(\d y)-\int_{(\varepsilon,1]}y\,\upsilon_n(\d y)&\xrightarrow[n\to\infty]{d}\int_{(\varepsilon,R]}y\,N(\d y)-\int_{(\varepsilon,1]}y\,\upsilon_1(\d y).
\label{ref:middle-jumps-alpha-one}
\end{align}
We next control the small jumps. 
Lemma~\ref{lem:conditional-second-moment} gives 
$$
\mathbb E_{x,n}\left[\left(\int_{(0,\varepsilon]}y\left(N_n-\upsilon_n\right)(\d y)\right)^2\right]\leq \frac{C_x}{B_n^2}n\nu({f_\beta}^2 \mathbbm 1_{\{f_\beta\leq\varepsilon B_n\}})+C_x\varepsilon^2+\smallO(1).
$$
From \eqref{eq:tail}, $\nu(f_\beta>t)\sim C(x_0)t^{-1}.$
Using
$$
\nu\left(f_\beta^2\mathbbm 1_{\{f_\beta\leq t\}}\right)=2\int_0^t u\nu(f_\beta>u)\,\d u-t^2\nu(f_\beta>t),
$$
we therefore obtain $\nu\left(
f_\beta^2\mathbbm 1_{\{f_\beta\leq t\}}\right)\sim t^2\nu(f_\beta>t).$ Taking $t=\varepsilon B_n$ gives
$$
\begin{aligned}
\frac{n}{B_n^2}\nu\left(f_\beta^2\mathbbm 1_{\{f_\beta\leq\varepsilon B_n\}}\right)\sim\varepsilon^2 n\nu(f_\beta>\varepsilon B_n)\to\varepsilon.
\end{aligned}
$$
Therefore, Chebyshev's inequality yields, for every $\delta>0$,
$$
\begin{aligned}
\limsup_{n\to\infty}\mathbb P_{x,n}\left(\left|\int_{(0,\varepsilon]}y\left(N_n-\upsilon_n\right)(\d y)\right|>\delta\right)\leq \frac{C_x}{\delta^2}\left(\varepsilon+\varepsilon^2\right).
\end{aligned}
$$
Letting $\varepsilon\to0$, we obtain
\begin{align}
\lim_{\varepsilon\to0}\limsup_{n\to\infty}\mathbb P_{x,n}\left(\left|\int_{(0,\varepsilon]}y\left(N_n-\upsilon_n\right)(\d y)\right|>\delta\right)
&=0.
\label{ref:small-jumps-alpha-one}
\end{align}
For the limiting Poisson random measure,
$$
\mathbb E\left[\left|\int_{(0,\varepsilon]}y\left(N-\upsilon_1\right)(\d y)\right|^2\right]=\int_{(0,\varepsilon]}y^2\,\upsilon_1(\d y)=\varepsilon.
$$
Hence, the compensated integrals over $(\varepsilon,1]$ converge in $L^2$ as $\varepsilon\to0$. Adding the almost surely finite integral over $(1,\infty]$ gives convergence in probability of the expressions defining $Z_1$.

For the large jumps, Theorem~\ref{thm:ppp} applied to $(R,\infty]$ gives
$$
N_n((R,\infty])\xrightarrow[d]{n\to\infty}N((R,\infty]),\ \text{where }N((R,\infty])\sim\operatorname{Poi}(R^{-1}).
$$
Since
$$
\left\{\int_{(R,\infty]}y\,N_n(\d y)>0\right\}=\{N_n((R,\infty])\geq1\},$$
we obtain
\begin{align}
\lim_{R\to\infty}\limsup_{n\to\infty}\mathbb P_{x,n}\left(\int_{(R,\infty]}y\,N_n(\d y)>0\right)
&=0.
\label{ref:large-jumps-alpha-one}
\end{align}
Combining \eqref{ref:middle-jumps-alpha-one}, \eqref{ref:small-jumps-alpha-one} and \eqref{ref:large-jumps-alpha-one}, and then letting $\varepsilon\to0$ and $R\to\infty$, gives
$$
\frac{1}{B_n}\left(\sum_{i=0}^{n-1}f_\beta\circ X_i-A_n\right)\xrightarrow[d]{n\to\infty}Z_1.
$$
Finally, the characteristic functional of the compensated Poisson random measure gives
$$
\mathbb E\left[e^{itZ_1}\right]
=
\exp\left(\int_0^\infty\left(e^{ity}-1-ity\mathbbm 1_{(0,1]}(y)\right)y^{-2}\,\d y\right).
$$
The corresponding Lévy measure is supported on $(0,\infty)$ and has density $y^{-2}$. Therefore, by~\cite[Theorem~14.10]{Sato1999}, $Z_1$ has a $1$-stable distribution totally skewed to the right.
\end{proof}

\subsection{The case \texorpdfstring{$1<\alpha<2$}{1<α<2}}

\begin{proof}[Proof of Theorem~\ref{thm:main-stable-laws}~\ref{it:stab-law-iii}]
Write $\mathbb P_{x,n}(\cdot):=\mathbb P_x(\cdot\mid\tau>n)$ and define the deterministic measure
$$
\upsilon_n(A):=n\nu\left(\frac{f_\beta}{B_n}\in A\right).
$$
Since $f_\beta\in L^1(M,\nu)$, we have that
$$
\frac{n\nu(f_\beta)}{B_n}=\int_{(0,\infty]}y\,\upsilon_n(\d y).
$$
Hence
$$
\frac{1}{B_n}\left(\sum_{i=0}^{n-1}f_\beta\circ X_i-n\nu(f_\beta)\right)=\int_{(0,\infty]}y\left(N_n-\upsilon_n
\right)(\d y).
$$
Fix $0<\varepsilon<1<R$. From Theorem~\ref{thm:ppp}, the convergence $\upsilon_n\to\upsilon_\alpha$ on sets bounded away from zero and since $N(\{\e,R\})=0$  we obtain that
\begin{align}
\int_{(\varepsilon,R]}y\left(N_n-\upsilon_n\right)(\d y)
&\xrightarrow[d]{n\to\infty}\int_{(\varepsilon,R]}y\left(N-\upsilon_\alpha\right)(\d y).
\label{ref:middle-jumps-alpha-greater-one}
\end{align}
We next control the small jumps. 
From Lemma \ref{lem:conditional-second-moment} we have that
\begin{align*}
\mathbb E_{x,n}\left[\left(\int_{(0,\varepsilon]}y\left(N_n-\upsilon_n\right)(\d y)\right)^2\right]
&\leq
C_xn\nu\left(\frac{1}{B_n^2} f_\beta^2\mathbbm 1_{\{f_\beta\leq\varepsilon B_n\}}\right)+C_x\varepsilon^2+\smallO(1).
\end{align*}
From \eqref{eq:tail}, $\nu(f_\beta>t)\sim C(x_0)t^{-\alpha}$. Since $\alpha<2$, integration of the tail asymptotic gives
$$
\nu\left(f_\beta^2\mathbbm 1_{\{f_\beta\leq t\}}\right)\sim\frac{\alpha}{2-\alpha}t^2\nu(f_\beta>t).
$$
Taking $t=\varepsilon B_n$, we obtain
$$
n\nu\left(\frac{1}{B_n^2} f_\beta^2\mathbbm 1_{\{f_\beta\leq\varepsilon B_n\}}\right)=\frac{n}{B_n^2}\nu\left(f_\beta^2\mathbbm 1_{\{f_\beta\leq\varepsilon B_n\}}\right)\xrightarrow[]{n\to\infty}\frac{\alpha}{2-\alpha}\varepsilon^{2-\alpha}.
$$
Therefore, Chebyshev's inequality gives, for every $\delta>0$,
\begin{align}
\lim_{\varepsilon\to0}\limsup_{n\to\infty}\mathbb P_{x,n}\left(\left|\int_{(0,\varepsilon]}y\left(N_n-\upsilon_n\right)(\d y)\right|>\delta\right)
&=0.
\label{ref:small-jumps-alpha-greater-one}
\end{align}
For the limiting Poisson random measure,
$$
\mathbb E\left[\left|\int_{(0,\varepsilon]}y
\left(N-\upsilon_\alpha\right)(\d y)\right|^2\right]
=
\int_{(0,\varepsilon]}y^2\,\upsilon_\alpha(\d y)
=
\frac{\alpha}{2-\alpha}\varepsilon^{2-\alpha}.
$$
Thus, the compensated integrals over $(\varepsilon,1]$ converge in $L^2$ as $\varepsilon\to0$. Adding the compensated integral over $(1,\infty]$ gives convergence in probability of the expressions defining $Z_\alpha$. It remains to control the large jumps. Theorem~\ref{thm:ppp} gives
$$
N_n((R,\infty])
\xrightarrow[d]{n\to\infty}N((R,\infty]),
$$
where $N((R,\infty])\sim\operatorname{Poi}(R^{-\alpha})$. Hence,
$$
\lim_{R\to\infty}\limsup_{n\to\infty}\mathbb P_{x,n}\left(\int_{(R,\infty]}y\,N_n(\d y)>0\right)=0.
$$
Since $\alpha>1$, integration of \eqref{eq:tail} gives
$$
\nu\left(f_\beta\mathbbm 1_{\{f_\beta>t\}}\right)\sim\frac{\alpha}{\alpha-1}t\nu(f_\beta>t).
$$
Therefore,
$$
\int_{(R,\infty]}y\,\upsilon_n(\d y)=\frac{n}{B_n}
\nu\left(f_\beta\mathbbm 1_{\{f_\beta>RB_n\}}\right)\xrightarrow[]{n\to\infty}\frac{\alpha}{\alpha-1}R^{1-\alpha}.
$$
Since $R^{1-\alpha}\to0$, we conclude that, for every $\delta>0$,
\begin{align}
\lim_{R\to\infty}\limsup_{n\to\infty}\mathbb P_{x,n}\left(
\left|\int_{(R,\infty]}y\left(N_n-\upsilon_n\right)(\d y)\right|>\delta\right)
&=0.
\label{ref:large-jumps-alpha-greater-one}
\end{align}
Combining \eqref{ref:middle-jumps-alpha-greater-one}, \eqref{ref:small-jumps-alpha-greater-one} and \eqref{ref:large-jumps-alpha-greater-one} gives
$$
\frac{1}{B_n}\left(\sum_{i=0}^{n-1}f_\beta\circ X_i-n\nu(f_\beta)\right)\xrightarrow[d]{n\to\infty}Z_\alpha.
$$
Finally, the characteristic functional of the compensated Poisson random measure gives
$$
\mathbb E\left[e^{itZ_\alpha}\right]=\exp\left(\int_0^\infty
\left(
e^{ity}-1-ity
\right)
\alpha y^{-\alpha-1}\,\d y
\right).
$$
The corresponding Lévy measure is supported on $(0,\infty)$ and has density $\alpha y^{-\alpha-1}$. Therefore, by~\cite[Theorem~14.10]{Sato1999}, $Z_\alpha$ has a $\alpha$-stable distribution totally skewed to the right.
\end{proof}

\section{The Gaussian boundary case, conditional CLT and large deviations}
\label{sec:gaussian-laws}

 The proofs in this section use the Nagaev--Guivarc'h spectral method and perturbation theory for a simple isolated eigenvalue (see~\cite{Kato1995,HennionHerve2001}).  Similar spectral central limit theorems were proved for sequential
dynamical systems in~\cite[Theorems~2.6--2.7 and Corollaries~2.8--2.9]{DemersLiverani2025}. Their argument uses twisted transfer operators for time-dependent dynamics and observables. Our proof follows the same general idea, but for a fixed killed transition operator, row-dependent observables $g_n$, and conditioning on survival up to time $n$.

\begin{lemma}
\label{lem:conditional-triangular-clt}
Let $g_n:M\to\mathbb R$ be bounded measurable functions satisfying $\nu(g_n)=0$, and let $a_n\to\infty$. Define
$$
\sigma_n^2:=\nu(g_n^2)+2\sum_{m=1}^\infty\mu\left(g_n \frac{1}{\lambda^m}\mathcal P^m(\eta g_n)\right).
$$
Assume that
\begin{enumerate}[label = (\roman*)]
    \item \label{it:tri-ass-i} $\|g_n\|_\infty/a_n\to0$ as $n\to\infty$;
    \item \label{it:tri-ass-ii} $n \left(\nu(g_n^2)+\mu(g_n^2)\right)/a_n^2 = \mathcal O(1)$; and
    \item \label{it:tri-ass-iii} $n\sigma_n^2/a_n^2 \to \sigma^2$ as $n\to\infty$.
\end{enumerate}
 Then, for every $x\in M\setminus Z$, under the probability measures $\mathbb P_x(\cdot\mid\tau>n)$,
$$
\frac{1}{a_n}\sum_{j=0}^{n-1}g_n\circ X_j\xrightarrow[d]{n\to\infty}\mathcal N(0,\sigma^2).
$$
In the case that $\sigma = 0$,  $\mathcal N(0,0)$ should be understood as  $\delta_0.$
\end{lemma}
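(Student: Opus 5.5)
We follow the Nagaev--Guivarc'h spectral method applied to twisted killed operators. For $t\in\mathbb R$ set $u_n:=t/a_n$ and define on $L^\infty(M,\rho)$ the twisted operator
$$
\widehat{\mathcal P}_{n,t}h:=\widehat{\mathcal P}\bigl(e^{iu_ng_n}h\bigr).
$$
By the Markov property, exactly as in \eqref{eq:markovbernat},
$$
\mathbb E_{x,n}\Bigl[e^{\frac{it}{a_n}\sum_{j=0}^{n-1}g_n(X_j)}\Bigr]=\frac{e^{iu_ng_n(x)}\,\widehat{\mathcal P}_{n,t}^{\,n-1}\bigl(\widehat{\mathcal P}(e^{iu_ng_n})\bigr)(x)}{\widehat{\mathcal P}^n\mathbbm 1_M(x)}.
$$
The denominator equals $\eta(x)+\mathcal O_x(\vartheta^n)$. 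The factor $e^{iu_ng_n(x)}$ tends to $1$ by \ref{it:tri-ass-i}. The goal is therefore to show that the numerator equals $\eta(x)e^{-\sigma^2t^2/2}+\smallO(1)$.

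The first step is perturbation theory. Since $\|e^{iu_ng_n}-1\|_\infty\leq|t|\|g_n\|_\infty/a_n\to0$ by \ref{it:tri-ass-i}, we have $\|\widehat{\mathcal P}_{n,t}-\widehat{\mathcal P}\|_{L^\infty\to L^\infty}\to0$. By Theorem~\ref{thm:spectralgap}, $\widehat{\mathcal P}$ has a simple isolated eigenvalue $1$ with projection $\Pi$, and the rest of its spectrum lies in a disc of radius $<\vartheta$. Kato's perturbation theory then gives, for large $n$,
$$
\widehat{\mathcal P}_{n,t}^{\,k}=\lambda_n(t)^k\Pi_{n,t}+R_{n,t}^k,
$$
with $\|\Pi_{n,t}-\Pi\|\to0$ and $\|R_{n,t}^k\|\leq C\vartheta^k$ uniformly. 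It follows that the numerator is $\lambda_n(t)^{n-1}\eta(x)\mu(\eta)+\smallO(1)=\lambda_n(t)^{n}\eta(x)+\smallO(1)$, since $\lambda_n(t)\to1$. It then remains to prove $n\log\lambda_n(t)\to-\sigma^2t^2/2$.

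The main obstacle is the second-order expansion of $\lambda_n(t)$. Crude norm bounds give $\lambda_n(t)-1=\mathcal O(\|g_n\|_\infty/a_n)$, which is far too weak because $n\|g_n\|_\infty/a_n$ need not be bounded. The plan is to expand the eigenvalue through the Rayleigh-type formula $\lambda_n(t)=\mu(\widehat{\mathcal P}_{n,t}h_{n,t})/\mu(h_{n,t})$, where $h_{n,t}$ is the eigenfunction normalised by $\mu(h_{n,t})=1$, written as $h_{n,t}=\eta+w$ with $\mu(w)=0$. Using $\widehat{\mathcal P}^*\mu=\mu$, this gives
$$
\lambda_n-1=\mu\bigl((e^{iu_ng_n}-1)\eta\bigr)+\mu\bigl((e^{iu_ng_n}-1)w\bigr).
$$
The first term equals $iu_n\nu(g_n)-\tfrac{u_n^2}{2}\nu(g_n^2)+\mathcal O\bigl(u_n^3\|g_n\|_\infty\nu(g_n^2)\bigr)$. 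Here $\nu(g_n)=0$, and by \ref{it:tri-ass-i}--\ref{it:tri-ass-ii} the error is $\smallO(1/n)$.

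For $w$ we use the fixed-point equation $(I-R)w=(I-\Pi)\bigl[\widehat{\mathcal P}((e^{iu_ng_n}-1)(\eta+w))\bigr]$ corrected by $(\lambda_n-1)w$. To first order this gives $w=iu_n\sum_{m\geq1}R^{m}(\eta g_n)+\text{(higher order)}$. We would carry out these estimates in $L^2(\mu)$, using Lemma~\ref{lem:l2-compactness} and $\|e^{iu_ng_n}-1\|_{L^2}\le|u_n|\|g_n\|_{L^2}$, together with the $L^\infty$ smallness from \ref{it:tri-ass-i}. This is the delicate point: the twisted operator must be controlled on $L^2(\mu)$, or on $L^\infty$ with $L^2$-valued bounds on the differences, so that all remainders are $\mathcal O\bigl(u_n^2\|g_n\|_{L^2}^2\cdot\smallO(1)\bigr)=\smallO(1/n)$ by \ref{it:tri-ass-ii}.

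Substituting back gives
$$
\mu\bigl((e^{iu_ng_n}-1)w\bigr)=-u_n^2\sum_{m\geq1}\mu\bigl(g_nR^m(\eta g_n)\bigr)+\smallO(1/n).
$$
Since $\Pi(\eta g_n)=0$, we have $R^m(\eta g_n)=\lambda^{-m}\mathcal P^m(\eta g_n)$. Hence $\lambda_n(t)=1-\tfrac{t^2}{2a_n^2}\sigma_n^2+\smallO(1/n)$, and \ref{it:tri-ass-iii} yields $\lambda_n(t)^n\to e^{-\sigma^2t^2/2}$. Lévy's continuity theorem concludes the proof, and the case $\sigma=0$ is included automatically.
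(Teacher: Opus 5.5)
Your proposal is correct and is essentially the paper's proof. It uses the same twisted operator and Kato perturbation, the same identity $\lambda_n(t)-1=\mu\bigl((e^{itg_n/a_n}-1)\eta\bigr)+\mu\bigl((e^{itg_n/a_n}-1)(\eta_{n,t}-\eta)\bigr)$, and the same $L^2(\mu)$ control of the eigenfunction correction via $(\mathrm{Id}-R)^{-1}$ on $\ker\mu$ with absorption using (i), giving a remainder of order $\|g_n\|_\infty a_n^{-3}\bigl(\nu(g_n^2)+\mu(g_n^2)\bigr)$. Your option of keeping the $L^\infty$ eigenpair and taking only $L^2(\mu)$ norms of differences suffices and spares the paper's separate perturbation on $L^2(\mu)$. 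One harmless slip: the Markov representation for $\sum_{j=0}^{n-1}$ is $e^{itg_n(x)/a_n}\,\widehat{\mathcal P}_{n,t}^{\,n-1}\bigl(\widehat{\mathcal P}\mathbbm 1_M\bigr)(x)$, whereas yours carries an extra twist for $X_n$; this changes nothing in the limit by (i).
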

\begin{proof}
Observe that if $\nu(g)=\mu(\eta g)=0$, then $\lambda^{-m}\mathcal P^m(\eta g)=R^m(\eta g)$ for every $m\in\mathbb N$. For $t\in\mathbb R$, define $\widehat{\mathcal P}_{n,t}u:=\widehat{\mathcal P}\left(e^{it g_n/a_n}u\right).$
Hence, choosing representatives, for every $f\in L^2(M,\mu)$ and for $\mu$-almost every $x\in M$,
\begin{align}
\left|\widehat{\mathcal P}_{n,t} f (x)  - \widehat{\mathcal P}f(x)\right| =  \left|\widehat{\mathcal P} [ (e^{i t g_n/a_n} -1) f] (x)\right|  \leq \left\|e^{i t \frac{g_n}{a_n}} -1\right\|_\infty  \widehat{\mathcal P}(|f|) (x).\label{eq:ineq}
\end{align}
From \eqref{eq:ineq} and Lemma \ref{lem:l2-compactness} we obtain that on every compact set $K\subset\mathbb R$,
$$
\sup_{t\in K}\left(\left\|\widehat{\mathcal P}_{n,t}-\widehat{\mathcal P}\right\|_{L^\infty\to L^\infty}+\left\|\widehat{\mathcal P}_{n,t}-\widehat{\mathcal P}\right\|_{L^2(\mu)\to L^2(\mu)}\right)\leq C_K\frac{\|g_n\|_\infty}{a_n}\xrightarrow[]{n\to\infty}0.
$$
Recall from Lemma~\ref{lem:l2-compactness} that $\widehat{\mathcal P}$ is compact on both $L^\infty(M,\mu)$ and $L^2(M,\mu)$. We apply analytic perturbation theory to $\widehat{\mathcal P}_{n,t}$ separately on $L^\infty(M,\mu)$ and $L^2(\mu)$. By \eqref{eq:ineq}, for all sufficiently large $n$, the operator $\widehat{\mathcal P}_{n,t}$, acting on either $L^\infty(M,\mu)$ or $L^2(\mu)$, has a unique simple eigenvalue near $1$ and a corresponding rank-one spectral projection, both depending analytically on $t$ (see~\cite[Ch.~VII, \S1, Theorems~1.7--1.8]{Kato1995}). Since $L^\infty(M,\mu)$ is continuously and densely embedded in $L^2(\mu)$, the action of $\widehat{\mathcal P}_{n,t}$ on $L^\infty(M,\mu)$ is the same whether the operator is considered on $L^\infty(M,\mu)$ or on $L^2(\mu)$. Moreover, in both spaces, the remainder of the spectrum is separated from $1$. Hence, the result in~\cite[Lemma~A.1]{BaladiTsujii2008} implies that the eigenvalues and eigenspaces obtained in the two spaces coincide. We denote the common eigenvalue by $\Lambda_n(t)$ and choose the corresponding eigenfunction $\eta_{n,t}$ so that
\begin{align}
\widehat{\mathcal P}_{n,t}\eta_{n,t}=\Lambda_n(t)\eta_{n,t},\quad\mu(\eta_{n,t})=1.\label{eq:eigenvalueeq}
\end{align}
We denote the corresponding spectral projection by $\Pi_{n,t}$. The projections obtained in the two spaces agree on $L^\infty(M,\mu)$. Define
$$
\mathcal R_{n,t}:=\widehat{\mathcal P}_{n,t}\left(\mathrm{Id}-\Pi_{n,t}\right).
$$
Then, for every $m\geq1$, $\widehat{\mathcal P}_{n,t}^{m}=\Lambda_n(t)^m\Pi_{n,t}+\mathcal R_{n,t}^{m},$
and there exist $\chi\in(0,1)$ and $C>0$ such that
\begin{align*}
\sup_{t\in K}\left(\left\|\mathcal R_{n,t}^{m}\right\|_{L^\infty\to L^\infty}+\left\|\mathcal R_{n,t}^{m}\right\|_{L^2(\mu)\to L^2(\mu)}\right)\leq C\chi^m
\end{align*}
for every $m\geq1$ and all sufficiently large $n$. Moreover,
$$
\Lambda_n(0)=1,\quad\eta_{n,0}=\eta,\quad\Pi_{n,0}=\Pi,
$$
and
$$
\sup_{t\in K}\left(\left\|\Pi_{n,t}-\Pi\right\|_{L^\infty\to L^\infty}+\left\|\Pi_{n,t}-\Pi\right\|_{L^2(\mu)\to L^2(\mu)}\right)\xrightarrow[]{n\to\infty}0.
$$
 We divide the remainder of the proof into five steps. Since the argument is technical, we keep track of all constants explicitly for clarity.

\begin{step}[1]\label{step:tri-clt-1}
We compute the first two derivatives of $\Lambda_n(t)$ at zero. More precisely, we show that
$$
\Lambda_n'(0)=0\quad \text{and}\quad\eta_{n,0}'=\frac{i}{a_n}\sum_{m=1}^\infty R^m(\eta g_n),
$$
and
$$
\Lambda_n''(0)=-\frac{1}{a_n^2}\left(\nu(g_n^2)+2\sum_{m=1}^\infty\mu\left(g_nR^m(\eta g_n)\right)\right).
$$
\end{step}
\begin{stepproof}[Proof of Step~\ref{step:tri-clt-1}]
The first two derivatives of the perturbed operator are
\begin{align}
\widehat{\mathcal P}_{n,0}'u&=\frac{i}{a_n}\widehat{\mathcal P}(g_nu)\quad \text{and}\quad\widehat{\mathcal P}_{n,0}''u=-\frac{1}{a_n^2}\widehat{\mathcal P}(g_n^2u).\label{eq:pertdif}
\end{align}
Differentiating \eqref{eq:eigenvalueeq} at $t=0$ and applying $\mu$ gives
\begin{align}
\Lambda_n'(0)&=\mu\left(\widehat{\mathcal P}_{n,0}'\eta\right)=\frac{i}{a_n}\mu(\eta g_n)=\frac{i}{a_n}\nu(g_n)=0.\label{eq:lambdaprime0}
\end{align}
Moreover, differentiating the normalisation $\mu(\eta_{n,t})=1$ at $t=0$ gives $\mu(\eta_{n,0}')=0.$
The differentiated eigenvalue equation therefore becomes
\begin{align}
\left(\mathrm{Id}-\widehat{\mathcal P}\right)\eta_{n,0}'&=\frac{i}{a_n}\widehat{\mathcal P}(\eta g_n).\label{eq:eta0n}
\end{align}
Since $\mu(\eta g_n)=\nu(g_n)=0$ and $\mu(\eta_{n,0}')=0$, both $\eta g_n$ and $\eta_{n,0}'$ lie in $\ker\mu=\ker\Pi$. On this subspace,
\begin{align}
\left.\widehat{\mathcal P}\right|_{\ker\mu}&=R\quad \text{and}\quad\left[\left.\left(\mathrm{Id}-\widehat{\mathcal P}\right)\right|_{\ker\mu}\right]^{-1}=\left[\left.\left(\mathrm{Id}-R\right)\right|_{\ker\mu}\right]^{-1}=\sum_{m=0}^\infty R^m.\label{eq:powerr}
\end{align}
Let $C_R>0$ and $\kappa\in(0,1)$ be the constants in \eqref{ref:l2-spectral-estimate}. Then
\begin{align}
\left\|\sum_{m=0}^\infty R^mf\right\|_{L^2(\mu)}&\leq\left(1+\frac{C_R\kappa}{1-\kappa}\right)\|f\|_{L^2(\mu)}=C_{\mathrm{inv}}\|f\|_{L^2(\mu)},\label{eq:inverse-bound}
\end{align}
where $C_{\mathrm{inv}}:=1+C_R\kappa/(1-\kappa).$
From \eqref{eq:eta0n} and \eqref{eq:powerr},
\begin{align}
\eta_{n,0}'&=\frac{i}{a_n}\sum_{m=1}^\infty R^m(\eta g_n).\label{eq:etaprime}
\end{align}
Differentiating \eqref{eq:eigenvalueeq} twice and applying $\mu$, using \eqref{eq:pertdif} and \eqref{eq:lambdaprime0}, gives
\begin{align*}
\Lambda_n''(0)&=\mu\left(\widehat{\mathcal P}_{n,0}''\eta\right)+2\mu\left(\widehat{\mathcal P}_{n,0}'\eta_{n,0}'\right)=-\frac{1}{a_n^2}\mu\left(\widehat{\mathcal P}(g_n^2\eta)\right)+\frac{2i}{a_n}\mu\left(\widehat{\mathcal P}(g_n\eta_{n,0}')\right).
\end{align*}
From \eqref{eq:etaprime},
\begin{align*}
\Lambda_n''(0)&=-\frac{1}{a_n^2}\left(\nu(g_n^2)+2\sum_{m=1}^\infty\mu\left(g_nR^m(\eta g_n)\right)\right).
\end{align*}
This completes Step~\ref{step:tri-clt-1}.
\end{stepproof}
\begin{step}[2]\label{step:tri-clt-2}
We prove that, for every compact set $K\subset\mathbb R$,
\begin{align}
\Lambda_n(t)&=1-\frac{t^2}{2a_n^2}\left(\nu(g_n^2)+2\sum_{m=1}^\infty\mu\left(g_nR^m(\eta g_n)\right)\right)+\mathcal E_n(t)\label{eq:eigenvalue-expansion}
\end{align}
for every $t\in K$, where there exists $C_{4,K}>0$ such that
\begin{align}
\sup_{t\in K}|\mathcal E_n(t)|&\leq\frac{C_{4,K}\|g_n\|_\infty}{a_n^3}\left(\nu(g_n^2)+\mu(g_n^2)\right).\label{eq:eigenvalue-remainder}
\end{align}
\end{step}
\begin{stepproof}[Proof of Step~\ref{step:tri-clt-2}]
We first control the third-order remainder of $\Lambda_n(t)$. Fix a compact set $K\subset\mathbb R$ and set
$$
T_K:=\sup_{t\in K}|t|,\quad \text{and}\quad C_{\mathrm{op}}:=\left\|\widehat{\mathcal P}\right\|_{L^2(\mu)\to L^2(\mu)}.
$$
The convergence of peripheral eigenpair gives a constant $C_{\mathrm{sp},K}>0$ such that
\begin{align*}
\sup_{t\in K}\left(\|\eta_{n,t}\|_{L^2(\mu)}+\|\Pi_{n,t}\|_{L^2(\mu)\to L^2(\mu)}\right)&\leq C_{\mathrm{sp},K}
\end{align*}
for all sufficiently large $n$. These constants remain fixed throughout Step~\ref{step:tri-clt-2}.
For every $t\in K$, Taylor's expansion gives
\begin{align}
\left|e^{it g_n/a_n}-1\right|&\leq\frac{T_K|g_n|}{a_n},\label{eq:exponential-estimates1}\\
\left|e^{it g_n/a_n}-1-\frac{it g_n}{a_n}\right|&\leq\frac{T_K^2g_n^2}{2a_n^2},\label{eq:exponential-estimates2}\\
\left|e^{it g_n/a_n}-1-\frac{it g_n}{a_n}+\frac{t^2g_n^2}{2a_n^2}\right|&\leq\frac{T_K^3\|g_n\|_\infty g_n^2}{6a_n^3}.\label{eq:exponential-estimates3}
\end{align}
Since $\mu(\eta_{n,t})=\mu(\eta)=1$, we have $\eta_{n,t}-\eta\in\ker\mu.$ Moreover,
\begin{align}
\widehat{\mathcal P}\eta_{n,t}=\eta+R\left(\eta_{n,t}-\eta\right).\label{eq:PEtaPrime}
\end{align}
Substituting \eqref{eq:PEtaPrime} into \eqref{eq:eigenvalueeq} and rearranging gives
\begin{align}
\left(\mathrm{Id}-R\right)\left(\eta_{n,t}-\eta\right)&=\widehat{\mathcal P}\left(\left(e^{it g_n/a_n}-1\right)\eta_{n,t}\right)-\left(\Lambda_n(t)-1\right)\eta_{n,t}.\label{eq:eigenfunction-difference}
\end{align}
Applying $\mu$ to \eqref{eq:eigenfunction-difference} and using that $(\mathrm{Id} - R)(\eta_{n,t} -\eta) =(\mathrm{Id} - \widehat{\mathcal P})(\eta_{n,t} -\eta) $ and \eqref{eq:PEtaPrime} we obtain
\begin{align}
\Lambda_n(t)-1&=\mu\left(\left(e^{it g_n/a_n}-1\right)\eta\right)+\mu\left(\left(e^{it g_n/a_n}-1\right)\left(\eta_{n,t}-\eta\right)\right).\label{eq:eigenvalue-difference}
\end{align}
From $\nu(g_n)=0$, \eqref{eq:exponential-estimates1}, \eqref{eq:exponential-estimates2} and Cauchy--Schwarz in \eqref{eq:eigenvalue-difference}, we obtain
\begin{align}
\left|\Lambda_n(t)-1\right|&\leq\frac{T_K^2}{2a_n^2}\nu(g_n^2)+\frac{T_K}{a_n}\mu(g_n^2)^{1/2}\left\|\eta_{n,t}-\eta\right\|_{L^2(\mu)}.\label{eq:first-eigenvalue-bound}
\end{align}
The right-hand side of \eqref{eq:eigenfunction-difference} lies in $\ker\mu$. Applying the inverse in \eqref{eq:inverse-bound}, splitting $\eta_{n,t}=\eta+(\eta_{n,t}-\eta),$
and using
$$
\|\eta g_n\|_{L^2(\mu)}\leq\|\eta\|_\infty^{1/2}\nu(g_n^2)^{1/2},
$$
we obtain
\begin{align}
\left\|\eta_{n,t}-\eta\right\|_{L^2(\mu)}\leq&\frac{C_{\mathrm{inv}}C_{\mathrm{op}}T_K\|\eta\|_\infty^{1/2}}{a_n}\nu(g_n^2)^{1/2}+\frac{C_{\mathrm{inv}}C_{\mathrm{op}}T_K\|g_n\|_\infty}{a_n}\left\|\eta_{n,t}-\eta\right\|_{L^2(\mu)}\nonumber\\
&+C_{\mathrm{inv}}C_{\mathrm{sp},K}\left|\Lambda_n(t)-1\right|.\label{eq:first-eigenfunction-bound}
\end{align}
Choose $N_{1,K}$ such that, for every $n\geq N_{1,K}$,
\begin{align}
\frac{C_{\mathrm{inv}}T_K\left(C_{\mathrm{op}}+C_{\mathrm{sp},K}\right)\|g_n\|_\infty}{a_n}&\leq\frac12\quad \text{and}\quad\frac{\|g_n\|_\infty}{a_n}\leq1.\label{eq:absorption-choice}
\end{align}
Substituting \eqref{eq:first-eigenvalue-bound} into \eqref{eq:first-eigenfunction-bound}, using \eqref{eq:absorption-choice} and moving the term containing $\|\eta_{n,t}-\eta\|_{L^2(\mu)}$ to the left-hand side, we obtain
\begin{align}
\sup_{t\in K}\left\|\eta_{n,t}-\eta\right\|_{L^2(\mu)}&\leq\frac{C_{1,K}}{a_n}\left(\nu(g_n^2)+\mu(g_n^2)\right)^{1/2},\label{eq:first-eigenfunction-final}
\end{align}
where
$$
C_{1,K}:=2C_{\mathrm{inv}}\left(C_{\mathrm{op}}T_K\|\eta\|_\infty^{1/2}+\frac{C_{\mathrm{sp},K}T_K^2}{2}\right).
$$
Inserting \eqref{eq:first-eigenfunction-final} into \eqref{eq:first-eigenvalue-bound} gives
\begin{align}
\sup_{t\in K}\left|\Lambda_n(t)-1\right|&\leq\frac{C_{2,K}}{a_n^2}\left(\nu(g_n^2)+\mu(g_n^2)\right),\label{eq:first-eigenvalue-final}
\end{align}
where
$$
C_{2,K}:=\frac{T_K^2}{2}+T_KC_{1,K}.
$$
We next estimate the second-order remainder of the eigenfunction. Equation~\eqref{eq:etaprime} gives
$$
\left(\mathrm{Id}-R\right)t\eta_{n,0}'=\frac{it}{a_n}\widehat{\mathcal P}(\eta g_n).
$$
Subtracting this identity from \eqref{eq:eigenfunction-difference} gives
\begin{align*}
\left(\mathrm{Id}-R\right)\left(\eta_{n,t}-\eta-t\eta_{n,0}'\right)&=\widehat{\mathcal P}\left(\left(e^{it g_n/a_n}-1-\frac{it g_n}{a_n}\right)\eta\right)+\widehat{\mathcal P}\left(\left(e^{it g_n/a_n}-1\right)\left(\eta_{n,t}-\eta\right)\right)\nonumber\\
&\quad-\left(\Lambda_n(t)-1\right)\eta_{n,t}.
\end{align*}
Applying \eqref{eq:inverse-bound}, \eqref{eq:exponential-estimates1}, \eqref{eq:exponential-estimates2}, \eqref{eq:first-eigenfunction-final} and \eqref{eq:first-eigenvalue-final} gives
\begin{align}
\sup_{t\in K}\left\|\eta_{n,t}-\eta-t\eta_{n,0}'\right\|_{L^2(\mu)}&\leq\frac{C_{3,K}\|g_n\|_\infty}{a_n^2}\left(\nu(g_n^2)+\mu(g_n^2)\right)^{1/2},\label{eq:second-order-eigenfunction-bound}
\end{align}
where
$$
C_{3,K}:=C_{\mathrm{inv}}\left(\frac{C_{\mathrm{op}}T_K^2\|\eta\|_\infty^{1/2}}{2}+C_{\mathrm{op}}T_KC_{1,K}+\sqrt2C_{\mathrm{sp},K}C_{2,K}\right).
$$
Here we used
$$
\nu(g_n^2)+\mu(g_n^2)\leq\sqrt2\|g_n\|_\infty\left(\nu(g_n^2)+\mu(g_n^2)\right)^{1/2}.
$$
It remains to expand the eigenvalue $\Lambda_n(t)$. From \eqref{eq:eigenvalue-difference}, \eqref{eq:exponential-estimates3} and $\nu(g_n)=0$,
\begin{align}
\mu\left(\left(e^{it g_n/a_n}-1\right)\eta\right)&=-\frac{t^2}{2a_n^2}\nu(g_n^2)+\mathcal E_{n,1}(t),\label{eq:first-eigenvalue-expansion-term}
\end{align}
where
\begin{align}
\sup_{t\in K}\left|\mathcal E_{n,1}(t)\right|&\leq\frac{T_K^3\|g_n\|_\infty}{6a_n^3}\nu(g_n^2).\label{eq:first-eigenvalue-expansion-error}
\end{align}
For the second term in \eqref{eq:eigenvalue-difference}, write
\begin{align}
\mu\left(\left(e^{it g_n/a_n}-1\right)\left(\eta_{n,t}-\eta\right)\right)=\frac{it}{a_n}\mu\left(g_nt\eta_{n,0}'\right)+\mathcal E_{n,2}(t)+\mathcal E_{n,3}(t), \label{eq:second-eigenvalue-expansion-term}
\end{align}
where
$$
\mathcal E_{n,2}(t):=\mu\left(\left(e^{it g_n/a_n}-1-\frac{it g_n}{a_n}\right)t\eta_{n,0}'\right)\ \text{and }\mathcal E_{n,3}(t):=\mu\left(\left(e^{it g_n/a_n}-1\right)\left(\eta_{n,t}-\eta-t\eta_{n,0}'\right)\right).
$$
From \eqref{eq:etaprime} and \eqref{ref:l2-spectral-estimate},
\begin{align}
\|\eta_{n,0}'\|_{L^2(\mu)}&\leq\frac{C_0}{a_n}\nu(g_n^2)^{1/2},\label{eq:eigenfunction-derivative-bound}
\end{align}
where $ C_0:=C_R\kappa\|\eta\|_\infty^{1/2} /(1-\kappa).$
Using \eqref{eq:exponential-estimates1}, \eqref{eq:exponential-estimates2}, \eqref{eq:second-order-eigenfunction-bound} and \eqref{eq:eigenfunction-derivative-bound}, we obtain
\begin{align}
\sup_{t\in K}\left(\left|\mathcal E_{n,2}(t)\right|+\left|\mathcal E_{n,3}(t)\right|\right)&\leq\frac{\left(T_K^3C_0/4+T_KC_{3,K}\right)\|g_n\|_\infty}{a_n^3}\left(\nu(g_n^2)+\mu(g_n^2)\right).\label{eq:second-eigenvalue-expansion-errors}
\end{align}
Finally, \eqref{eq:etaprime} gives
\begin{align}
\frac{it}{a_n}\mu\left(g_nt\eta_{n,0}'\right)&=-\frac{t^2}{a_n^2}\sum_{m=1}^\infty\mu\left(g_nR^m(\eta g_n)\right).\label{eq:correlation-expansion-term}
\end{align}
Combining \eqref{eq:first-eigenvalue-expansion-term}, \eqref{eq:second-eigenvalue-expansion-term} and \eqref{eq:correlation-expansion-term} proves \eqref{eq:eigenvalue-expansion}. Equations~\eqref{eq:first-eigenvalue-expansion-error} and \eqref{eq:second-eigenvalue-expansion-errors} give \eqref{eq:eigenvalue-remainder} with
$$
C_{K}:=\frac{T_K^3}{6}+\frac{T_K^3C_0}{4}+T_KC_{3,K}.
$$
This completes Step~\ref{step:tri-clt-2}.
\end{stepproof}

\begin{step}[3] \label{step:tri-clt-3}
We show that
$$
\Lambda_n(t)^n\xrightarrow[]{n\to\infty}\exp\left(-\frac{\sigma^2t^2}{2}\right).
$$
\end{step}
\begin{stepproof}[Proof of Step~\ref{step:tri-clt-3}]
By the definition of $\sigma_n^2$, Step~\ref{step:tri-clt-2} gives
$$
\Lambda_n(t)=1-\frac{t^2\sigma_n^2}{2a_n^2}+\mathcal E_n(t).
$$
Assumptions~\ref{it:tri-ass-i} and \ref{it:tri-ass-ii}, and \eqref{eq:eigenvalue-remainder} imply that
$$
n|\mathcal E_n(t)|\leq C_t\frac{\|g_n\|_\infty}{a_n}\frac{n}{a_n^2}\left(\nu(g_n^2)+\mu(g_n^2)\right)\xrightarrow[]{n\to\infty}0.
$$
Since \ref{it:tri-ass-iii} gives $n\sigma_n^2/a_n^2\xrightarrow[]{n\to\infty}\sigma^2,$ we obtain
$$
n\left(\Lambda_n(t)-1\right)\xrightarrow[]{n\to\infty}-\frac{\sigma^2t^2}{2}.
$$
In particular, $\Lambda_n(t)-1=\mathcal O(n^{-1})$, and hence $n\left|\Lambda_n(t)-1\right|^2\xrightarrow[]{n\to\infty}0.$
Using the expansion of the logarithm at $1$, we obtain
$$
n\log\Lambda_n(t)=n\left(\Lambda_n(t)-1\right)+\mathcal O\left(n\left|\Lambda_n(t)-1\right|^2\right)\xrightarrow[]{n\to\infty}-\frac{\sigma^2t^2}{2}.
$$
Exponentiating proves Step~\ref{step:tri-clt-3}. Observe that if $\sigma =0$ then, the above computation shows that $\Lambda_n(t) \xrightarrow[]{n\to\infty} 1.$
\end{stepproof}

\begin{step}[4]\label{step:tri-clt-4}
We show that
$$
\mathbb E_{x,n}\left[\exp\left(\frac{it}{a_n}\sum_{j=1}^ng_n\circ X_j\right)\right]\xrightarrow[]{n\to\infty}\exp\left(-\frac{\sigma^2t^2}{2}\right).
$$
\end{step}
\begin{stepproof}[Proof of Step~\ref{step:tri-clt-4}]
By the Markov property,
$$
\mathbb E_{x,n}\left[\exp\left(\frac{it}{a_n}\sum_{j=1}^ng_n\circ X_j\right)\right]=\frac{\widehat{\mathcal P}_{n,t}^{n}\mathbbm 1_M(x)}{\widehat{\mathcal P}^{n}\mathbbm 1_M(x)}.
$$
Using the spectral decompositions of the numerator and denominator, together with the strong Feller property to evaluate the resulting continuous representatives at $x$, we obtain
$$
\widehat{\mathcal P}_{n,t}^{n}\mathbbm 1_M(x)=\Lambda_n(t)^n\Pi_{n,t}\mathbbm 1_M(x)+\mathcal R_{n,t}^{n}\mathbbm 1_M(x)
$$
and
$$
\widehat{\mathcal P}^{n}\mathbbm 1_M(x)=\eta(x)+R^n\mathbbm 1_M(x).
$$
Moreover,
$$
\Pi_{n,t}\mathbbm 1_M(x)\xrightarrow[]{n\to\infty}\Pi\mathbbm 1_M(x)=\eta(x),
$$
while
$$
\mathcal R_{n,t}^{n}\mathbbm 1_M(x)\xrightarrow[]{n\to\infty}0\quad \text{and}\quad R^n\mathbbm 1_M(x)\xrightarrow[]{n\to\infty}0.
$$
Since $\eta(x)>0$ for $x\in M\setminus Z$ and using Step~\ref{step:tri-clt-3} we conclude Step~\ref{step:tri-clt-4}.
\end{stepproof}

\begin{step}[5]\label{step:tri-clt-5}
We replace the sum from $j=1$ to $j=n$ by the sum from $j=0$ to $j=n-1$ and conclude that
$$
\frac{1}{a_n}\sum_{j=0}^{n-1}g_n\circ X_j\xrightarrow[d]{n\to\infty}\mathcal N(0,\sigma^2).
$$
\end{step}
\begin{stepproof}[Proof of Step~\ref{step:tri-clt-5}]
We have
$$
\frac{1}{a_n}\sum_{j=0}^{n-1}g_n\circ X_j-\frac{1}{a_n}\sum_{j=1}^ng_n\circ X_j=\frac{g_n\circ X_0-g_n\circ X_n}{a_n}.
$$
Consequently,
$$
\left|\frac{1}{a_n}\sum_{j=0}^{n-1}g_n\circ X_j-\frac{1}{a_n}\sum_{j=1}^ng_n\circ X_j\right|\leq\frac{2\|g_n\|_\infty}{a_n}\xrightarrow[]{n\to\infty}0
$$
by assumption \ref{it:tri-ass-i}, i.e.~$\|g_n\|_\infty/a_n\to0$ as $n\to\infty$. Step~\ref{step:tri-clt-4} and Lévy's continuity theorem~\cite[Theorem~26.3 and Corollary~1]{Billingsley95} complete the proof.
\end{stepproof}
\end{proof}

\subsection{The boundary case \texorpdfstring{$\alpha=2$}{α=2}}

\begin{proof}[Proof of Theorem~\ref{thm:main-stable-laws}~\ref{it:stab-law-iv}]
Recall that $B_n:= \sqrt{C(x_0) n \log n}$ and let $C:=C(x_0)$. Since $\nu(f_\beta>t)\sim Ct^{-2},$
we have that $f_\beta\in L^1(M,\nu)$, although $f_\beta\notin L^2(M,\nu)$. Integration of the truncated tail, applying Fubini, gives
\begin{align}
\nu\left(f_\beta^2\mathbbm 1_{\{f_\beta\leq t\}}\right)&=2\int_0^t s\nu(f_\beta>s)\,\d s-t^2\nu(f_\beta>t)\sim2C\log t.\label{ref:truncated-second-moment-alpha-two}
\end{align}
Choose 
$$u_n:=\frac{B_n}{(\log n)^{1/4}}\ \text{and define }g_n:=f_\beta\mathbbm 1_{\{f_\beta\leq u_n\}} -\nu(f_\beta\mathbbm 1_{\{f_\beta\leq u_n\}}).
$$

We divide the proof into 5 steps.

\begin{step}[1]\label{step:bdy-1}
We verify assumption~\ref{it:tri-ass-i} of Lemma~\ref{lem:conditional-triangular-clt} applied to the sequence $(g_n)$ with $a_n:=B_n$.
\end{step}
\begin{stepproof}[Proof of Step~\ref{step:bdy-1}]
Since $f_\beta\in L^1(M,\nu)$, we have that $\nu(f_\beta\mathbbm 1_{\{f_\beta\leq u_n\}})\xrightarrow[]{n\to\infty}\nu(f_\beta).$
Therefore,
$$
\frac{\|g_n\|_\infty}{B_n}\leq\frac{u_n+\nu(f_\beta\mathbbm 1_{\{f_\beta\leq u_n\}})}{B_n}=\frac{1}{(\log n)^{1/4}}+\frac{\nu(f_\beta\mathbbm 1_{\{f_\beta\leq u_n\}})}{B_n}\xrightarrow[]{n\to\infty}0.
$$
This verifies \ref{it:tri-ass-i} of Lemma~\ref{lem:conditional-triangular-clt}.
\end{stepproof}

\begin{step}[2]\label{step:bdy-2}
We verify assumption \ref{it:tri-ass-ii} of Lemma~\ref{lem:conditional-triangular-clt} applied to the sequence $(g_n)$ with $a_n:=B_n$.
\end{step}
\begin{stepproof}[Proof of Step~\ref{step:bdy-2}]
By the definition of $g_n$,
$$
\nu(g_n^2)=\nu\left(f_\beta^2\mathbbm 1_{\{f_\beta\leq u_n\}}\right)-\nu\left(f_\beta\mathbbm 1_{\{f_\beta\leq u_n\}}\right)^2.
$$
Since $f_\beta\in L^1(M,\nu)$, the second term is $\mathcal O(1)$. Equation~\eqref{ref:truncated-second-moment-alpha-two} and $\log u_n\sim\frac12\log n$ therefore give
\begin{align}
\nu(g_n^2)&\sim2C\log u_n\sim C\log n.\label{eq:vngrow}
\end{align}
Since $\eta$ is bounded away from zero in a neighbourhood of $x_0$ and $f_\beta$ is bounded outside this neighbourhood, $\mu(g_n^2)=\mathcal O(\log n).$
Recalling that $B_n^2=Cn\log n$, we obtain
$$
\frac{n}{B_n^2}\left(\nu(g_n^2)+\mu(g_n^2)\right)=\frac{1}{C\log n}\left(\nu(g_n^2)+\mu(g_n^2)\right)=\mathcal O(1).
$$
This verifies \ref{it:tri-ass-ii} of Lemma~\ref{lem:conditional-triangular-clt}.
\end{stepproof}

\begin{step}[3]\label{step:bdy-3}
We verify assumption \ref{it:tri-ass-iii} of Lemma~\ref{lem:conditional-triangular-clt} and conclude that
\begin{equation}
\frac{1}{B_n}\sum_{i=0}^{n-1}g_n\circ X_i\xrightarrow[d]{n\to\infty}\mathcal N(0,1).\label{ref:truncated-clt-alpha-two}
\end{equation}
\end{step}
\begin{stepproof}[Proof of Step~\ref{step:bdy-3}]
Recall that $R:=\widehat{\mathcal P}-\Pi,\ \widehat{\mathcal P}:=\lambda^{-1}\mathcal P,\ \Pi g:=\eta\mu(g),$
so that $\widehat{\mathcal P}^{m}=\Pi+R^m$ for every $m\geq1$. From Lemma \ref{lem:l2-compactness} the operator $R$ is compact and  there exist $\widetilde C>0$ and $\kappa\in(0,1)$ such that
\begin{align}
\|R^mf\|_{L^2(\mu)}&\leq\widetilde C\kappa^m\|f\|_{L^2(\mu)}\ \text{for every }f\in L^2(M,\mu)\text{ and }m\geq1.\label{eq:rmgap}
\end{align}
By the Cauchy--Schwarz inequality and \eqref{eq:rmgap},
$$
\begin{aligned}
\left|\mu\left(g_nR^m(\eta g_n)\right)\right|&\leq\|g_n\|_{L^2(\mu)}\left\|R^m(\eta g_n)\right\|_{L^2(\mu)}\leq\widetilde C\kappa^m\|g_n\|_{L^2(\mu)}\|\eta g_n\|_{L^2(\mu)}\leq\widetilde C\|\eta\|_\infty\kappa^m\mu(g_n^2).
\end{aligned}
$$
Since $\mu(g_n^2)=\mathcal O(\log n)$, this gives $\left|\mu\left(g_nR^m(\eta g_n)\right)\right|=\mathcal O(\kappa^m\log n).$ In particular, for every fixed $n$,
$$
\sum_{m=1}^\infty\left|\mu\left(g_nR^m(\eta g_n)\right)\right|<\infty.
$$
To obtain a bound which is negligible compared with $\log n$, we use the compactness of $R$ on $L^2(\mu)$ (see Lemma~\ref{lem:l2-compactness}). Since $R$ is $L^2(\mu)$ compact, so is $R^m$ for every fixed $m\geq1$.

We claim that
$$
\frac{\eta g_n}{\sqrt{\log n}}\xrightarrow[n\to\infty]{w}0\ \text{in }L^2(M,\mu).
$$
Indeed,
\begin{align}
\left\|\frac{\eta g_n}{\sqrt{\log n}}\right\|_{L^2(\mu)}^2&\leq\frac{\|\eta\|_\infty^2}{\log n}\mu(g_n^2)=\mathcal O(1).\label{eq:O1}
\end{align}
Moreover, since $u_n\to\infty$, $\nu(f_\beta\mathbbm 1_{\{f_\beta\leq u_n\}})\to\nu(f_\beta)$ and $f_\beta$ is finite everywhere,
$$
g_n(x)\xrightarrow[]{n\to\infty}f_\beta(x)-\nu(f_\beta)
$$
for every $x\in M$. Consequently,
\begin{align}
\frac{\eta(x)g_n(x)}{\sqrt{\log n}}&\xrightarrow[]{n\to\infty}0\ \text{for every }x\in M.\label{eq:0000}
\end{align}
From~\cite[Theorem~13.44, p.~207]{HewittStromberg1965}, every bounded sequence in $L^p(M,\mu)$, with $1<p<\infty$, which converges $\mu$-almost everywhere also converges weakly to the same limit. Hence from \eqref{eq:O1} and  \eqref{eq:0000} we obtain that
$$
\frac{\eta g_n}{\sqrt{\log n}}\xrightarrow[w]{n\to\infty}0\ \text{in }L^2(M,\mu).
$$
Since $R^m$ is compact on $L^2(\mu)$, weak convergence of its argument implies strong convergence of its image. In other words, for any $m\geq1$,
\begin{align}
R^m\left(\frac{\eta g_n}{\sqrt{\log n}}\right)\xrightarrow[]{n\to\infty}0\ \text{in }L^2(\mu).\label{eq:L2to0}
\end{align}
Since $g_n/\sqrt{\log n}$ is bounded in $L^2(\mu)$, from Cauchy-Schwarz and \eqref{eq:L2to0} we obtain that

\begin{align}
\frac{1}{\log n}\left|\mu\left(g_nR^m(\eta g_n)\right)\right|&=\left|\mu\left(\frac{g_n}{\sqrt{\log n}}R^m\left(\frac{\eta g_n}{\sqrt{\log n}}\right)\right)\right|\xrightarrow[]{n\to\infty}0\label{eq:indoa0}
\end{align}
for every fixed $m\geq1$. Finally, for every $L\geq1$,
$$
\begin{aligned}
\frac{1}{\log n}\left|\sum_{m=1}^\infty\mu\left(g_nR^m(\eta g_n)\right)\right|&\leq\sum_{m=1}^L\frac{\left|\mu\left(g_nR^m(\eta g_n)\right)\right|}{\log n}+\widetilde C\|\eta\|_\infty\frac{\mu(g_n^2)}{\log n}\sum_{m=L+1}^\infty\kappa^m.
\end{aligned}
$$
Taking the upper limit as $n\to\infty$ and using \eqref{eq:indoa0} we obtain that
$$
\limsup_{n\to\infty}\frac{1}{\log n}\left|\sum_{m=1}^\infty\mu\left(g_nR^m(\eta g_n)\right)\right|\leq C\frac{\kappa^{L+1}}{1-\kappa}.
$$
Letting $L\to\infty$, we conclude that
\begin{align}
\frac{1}{\log n}\sum_{m=1}^\infty\mu\left(g_nR^m(\eta g_n)\right)\xrightarrow[]{n\to\infty}0.\label{eq:conv}
\end{align}
It follows from \eqref{eq:vngrow} and \eqref{eq:conv} that 
$$
\sigma_n^2:=\nu(g_n^2)+2\sum_{m=1}^\infty\mu\left(g_nR^m(\eta g_n)\right)\sim C\log n.
$$
Therefore,
$$
\frac{n\sigma_n^2}{B_n^2}=\frac{\sigma_n^2}{C\log n} \xrightarrow[]{n\to\infty}1.
$$
This verifies \ref{it:tri-ass-iii} of Lemma~\ref{lem:conditional-triangular-clt} with $\sigma^2=1$. Lemma~\ref{lem:conditional-triangular-clt} now concludes Step~\ref{step:bdy-3}.
\end{stepproof}
\begin{step}[4]\label{step:bdy-4}
We show that the conditional probability of observing a value of $f_\beta\circ X_j$ larger than the truncation level $u_n$ converges to zero, namely
$$
\mathbb P_{x,n}\left(f_\beta\circ X_j>u_n\text{ for some }0\leq j<n\right)\xrightarrow[]{n\to\infty}0.
$$
\end{step}
\begin{stepproof}[Proof of Step~\ref{step:bdy-4}]
Set $A_n:=\{f_\beta>u_n\}.$
Since
$$
n\nu(A_n)=n\nu(f_\beta>u_n)\sim\frac{Cn}{u_n^2}=\frac{1}{\sqrt{\log n}}\xrightarrow[]{n\to\infty}0,
$$
the sets $A_n$ are rare on the time scale $n$. Moreover, since $\eta$ is bounded away from zero in a neighbourhood of $x_0$ and $A_n$ is contained in this neighbourhood for all sufficiently large $n$,
$$
n\mu(A_n)\leq\eta_*^{-1}n\nu(A_n)\xrightarrow[]{n\to\infty}0.
$$
Observe that for $1\leq j<n$
\begin{align}
\mathbb P_{x,n}[X_j \in A_n] =& \frac{\mathbb P_x[X_j \in A_n\ \text{and} \ X_n\in M]}{\mathbb P_x[X_n \in M]}= \frac{\lambda^{-j}\mathcal P^{j}( \mathbbm 1_{A_n} \lambda^{-(n-j)}\mathcal P^{n-j}(\cdot, M))(x)}{\lambda^{-n}\mathcal P^n (x, M)}\nonumber\\
=&\frac{\eta(x) \nu(A_n) + \mathcal O(\vartheta^{\min\{j, n-j\}-1}\sup_{z\in M}\mathcal P(z,A_n) )}{\eta(x) + \mathcal O(\vartheta^n)}\nonumber\\
=&  \nu(A_n) + \mathcal O_x\left(\vartheta^{\min\{j, n-j\}} \sup_{z\in M}\mathcal P(z,A_n)  \right).\label{eq:o1}
\end{align}
Hence, from \eqref{eq:o1} we obtain that
\begin{align*}
\mathbb P_{x,n}\left(f_\beta\circ X_j>u_n\text{ for some }0\leq j<n\right) \leq& \mathbb P_{x,n}[X_0 \in A_n] + \sum_{j=1}^{n-1}\mathbb P_{x,n}[X_j \in A_n]\\
\leq&  \mathbb P_{x,n}[X_0 \in A_n]  + n \nu(A_n) + \mathcal O_x ( \sup_{z\in M} \mathcal P(z,A_n) )\\
&\xrightarrow[]{n\to\infty} 0.
\end{align*}
Above,  we have used that $\sup_{z\in M}\mathcal P(z,A_n)\xrightarrow[]{n\to\infty}0$ which follows from Lemma \ref{lem:uniform-small-targets} and the term $j=0$ vanishes for all sufficiently large $n$ because $f_\beta(x)<\infty$ for each $x\in M$. This concludes Step~\ref{step:bdy-4}.
\end{stepproof}

\begin{step}[5] \label{step:bdy-5}
We pass from the centred truncated observable $g_n$ back to the original centred observable $f_\beta-\nu(f_\beta)$ and conclude.
\end{step}
\begin{stepproof}[Proof of Step~\ref{step:bdy-5}]
The difference between the original and truncated centred sums is
$$
\begin{aligned}
\frac{1}{B_n}\left(\sum_{i=0}^{n-1}f_\beta\circ X_i-n\nu(f_\beta)\right)-\frac{1}{B_n}\sum_{i=0}^{n-1}g_n\circ X_i=\frac{1}{B_n}\sum_{i=0}^{n-1}f_\beta\mathbbm 1_{\{f_\beta>u_n\}}\circ X_i-\frac{n}{B_n}\nu\left(f_\beta\mathbbm 1_{\{f_\beta>u_n\}}\right).
\end{aligned}
$$
Integration of the tail gives
$$
\nu\left(f_\beta\mathbbm 1_{\{f_\beta>t\}}\right)=t\nu(f_\beta>t)+\int_t^\infty\nu(f_\beta>s)\,\d s\sim\frac{2C}{t}.
$$
Thus,
$$
\frac{n}{B_n}\nu\left(f_\beta\mathbbm 1_{\{f_\beta>u_n\}}\right)\sim\frac{2Cn}{u_nB_n}=\frac{2}{(\log n)^{3/4}}\xrightarrow[]{n\to\infty}0.
$$
By Step~\ref{step:bdy-4},
$$
\frac{1}{B_n}\sum_{i=0}^{n-1}f_\beta\mathbbm 1_{\{f_\beta>u_n\}}\circ X_i\xrightarrow[]{n\to\infty} 0\ \text{in probability under }\mathbb P_{x,n}.
$$
 Combining the above limit, \eqref{ref:truncated-clt-alpha-two} and Slutsky's theorem~\cite[Lemma~2.8]{vanDerVaart1998} proves the result.
\end{stepproof}
This concludes the proof of the theorem.
\end{proof}

\subsection{Conditional central limit theorem for bounded observables}

We first apply the spectral method to a fixed bounded observable $g$. The following theorem gives the conditional central limit theorem in this setting and identifies the asymptotic variance $\sigma_g^2$ through the spectral remainder.

\begin{theorem}
\label{thm:conditional-clt-bounded}
Let $g:M\to\mathbb R$ be bounded and measurable, and set $\overline g:=g-\nu(g)$. Then, for every $x\in M\setminus Z$, under the probability measures $\mathbb P_x(\cdot\mid\tau>n)$,
$$
\frac{1}{\sqrt n}\sum_{j=0}^{n-1}\overline g\circ X_j\xrightarrow[d]{n\to\infty}\mathcal N(0,\sigma_g^2),
$$
where
\begin{align}
\sigma_g^2:=\nu(\overline g^2)+2\sum_{m=1}^{\infty}\mu\left(\overline g\, \frac{1}{\lambda^m}\mathcal P^m(\eta\overline g)\right).\label{ref:conditional-asymptotic-variance}
\end{align}
The series in \eqref{ref:conditional-asymptotic-variance} converges absolutely and $\sigma_g^2\geq0$. When $\sigma_g^2=0$, the limiting distribution is $\delta_0$.
\end{theorem}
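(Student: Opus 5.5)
The plan is to apply Lemma~\ref{lem:conditional-triangular-clt} with the constant sequence $g_n:=\overline g$ and normalisation $a_n:=\sqrt n$. Since $g$ is bounded, $\overline g$ is bounded and measurable with $\nu(\overline g)=0$. The three hypotheses then become almost immediate.

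\ref{it:tri-ass-i} holds because $\|\overline g\|_\infty/\sqrt n\to0$. For \ref{it:tri-ass-ii}, note that $n(\nu(\overline g^2)+\mu(\overline g^2))/n=\nu(\overline g^2)+\mu(\overline g^2)\leq(1+\|\eta\|_\infty)\|\overline g\|_\infty^2$, which is $\mathcal O(1)$. For \ref{it:tri-ass-iii}, observe that $\sigma_n^2$ in the lemma does not depend on $n$ and equals $\sigma_g^2$. Hence $n\sigma_n^2/a_n^2=\sigma_g^2$ trivially, and the lemma yields convergence to $\mathcal N(0,\sigma_g^2)$, interpreted as $\delta_0$ when $\sigma_g^2=0$.

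Two points remain: absolute convergence of the series and nonnegativity of $\sigma_g^2$.

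For absolute convergence, use $\mu(\eta\overline g)=\nu(\overline g)=0$. This gives $\lambda^{-m}\mathcal P^m(\eta\overline g)=\widehat{\mathcal P}^m(\eta\overline g)=\Pi(\eta\overline g)+R^m(\eta\overline g)=R^m(\eta\overline g)$. Then $|\mu(\overline g R^m(\eta\overline g))|\leq\|\overline g\|_\infty\|R^m(\eta\overline g)\|_\infty\leq C\vartheta^m\|\eta\|_\infty\|\overline g\|_\infty^2$ by Theorem~\ref{thm:spectralgap}. Alternatively one can use the $L^2$ bound of Lemma~\ref{lem:l2-compactness}. Either way the terms are summable geometrically.

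For nonnegativity, I would not attempt a direct algebraic argument. Instead, identify $\sigma_g^2$ as a limit of variances: $\sigma_g^2=\lim_n \frac1n\mathbb E_{\mathbb Q_\nu}[(S_n\overline g)^2]$ under the stationary $Q$-process. Expanding with the Markov property and $\mathcal Q^m h=\eta^{-1}\widehat{\mathcal P}^m(\eta h)$ gives
\[
\frac1n\mathbb E_{\mathbb Q_\nu}\bigl[(S_n\overline g)^2\bigr]=\nu(\overline g^2)+\frac2n\sum_{m=1}^{n-1}(n-m)\,\nu\bigl(\overline g\,\mathcal Q^m\overline g\bigr).
\]
Since $\nu(\overline g\,\mathcal Q^m\overline g)=\mu(\overline g\,\widehat{\mathcal P}^m(\eta\overline g))=\mu(\overline g R^m(\eta\overline g))$, Cesàro summation together with the geometric decay shows the right-hand side converges to $\sigma_g^2$. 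Each term on the left is nonnegative, so $\sigma_g^2\geq0$.

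A simpler route to the same conclusion: Lemma~\ref{lem:conditional-triangular-clt} itself gives $\Lambda_n(t)^n\to e^{-\sigma^2t^2/2}$ with $|\Lambda_n(t)|\leq\|\widehat{\mathcal P}_{n,t}\|$. More directly, $e^{-\sigma_g^2t^2/2}$ is a limit of characteristic functions and hence bounded by $1$ in modulus, which forces $\sigma_g^2\geq0$.

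The main (mild) obstacle is the $\sigma_g^2\geq0$ claim. The Lemma's proof implicitly treats $\sigma^2$ as a variance, so I would supply the stationary-$Q$-process variance identity, or the characteristic-function bound $|e^{-\sigma_g^2 t^2/2}|\le 1$, before invoking it. Everything else is a direct specialisation of the lemma.
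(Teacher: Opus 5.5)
Your proposal is correct and follows the paper's proof essentially step for step. The paper also applies Lemma~\ref{lem:conditional-triangular-clt} with $g_n=\overline g$ and $a_n=\sqrt n$, and it gets absolute convergence from $\widehat{\mathcal P}^m(\eta\overline g)=R^m(\eta\overline g)$ plus geometric decay, using the $L^2$ bound of Lemma~\ref{lem:l2-compactness} rather than your $L^\infty$ bound (an immaterial difference); it proves $\sigma_g^2\geq0$ by exactly your stationary $Q$-process variance identity, and your characteristic-function alternative is also valid because Steps 1--4 of the lemma's proof never use the sign of $\sigma^2$.
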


\begin{proof}[Proof of Theorem~\ref{thm:conditional-clt-bounded}]
We first prove that the series in \eqref{ref:conditional-asymptotic-variance} converges absolutely. Since $\nu(\overline g)=0$,
$\widehat{\mathcal P}^{m}(\eta\overline g)=R^m(\eta\overline g).$ From Lemma \ref{lem:l2-compactness},
$$
\begin{aligned}
\left|\mu\left(\overline g\,R^m(\eta\overline g)\right)\right|&\leq\|\overline g\|_{L^2(\mu)}\left\|R^m(\eta\overline g)\right\|_{L^2(\mu)}\leq C\kappa^m\|\overline g\|_{L^2(\mu)}\|\eta\overline g\|_{L^2(\mu)}.
\end{aligned}
$$
Since $\overline g$ and $\eta$ are bounded, the expression on the right-hand side is summable in $m$. Hence,
$$
\sum_{m=1}^\infty\left|\mu\left(\overline g\,R^m(\eta\overline g)\right)\right|<\infty.
$$
We next prove that $\sigma_g^2\geq0$. Since $\nu$ is stationary for the $Q$-process,
$$
\begin{aligned}
\mathbb E_\nu^{\mathcal Q}\left[\overline g(X_0)\overline g(X_m)\right]&=\int_M\overline g\,\mathcal Q^m\overline g\,\d\nu=\mu\left(\overline g\,\widehat{\mathcal P}^{m}(\eta\overline g)\right)=\mu\left(\overline g\,R^m(\eta\overline g)\right).
\end{aligned}
$$
Consequently,
$$
\begin{aligned}
\frac{1}{n}\mathbb E_\nu^{\mathcal Q}\left[\left(\sum_{j=0}^{n-1}\overline g\circ X_j\right)^2\right]&=\nu(\overline g^2)+2\sum_{m=1}^{n-1}\left(1-\frac{m}{n}\right)\mu\left(\overline g\,R^m(\eta\overline g)\right)\\
&\xrightarrow[]{n\to\infty}\nu(\overline g^2)+2\sum_{m=1}^\infty\mu\left(\overline g\,R^m(\eta\overline g)\right)=\sigma_g^2.
\end{aligned}
$$
The convergence follows from the absolute convergence proved above. Since the expression on the left-hand side is non-negative for every $n$, we obtain that $\sigma_g^2\geq0$.

We may conclude applying Lemma~\ref{lem:conditional-triangular-clt} with $g_n:=\overline g$ and $a_n:=\sqrt n.$
By the definition of $\overline g$, $\nu(g_n)=\nu(\overline g)=0.$ Assumption~\ref{it:tri-ass-i} follows from
$$
\frac{\|g_n\|_\infty}{a_n}=\frac{\|\overline g\|_\infty}{\sqrt n}\xrightarrow[]{n\to\infty}0.
$$
For assumption~\ref{it:tri-ass-ii}, we have
$$
\frac{n}{a_n^2}\left(\nu(g_n^2)+\mu(g_n^2)\right)=\nu(\overline g^2)+\mu(\overline g^2)=\mathcal O(1).
$$
Finally, the quantity $\sigma_n^2$ appearing in Lemma~\ref{lem:conditional-triangular-clt} is independent of $n$ and satisfies
$$
\sigma_n^2=\nu(\overline g^2)+2\sum_{m=1}^\infty\mu\left(\overline g\,R^m(\eta\overline g)\right)=\sigma_g^2.
$$
Thus,
${n\sigma_n^2}/{a_n^2}=\sigma_g^2,$ which verifies assumption~\ref{it:tri-ass-iii} with $\sigma^2=\sigma_g^2$. This finishes the proof of the theorem.
\end{proof}

\subsection{Conditional central limit theorem for \texorpdfstring{$L^2$}{L2} observables}
\label{sec:l2-clt} 
Building on the previous proof of a conditional central limit theorem for bounded observables, we now extend it to observables in $L^2$ and prove Theorem~\ref{thm:conditional-clt-l2}.

\begin{proof}[Proof of Theorem~\ref{thm:conditional-clt-l2}]
Since $\eta$ is bounded and $\nu=\eta\mu$, we have
$$
\nu(g^2)=\mu(\eta g^2)\leq\|\eta\|_\infty\mu(g^2)<\infty.
$$
In particular, $\nu(g)$ is well defined and $g\in L^2(M,\nu)$.
We first verify that the asymptotic variance is well defined. By \eqref{ref:l2-spectral-estimate},
$$
\begin{aligned}
\left|\mu\left(\overline g\,R^m(\eta\overline g)\right)\right|&\leq\|\overline g\|_{L^2(\mu)}\left\|R^m(\eta\overline g)\right\|_{L^2(\mu)}\leq C\kappa^m\|\overline g\|_{L^2(\mu)}\|\eta\overline g\|_{L^2(\mu)}\leq C\kappa^m\|\eta\|_\infty \|\overline g\|_{L^2(\mu)}^2.
\end{aligned}
$$
Thus, the series defining $\sigma_g^2$ converges absolutely.
For $K>0$, define the bounded truncation
$$
g_K(x):=\max\{ -K, \min\{g(x), K\}\}
$$
and set $\overline g_K:=g_K-\nu(g_K).$ Since $g_K\to g$ in $L^2(M,\mu)$ and $\nu=\eta\mu$ with $\eta$ bounded,
$$
g_K\to g\ \text{in }L^2(M,\mu)\text{ and }L^2(M,\nu).
$$
Hence $\overline g_K\to\overline g\ \text{in }L^2(M,\mu)\text{ and }L^2(M,\nu).$

For centred functions $u,v\in L^2(M,\mu)$, define the bilinear form
$$
\mathcal B(u,v):=\nu(uv)+\sum_{m=1}^{\infty}\left[\mu\left(uR^m(\eta v)\right)+\mu\left(vR^m(\eta u)\right)\right].
$$
By \eqref{ref:l2-spectral-estimate} and the boundedness of $\eta$,
\begin{align*}
|\mathcal B(u,v)|&\leq\|\eta\|_\infty\|u\|_{L^2(\mu)}\|v\|_{L^2(\mu)}+2C\|\eta\|_\infty\sum_{m=1}{\infty}\kappa^m\|u\|_{L^2(\mu)}\|v\|_{L^2(\mu)}\leq C\|u\|_{L^2(\mu)}\|v\|_{L^2(\mu)}.
\end{align*}
Since
$$
\sigma_{g_K}^2=\mathcal B(\overline g_K,\overline g_K)\quad \text{and}\quad  \sigma_g^2=\mathcal B(\overline g,\overline g),
$$
we obtain
$$
\begin{aligned}
\left|\sigma_{g_K}^2-\sigma_g^2\right|&\leq C\|\overline g_K-\overline g\|_{L^2(\mu)}\left(\|\overline g_K\|_{L^2(\mu)}+\|\overline g\|_{L^2(\mu)}\right)\to0.
\end{aligned}
$$
Theorem~\ref{thm:conditional-clt-bounded} gives, for every fixed $K>0$,
\begin{align}
\frac{1}{\sqrt n}\sum_{j=0}^{n-1}\overline g_K\circ X_j\xrightarrow[d]{n\to\infty}\mathcal N(0,\sigma_{g_K}^2)\label{ref:clt-bounded-truncation}
\end{align}
under $\mathbb P_x(\cdot\mid\tau>n)$.

It remains to prove that the contribution of $g-g_K$ becomes negligible when $K\to\infty$. We first control observations larger than the central limit theorem scale. Fix $\varepsilon>0$ and define
$$
A_{n,\varepsilon}:=\left\{|g|>\varepsilon\sqrt n\right\}.
$$
Since $g$ is finite-valued, $A_{n,\varepsilon}$ decreases to the empty set. From Lemma~\ref{lem:uniform-small-targets} we have that
$$
\sup_{z\in M}\mathcal P(z,A_{n,\varepsilon})\xrightarrow[]{n\to\infty}0.
$$
Moreover, $n\nu(A_{n,\varepsilon})\leq\frac{1}{\varepsilon^2}\nu\left(g^2\mathbbm 1_{A_{n,\varepsilon}}\right)\xrightarrow[]{n\to\infty}0$ and
$$
n\mu(A_{n,\varepsilon})\leq\frac{1}{\varepsilon^2}\mu\left(g^2\mathbbm1_{A_{n,\varepsilon}}\right)\xrightarrow[]{n\to\infty} 0.
$$
We claim that
\begin{align}
\mathbb P_{x,n}\left(X_j\in A_{n,\varepsilon}\text{ for some }0\leq j<n\right)\xrightarrow[]{n\to\infty}0.\label{ref:conditional-large-observation}
\end{align}
Indeed, for $1\leq j\leq n/2$, the Markov property and the spectral decomposition give
$$
\mathbb P_{x,n}\left(X_j\in A_{n,\varepsilon}\right)=\mathcal Q^j(x,A_{n,\varepsilon})+\mathcal O_x(\vartheta^{n-j}).
$$
 Since
$$
\|\widehat{\mathcal P} \left( \eta\mathbbm 1_{A_{n,\varepsilon}} \right)\|_\infty\leq\frac{\|\eta\|_\infty}{\lambda}\sup_{z\in M} \mathcal P(z,A_{n,\varepsilon})\quad \text{and}\quad \mu(\widehat{\mathcal P} \left( \eta\mathbbm 1_{A_{n,\varepsilon}} \right))=\nu(A_{n,\varepsilon}),
$$
we obtain
$$
\mathcal Q^j(x,A_{n,\varepsilon})=\nu(A_{n,\varepsilon})+\mathcal O_x\left(\vartheta^{j-1}\sup_{z\in M} \mathcal P(z,A_{n,\varepsilon})\right).
$$
Consequently,
\begin{align}
\sum_{j=1}^{\lfloor n/2\rfloor}\mathbb P_{x,n}\left(X_j\in A_{n,\varepsilon}\right)\leq n\nu(A_{n,\varepsilon})+C_x\sup_{z\in M} \mathcal P(z,A_{n,\varepsilon})+\smallO(1).\label{eq:n/2}
\end{align}
For $n/2<j<n$, applying the spectral decomposition to the initial gap $j$ gives
\begin{align*}
\mathbb P_{x,n}\left(X_j\in A_{n,\varepsilon}\right)&=\frac{\widehat{\mathcal P}^{j}\left(\mathbbm 1_{A_{n,\varepsilon}}\widehat{\mathcal P}^{n-j}\mathbbm 1_M\right)(x)}{\widehat{\mathcal P}^{n}\mathbbm 1_M(x)}\leq C_x\mu(A_{n,\varepsilon})+C_x\vartheta^j.
\end{align*}
It follows that
\begin{align}
\sum_{j=\lfloor n/2\rfloor+1}^{n-1}\mathbb P_{x,n}\left(X_j\in A_{n,\varepsilon}\right)\leq C_xn\mu(A_{n,\varepsilon})+\smallO(1).\label{eq:n/2n}
\end{align}
The term $j=0$ vanishes for all sufficiently large $n$ because $X_0=x$ and $g(x)$ is finite. Combining \eqref{eq:n/2} and \eqref{eq:n/2n} we obtain that  \eqref{ref:conditional-large-observation} holds.

To simplify notation and improve readability, define the bounded centred function
$$
r_{n,K,\varepsilon}:=(g-g_K)\mathbbm 1_{\{|g|\leq\varepsilon\sqrt n\}}-\nu\left((g-g_K)\mathbbm 1_{\{|g|\leq\varepsilon\sqrt n\}}\right).
$$
We have
$$
\nu(r_{n,K,\varepsilon}^2)\leq\nu((g-g_K)^2)\quad \text{and}\quad \mu(r_{n,K,\varepsilon}^2)\leq 2\mu((g-g_K)^2)+2\nu((g-g_K)^2).
$$
Moreover, $\|r_{n,K,\varepsilon}\|_\infty\leq2\left(\varepsilon\sqrt n+K\right).$ Applying Lemma~\ref{lem:conditional-sum-second-moment} to $r_{n,K,\varepsilon}$ and dividing by $n$ gives
\begin{equation}
\mathbb E_{x,n}\left[\left(\frac{1}{\sqrt n}\sum_{j=0}^{n-1}r_{n,K,\varepsilon}(X_j)\right)^2\right]\leq C_x\left(\mu((g-g_K)^2)+\nu((g-g_K)^2)\right)+C_x\varepsilon^2+\frac{C_xK^2}{n}.\label{eq:L2}
\end{equation}
Applying Chebyshev's inequality to \eqref{eq:L2}, we obtain that for every $\delta>0$,
\begin{align}
\limsup_{n\to\infty}\mathbb P_{x,n}\left(\left|\frac{1}{\sqrt n}\sum_{j=0}^{n-1}r_{n,K,\varepsilon}(X_j)\right|>\delta\right)\leq\frac{C_x}{\delta^2}\left(\mu((g-g_K)^2)+\nu((g-g_K)^2)+\varepsilon^2\right).\label{eq:rnk}
\end{align}
Let $E_{n,\varepsilon}:=\left\{|g(X_j)|>\varepsilon\sqrt n\text{ for some }0\leq j<n\right\}.$
By \eqref{ref:conditional-large-observation},
$\mathbb P_{x,n}(E_{n,\varepsilon})\to0$. Moreover,
\begin{align}
\frac{1}{\sqrt n}\sum_{j=0}^{n-1}\left((g-g_K)(X_j)-\nu(g-g_K)\right)=&\frac{1}{\sqrt n}\sum_{j=0}^{n-1}r_{n,K,\varepsilon}(X_j) \nonumber \\
 &-\sqrt n\,\nu\left((g-g_K)\mathbbm 1_{\{|g|>\varepsilon\sqrt n\}}\right)\label{eq:r-nu}
\end{align}
on $\Omega\setminus E_{n,\e}$. The deterministic last term converges to zero. Indeed,
\begin{align}
\sqrt n\left|\nu\left((g-g_K)\mathbbm 1_{\{|g|>\varepsilon\sqrt n\}}\right)\right|&\leq\sqrt n\,\nu\left(|g|\mathbbm 1_{\{|g|>\varepsilon\sqrt n\}}\right)\nonumber\\
&\leq\frac1\varepsilon\nu\left(g^2\mathbbm 1_{A_{n,\varepsilon}}\right)\xrightarrow[]{n\to\infty}0,\label{eq:deterministic}
\end{align}
since $g\in L^2(M,\nu)$ and $A_{n,\e}$ are nested sets satisfying $\cap_{n\in\mathbb N}A_{n,\e} = \varnothing$.
Recalling $\overline g-\overline g_K=(g-g_K)-\nu(g-g_K)$, equations
\eqref{eq:r-nu}--\eqref{eq:deterministic} imply that, for all
sufficiently large $n$,
\begin{align*}
\mathbb P_{x,n}\left(\left|\frac{1}{\sqrt n}\sum_{j=0}^{n-1}\left(\overline g-\overline g_K\right)(X_j)\right|>\delta\right)\leq\mathbb P_{x,n}(E_{n,\varepsilon})+\mathbb P_{x,n}\left(\left|\frac{1}{\sqrt n}\sum_{j=0}^{n-1}r_{n,K,\varepsilon}(X_j)\right|>\frac{\delta}{2}\right).
\end{align*}
Therefore, by \eqref{ref:conditional-large-observation} and
\eqref{eq:rnk},
$$
\limsup_{n\to\infty}\mathbb P_{x,n}\left(\left|\frac{1}{\sqrt n}\sum_{j=0}^{n-1}\left(\overline g-\overline g_K\right)(X_j)\right|>\delta\right)\leq\frac{4C_x}{\delta^2}\left(\mu((g-g_K)^2)+\nu((g-g_K)^2)+\varepsilon^2\right).
$$
Since $(g-g_K)\to0$ in both $L^2(M,\mu)$ and $L^2(M,\nu)$, first
letting $K\to\infty$ and then $\varepsilon\to0$ gives
\begin{align}
\lim_{K\to\infty}\limsup_{n\to\infty}\mathbb P_{x,n}\left(\left|\frac{1}{\sqrt n}\sum_{j=0}^{n-1}\left(\overline g-\overline g_K\right)(X_j)\right|>\delta\right)=0.\label{ref:l2-truncation-negligible}
\end{align}
For every fixed $K$, \eqref{ref:clt-bounded-truncation} holds. We use the following standard approximation result for weak convergence (see~\cite[Theorem~3.2]{Billingsley1999}): if $Y_{n,K}\xrightarrow[d]{n\to\infty}Y_K$ for every fixed $K$, $Y_K\xrightarrow[d]{K\to\infty}Y$, and for every $\delta>0$,
$$
\lim_{K\to\infty}\limsup_{n\to\infty}\mathbb{P}(|Y_{n,K}-Y_n|>\delta)=0,
$$ 
then $Y_n\xrightarrow[d]{n\to\infty}Y.$ Applied to our setting, combining  $\mathcal{N}(0,\sigma_{g_K}^2)\xrightarrow[d]{K\to\infty}\mathcal{N}(0,\sigma_g^2)$ and \eqref{ref:l2-truncation-negligible} we conclude that
$$
\frac{1}{\sqrt n}\sum_{j=0}^{n-1}\bar g(X_j)\xrightarrow[d]{n\to\infty}\mathcal{N}(0,\sigma_g^2),
$$
which finishes the proof.
\end{proof}

\subsection{Conditional large deviations for bounded observables}
\label{sec:exponential-concentration}

We dedicate this section to the proof of Theorem~\ref{thm:conditional-exponential-concentration}, which uses the spectral method for large deviations based on exponentially twisted transition operators (see~\cite{HennionHerve2001}).

\begin{proof}[Proof of Theorem~\ref{thm:conditional-exponential-concentration}]
Set $\overline h:=h-\nu(h)$. For $t\in\mathbb R$, define the twisted operators
$$
\mathcal P_t f:=\mathcal P(e^{t\overline h}f)\quad \text{and}\quad\widehat{\mathcal P}_t f:=\lambda^{-1}\mathcal P_t f=\widehat{\mathcal P}(e^{t\overline h}f)
$$
for $f\in L^\infty(M,\rho)$. Since $\overline h$ is bounded, the family $t\mapsto\widehat{\mathcal P}_t$ is analytic in operator norm, and
\begin{align}
\left\|\widehat{\mathcal P}_t-\widehat{\mathcal P}\right\|_{L^\infty\to L^\infty}&\leq\left\|\widehat{\mathcal P}\right\|_{L^\infty\to L^\infty}e^{|t|\|\overline h\|_\infty}|t|\|\overline h\|_\infty\xrightarrow[]{t\to0}0.\label{eq:concentration-operator-perturbation}
\end{align}
The spectral stability of the isolated simple eigenvalue $1$ of $\widehat{\mathcal P}$ (see~\cite[Ch.~IV, Theorem~3.16]{Kato1995}), together with analytic perturbation theory, therefore gives $t_0>0$, $\chi\in(0,1)$ and analytic families $\Lambda(t)$ and $\Pi_t$ such that, for every $|t|\leq t_0$ and $n\geq1$,
\begin{align}
\widehat{\mathcal P}_t^n&=\Lambda(t)^n\Pi_t+\mathcal R_t^n\quad \text{and}\quad \left\|\mathcal R_t^n\right\|_{L^\infty\to L^\infty}\leq C\chi^n,\label{eq:concentration-spectral-decomposition}
\end{align}
where $\Lambda(0)=1$ and $\Pi_0=\Pi$. Reducing $t_0$ if necessary, we may assume that $\Lambda(t)>0$ and $\Lambda(t)>\chi$ for every $|t|\leq t_0$.

Choose an analytic family of eigenfunctions $\eta_t$ satisfying $\eta_0=\eta$ and $\mu(\eta_t)=1$, and set $\Psi(t):=\log\Lambda(t)$. Differentiating $\widehat{\mathcal P}_t\eta_t=\Lambda(t)\eta_t$
at $t=0$ and applying $\mu$ gives
$$
\Lambda'(0)=\mu\left(\widehat{\mathcal P}(\overline h\eta)\right)=\mu(\overline h\eta)=\nu(\overline h)=0.
$$
Consequently, $\Psi(0)=\Psi'(0)=0$. Since $\Psi$ is analytic near zero, there exists $K_h>0$ such that
\begin{align}
\Psi(t)&\leq K_h t^2\ \text{for every }|t|\leq t_0.\label{eq:concentration-quadratic-bound}
\end{align}

We first consider the sums from $1$ to $n$. By the Markov property,
\begin{align}
\mathbb E_{x,n}\left[\exp\left(t\sum_{j=1}^n\overline h\circ X_j\right)\right]&=\frac{\widehat{\mathcal P}_t^n\mathbbm 1_M(x)}{\widehat{\mathcal P}^{\,n}\mathbbm 1_M(x)}.\label{eq:concentration-moment-representation}
\end{align}
Using \eqref{eq:concentration-spectral-decomposition} and the decomposition of $\widehat{\mathcal P}^{\,n}$, together with the strong Feller property to evaluate the resulting continuous representatives at $x$, we obtain
$$
\frac{\widehat{\mathcal P}_t^n\mathbbm 1_M(x)}{\widehat{\mathcal P}^{\,n}\mathbbm 1_M(x)}=\frac{\Lambda(t)^n\Pi_t\mathbbm 1_M(x)+\mathcal R_t^n\mathbbm 1_M(x)}{\eta(x)+\mathcal O_x(\vartheta^n)}.
$$
Since $\eta(x)>0$, $\Lambda(t)>\chi$ and the projections $\Pi_t$ are uniformly bounded for $|t|\leq t_0$, equations \eqref{eq:concentration-quadratic-bound} and \eqref{eq:concentration-moment-representation} give a constant $C_x^{(0)}>0$ such that
\begin{align}
\mathbb E_{x,n}\left[\exp\left(t\sum_{j=1}^n\overline h\circ X_j\right)\right]&\leq C_x^{(0)}e^{K_hnt^2}\label{ref:conditional-exponential-moment-bound}
\end{align}
for every $n\geq1$ and every $|t|\leq t_0$.

Fix $\varepsilon>0$ and set
\begin{align}
s(h,\varepsilon)&:=\min\left\{t_0,\frac{\varepsilon}{4K_h}\right\}.\label{eq:concentration-parameter}
\end{align}
By Markov's inequality and \eqref{ref:conditional-exponential-moment-bound},
\begin{align}
\mathbb P_{x,n}\left(\frac1n\sum_{j=1}^n\overline h\circ X_j>\frac{\varepsilon}{2}\right)&=\mathbb P_{x,n}\left(\exp\left(s(h,\varepsilon)\sum_{j=1}^n\overline h\circ X_j\right)>e^{ns(h,\varepsilon)\varepsilon/2}\right)\nonumber\\
&\leq e^{-ns(h,\varepsilon)\varepsilon/2}\mathbb E_{x,n}\left[\exp\left(s(h,\varepsilon)\sum_{j=1}^n\overline h\circ X_j\right)\right]\nonumber\\
&\leq C_x^{(0)}\exp\left(-n\left(\frac{s(h,\varepsilon)\varepsilon}{2}-K_hs(h,\varepsilon)^2\right)\right)\leq C_x^{(0)}e^{-s(h,\varepsilon)\varepsilon n/4},\label{eq:concentration-upper-tail}
\end{align}
where the last inequality follows from $s(h,\varepsilon)\leq\varepsilon/(4K_h)$ in \eqref{eq:concentration-parameter}. Applying the same argument with $t=-s(h,\varepsilon)$ and combining the two tails gives
\begin{align}
\mathbb P_{x,n}\left(\left|\frac1n\sum_{j=1}^n\overline h\circ X_j\right|>\frac{\varepsilon}{2}\right)&\leq 2C_x^{(0)}e^{-s(h,\varepsilon)\varepsilon n/4}.\label{eq:concentration-two-sided}
\end{align}

We now return to the sums from $0$ to $n-1$. Since
\begin{align}
\left|\sum_{j=0}^{n-1}\overline h\circ X_j-\sum_{j=1}^n\overline h\circ X_j\right|&\leq2\|\overline h\|_\infty,\label{eq:concentration-boundary-terms}
\end{align}
for $n\geq4\|\overline h\|_\infty/\varepsilon$ we have
\begin{align}
\left\{\left|\frac1n\sum_{j=0}^{n-1}\overline h\circ X_j\right|>\varepsilon\right\}&\subset\left\{\left|\frac1n\sum_{j=1}^n\overline h\circ X_j\right|>\frac{\varepsilon}{2}\right\}.\label{eq:concentration-event-inclusion}
\end{align}
Set $t_h:=4K_ht_0^2$ and $
\gamma(\varepsilon,h):=\min\left\{t_h/{4},\varepsilon^2/(16K_h)\right\}.$ By \eqref{eq:concentration-parameter},
\begin{align}
\gamma(\varepsilon,h)&\leq\min\left\{\frac{t_0\varepsilon}{4},\frac{\varepsilon^2}{16K_h}\right\}=\frac{s(h,\varepsilon)\varepsilon}{4}.\label{eq:concentration-rate-comparison}
\end{align}
Combining \eqref{eq:concentration-two-sided}, \eqref{eq:concentration-event-inclusion} and \eqref{eq:concentration-rate-comparison}, we obtain, for $n\geq4\|\overline h\|_\infty/\varepsilon$,
\begin{align}
\mathbb P_{x,n}\left(\left|\frac1n\sum_{j=0}^{n-1}\overline h\circ X_j\right|>\varepsilon\right)&\leq 2C_x^{(0)}e^{-\gamma(\varepsilon,h)n}.\label{eq:concentration-large-n}
\end{align}
If $n<4\|\overline h\|_\infty/\varepsilon$, then \eqref{eq:concentration-rate-comparison} gives $\gamma(\varepsilon,h)n<s(h,\varepsilon)\|\overline h\|_\infty\leq t_0\|\overline h\|_\infty.$
Hence,
\begin{align}
\mathbb P_{x,n}\left(\left|\frac1n\sum_{j=0}^{n-1}\overline h\circ X_j\right|>\varepsilon\right)&\leq1\leq e^{t_0\|\overline h\|_\infty}e^{-\gamma(\varepsilon,h)n}.\label{eq:concentration-small-n}
\end{align}
Equations \eqref{eq:concentration-large-n} and \eqref{eq:concentration-small-n} therefore prove \eqref{ref:conditional-exponential-concentration} for every $n\geq1$, with $C_x:=\max\left\{2C_x^{(0)},e^{t_0\|\overline h\|_\infty}\right\},$
which is independent of $\varepsilon$. The proof is completed by setting $a_h := \e^2/(16 K_h)$.
\end{proof}

\section{A conditional Poisson law}
\label{sec:conditional-poisson}

The proof of Theorem~\ref{thm:conditional-poisson} relies on the Nagaev--Guivarc'h method. The argument below is a simpler adaptation of the celebrated result of Keller--Liverani \cite[Theorem~2.1]{KellerLiverani2009} under the conditions of Hypothesis \eqref{hyp:H}.

\begin{proof}[Proof of Theorem~\ref{thm:conditional-poisson}]
Fix $x\in M\setminus(Z\cup\{x_1\})$. Since $\eta$ is continuous, $\eta(x_1)>0$ and $U_n$ shrinks to $x_1$, for all sufficiently large $n$ we have
\begin{align}
U_n\subset M\setminus Z
\quad \text{and}\quad
\mu(U_n)\leq C\nu(U_n).
\label{eq:poisson-measure-comparison}
\end{align}
Moreover, $n\nu(U_n)\to t$ implies $\nu(U_n)\to0$ as $n \to \infty$. Lemma~\ref{lem:uniform-small-targets} therefore gives
\begin{align}
\sup_{z\in M}\mathcal P(z,U_n)\xrightarrow[]{n\to\infty}0.
\label{eq:un0}
\end{align}

Fix $s\in\mathbb R$ and consider the twisted operator $\widehat{\mathcal P}_{n,s}f
:=\widehat{\mathcal P}\left(e^{is\mathbbm 1_{U_n}}f\right)$
on $\mathcal C^0(M;\mathbb C)$. Theorem~\ref{thm:spectralgap}~\ref{it:sg-it1} implies that $\widehat{\mathcal P}_{n,s}$ is strong Feller. Since
$e^{is\mathbbm 1_{U_n}}-1=(e^{is}-1)\mathbbm 1_{U_n}$, we have
\begin{align}
\left\|\widehat{\mathcal P}_{n,s}-\widehat{\mathcal P}\right\|_{\mathcal C^0\to\mathcal C^0}
&\leq\frac{|e^{is}-1|}{\lambda}
\sup_{z\in M}\mathcal P(z,U_n)
\xrightarrow[]{n\to\infty}0.
\label{eq:en2}
\end{align}
By Theorem~\ref{thm:spectralgap}, the eigenvalue $1$ of $\widehat{\mathcal P}$ is simple and isolated, and the remaining spectrum lies strictly inside the unit disk. Spectral perturbation theory~\cite[Ch.~IV, Theorem~3.16]{Kato1995} gives a simple eigenvalue $\lambda_{n,s}\to1$ and an eigenfunction $\eta_{n,s}$ such that
\begin{align}
\widehat{\mathcal P}_{n,s}\eta_{n,s}
&=\lambda_{n,s}\eta_{n,s},
\quad \mu(\eta_{n,s})=1,
\quad \|\eta_{n,s}-\eta\|_\infty\xrightarrow[]{n\to\infty}0.
\label{eq:poisson-perturbed-eigenfunction}
\end{align}
The corresponding spectral projections $\Pi_{n,s}$ of $\widehat{\mathcal P}_{n,s}$ converge in operator norm to $\Pi$, and the powers 
\begin{align}\max\left\{\left\|(\mathcal P_{n,s} - \Pi_{n,s})^k\right\|_{L^\infty(\rho)\to L^\infty(\rho) } , \left\|(\mathcal P - \Pi)^k\right\|_{L^\infty(\rho)\to L^\infty(\rho) }\right\}\xrightarrow[]{k\to\infty}0\label{eq:expfast}
\end{align}
exponentially fast.
Applying $\mu$ to the eigenvalue equation in \eqref{eq:poisson-perturbed-eigenfunction}, and using $\widehat{\mathcal P}^*\mu=\mu$, gives
\begin{align}
\lambda_{n,s}=\mu\left(e^{is\mathbbm 1_{U_n}}\eta_{n,s}\right)=1+(e^{is}-1)\mu\left(\mathbbm 1_{U_n}\eta_{n,s}\right).
\label{eq:poisson-eigenvalue-identity}
\end{align}
Since $\nu=\eta\mu$, equations \eqref{eq:poisson-measure-comparison} and \eqref{eq:poisson-perturbed-eigenfunction} imply
\begin{align}
\left|\mu\left(\mathbbm 1_{U_n}\eta_{n,s}\right)-\nu(U_n)\right|
&\leq\mu(U_n)\|\eta_{n,s}-\eta\|_\infty\leq C\nu(U_n)\|\eta_{n,s}-\eta\|_\infty
=\smallO(\nu(U_n)).
\label{eq:poisson-eigenfunction-error}
\end{align}
Substituting \eqref{eq:poisson-eigenfunction-error} into \eqref{eq:poisson-eigenvalue-identity}, we obtain
\begin{align}
\lambda_{n,s}
&=1+(e^{is}-1)\nu(U_n)+\smallO(\nu(U_n)).
\label{eq:poisson-eigenvalue-expansion}
\end{align}
Since $n\nu(U_n)\to t$, this yields $n(\lambda_{n,s}-1)\to t(e^{is}-1)$ and $n|\lambda_{n,s}-1|^2\to0$. Expanding the logarithm near $1$ therefore gives
\begin{align*}
\log(\lambda_{n,s}^{n-1})  &= (n-1) \log(1 + (\lambda_{n,s} -1)) = (n-1) (\lambda_{n,s} -1) +\mathcal O(  (n-1) |\lambda_{n,s} -1|^2)   \\ 
&= (e^{is} -1) (n-1) \nu(U_n) + \smallO( (n-1)\nu(U_n))+\mathcal O(  (n-1) |\lambda_{n,s} -1|^2).
\end{align*}
Hence,
\begin{align}
\lambda_{n,s}^{n-1}
&\xrightarrow[]{n\to\infty}\exp\left(t(e^{is}-1)\right).
\label{eq:poisson-eigenvalue-power}
\end{align}

It remains to identify the conditional characteristic function. By the Markov property,
\begin{align}
\mathbb E_{x,n}\left[
\exp\left(is\sum_{j=0}^{n-1}\mathbbm 1_{U_n}(X_j)\right)
\right]
&=
\frac{e^{is\mathbbm 1_{U_n}(x)}\widehat{\mathcal P}_{n,s}^{n-1}\widehat{\mathcal P}\mathbbm 1_M(x)}{\widehat{\mathcal P}^{n}\mathbbm 1_M(x)}.
\label{eq:poisson-characteristic-function}
\end{align}
The convergence of the spectral projections $\Pi_{n,s}\to \Pi$ and \eqref{eq:expfast} implies
\begin{align}
\widehat{\mathcal P}_{n,s}^{n-1}\widehat{\mathcal P}\mathbbm 1_M(x)&=\lambda_{n,s}^{n-1}\left(\eta(x)+\smallO(1)\right)+\smallO(1),
\label{eq:poisson-spectral-numerator}
\end{align}
where we used $\Pi\widehat{\mathcal P}\mathbbm 1_M=\eta$. Moreover,
$\widehat{\mathcal P}^{n}\mathbbm 1_M(x)\to\eta(x)>0$.
Since $x\neq x_1$ and $U_n$ shrinks to $x_1$, we also have $\mathbbm 1_{U_n}(x)=0$ for all sufficiently large $n$. Combining \eqref{eq:poisson-eigenvalue-power}--\eqref{eq:poisson-spectral-numerator}, we conclude that
$$
\mathbb E_{x,n}\left[
\exp\left(is\sum_{j=0}^{n-1}\mathbbm 1_{U_n}(X_j)\right)
\right]
\xrightarrow[]{n\to\infty}
\exp\left(t(e^{is}-1)\right).
$$
The limit is the characteristic function of $\operatorname{Poi}(t)$. Since $s\in\mathbb R$ is arbitrary, Lévy's continuity theorem~\cite[Theorem~26.3 and Corollary~1]{Billingsley95} completes the proof.
\end{proof}

\section*{Acknowledgements}
MMC thanks Max Auer for useful discussions at during the ``SDG workshop Dynamical Systems -- Theory and Application'' in Kioloa, 10--14 November 2025, and during the conference ``Statistical Properties and Extremes in Dynamical Systems: Theory and Numerics'', at MATRIX, 19--30 January 2026, which inspired the approach used to prove Theorem~\ref{thm:ppp}.
BBC is supported by a Chapman Fellowship at Imperial College London. MMC is supported by the São Paulo Research Foundation (FAPESP, grant no. 2025/26997-9).

\bibliographystyle{abbrvnat}
\bibliography{refs}

\newpage
\appendix
\renewcommand{\theequation}{A.\arabic{equation}}
\setcounter{equation}{0}
\section{Refined conditional laws of large numbers}
\label{ref:appendix}

In this Appendix we prove Theorem \ref{thm:refined-conditional-lln}. We start by establishing a result which allows us to transfer convergence properties of the $Q$-process to the original absorbing chain conditioned upon survival. 
Below, we adapt the argument in the proof of \cite[Theorem~2.10]{Castroetall} to Birkhoff averages $\Gamma_n=\frac{1}{n}\sum_{i=0}^{n-1}h\circ X_i$. We do not require $\rho(Z)=0$; instead, we assume that $h$ vanishes on $Z$.

\begin{theorem}[{Adapted from \cite[Theorem~2.10\textnormal{(i)}]{Castroetall}}]
\label{thm:conditional-transfer}
Let $X_n$ be an absorbing Markov chain satisfying Hypothesis~\ref{hyp:H},
let $x\in M\setminus Z$, and let $h:M\to\mathbb R$ be measurable,
with $h(z)=0$ for every $z\in Z$. Set
$$
\Gamma_n:=\frac1n\sum_{i=0}^{n-1}h\circ X_i.
$$
\begin{enumerate}[label=(\roman*)]
\item \label{it:cond-trans-i}
If $\Gamma_n$ converges to a constant $\Gamma^\star\in\mathbb R$
in $\mathbb Q_x$-probability, then, for every $\varepsilon>0$,
$$
\mathbb P_x\left(
\left.|\Gamma_n-\Gamma^\star|>\varepsilon\,\right|\,\tau>n
\right)\xrightarrow[]{n\to\infty}0.
$$

\item \label{it:cond-trans-ii}
Under the assumption of~\ref{it:cond-trans-i}, if $h$ is bounded, then
$$
\mathbb E_x\left[
\left.|\Gamma_n-\Gamma^\star|\,\right|\,\tau>n
\right]\xrightarrow[]{n\to\infty}0.
$$
\end{enumerate}
\end{theorem}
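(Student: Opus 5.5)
The plan is to compare the conditioned law on $\mathcal F_n$-measurable events with the $Q$-process law, using the explicit Radon--Nikodym density supplied by Theorem~\ref{thm:Qprocess}. For an event $A\in\mathcal F_n$ and $x\in M\setminus Z$, the Markov property gives
$$
\mathbb P_x(A\mid\tau>n)=\frac{\mathbb E_x[\mathbbm 1_A\mathbbm 1_{\{\tau>n\}}]}{\mathbb P_x(\tau>n)},\qquad
\mathbb Q_x(A)=\frac{\mathbb E_x[\mathbbm 1_A\eta(X_n)]}{\lambda^n\eta(x)}.
$$
Writing $\mathbbm 1_{\{\tau>n\}}=\mathbbm 1_M(X_n)$, on $\{\tau>n\}$ we have $\mathbb P_x(\,\cdot\mid\tau>n)=\frac{\lambda^n\eta(x)}{\mathbb P_x(\tau>n)}\cdot\frac{1}{\eta(X_n)}\,\mathrm d\mathbb Q_x$ restricted to $\{X_n\in M\setminus Z\}$. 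The prefactor $\lambda^n\eta(x)/\mathbb P_x(\tau>n)\to1$ by \eqref{ref:normalised-survival-probability}. The only danger is the factor $1/\eta(X_n)$, which blows up near $Z$. There is also the event $\{X_n\in Z\}$, which the $Q$-process never sees.

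To handle this, I split according to the terminal position, but at an earlier time, in order to avoid evaluating $\eta$ at time $n$. Fix a lag $k$ and consider $A\in\mathcal F_{n-k}$. Conditioning on $\mathcal F_{n-k}$ gives
$$
\mathbb P_x(A,\tau>n)=\mathbb E_x\big[\mathbbm 1_A\,\mathcal P^k\mathbbm 1_M(X_{n-k})\big],
$$
and by Theorem~\ref{thm:spectralgap}, $\lambda^{-k}\mathcal P^k\mathbbm 1_M=\eta+R^k\mathbbm 1_M$ with $\|R^k\mathbbm 1_M\|_\infty\le C\vartheta^k$. The difficulty is that $R^k\mathbbm 1_M$ is not small relative to $\eta$ near $Z$. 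To deal with it, I bound the contribution from $\{\eta(X_{n-k})<\delta\}$ separately, as
$$
\lambda^{-(n-k)}\mathbb E_x\big[\mathcal P^k\mathbbm 1_M(X_{n-k})\lambda^{-k};\,\eta(X_{n-k})<\delta\big]\le(\delta+C\vartheta^k)\,\lambda^{-(n-k)}\mathcal P^{n-k}\mathbbm 1_{M}(x)=\mathcal O_x(\delta+\vartheta^k).
$$
On $\{\eta(X_{n-k})\ge\delta\}$ the ratio $(\eta+R^k\mathbbm 1_M)/\eta$ equals $1+\mathcal O(\vartheta^k/\delta)$. Dividing by $\mathbb P_x(\tau>n)\sim\lambda^n\eta(x)$ then yields
$$
\big|\mathbb P_x(A\mid\tau>n)-\mathbb Q_x(A)\big|\le C_x\big(\delta+\vartheta^k/\delta\big)+\smallO(1),
$$
uniformly in $A\in\mathcal F_{n-k}$. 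The $\mathbb Q_x$-mass of $\{\eta(X_{n-k})<\delta\}$ is at most $\delta\cdot\mathcal O_x(1)$ by the same computation, so dropping this restriction on the $\mathbb Q_x$ side also costs only $\mathcal O_x(\delta)$.

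Next I pass from $\Gamma_n$ to an $\mathcal F_{n-k}$-measurable approximation. Let $\Gamma_n'=\frac1n\sum_{i=0}^{n-k-1}h\circ X_i$. The tail term $\frac1n\sum_{i=n-k}^{n-1}h\circ X_i$ is a fixed number $k$ of terms divided by $n$. Under $\mathbb P_x(\cdot\mid\tau>n)$ these terms are tight: $\mathbb P_x(|h(X_i)|>L\mid\tau>n)$ is bounded using $\mathcal P^{i}(x,\{|h|>L\})$ and strong Feller continuity. Each $|h(X_i)|$ is finite almost surely, and $h=0$ on $Z$, so $X_i\in Z$ causes no problem. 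Hence the tail is $\smallO_{\mathbb P}(1)$ under both measures. Then $\{|\Gamma_n'-\Gamma^\star|>\varepsilon/2\}\in\mathcal F_{n-k}$, and its $\mathbb Q_x$-probability tends to $0$ by hypothesis together with the same tightness. The previous step transfers this to the conditioned measure, and letting $n\to\infty$, then $k\to\infty$ with $\delta=\vartheta^{k/2}$, proves \ref{it:cond-trans-i}.

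For \ref{it:cond-trans-ii}, boundedness of $h$ gives $|\Gamma_n-\Gamma^\star|\le\|h\|_\infty+|\Gamma^\star|$. Convergence in conditional probability then implies convergence in conditional $L^1$ by dominated splitting on $\{|\Gamma_n-\Gamma^\star|>\varepsilon\}$.

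The main obstacle is the behaviour of $\eta$ near $Z$. I expect to spend the most care on the uniform-in-$A$ comparison with the $\delta$-cutoff, and on the tightness of the last $k$ observations.
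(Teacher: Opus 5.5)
Your argument is correct, but it takes a different route from the paper. The paper never compares $\mathbb P_x(\cdot\mid\tau>n)$ with $\mathbb Q_x$ directly. Instead it:
\begin{itemize}
\item passes to the chain $\widetilde X_n$ killed on leaving $M\setminus Z$, and checks that this chain has the same spectral decomposition and the same $Q$-process;
\item imports \cite[Theorem~2.10]{Castroetall} as a black box to obtain $\mathbb P_x(|\Gamma_m-\Gamma^\star|>\varepsilon\mid\widetilde\tau>m-1)\to0$;
\item treats paths entering $Z$ at a time $m<n$ by a first-entrance decomposition, using $h|_Z=0$ to write $\Gamma_n=\frac mn\Gamma_m$ there;
\item uses $\mathbb P_z(\tau>j)\leq C(\lambda\vartheta)^j$ for $z\in Z$ to get summable weights $\vartheta^{n-m}$, and concludes by dominated convergence.
\end{itemize}

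Your proof is self-contained and quantitative. It uses only Theorem~\ref{thm:spectralgap} and Theorem~\ref{thm:Qprocess}\ref{it:Qproc-i}, through the uniform total-variation estimate $\sup_{A\in\mathcal F_{n-k}}|\mathbb P_x(A\mid\tau>n)-\mathbb Q_x(A)|=\mathcal O_x(\vartheta^k)+\smallO(1)$ and tightness of the last $k$ observations. It also does not need $h|_Z=0$. Visits to $Z$ before time $n-k$ are absorbed into the $\mathcal O_x(\vartheta^k)$ error, and the tail terms are controlled for any finite-valued $h$. Your remark about $h=0$ on $Z$ in the tightness step is therefore superfluous, and your route proves a slightly stronger statement.

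Two simplifications are available.

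First, the $\delta$-cutoff is unnecessary. Write $\lambda^{-k}\mathcal P^k\mathbbm 1_M=\eta+R^k\mathbbm 1_M$. The error term is then bounded additively by $C\vartheta^k\,\mathbb P_x(\tau>n-k)$, which after dividing by $\mathbb P_x(\tau>n)\sim\lambda^n\eta(x)$ is $\mathcal O_x(\vartheta^k)$. You never divide by $\eta(X_{n-k})$, so the smallness of $\eta$ near $Z$ plays no role.

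Second, justify the tightness of $h(X_i)$, $n-k\leq i<n$, under the conditioned measure by the spectral decomposition rather than by strong Feller continuity, which gives no uniformity in $i$. Concretely, $\lambda^{-i}\mathcal P^i\mathbbm 1_{\{|h|>L\}}(x)=\eta(x)\mu(|h|>L)+\mathcal O(\vartheta^i)$, together with $\|\mathcal P^{n-i}\mathbbm 1_M\|_\infty\leq C\lambda^{n-i}$. This gives $\mathbb P_x(|h(X_i)|>L\mid\tau>n)\leq C_x\bigl(\mu(|h|>L)+\vartheta^i\bigr)$ uniformly in such $i$.
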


\begin{proof}
Set $\widetilde M:=M\setminus Z$ and
$\widetilde\tau:=\inf\{j\geq0:X_j\notin\widetilde M\}$.
We extend $h$ by zero at $\partial$ and fix
$\vartheta\in(\lambda_0/\lambda,1)$.
Since $\mathcal P\eta=\lambda\eta$, $\eta|_Z=0$ and $\eta>0$ on $\widetilde M$,
we have $\mathcal P(z,\widetilde M)=0$ for every $z\in Z$.
Thus $X_n$ cannot return to $\widetilde M$ after leaving it.

Define $\widetilde X_n$ by killing $X_n$ at time $\widetilde\tau$.
For every bounded measurable $g:\widetilde M\to\mathbb R$, extended by zero to $M$,
every $x\in\widetilde M$ and every $n\geq1$, we have
\begin{align}
\lambda^{-n}\widetilde{\mathcal P}^{\,n}g(x)
&=\lambda^{-n}\mathcal P^n(g\mathbbm 1_{\widetilde M})(x)
=\eta(x)\mu(g\mathbbm 1_{\widetilde M})
+R^n(g\mathbbm 1_{\widetilde M})(x).
\label{eq:eqspectral}
\end{align}
Since $\mu(\eta)=1$, we have $\mu(\widetilde M)>0$.
Equation~\eqref{eq:eqspectral} identifies the unique quasi-stationary measure
and normalised survival function of $\widetilde X_n$ as
$$
\widetilde\mu=\frac{\mu(\cdot\cap\widetilde M)}{\mu(\widetilde M)}
\quad \text{and}\quad 
\widetilde\eta=\mu(\widetilde M)\eta,
$$
respectively. In particular,
$\widetilde{\mathcal P}\widetilde\eta=\lambda\widetilde\eta$
and $\widetilde\eta>0$ on $\widetilde M$.

Theorem~\ref{thm:Qprocess} implies that $X_n$ and $\widetilde X_n$ have the same
$Q$-process. Moreover, \eqref{eq:eqspectral} verifies the convergence assumptions
used in the proof of \cite[Theorem~2.10\textnormal{(i)}]{Castroetall}.
That argument also applies when $\lambda=1$.
Since $h|_Z=0$, its averages along $\widetilde X_n$ agree with $\Gamma_n$.
Applying that result at time $m-1$ to the $\mathcal F_{m-1}$-measurable variable
$\Gamma_m$ therefore gives, for every $\varepsilon>0$,
\begin{align}
\mathbb P_x\left(
\left.|\Gamma_m-\Gamma^\star|>\varepsilon
\,\right|\,\widetilde\tau>m-1
\right)
&\xrightarrow[]{m\to\infty}0.
\label{eq:transfer-killed-Z}
\end{align}

We prove~\ref{it:cond-trans-i} by decomposing the survival event $\{\tau>n\}$
according to whether the chain enters $Z$ before time $n$ or remains in
$\widetilde M$ through time $n-1$.
On $\{\widetilde\tau=m,\tau>n\}$ with $1\leq m<n$,
we have $X_i\in Z$ for every $m\leq i<n$. Since $h|_Z=0$,
\begin{align}
\Gamma_n
&=\frac1n\sum_{i=0}^{m-1}h\circ X_i
+\underbrace{\frac1n\sum_{i=m}^{n-1}h\circ X_i}_{=\,0}
=\frac mn\Gamma_m.
\label{eq:transfer-stopped-average}
\end{align}
Fix $\varepsilon>0$. Splitting according to the first entrance into $Z$, we obtain
\begin{align}
\mathbb P_x\left(
\left.|\Gamma_n-\Gamma^\star|>\varepsilon\,\right|\,\tau>n
\right)
&=\sum_{m=1}^{n-1}\mathbb P_x\left(
\left.|\Gamma_n-\Gamma^\star|>\varepsilon,\,
\widetilde\tau=m\,\right|\,\tau>n
\right)
\nonumber\\
&\phantom{={}}+\mathbb P_x\left(
\left.|\Gamma_n-\Gamma^\star|>\varepsilon,\,
\widetilde\tau\geq n\,\right|\,\tau>n
\right).
\label{eq:dec}
\end{align}

We first estimate the terms in the sum, which are zero if $Z=\varnothing$.
Suppose that $Z\neq\varnothing$ and fix $1\leq m<n$.
On $\{\widetilde\tau=m,\tau>n\}$, the chain enters $Z$ at time $m$
and must survive another $n-m$ steps.
Using \eqref{eq:transfer-stopped-average} and the
$\mathcal F_{m-1}$-measurability of $\Gamma_m$, the Markov property gives
\begin{align}
&\mathbb P_x\left(
\left.|\Gamma_n-\Gamma^\star|>\varepsilon,\,
\widetilde\tau=m\,\right|\,\tau>n
\right)
\nonumber\\
&\leq\frac{\mathbb P_x(\widetilde\tau>m-1)}{\mathbb P_x(\tau>n)}
\sup_{z\in Z}\mathbb P_z(\tau>n-m)
\mathbb P_x\left(
\left.\left|\frac mn\Gamma_m-\Gamma^\star\right|>\varepsilon
\,\right|\,\widetilde\tau>m-1
\right).
\label{eq:ineq22}
\end{align}
Equation~\eqref{eq:eqspectral} gives
$\mathbb P_x(\widetilde\tau>m-1)\leq C\lambda^{m-1}$,
including $m=1$ after increasing $C$.
Moreover, $\mathbb P_x(\tau>n)\sim\eta(x)\lambda^n$.
Since $\eta|_Z=0$, \eqref{eq:eqspectral} implies
$$
\mathbb P_z(\tau>k)
=\lambda^kR^k\mathbbm 1_M(z)
\leq C\lambda^k\vartheta^k
\ \text{for every }z\in Z\text{ and }k\geq1.
$$
Consequently, enlarging $C_x$ to cover finitely many small $n$, we obtain
\begin{align}
\mathbb P_x\left(
\left.|\Gamma_n-\Gamma^\star|>\varepsilon,\,
\widetilde\tau=m\,\right|\,\tau>n
\right)
&\leq C_x\vartheta^{n-m}
\mathbb P_x\left(
\left.\left|\frac mn\Gamma_m-\Gamma^\star\right|>\varepsilon
\,\right|\,\widetilde\tau>m-1
\right),
\label{eq:firstterm}
\end{align}
where $C_x$ is independent of $m$ and $n$.

For the remaining term, $\{\widetilde\tau\geq n\}=\{\widetilde\tau>n-1\}$ implies
\begin{align}
\mathbb P_x\left(
\left.|\Gamma_n-\Gamma^\star|>\varepsilon,\,
\widetilde\tau\geq n\,\right|\,\tau>n
\right)
&\leq\frac{\mathbb P_x(\widetilde\tau>n-1)}{\mathbb P_x(\tau>n)}
\mathbb P_x\left(
\left.|\Gamma_n-\Gamma^\star|>\varepsilon
\,\right|\,\widetilde\tau>n-1
\right)
\nonumber\\
&\leq C_x\mathbb P_x\left(
\left.|\Gamma_n-\Gamma^\star|>\varepsilon
\,\right|\,\widetilde\tau>n-1
\right).
\label{eq:secondterm}
\end{align}
Indeed, since $\widetilde\tau\leq\tau$,
$$
\frac{\mathbb P_x(\widetilde\tau>n-1)}{\mathbb P_x(\tau>n)}
\leq\frac{\mathbb P_x(\tau>n-1)}{\mathbb P_x(\tau>n)}
\xrightarrow[]{n\to\infty}\frac1\lambda,
$$
so the ratio is bounded independently of $n$.

Substituting \eqref{eq:firstterm} and \eqref{eq:secondterm} into \eqref{eq:dec},
and writing $j=n-m$, yields
\begin{align}
&\mathbb P_x\left(
\left.|\Gamma_n-\Gamma^\star|>\varepsilon\,\right|\,\tau>n
\right)\leq C_x\sum_{j=0}^{n-1}\vartheta^j
\mathbb P_x\left(
\left.\left|\frac{n-j}{n}\Gamma_{n-j}-\Gamma^\star\right|>\varepsilon
\,\right|\,\widetilde\tau>n-j-1
\right).
\label{eq:transfer-deviation-bound}
\end{align}
The term $j=0$ accounts for $\{\widetilde\tau\geq n\}$.
For each fixed $j$ and all sufficiently large $n$,
$$
\left|\frac{n-j}{n}\Gamma_{n-j}-\Gamma^\star\right|
\leq|\Gamma_{n-j}-\Gamma^\star|+\frac jn|\Gamma^\star|.
$$
Thus the probability in the $j$th summand tends to zero by
\eqref{eq:transfer-killed-Z}.
Extending the summands by zero for $j\geq n$, each is bounded by $\vartheta^j$.
Since $\sum_{j\geq0}\vartheta^j<\infty$, dominated convergence proves~\ref{it:cond-trans-i}.

To prove~\ref{it:cond-trans-ii}, suppose that $h$ is bounded and write
$\|h\|_\infty:=\sup_{y\in M}|h(y)|$.
On $\{\tau>n\}$, we have $|\Gamma_n-\Gamma^\star|\leq\|h\|_\infty+|\Gamma^\star|$.
Hence, for every $\delta>0$,
\begin{align}
\mathbb E_x\left[
\left.|\Gamma_n-\Gamma^\star|\,\right|\,\tau>n
\right]
&\leq\delta+(\|h\|_\infty+|\Gamma^\star|)
\mathbb P_x\left(
\left.|\Gamma_n-\Gamma^\star|>\delta\,\right|\,\tau>n
\right).
\label{eq:transfer-holder}
\end{align}
By~\ref{it:cond-trans-i}, the probability on the right tends to zero.
Taking the upper limit as $n\to\infty$ and then letting $\delta\to 0$
proves~\ref{it:cond-trans-ii}.
\end{proof}

We now prove Theorem~\ref{thm:refined-conditional-lln}.
\begin{proof}[Proof of Theorem~\ref{thm:refined-conditional-lln}]
Define
$$
\Gamma_n:=\frac1n\sum_{i=0}^{n-1}h\circ X_i
\quad \text{and}\quad
\Gamma^\star:=\nu(h).
$$
For bounded measurable functions, $\|\cdot\|_\infty$ denotes the pointwise supremum norm. Fix $\vartheta\in(\lambda_0/\lambda,1)$. From the spectral decomposition of Theorem~\ref{thm:spectralgap}~\ref{it:sg-it2}, we have
\begin{align}
\|R^kv\|_\infty&\leq C\vartheta^k\|v\|_\infty
\ \text{for }k\geq1,\ \text{and }\sup_{k\geq0}\|\widehat{\mathcal P}^{\,k}\|_{L^\infty\to L^\infty}<\infty.
\label{eq:lln-operator-bounds}
\end{align}
Moreover, for every $x\in M\setminus Z$,
\begin{align}
\widehat{\mathcal P}^{\,n}\mathbbm 1_M(x)
&=\eta(x)+R^n\mathbbm 1_M(x)
\xrightarrow[]{n\to\infty}\eta(x)>0.
\label{eq:lln-survival}
\end{align}
Since all survival probabilities are positive for such $x$, the sequence in \eqref{eq:lln-survival} is also bounded away from zero. We divide the proof into four steps.
\color{black}{
\begin{step}[1]
\label{step:conditional-lln-1}
We prove~\ref{it:cond-lln-1}.
\end{step}

\begin{stepproof}[Proof of Step~\ref{step:conditional-lln-1}]


Let $h:M\to\mathbb R$ be bounded and measurable, and set $\overline h:=h-\nu(h)$. Since $\mu(\eta\overline h)=\nu(\overline h)=0$, it follows that $\widehat{\mathcal P}^{\,m}(\eta\overline h)=R^m(\eta\overline h)$ for every $m\geq1$.

For $0\leq i<j<n$, the Markov property, Theorem~\ref{thm:Qprocess}~\ref{it:Qproc-ii} and \eqref{eq:lln-operator-bounds} therefore imply
\begin{align}
\left|\mathbb E_x^{\mathbb Q}
\left[\overline h\circ X_i\,\overline h\circ X_j\right]\right|&=\left|\mathcal Q^i\left(\overline h\,\mathcal Q^{j-i}\overline h\right)(x)\right|
\nonumber=\frac1{\eta(x)}
\left|\widehat{\mathcal P}^{\,i}
\left(\overline h\,R^{j-i}(\eta\overline h)\right)(x)\right|
\nonumber\\
&\leq C_x\vartheta^{j-i}\|\overline h\|_\infty^2.
\label{eq:lln-q-correlation-bound}
\end{align}
Since $\Gamma_n-\Gamma^\star=\frac{1}{n}\sum_{i=0}^{n-1}\overline h\circ X_i$, expanding its square and applying \eqref{eq:lln-q-correlation-bound} gives
\begin{align}
\mathbb E_x^{\mathbb Q}\left[|\Gamma_n-\Gamma^\star|^2\right]&=\frac1{n^2}\mathbb E_x^{\mathbb Q}\left[
\left(\sum_{i=0}^{n-1}\overline h\circ X_i\right)^2
\right]\\
&=\frac1{n^2}\sum_{i=0}^{n-1}
\mathbb E_x^{\mathbb Q}\left[(\overline h\circ X_i)^2\right]
+\frac2{n^2}\sum_{0\leq i<j<n}
\mathbb E_x^{\mathbb Q}\left[
\overline h\circ X_i\,\overline h\circ X_j
\right]\nonumber\\
&\leq\frac{\|\overline h\|_\infty^2}{n}
+\frac{2C_x\|\overline h\|_\infty^2}{n^2}
\sum_{0\leq i<j<n}\vartheta^{j-i}\leq\frac{\widetilde C_x\|\overline h\|_\infty^2}{n}
\xrightarrow[]{n\to\infty}0,
\label{eq:lln-q-l2}
\end{align}
where we also used
$\mathbb E_x^{\mathbb Q}[(\overline h\circ X_i)^2]\leq\|\overline h\|_\infty^2$.
By Cauchy--Schwarz, \eqref{eq:lln-q-l2} also gives convergence in $L^1(\mathbb Q_x)$.

Since $X_n$ almost surely never visits $Z$ under $\mathbb Q_x$ and $\nu(Z)=0$, the same convergence holds with $h$ replaced by $h\mathbbm 1_{M\setminus Z}$. This observable vanishes on $Z$ and is bounded by $\|h\|_\infty$. Theorem~\ref{thm:conditional-transfer}~\ref{it:cond-trans-ii} consequently gives
\begin{align}
\mathbb E_x\left[
\left.
\left|\frac1n\sum_{i=0}^{n-1}(h\mathbbm 1_{M\setminus Z})\circ X_i-\nu(h)\right|
\,\right|\,\tau>n
\right]
&\xrightarrow[]{n\to\infty}0.
\label{eq:lln-bounded-active-limit}
\end{align}

It remains to control the contribution from $Z$. Since $\eta|_Z=0$, we have $\widehat{\mathcal P}^{\,j}\mathbbm 1_M=R^j\mathbbm 1_M$ on $Z$ for every $j\geq1$. The Markov property, followed by \eqref{eq:lln-operator-bounds} and \eqref{eq:lln-survival}, gives
\begin{align}
\mathbb E_x\left[
\left.
\left|\frac1n\sum_{i=0}^{n-1}(h\mathbbm 1_Z)\circ X_i\right|
\,\right|\,\tau>n
\right]&\leq\frac1{n\widehat{\mathcal P}^{\,n}\mathbbm 1_M(x)}
\sum_{i=0}^{n-1}\widehat{\mathcal P}^{\,i}
\left(|h|\mathbbm 1_Z\widehat{\mathcal P}^{\,n-i}\mathbbm 1_M\right)(x)
\nonumber\\
&\leq\frac{C_x\|h\|_\infty}{n}
\sum_{i=0}^{n-1}\vartheta^{n-i}
\leq\frac{C_x\|h\|_\infty}{n}\frac{\vartheta}{1-\vartheta}
\xrightarrow[]{n\to\infty}0.
\label{eq:lln-bounded-z-limit}
\end{align}
The triangle inequality, together with \eqref{eq:lln-bounded-active-limit} and \eqref{eq:lln-bounded-z-limit}, proves~\ref{it:cond-lln-1}.
\end{stepproof}

\begin{step}[2]
\label{step:conditional-lln-2}
We prove~\eqref{ref:l1-conditional-probability} for $h\in L^1(M,\nu)$ satisfying $h|_Z=0$.
\end{step}

\begin{stepproof}[Proof of Step~\ref{step:conditional-lln-2}]
By Theorem~\ref{thm:Qprocess}~\ref{it:Qproc-iii}, $\mathbb Q_\nu$ is stationary and ergodic. Birkhoff's ergodic theorem, applied to the stationary shift \cite[Theorem~11.4]{EisnerFarkasHaaseNagel2015}, therefore gives, for $\nu$-almost every $x\in M\setminus Z$,
\begin{align}
\Gamma_n&\xrightarrow[]{n\to\infty}\nu(h)
\ \text{$\mathbb Q_x$-almost surely}.
\label{eq:lln-q-almost-sure}
\end{align}
Applying Theorem~\ref{thm:conditional-transfer}~\ref{it:cond-trans-i} to \eqref{eq:lln-q-almost-sure} proves~\eqref{ref:l1-conditional-probability}.
\end{stepproof}

\begin{step}[3]
\label{step:conditional-lln-3}
For $h\in L^1(M,\mu)$ satisfying $h|_Z=0$, we prove that,
for $\nu$-almost every $x\in M\setminus Z$,
\begin{align}
\mathbb E_x\left[
\left.\Gamma_n\,\right|\,\tau>n
\right]
&\xrightarrow[]{n\to\infty}\nu(h).
\label{ref:l1-quasi-ergodic-limit2}
\end{align}
\end{step}

\begin{stepproof}[Proof of Step~\ref{step:conditional-lln-3}]
Since $\eta$ is bounded, $\nu(|h|)=\mu(\eta|h|)\leq\|\eta\|_\infty\mu(|h|)<\infty.$
It is enough to consider $h\geq0$. The identity $\widehat{\mathcal P}^{*}\mu=\mu$ implies that
\begin{align}
\mu(\widehat{\mathcal P}^{\,i}h)&=\mu(h)<\infty
\ \text{for every }i\geq0.
\label{eq:lln-mu-finiteness}
\end{align}
Thus all $\widehat{\mathcal P}^{\,i}h$ are finite outside a common $\mu$-null set. By \eqref{eq:lln-operator-bounds}, the functions $\widehat{\mathcal P}^{\,k}\mathbbm 1_M$ are uniformly bounded. Consequently, for $\nu$-almost every $x\in M\setminus Z$, all terms in the following decomposition are finite.

Using $\widehat{\mathcal P}^{\,n-i}\mathbbm 1_M=\eta+R^{n-i}\mathbbm 1_M$, the Markov property and Theorem~\ref{thm:Qprocess}~\ref{it:Qproc-ii} give
\begin{align}
\mathbb E_x\left[\left.\Gamma_n\,\right|\,\tau>n\right]
&=\frac1{n\widehat{\mathcal P}^{\,n}\mathbbm 1_M(x)}
\sum_{i=0}^{n-1}\widehat{\mathcal P}^{\,i}
\left(h\widehat{\mathcal P}^{\,n-i}\mathbbm 1_M\right)(x)
\\
&=\frac{\eta(x)}{\widehat{\mathcal P}^{\,n}\mathbbm 1_M(x)}
\frac1n\sum_{i=0}^{n-1}\mathcal Q^ih(x)+\mathcal E_n(x)
\label{ref:l1-quasi-ergodic-decomposition}
\end{align}
where
$$
\mathcal E_n(x):=
\frac1{n\widehat{\mathcal P}^{\,n}\mathbbm 1_M(x)}
\sum_{i=0}^{n-1}\widehat{\mathcal P}^{\,i}
\left(hR^{n-i}\mathbbm 1_M\right)(x).
$$

Define $h/\eta$ to be zero on $Z$. Since $h|_Z=0$, we have $h=\eta(h/\eta)$ on $M$. From Theorem~\ref{thm:Qprocess}~\ref{it:Qproc-ii}, extended to non-negative functions, we obtain
\begin{align}
\widehat{\mathcal P}^{\,i}h(x)
&=\widehat{\mathcal P}^{\,i}\left(\eta\frac h\eta\right)(x)
=\eta(x)\mathcal Q^i(h/\eta)(x).
\label{eq:lln-h-over-eta}
\end{align}
Using positivity, \eqref{eq:lln-operator-bounds} and \eqref{eq:lln-h-over-eta} in the definition of $\mathcal E_n$, we obtain
\begin{align}
|\mathcal E_n(x)|
&\leq\frac{C}{n\widehat{\mathcal P}^{\,n}\mathbbm 1_M(x)}
\sum_{i=0}^{n-1}\vartheta^{n-i}\widehat{\mathcal P}^{\,i}h(x)=\frac{C\eta(x)}{\widehat{\mathcal P}^{\,n}\mathbbm 1_M(x)}
\frac1n\sum_{i=0}^{n-1}\vartheta^{n-i}\mathcal Q^i(h/\eta)(x).
\label{ref:l1-quasi-ergodic-error}
\end{align}
Furthermore, $
\nu(h/\eta)=\int_{M\setminus Z} (h/\eta)\eta\,\d\mu=\mu(h)<\infty.$

By Theorem~\ref{thm:Qprocess}~\ref{it:Qproc-iii}, $\nu$ is stationary for $\mathcal Q$ and $\mathbb Q_\nu$ is ergodic. The pointwise ergodic theorem applied to $\mathcal Q$ (see \cite[Theorem~11.4 and Example~13.24]{EisnerFarkasHaaseNagel2015}) therefore gives, for every $f\in L^1(M,\nu)$,
\begin{align}
\frac1n\sum_{i=0}^{n-1}\mathcal Q^i f(x)
&\xrightarrow[]{n\to\infty}\nu(f)
\ \text{for $\nu$-almost every }x.
\label{eq:lln-q-ergodic}
\end{align}
Applying \eqref{eq:lln-q-ergodic} to $h$ and $h/\eta$, and using $\nu(h/\eta)=\mu(h)$, we obtain
\begin{align}
\frac1n\sum_{i=0}^{n-1}\mathcal Q^ih(x)
&\xrightarrow[]{n\to\infty}\nu(h),
\ \text{and }\frac1n\sum_{i=0}^{n-1}\mathcal Q^i(h/\eta)(x)
\xrightarrow[]{n\to\infty}\mu(h)
\label{ref:q-ergodic-u-over-eta}
\end{align}
for $\nu$-almost every $x$.

Taking differences of consecutive averages in \eqref{ref:q-ergodic-u-over-eta} gives
\begin{align}
\frac{\mathcal Q^i(h/\eta)(x)}{i+1}
&=\frac1{i+1}\sum_{r=0}^{i}\mathcal Q^r(h/\eta)(x)
-\frac{i}{i+1}\frac1i\sum_{r=0}^{i-1}\mathcal Q^r(h/\eta)(x)\xrightarrow[]{i\to\infty}0.
\label{eq:lln-single-iterate}
\end{align}
The weights $(i+1)\vartheta^{n-i}/n$ tend to zero for each fixed $i$, and
\begin{align}
\sum_{i=0}^{n-1}\frac{i+1}{n}\vartheta^{n-i}
&\leq\sum_{j=1}^{\infty}\vartheta^j
=\frac{\vartheta}{1-\vartheta}.
\label{eq:lln-toeplitz-weights}
\end{align}
Thus \eqref{eq:lln-single-iterate}, \eqref{eq:lln-toeplitz-weights} and Toeplitz's lemma~\cite{Toeplitz1911} (see also~\cite[p.~250]{Loeve1977} and~\cite[Theorem~1.1\textnormal{(i)}]{LiHu2017}) yield
\begin{align}
\frac1n\sum_{i=0}^{n-1}\vartheta^{n-i}\mathcal Q^i(h/\eta)(x)
&=\sum_{i=0}^{n-1}\frac{i+1}{n}\vartheta^{n-i}
\frac{\mathcal Q^i(h/\eta)(x)}{i+1}
\xrightarrow[]{n\to\infty}0
\label{eq:zero}
\end{align}
for $\nu$-almost every $x$.

Combining \eqref{ref:l1-quasi-ergodic-error}, \eqref{eq:zero} and \eqref{eq:lln-survival} gives $\mathcal E_n(x)\to0$ for $\nu$-almost every $x$. Substituting this and \eqref{ref:q-ergodic-u-over-eta} into \eqref{ref:l1-quasi-ergodic-decomposition}, and using \eqref{eq:lln-survival} once more, proves~\eqref{ref:l1-quasi-ergodic-limit2}. The result for real-valued $h$ follows by applying the argument to its positive and negative parts.
\end{stepproof}

\begin{step}[4]
\label{step:conditional-lln-4}
We prove~\ref{it:cond-lln-2}.
\end{step}

\begin{stepproof}[Proof of Step~\ref{step:conditional-lln-4}]
Let $h\in L^1(M,\mu)$ satisfy $h|_Z=0$. Since $\eta$ is bounded, $\nu(|h|)=\mu(\eta|h|)\leq\|\eta\|_\infty\mu(|h|)<\infty$.
For each integer $K\geq1$, define $h_K:=h\mathbbm 1_{\{|h|\leq K\}}$ and observe that $|h|\mathbbm 1_{\{|h|>K\}}=|h-h_K|.$
Then $h_K$ is bounded, $|h|\mathbbm 1_{\{|h|>K\}}\in L^1(M,\mu)$, and both functions vanish on $Z$.
Moreover, dominated convergence gives $\nu(|h|\mathbbm 1_{\{|h|>K\}})\to0$ as $K\to\infty$.

The triangle inequality yields
\begin{align}
\mathbb E_x\left[
\left.|\Gamma_n-\Gamma^\star|\,\right|\,\tau>n
\right]
&\leq
\mathbb E_x\left[
\left.
\left|\frac1n\sum_{i=0}^{n-1}h_K\circ X_i-\nu(h_K)\right|
\,\right|\,\tau>n
\right]
\nonumber\\
&\phantom{\leq{}}+
\mathbb E_x\left[
\left.\frac1n \sum_{i=0}^{n-1} \left(|h| \mathbbm 1_{\{|h|>K\}}\right)\circ X_i
\,\right|\,\tau>n
\right]
+\nu(|h|\mathbbm 1_{\{|h|>K\}}).
\label{eq:lln-truncation-bound}
\end{align}
For each fixed $K$, the first term tends to zero by
Step~\ref{step:conditional-lln-1}, while the second tends to $\nu(|h|\mathbbm 1_{\{|h|>K\}})$
by Step~\ref{step:conditional-lln-3}, for $\nu$-almost every $x$.
Since $K$ ranges over the positive integers, these conclusions hold
simultaneously for all $K$ outside a common $\nu$-null set.

Fix $x\in M\setminus Z$ outside this exceptional set. Taking the upper limit
in \eqref{eq:lln-truncation-bound} gives, for every integer $K\geq1$,
$$
\limsup_{n\to\infty}
\mathbb E_x\left[
\left.|\Gamma_n-\Gamma^\star|\,\right|\,\tau>n
\right]
\leq 2\nu(|h|\mathbbm 1_{\{|h|>K\}}).
$$
Letting $K\to\infty$ proves~\ref{it:cond-lln-2}.
\end{stepproof} 
}
\end{proof}

\end{document}